\newcommand{\bburl}[1]{\textcolor{blue}{\url{#1}}}
\newcommand\Item[1][]{%
  \ifx\relax#1\relax  \item \else \item[#1] \fi\abovedisplayskip=0pt\abovedisplayshortskip=0pt~\vspace*{-\baselineskip}}
\newcommand\be{\begin{equation}}
	\newcommand\ee{\end{equation}}
\newcommand\bi{\begin{itemize}}
	\newcommand\ei{\end{itemize}}
\newcommand\ben{\begin{enumerate}}
	\newcommand\een{\end{enumerate}}
\newtheorem{thm}{Theorem}[section]
\newtheorem{cor}[thm]{Corollary}
\newtheorem{lem}[thm]{Lemma}
\newtheorem{prop}[thm]{Proposition}
\theoremstyle{remark}
\newtheorem*{rek}{Remark}
\newcommand{\z}{\mathbb{Z}}
\newcommand{\h}{\mathbb{H}}
\newcommand{\bs}{\backslash}
\DeclareMathOperator{\SL}{SL}
\DeclareMathOperator{\GL}{GL}
\DeclareMathOperator{\ad}{Ad}
\DeclareMathOperator{\Real}{Re}
\DeclareMathOperator{\Imag}{Im}
\DeclareMathOperator{\supp}{supp}
\DeclareMathOperator{\ord}{ord}
\DeclareMathOperator{\res}{res}
\DeclareMathOperator{\new}{new}
\let\@wraptoccontribs\wraptoccontribs
\numberwithin{equation}{section}
\begin{document}

\title[Effective correlation/decorrelation and weak subconvexity]{Effective correlation and decorrelation for newforms, and weak subconvexity for $L$-functions}

\author[Nawapan Wattanawanichkul]{Nawapan Wattanawanichkul \\ (with an appendix by Jesse Thorner)}
\thanks{Address: 1409 West Green Street, University of Illinois Urbana-Champaign, Urbana, IL 61801, USA}
\thanks{Email: nawapanwattanawanichkul4@gmail.com (Nawapan Wattanawanichkul)}
\thanks{Email: jesse.thorner@gmail.com (Jesse Thorner)}
\thanks{Competing Interests: The authors N.W. and J.A.T. declare that they have no competing interests.}

\begin{abstract}
    Let $f$ and $g$ be spectrally normalized holomorphic newforms of even weight $k \geq2$ on $\Gamma_0(q)$. 
    If $f\neq g$, then assume that $q$ is squarefree.  For a nice test function $\psi$ supported on $\Gamma_0(1)\backslash\mathbb{H}$, we establish the best known bound (uniform in $k$, $q$, and $\psi$) for
    \[ 
        \int_{\Gamma_0(q)\backslash\mathbb{H}}\psi(z)f(z)\overline{g(z)}y^{k}\frac{dxdy}{y^2}-\mathds{1}_{f = g}\frac{3}{\pi}\int_{\Gamma_0(1)\backslash\mathbb{H}}\psi(z)\frac{dx dy}{y^2}.
    \]
    When $f=g$, our result yields an effective holomorphic variant of quantum unique ergodicity, refining work of Holowinsky--Soundararajan and Nelson--Pitale--Saha. 
    When $f \neq g$, our result extends and improves the effective decorrelation result of Huang for $q=1$. 
    To prove our results, we refine Soundararajan's weak subconvexity bound for Rankin--Selberg $L$-functions.
\end{abstract}

\keywords{Quantum unique ergodicity, Weak subconvexity, Newforms, Effective correlation, Effective decorrelation}

\subjclass{11F11, 58J51}

\maketitle

%%%%%%%%%%%%%%%%%%%%%%%%%%%%%%%%%%%%%%%%%
%%%%%%%%%%%%%%%%%%%%%%%%%%%%%%%%%%%%%%%%%

\vspace{-3mm}

\section{Introduction}

\label{sec:intro}

    Let $\Gamma_0(1) = \SL_2(\z)$, and consider the hyperbolic surface $Y_0(1) = \Gamma_0(1)\bs\h$. The volume form is $d\mu = y^{-2}dxdy$, and $\mu(Y_0(1))=\pi/3$. Given bounded measurable functions $H_1$ and $H_2$ on $Y_0(1)$, their Petersson inner product is given by
    \[
        \langle H_1, H_2 \rangle = \int_{Y_0(1)} H_1(z) \overline{H_2(z)} d\mu.
    \]
    
    In this paper, we focus on a holomorphic variant of  quantum unique ergodicity (QUE).  
    Let $z= x+iy$, and let 
    \[
        f(z) = \sum_{n=1}^{\infty}\lambda_f(n)n^{\frac{k-1}{2}}e^{2\pi i n z} = e^{2\pi i z}+\sum_{n=2}^{\infty}\lambda_f(n)n^{\frac{k-1}{2}}e^{2\pi i n z}
    \]
    be a normalized Hecke eigenform of even weight $k\geq 2$ and level $1$. In other words, $f$ is a cusp form of weight $k$ and level $1$ that is also an eigenfunction of all the Hecke operators with $\lambda_f(1) = 1$.  
    Consider a sequence of such $f$ with $k\to\infty$. 
    Write $F_k(z) = \rho_f(1)y^{k/2}f(z)$, where $\rho_f(1)$ is chosen such that $\langle 1, |F_k|^2 \rangle = 1$. The holomorphic QUE conjecture asserts that if $\psi$ is a bounded measurable function on  $Y_0(1)$, then
    \begin{equation}
	\label{eqn:HS_qualitative}
            \lim_{k \to \infty}   \langle \psi,|F_k|^2 \rangle = \frac{3}{\pi} \langle \psi, 1\rangle.
    \end{equation}
    Holowinsky and Soundararajan \cite{H,HS,S}  proved that if $\varepsilon>0$, then
    \begin{equation}
	\label{eqn:HolowinskySound}
            \langle \psi, |F_k|^2 \rangle = \frac{3}{\pi}\langle \psi, 1\rangle +O_{\psi, \varepsilon}((\log k)^{-\frac{1}{30}+\varepsilon}).
    \end{equation}
    
    The proof of \eqref{eqn:HolowinskySound} in \cite{HS} relies on two different approaches.  In \cite{S}, Soundararajan spectrally expands the left-hand side of \eqref{eqn:HolowinskySound} in terms of Hecke--Maa{\ss} cusp forms and unitary Eisenstein series, bounding the error term using a weak subconvexity bound (see \eqref{eq:sound-weaksubconvexity}).  In \cite{H}, Holowinsky expands the left-hand side of \eqref{eqn:HolowinskySound} in terms of incomplete Poincar{\'e} series and 
    incomplete Eisenstein series, bounding the error term by analyzing shifted convolution sums that arise from the expansion.  A rate of convergence of $O_{\psi,\varepsilon}(k^{-1/2+\varepsilon})$ follows from the generalized Lindel{\"o}f hypothesis; up to refinements in the $k^{\varepsilon}$, this is optimal (see \cite{IS}). Using \eqref{eqn:HS_qualitative}, Rudnick \cite{Ru} proved that as $k\to\infty$, the zeros of $f$ equidistribute on $Y_0(1)$.
	
    Let $C_c^{\infty}(Y_0(1))$ be the space of compactly supported, infinitely differentiable functions on $Y_0(1)$. Let $\mathbb{N}$ denote the set of positive integers.  Given $M\geq 1$, let $C_{c}^{\infty}(Y_0(1),M)$ equal
    \begin{equation}
    \label{eq:M-norm}
       \hspace{-1mm}
       \Big\{\psi\in C_{c}^{\infty}(Y_0(1))\colon \textup{if $a,b \in \mathbb{N}\cup\{0\}$, then } \sup_{z \in Y_0(1)} \Big|y^{\max(a+b,\frac{1}{2})}\frac{\partial^a}{\partial x^a}\frac{\partial^b}{\partial y^b} \psi(z)\Big| \ll_{a,b} M^{a+b} \Big\}.
    \end{equation}
    Let $\mathcal{F}$ be a fundamental domain of $Y_0(1)$.  A close inspection of the work of Lester, Matom{\"a}ki, and Radziwi{\l}{\l} \cite[Proof of Theorem 1.1]{LMR} shows that if $M\geq 1$, $\psi\in C_c^{\infty}(Y_0(1),M)$, and there exist absolute and effectively computable constants $A>0$ and $\delta>0$ such that
    \begin{equation}
        \label{eqn:LMR}
            \langle \psi, |F_k|^2\rangle = \frac{3}{\pi}\langle \psi,1\rangle+ O_{\varepsilon}(M^{A+\varepsilon}(\log k)^{-\delta+\varepsilon}),
    \end{equation}
    then for any fixed $B>0$ and any hyperbolic ball ${\mathscr{B}}(z_0,r)\subseteq\{z\in\mathcal{F}\colon \mathrm{Im}(z)\leq B\}$ centered at $z_0$ with radius $r>0$, the following effective variant of Rudnick's result holds:
    \begin{equation}
	\label{eqn:LMR_zeros}
            \frac{\#\{\varrho_f \in \mathscr{B}(z_0,r)\colon f(\varrho_f) = 0\}}{\#\{\varrho_f \in \mathcal{F}\colon f(\varrho_f) = 0\}} = \frac{3}{\pi}\mu(\mathscr{B}(z_0,r))+ O_{B,\varepsilon}\Big(\frac{r}{(\log k)^{\frac{\delta}{4+2A}-\varepsilon}}\Big).
    \end{equation}
    Using Iwaniec’s course notes \cite{Iwaniec}, Lester, Matom\"aki, and Radziwi{\l\l} \cite[Theorem 1.3]{LMR} proved that \eqref{eqn:LMR} holds with $A=2$ and $\delta=31/2-4\sqrt{15} = 0.008066\ldots$. Iwaniec's course notes present a proof of holomorphic QUE that does not require Soundararajan's weak subconvexity result from \cite{S}. However, upon closer examination, we identified a missing factor in the final estimate of \cite[Section 11]{Iwaniec}, which also influences the proof of \cite[Theorem 1.3]{LMR}. Accounting for this factor yields a convergence rate that is slower than what was stated in the original argument. See Appendix \ref{appendix} for details. Alternatively, \cite[Theorem 1.3]{LMR} can be proved by refining Soundararajan's weak subconvexity bound.  We discuss this below.
	
    Kowalski, Michel, and Vanderkam \cite[Conjecture 1.4]{KMV} introduced a variant of holomorphic QUE in which the levels of holomorphic newforms are allowed to vary.  For $z=x+iy$, let
    \[
        f(z) = \sum_{n=1}^{\infty}\lambda_f(n)n^{\frac{k-1}{2}}e^{2\pi i n z}\in S_k^{\mathrm{new}}(\Gamma_0(q))
    \]
    be a holomorphic newform of even weight $k\geq 2$ and level $q\geq 1$ with trivial nebentypus (see Section \ref{subsec:newforms} for more details).  We consider a sequence of such $f$ with $kq\to\infty$. Given bounded measurable functions $H_1$ and $H_2$ on  $Y_0(q)=\Gamma_0(q)\bs\h$, we define 
    \begin{equation}
        \label{eq:Petersson}
     \langle H_1, H_2\rangle_q = \int_{Y_0(q)}H_1(z)\overline{H_2(z)} d\mu.
    \end{equation}
    Write
    \begin{equation}
    \label{eq:F_k(z)}
        F_k(z) = \rho_f(1)y^{k/2}f(z),
    \end{equation}
    where $\rho_f(1)$ is chosen such that $ \langle 1, |F_k(z)|^2 \rangle_q = 1$. 
    Kowalski, Michel, and Vanderkam conjectured that if $\psi$ is a bounded measurable function supported on $Y_0(1)$, then
    \begin{equation}
    \label{eq:hQUEstatement}
        \lim_{kq \to \infty}\langle \psi, |F_k|^2 \rangle_q = \frac{3}{\pi}\langle \psi, 1\rangle_1.
    \end{equation}
    Building on the work in \cite{H,HS,S}, Nelson, Pitale, and Saha \cite{N,NPS} proved that there exist constants $\delta_1,\delta_2>0$ such that if $q_0$ is the largest squarefree divisor of $q$, then
    \[
      \langle \psi, |F_k|^2 \rangle_q = \frac{3}{\pi} \langle \psi, 1 \rangle_1 + O_{\psi}\Big(\Big(\frac{q}{q_0}\Big)^{-\delta_1}(\log kq)^{-\delta_2}\Big).
    \]
    Thus, the rate of convergence is sensitive to whether the level $q$ is powerful.

    Let $f$ and $g$ be holomorphic newforms of weight $k$ and level $q$, and let $G_k$ be defined similarly to \eqref{eq:F_k(z)}.
     In analogy with the variations on holomorphic QUE mentioned above, it is natural to conjecture that if $\psi$ is a bounded measurable function supported on $Y_0(1)$, then
    \begin{equation}
        \label{eq:decorrelation}
            \lim_{kq \to \infty} |\langle \psi, \overline{F_k}G_k \rangle_q| = 0.
    \end{equation}
    Huang \cite{H} proved that if $f$ and $g$ are normalized Hecke eigenforms of weight $k$ and level $1$, with $f\neq g$, $M\geq 1$, and $\psi \in C_{c}^{\infty}(Y_0(1),M)$, then for all $\varepsilon>0$, we have
    \begin{equation}
        \label{eq:Huang-decor}
             \langle \psi, \overline{F_k}G_k \rangle_1  \ll_\varepsilon M^{5/3}(\log k)^{-1.19\times 10^{-41} +\varepsilon}.
    \end{equation}

    Now, consider holomorphic newforms $f$ and $g$ of weight $k$ and level $q$.  If $f \neq g$, then we assume that $q$ is squarefree.  In this paper, we establish the strongest known unconditional upper bound on $|\langle \psi, \overline{F_k}G_k \rangle_q - \mathds{1}_{f=g}\frac{3}{\pi}\langle \psi, 1 \rangle_1|$.  When $f=g$, this addresses the rate of convergence in \eqref{eq:hQUEstatement} with effective dependence on $k$, $q$, and $\psi$. When $f\neq g$, this addresses the rate of convergence in \eqref{eq:Huang-decor} and extends such result to squarefree levels.  For $f=g$ and $q = 1$, our result provides a faster rate of convergence than in \eqref{eqn:HolowinskySound} and \cite[Theorem 1.3]{LMR}.

    \begin{thm}
    \label{thm:cor}
        Let $q\geq 1$ be an integer, and let $q_0$ be the largest squarefree divisor of $q$. Let $k\geq 2$ be an even integer.  Let $f$ and $g$ be holomorphic newforms of weight $k$, level $q$, and trivial nebentypus.  Let $F_k$ and $G_k$ be defined as in \eqref{eq:F_k(z)}. Assume that if $f\neq g$, then $q$ is squarefree.
        Let $\theta\in[0,7/64]$ denote the best bound towards the generalized Ramanujan conjecture for Hecke--Maa{\ss} cusp forms. If $\varepsilon>0$, $M\geq 1$, and $\psi\in C_c^{\infty}(Y_0(1),M)$, then  
	\[
       \langle \psi, \overline{F_k}G_k \rangle_q - \mathds{1}_{f=g}\frac{3}{\pi}\langle \psi, 1 \rangle_1\ll_{\varepsilon} M^{4} \Big(\frac{q}{q_0}\Big)^{-\frac{1}{12}(2\sqrt{3}-3)(1-2\theta)+\varepsilon} (\log kq)^{-(\frac{7}{2}-2\sqrt{3})+\varepsilon}.
	\]
    If $q=1$ and $f=g$, then we can replace $M^4$ with $M^{(23-2\sqrt{3})/12}$. 
    \end{thm}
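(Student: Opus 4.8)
The plan is to follow Soundararajan's spectral strategy, powered by a refined weak subconvexity bound, and to interpolate it against Holowinsky's shifted-convolution estimate --- the two-pronged structure of \cite{HS}, now carried out with every constant effective and every dependence on $k$, $q$, and $\psi$ made explicit. \emph{Step 1: spectral expansion of the test function.} Since $\psi\in C_c^\infty(Y_0(1),M)$, expand it on $Y_0(1)$ along the orthonormal basis $\{\phi_j\}$ of Hecke--Maa{\ss} cusp forms and the unitary Eisenstein series $E(\cdot,\tfrac12+it)$:
\[
\psi=\tfrac3\pi\langle\psi,1\rangle_1+\sum_j\langle\psi,\phi_j\rangle_1\,\phi_j+\frac1{4\pi}\int_{\mathbb R}\langle\psi,E(\cdot,\tfrac12+it)\rangle_1\,E(\cdot,\tfrac12+it)\,dt.
\]
Applying powers of $\Delta$ and integrating by parts, the $M$-normalization in \eqref{eq:M-norm} forces Sobolev-type decay $\langle\psi,\phi_j\rangle_1\ll_A M^{O_A(1)}(1+|t_j|)^{-A}$ (and similarly for the continuous part), so only $1+|t_j|\ll M^{1+\varepsilon}$ contributes effectively. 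Pulling $\psi$ and each $\phi_j$ back to $Y_0(q)$ and pairing against $\overline{F_k}G_k$, the constant term contributes $\tfrac3\pi\langle\psi,1\rangle_1\langle F_k,G_k\rangle_q=\mathds{1}_{f=g}\tfrac3\pi\langle\psi,1\rangle_1$, because $F_k,G_k$ are $L^2$-normalized on $Y_0(q)$ and orthogonal when $f\neq g$. Hence it remains to bound
\[
\sum_j\langle\psi,\phi_j\rangle_1\,\langle\phi_j F_k,G_k\rangle_q+\frac1{4\pi}\int_{\mathbb R}\langle\psi,E(\cdot,\tfrac12+it)\rangle_1\,\langle E(\cdot,\tfrac12+it)F_k,G_k\rangle_q\,dt.
\]

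\emph{Step 2: triple products, central $L$-values, and refined weak subconvexity.} By the explicit Watson/Ichino triple-product formula for newforms on $\Gamma_0(q)$, one has, up to an archimedean factor and ramified local factors at $p\mid q$,
\[
|\langle\phi_j F_k,G_k\rangle_q|^2\asymp\frac{\Lambda\!\big(\tfrac12,(f\boxtimes g)\times\phi_j\big)}{\Lambda(1,\sym^2\phi_j)\,\Lambda(1,\sym^2 f)\,\Lambda(1,\sym^2 g)},
\]
with $\phi_j$ replaced by $E(\cdot,\tfrac12+it)$ on the continuous spectrum; here $\Lambda(s,(f\boxtimes g)\times\phi_j)$ is $\Lambda(s,\phi_j)\Lambda(s,\sym^2 f\times\phi_j)$ when $f=g$ and $\Lambda(s,f\times g\times\phi_j)$ when $f\neq g$. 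In the weight aspect the archimedean factor cancels the polynomial growth from the analytic conductor ($\asymp k^4(1+|t_j|)^3$ for $\sym^2 f\times\phi_j$, $\asymp k^4q^{O(1)}(1+|t_j|)^4$ for $f\times g\times\phi_j$); when $f\neq g$ the squarefree hypothesis makes the local representations at $p\mid q$ Steinberg and the ramified factors negligible, while when $f=g$ and $q$ is powerful they carry a saving that is a negative power of $q/q_0$ into which the Ramanujan exponent $\theta$ enters through the local Hecke eigenvalues $\lambda_{\phi_j}(p^m)$, $p\mid q$. Bounding $\Lambda(\tfrac12,\phi_j)$, $|\zeta(\tfrac12+it)|$, and the adjoint values at $1$ by convexity together with $L(1,\sym^2 f)\gg1/\log kq$ (symmetric squares have no Siegel zero), and bounding the Rankin--Selberg central values $\Lambda(\tfrac12,\sym^2 f\times\phi_j)$ and $\Lambda(\tfrac12,f\times g\times\phi_j)$ by the refined weak subconvexity bound --- the main new input, improving Soundararajan's $\log$-saving over convexity for these degree-$6$ and degree-$8$ $L$-functions --- and then summing over $j$ and integrating over $t$ against the decay from Step 1, produces a bound $\ll M^{O(1)}(q/q_0)^{-c_1}(\log kq)^{-S(\beta)+\varepsilon}$, where $\beta$ is defined by $L(1,\sym^2 f)L(1,\sym^2 g)=(\log kq)^{\beta}$ and $S$ is an explicit increasing function; this estimate degrades when $\beta$ is small.

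\emph{Step 3: interpolation with Holowinsky's bound, and optimization.} In the complementary regime --- small adjoint values at $1$, equivalently small Hecke eigenvalues on average --- expand $\langle\psi,\overline{F_k}G_k\rangle_q$ in incomplete Poincar{\'e} and incomplete Eisenstein series following Holowinsky and estimate the resulting shifted convolution sums $\sum_{n\le kq}\lambda_f(n)\overline{\lambda_g(n+\ell)}$, obtaining $\ll M^{O(1)}(q/q_0)^{-c_2}(\log kq)^{H(\beta)+\varepsilon}$ with $H$ improving as $\beta$ decreases. The identities
\[
\sum_{p\le kq}\frac{|\lambda_f(p)|}{p}\le\Big(\sum_{p\le kq}\frac{|\lambda_f(p)|^2}{p}\Big)^{1/2}\Big(\sum_{p\le kq}\frac1p\Big)^{1/2},\qquad\sum_{p\le kq}\frac{|\lambda_f(p)|^2}{p}=\log L(1,\sym^2 f)+\log\log kq+O(1)
\]
make $H$ explicit in $\sqrt{1+\beta}$, so that optimizing $\min\bigl(S(\beta),-H(\beta)\bigr)$ over $\beta$ is a quadratic optimization in $\sqrt{1+\beta}$ --- the source of the $\sqrt3$ --- and yields the exponent $\tfrac72-2\sqrt3$ in $\log kq$; the parallel balancing of $c_1$ against $c_2$ yields $\tfrac1{12}(2\sqrt3-3)(1-2\theta)$ in $q/q_0$. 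For the final claim, when $q=1$ and $f=g$ the Watson formula is unramified, and re-optimizing the spectral cutoff in $|t_j|$ against the $M$-dependence of the individual terms replaces $M^4$ by $M^{(23-2\sqrt3)/12}$.

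\emph{Main obstacle.} The crux is the refined weak subconvexity bound of Step 2: one needs a $\log$-saving over convexity for the degree-$6$ and degree-$8$ Rankin--Selberg $L$-functions that is strong enough to win after the interpolation of Step 3, uniformly in $k$, $q$, and the spectral parameter, and with fully effective constants. A genuine secondary difficulty is the bookkeeping of the ramified local integrals at primes $p^2\mid q$ --- both in the Watson/Ichino formula and in the shifted convolution sums --- whose interplay with $\theta$ produces the $(q/q_0)$-saving and whose clean exponent again requires an optimization parallel to the one in the $\log$-aspect.
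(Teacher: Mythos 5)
Your proposal correctly identifies the paper's two-pronged strategy: a Soundararajan-style bound via the spectral expansion, the Watson/Ichino triple-product formula, and the refined weak subconvexity Theorem \ref{thm:weaksub} (giving the Euler product $\prod_p(1-\tfrac{\lambda_f(p)^2+\lambda_g(p)^2-1}{2p})$ with a $(q/\sqrt Q)^{-(1/2-\theta)}$ saving at the ramified places), played against a Holowinsky-style bound via incomplete Poincar\'e/Eisenstein series and shifted convolutions (giving $\prod_p(1-\tfrac{(|\lambda_f(p)|-1)^2+(|\lambda_g(p)|-1)^2}{4p})$). The place where your presentation genuinely diverges is the final optimization. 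The paper follows Iwaniec's pointwise device: it bounds $\min\{X,Y\}\leq X^{\alpha}Y^{1-\alpha}$ and computes $\min_{\alpha\in[0,1]}\max_{\lambda_f,\lambda_g\in[0,2]}$ of $\alpha\tfrac{\lambda_f^2+\lambda_g^2-1}{2}+(1-\alpha)\tfrac{(\lambda_f-1)^2+(\lambda_g-1)^2}{4}$, solved at $\alpha_0=2/\sqrt3-1$ with value $\tfrac72-2\sqrt3$, then sends this through Mertens. You instead set $\beta$ via $L(1,\mathrm{Ad}\,f)L(1,\mathrm{Ad}\,g)=(\log kq)^\beta$ and convert $\sum_p|\lambda_f(p)|/p$ into $\sqrt{1+\beta}\,\log\log kq$ by Cauchy--Schwarz, as in the original Holowinsky--Soundararajan treatment. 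These are equivalent because Cauchy--Schwarz is tight precisely for constant $|\lambda_f(p)|$, which is Iwaniec's worst case $\lambda^{*}=(1-\alpha)/(1+\alpha)$; indeed, substituting $u=\sqrt{1+\beta}$ gives exactly the same quadratic and reproduces $\alpha_0$ and $\tfrac72-2\sqrt3$. One caution worth stating: if you really implement Step~3 as the literal $\max_{\beta}\min\{S(\beta),-H(\beta)\}$, the $(q/q_0)$-saving does not drop out cleanly, because only the Soundararajan side carries the $(q/\sqrt Q)^{-(1/2-\theta)}$ factor. To get the stated exponent you must bound $\min\{X,Y\}$ by the geometric mean $X^{\alpha_0}Y^{1-\alpha_0}$ with the \emph{fixed} optimal $\alpha_0$, so that the $(q/\sqrt Q)^{-\alpha_0(\frac12-\theta)}$ factor survives, and then apply $(q/q_0)^{1/2}\ll q/\sqrt Q$; this gives the exponent $\tfrac{1}{12}(2\sqrt3-3)(1-2\theta)$ and is exactly what the paper does. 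Your description of the final $q=1$ refinement (trading the spectral cutoff in $|t_\phi|$ against the $M$-exponent, and using a sharper archimedean bound on $\zeta(\tfrac12+it)$ and on $\mathfrak{C}(\mathrm{Ad}\,f\times\phi)$ for small $|t_\phi|$) matches the paper's route to $M^{(23-2\sqrt3)/12}=M^{\alpha_0\cdot 17/12+(1-\alpha_0)\cdot 5/3}$.
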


    \begin{cor}
    Let $f$ be a normalized Hecke eigenform of even weight $k\geq 2$ and level $1$.  Equation \eqref{eqn:LMR_zeros} holds with $A=\frac{1}{12}(23-2\sqrt{3}) = 1.627991\ldots$ and $\delta = \frac{7}{2}-2\sqrt{3} =0.035898\ldots$.  In particular, if $B>0$ is fixed and $\mathcal{F}$ is a fundamental domain of $Y_0(1)$, then the zeros of $f$ equidistribute with respect to $d\mu$ within any hyperbolic ball ${\mathscr{B}}(z_0,r)\subseteq\{z\in\mathcal{F}\colon \mathrm{Im}(z)\leq B\}$ of radius $r\geq (\log k)^{-1/203}$.
    \end{cor}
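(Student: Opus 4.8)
The plan is to derive the Corollary directly from Theorem~\ref{thm:cor} together with the implication of Lester, Matom\"aki, and Radziwi{\l\l} recalled in the introduction: if there are absolute, effectively computable constants $A,\delta>0$ such that \eqref{eqn:LMR} holds for every $\psi\in C_c^{\infty}(Y_0(1),M)$ with the implied constant depending only on $\varepsilon$, then the effective zero-equidistribution statement \eqref{eqn:LMR_zeros} holds with those same $A$ and $\delta$. So the one substantive step is to extract an admissible pair $(A,\delta)$ from Theorem~\ref{thm:cor}.

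First I would specialize Theorem~\ref{thm:cor} to $q=1$ and $g=f$. Then $Y_0(q)=Y_0(1)$, so $\langle\cdot,\cdot\rangle_q$, $\langle\cdot,\cdot\rangle_1$, and $\langle\cdot,\cdot\rangle$ all coincide; moreover $q/q_0=1$ (so the $\theta$-dependent factor equals $1$), $\log kq=\log k$, $\overline{F_k}G_k=|F_k|^2$, $\mathds{1}_{f=g}=1$, and the final sentence of the theorem permits the exponent $M^{(23-2\sqrt3)/12}$ in place of $M^4$. Using $M\ge 1$ to absorb an $M^{\varepsilon}$, the conclusion of Theorem~\ref{thm:cor} becomes
\[
  \langle \psi, |F_k|^2\rangle-\tfrac{3}{\pi}\langle \psi,1\rangle\ \ll_{\varepsilon}\ M^{\frac{1}{12}(23-2\sqrt3)+\varepsilon}(\log k)^{-(\frac{7}{2}-2\sqrt3)+\varepsilon},
\]
with the implied constant depending only on $\varepsilon$. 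This is precisely \eqref{eqn:LMR} with $A=\frac{1}{12}(23-2\sqrt3)=1.627991\ldots$ and $\delta=\frac{7}{2}-2\sqrt3=0.035898\ldots$, both absolute, effectively computable, and positive (the latter because $\tfrac72>2\sqrt3$). Invoking the LMR implication then yields \eqref{eqn:LMR_zeros} with these $A$ and $\delta$, which is the first assertion of the Corollary.

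For the ``in particular'' statement I would let $r$ decay with $k$ and compare the two sides of \eqref{eqn:LMR_zeros}: its main term $\tfrac{3}{\pi}\mu(\mathscr{B}(z_0,r))$ is $\gg_B r^2$ (for a hyperbolic ball of radius $r$ one has $\mu(\mathscr{B}(z_0,r))=4\pi\sinh^2(r/2)\ge\pi r^2$), whereas its error term is $O_{B,\varepsilon}\big(r\,(\log k)^{-\frac{\delta}{4+A}+\varepsilon}\big)$. A short computation gives $\frac{\delta}{4+A}=\frac{42-24\sqrt3}{71-2\sqrt3}$, and the inequality $\frac{\delta}{4+A}>\frac{1}{157}$ is equivalent to $6523>3766\sqrt3$, i.e.\ to $6523^{2}>3\cdot 3766^{2}$, i.e.\ to $42{,}549{,}529>42{,}548{,}268$, which holds. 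Hence, choosing $\varepsilon>0$ small enough that $\tfrac{1}{157}+\varepsilon<\tfrac{\delta}{4+A}$, the error term in \eqref{eqn:LMR_zeros} is $o(r^{2})=o(\mu(\mathscr{B}(z_0,r)))$ as $k\to\infty$ whenever $r\ge(\log k)^{-1/157}$, uniformly over all admissible balls; this is the claimed equidistribution. I expect no genuine obstacle here: all of the analytic content is carried by Theorem~\ref{thm:cor}, and the only point that needs care is the elementary verification that $1/157$ lies just below the threshold $\delta/(4+A)$.
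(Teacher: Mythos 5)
Your proposal is correct and is essentially the proof the paper intends: specialize Theorem~\ref{thm:cor} to $q=1$, $f=g$ (where the theorem's final sentence permits the exponent $M^{(23-2\sqrt3)/12}$), obtain \eqref{eqn:LMR} with $A=\frac{1}{12}(23-2\sqrt3)$, $\delta=\frac72-2\sqrt3$, and feed these into the Lester--Matom\"aki--Radziwi\l\l{} implication \eqref{eqn:LMR_zeros}. The ``in particular'' clause is then the elementary verification (which you carry out correctly, including the sharp inequality $6523^2>3\cdot 3766^2$) that $\delta/(4+A)>1/157$, so the error term in \eqref{eqn:LMR_zeros} is $o(r^2)\asymp_B o(\mu(\mathscr{B}(z_0,r)))$ whenever $r\geq(\log k)^{-1/157}$.
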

    
    The main new result that enables us to prove Theorem \ref{thm:cor}  is a refinement of Soundararajan's weak subconvexity bound. For this refinement, let $m \ge 1$ be an integer, and let $\mathfrak{F}_{m}$ be the set of all cuspidal automorphic representations of $\GL_m(\mathbb{A}_{\mathbb{Q}})$ with unitary central character. We normalize each $\pi\in\mathfrak{F}_m$ so that the central character is  trivial on the diagonally embedded copy of the positive reals. Consider $\pi \in \mathfrak{F}_{m}$ and $\pi' \in \mathfrak{F}_{m'}$.  Let $\mathfrak{C}(\pi\times\pi')$ denote the analytic conductor of the Rankin--Selberg $L$-function $L(s,\pi\times\pi')$ (see Section \ref{subsec:RS}). 
    Soundararajan \cite{S} proved the following {\it weak subconvexity bound}:  If $\pi$ satisfies the generalized Ramanujan conjecture (GRC), then for any $\varepsilon > 0$,
    \begin{equation}
	\label{eq:sound-weaksubconvexity}
            L(1/2, \pi \times \pi') \ll_{ m, \pi', \varepsilon} \frac{\mathfrak{C}(\pi\times \pi')^{1/4}}{[\log \mathfrak{C}(\pi\times \pi')]^{1-\varepsilon}}.
    \end{equation}
    If $\pi$ satisfies GRC, then \eqref{eq:sound-weaksubconvexity} provides a logarithmic improvement over the convexity bound
    when $\pi'$ is fixed. To obtain the explicit dependence on $\psi$ as presented in Theorem \ref{thm:cor}, one must explicate the dependence on $\pi'$  in \eqref{eq:sound-weaksubconvexity}. Alternatively, when $\pi$ satisfies GRC, one might use the bound
	\[
            L(1/2,\pi\times\pi')\ll_{m,m'}\frac{\mathfrak{C}(\pi\times\pi')^{1/4}}{[\log \mathfrak{C}(\pi\times\pi')]^{1/(10^{17}(mm')^3)}},
	\]
    proved by Soundararajan and Thorner \cite{ST}.  While this bound is fully uniform in both $\pi$ and $\pi'$, the power of $\log \mathfrak{C}(\pi\times\pi')$ that it saves is quite small.  In this paper, we prove the following refinement of \eqref{eq:sound-weaksubconvexity}, which we expect to be useful in settings beyond what we consider here.
	
    \begin{thm}
    \label{thm:weaksub}
        Let $\pi \in \mathfrak{F}_{m}$ and $\pi' \in \mathfrak{F}_{m'}$. Let $L(s,\pi\times\pi')$ be entire and $\pi$ satisfy GRC.\footnote{We only require $\pi$ to satisfy GRC at the non-archimedean places in our proof.}
        If $0<\varepsilon<1$, then 
		\begin{equation*}
                L(1/2, \pi\times\pi') \ll_{m,m',\varepsilon} \mathfrak{C}(\pi')^{\varepsilon} \frac{\mathfrak{C}(\pi\times\pi')^{1/4}}{[\log \mathfrak{C}(\pi\times\pi')]^{1-\varepsilon}}.
		\end{equation*}
    \end{thm}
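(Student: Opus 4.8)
The plan is to retrace Soundararajan's proof of the weak subconvexity bound \eqref{eq:sound-weaksubconvexity} from \cite{S} (in the form reworked in \cite{ST}), making the dependence on $\pi'$ explicit and uniform at every step. Throughout, put $C=\mathfrak{C}(\pi\times\pi')$ and $d=mm'$. The engine is Soundararajan's key inequality: for any auxiliary parameter $x\ge 2$, $\log|L(\tfrac12,\pi\times\pi')|$ is bounded above by the real part of a weighted Dirichlet polynomial supported on prime powers $n\le x$ (with coefficients essentially $\Lambda_{\pi\times\pi'}(n)/(\sqrt{n}\log n)$ against a smooth cutoff), plus an explicit main term depending only on $C$, $d$, and $x$, plus an error $O_d(1)$. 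Deriving this uses only the hypothesis that $L(s,\pi\times\pi')$ is entire, its functional equation, and standard bounds for the archimedean factors; thus the only arithmetic data about $\pi$ and $\pi'$ entering here is through $C$ and $d$, and when the internal parameters are optimized as in \cite{S} the main term produces the convexity exponent $\tfrac14\log C$ together with a saving of $(1-\varepsilon)\log\log C$. The entire $\pi'$-dependence that must be controlled is therefore concentrated in the prime-power sum.

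\emph{Reduction to a prime sum.} The terms $n=p^{k}$ with $k\ge 2$ are dispatched using that $\pi$ satisfies GRC at the non-archimedean places (so $|\alpha_{i,\pi}(p)|\le 1$ and $|\lambda_\pi(p^{k})|\ll_{m,k}1$), together with the Luo--Rudnick--Sarnak bound $|\alpha_{j,\pi'}(p)|\le p^{1/2-\delta_{m'}}$ with $\delta_{m'}>0$; their total contribution is of lower order and is absorbed into the main term. This is the only place GRC at the finite places of $\pi$ is used, matching the footnote. There remains the prime sum $\Real\sum_{p\le x}\lambda_\pi(p)\lambda_{\pi'}(p)\,p^{-\sigma_0}w_x(p)$ for some $\sigma_0$ just above $\tfrac12$ (modified at ramified primes), which we estimate by Cauchy--Schwarz; separating the two factors reduces matters to the second moments $\sum_{p\le x}|\lambda_\pi(p)|^2(\log p)/p$ and $\sum_{p\le x}|\lambda_{\pi'}(p)|^2(\log p)/p$. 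The first is $\le m^2(\log x+O(1))$ by GRC alone, with \emph{no} dependence on $\mathfrak{C}(\pi)$. For the second we use $\sum_{p\le x}|\lambda_{\pi'}(p)|^2(\log p)/p=\sum_{p\le x}\lambda_{\pi'\times\widetilde{\pi'}}(p)(\log p)/p$, the fact that $L(s,\pi'\times\widetilde{\pi'})$ has a simple pole at $s=1$ whose residue is $\ll_{m',\varepsilon}\mathfrak{C}(\pi')^{\varepsilon}$ (a standard unconditional bound, e.g.\ from a convexity-type estimate near $\Re s=1$) and is nonvanishing on $\Re s = 1$; a standard contour/Perron argument then yields $\sum_{p\le x}|\lambda_{\pi'}(p)|^2(\log p)/p\le \log x+O_{m',\varepsilon}(\log\mathfrak{C}(\pi'))$. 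Crucially this bypasses any unproven zero-free region for $L(s,\pi'\times\widetilde{\pi'})$, so there is no Siegel-zero obstruction.

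\emph{Assembling the $\mathfrak{C}(\pi')^{\varepsilon}$ factor.} Combining the two second-moment estimates through the Cauchy--Schwarz of the previous step, the prime sum is $\ll\sqrt{\log x}\cdot\sqrt{\log x+\log\mathfrak{C}(\pi')}\ll_{\varepsilon}\log x+\varepsilon\log\mathfrak{C}(\pi')$ by the arithmetic--geometric mean inequality. Choosing $x$ exactly as Soundararajan does, so that the $\log x$-terms are subsumed by the main-term saving while that saving remains $(1-\varepsilon)\log\log C$, we obtain $\log|L(\tfrac12,\pi\times\pi')|\le\tfrac14\log C-(1-\varepsilon)\log\log C+\varepsilon\log\mathfrak{C}(\pi')+O_{d,\varepsilon}(1)$, which is the theorem after relabeling $\varepsilon$. (When $\pi'$ is fixed this recovers \eqref{eq:sound-weaksubconvexity}, the term $\varepsilon\log\mathfrak{C}(\pi')$ having been absorbed into the implied constant there.)

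\emph{Main obstacle.} The crux is the uniform, unconditional control of the $\pi'$-second moment: establishing $\sum_{p\le x}|\lambda_{\pi'}(p)|^2(\log p)/p\ll\log x+\log\mathfrak{C}(\pi')$ without a zero-free region for $L(s,\pi'\times\widetilde{\pi'})$ (using instead the simple pole, its conductor-controlled residue, and nonvanishing on $\Re s=1$), and then threading this error term through Soundararajan's optimization so that it contributes only $\varepsilon\log\mathfrak{C}(\pi')$ --- i.e.\ a factor $\mathfrak{C}(\pi')^{\varepsilon}$ --- without eroding the $(\log C)^{1-\varepsilon}$ saving over the convexity bound \eqref{eq:convexity}. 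Everything else is a careful but essentially routine re-run of \cite{S}.
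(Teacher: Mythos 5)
Your proposal swaps out the paper's engine for a genuinely different one, and the substitution creates a gap that I do not see how to close.  The paper's proof (Section~\ref{sec:weak-subconvexity-bound}) runs Soundararajan's ``slow variation of mean values of multiplicative functions'' machinery from \cite{S} on the Dirichlet coefficients $\lambda_{\pi\times\pi'}(n)$: one first proves the cancellation $\sum_{n\le x}\lambda_{\pi\times\pi'}(n)\ll_B x(\log x)^{-(1-\varepsilon)}\mathfrak{C}(\pi')^{\varepsilon}$ for $x\ge C^{1/2}(\log C)^{-B}$ (Theorem~\ref{thm:partial-sum-thm2-Sound}) via the iterated difference operator $\mathcal{O}_{\underline L}(x,w;\underline\tau)$ and the successive-maxima construction (Theorem~\ref{thm:2.1-Sound}, Lemmas~\ref{lem:sound-5.1}--\ref{lem:sound-5.3}, \ref{lem:6.1-Sound}, \ref{lem:6.2-Sound}, Propositions~\ref{prop:sound-5.4}, \ref{prop:6.3-Sound}, \ref{prop:6.5-Sound}), and only then deduces the $L$-value bound by feeding this into the approximate functional equation (the proof of Theorem~\ref{thm:weaksub} in Section~4.3).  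Your ``key inequality'' --- a direct bound of $\log|L(\tfrac12,\pi\times\pi')|$ by a short weighted Dirichlet polynomial in $\Lambda_{\pi\times\pi'}(n)/(\sqrt n\log n)$ plus an explicit main term --- is not the engine of \cite{S}; it is the type of inequality underlying \cite{ST}.  No choice of parameters makes that main term equal to $\tfrac14\log C-(1-\varepsilon)\log\log C$ while keeping the prime sum small, and there is no ``\cite{S} optimization'' to borrow because \cite{S} does not prove such an inequality: this is precisely why the fully uniform bound of \cite{ST} saves only $(\log\mathfrak C)^{1/(10^{17}(mm')^3)}$ rather than $(\log\mathfrak C)^{1-\varepsilon}$.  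The $(1-\varepsilon)\log\log C$ saving is created by the mean-value step, which your proposal omits.

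There is also a concrete internal inconsistency in your Cauchy--Schwarz estimate of the prime sum.  With $\sigma_0$ just above $\tfrac12$, Cauchy--Schwarz applied to $\sum_{p\le x}\lambda_\pi(p)\lambda_{\pi'}(p)\,p^{-\sigma_0}w_x(p)$ produces factors of the shape $\big(\sum_{p\le x}|\lambda_\pi(p)|^2 p^{-a}\big)^{1/2}\big(\sum_{p\le x}|\lambda_{\pi'}(p)|^2 p^{-b}\big)^{1/2}$ with $a+b=2\sigma_0\approx 1$; at least one of the two factors then has a denominator $p^{-c}$ with $c<1$, and that moment is of size $\gg\sqrt{x}$ up to logarithms, not $O(\sqrt{\log x})$.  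The estimate you quote, $\sqrt{\log x}\cdot\sqrt{\log x+\log\mathfrak C(\pi')}$, only emerges when both moments carry a $p^{-1}$ weight, which forces $\sigma_0=1$ --- at which point the inequality no longer sees the convexity exponent $\tfrac14\log C$ at all.  You do correctly identify the uniform-in-$\pi'$ ingredients (GRC for the $\pi$-moment, a conductor-controlled bound on the $\pi'$-moment that bypasses Siegel-zero issues, compare Theorem~\ref{thm:Xiannan-Li} and Lemma~\ref{lem:pnt}), but these are deployed in \cite{S}'s framework --- inside Lemmas~\ref{lem:3.2-Sound}, \ref{lem:3.3-Sound}, \ref{lem:sound-5.1} and Propositions~\ref{prop:6.3-Sound}, \ref{prop:6.5-Sound} --- not in a one-shot Dirichlet-polynomial bound at the central point.
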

    \begin{rek}
        Our proof shows that there is an effectively computable constant $\Cl[abcon]{Li-thm1.3}=\Cr{Li-thm1.3}(m,m')>0$ such that the $\mathfrak{C}(\pi')^{\varepsilon}$ in Theorem \ref{thm:weaksub} can be refined to $\exp[\Cr{Li-thm1.3}(\log \mathfrak{C}(\pi'\times\widetilde{\pi}'))/\log\log \mathfrak{C}(\pi'\times\widetilde{\pi}')]$.
    \end{rek}

    \subsection*{Acknowledgements} 
    The author thanks  Jesse Thorner for providing invaluable feedback and guidance throughout this project. The author also thanks Peter Humphries, Maksym Radziwi\l\l, and Bingrong Huang for helpful  comments on the manuscript. Finally, the author would like to thank the referee for their careful reading and valuable suggestions.
   
    %%%%%%%%%%%%%%%%%%%%%%%%%%%%%%%%%%%%%%%%%
    %%%%%%%%%%%%%%%%%%%%%%%%%%%%%%%%%%%%%%%%%

    \section{Outline of the paper}

    In Section \ref{sec:L-functions}, we record some properties of standard $L$-functions and Rankin--Selberg $L$-functions, and establish useful bounds for partial sums of Dirichlet coefficients. 
    
    In Section \ref{sec:weak-subconvexity-bound}, we prove Theorem \ref{thm:weaksub}.

    In Section \ref{sec:GL(2)}, we provide background on $\GL(2)$ automorphic forms, including holomorphic newforms, Hecke--Maa{\ss} cusp forms, Eisenstein series, incomplete Eisenstein series, and incomplete Poincar\'e series, and recall the spectral decomposition and the Poincar\'e decomposition.

    In Section \ref{sec:outline-thm1.1-1.2}, we prove Theorem \ref{thm:cor} assuming Propositions  \ref{prop:Sound_cor} and \ref{prop:Holow_cor}.

    In Section \ref{sec:Bound-L-functions}, we establish useful lower bounds for the adjoint lift of a holomorphic newform. We also prove bounds for $L$-functions that will be useful for proving Propositions \ref{prop:Sound_cor}.

    In Section \ref{sec:correlation}, we prove Propositions \ref{prop:Sound_cor} and \ref{prop:Holow_cor}. 
    %%%%%%%%%%%%%%%%%%%%%%%%%%%%%%%%%%%%%%%%%
    %%%%%%%%%%%%%%%%%%%%%%%%%%%%%%%%%%%%%%%%%

    \section{Properties of \texorpdfstring{$L$}{L}-functions}
    \label{sec:L-functions}
	%%%%%%%%%%%%%%%%%%%%%%%%%%%%%%%%%%%%%%%%%

    We use the standard notation $f(z) \asymp g(z)$, $f(z) \ll g(z)$, and $f(z) = O(g(z))$ from analytic number theory. In particular,
    for any parameter $\nu$, if
    we write $f(z) = O_{\nu}(g(z))$ or $f(z) \ll_{\nu} g(z)$, then there is an effectively computable constant $c>0$, depending at most on $m$, $m'$, $\varepsilon$, and $\nu$, such that $|f(z)| \le c|g(z)|$ for all $z$ under consideration. If $\nu$ is absent, then $c$ depends at most on $m$, $m'$, and $\varepsilon$. Finally, we write $f(z) \asymp g(z)$ if both $f(z) \ll g(z)$ and $f(z) \gg g(z)$ hold.
    %%%%%%%%%%%%%%%%%%%%%%%%%%%%%%%%%%%%%%%%%

    \subsection{Standard \texorpdfstring{$L$}{L}-functions}
    \label{subsec: standardL}
    
    Let $m \ge 1$ be an integer, $\mathbb{A}_{\mathbb{Q}}$ denote the ring of ad\`{e}les over $\mathbb{Q}$, and $\mathfrak{F}_m$ be the set of all cuspidal automorphic representations of $\mathrm{GL}_m(\mathbb{A}_{\mathbb{Q}})$ with central characters normalized to be trivial on the positive reals.  
    If $\pi\in\mathfrak{F}_m$, then for any $v$ of $\mathbb{Q}$, there exists a smooth admissible representation $\pi_v$ of $\GL_m(\mathbb{Q}_v)$ such that  $\pi$ can be written as the restricted tensor product $\otimes_v \pi_v$.  
    For non-archimedean $v$ corresponding with a prime $p$, the local $L$-function $L(s,\pi_{p})$ is defined by the Satake parameters $\alpha_{1,\pi}(p),\ldots,\alpha_{m,\pi}(p)$ as
    \begin{equation}
	\label{eqn:Euler_p_single}
        L_p(s,\pi)=\prod_{j=1}^{m}(1-\alpha_{j,\pi}(p)p^{-s})^{-1}=\sum_{k=0}^{\infty}\frac{\lambda_{\pi}(p^k)}{p^{ks}}.
    \end{equation}
     Let $N_{\pi}$ be the conductor of $\pi$. 
    If $p\nmid N_{\pi}$, then for all $j$, we have that $\alpha_{j,\pi}(p)\neq0$.  If $p\mid N_{\pi}$, it might be the case that there exists $j$ such that $\alpha_{j,\pi}(p)=0$.  The standard $L$-function $L(s,\pi)$ associated to $\pi$ is
    \[
        L(s,\pi)=\prod_{p} L_{p}(s,\pi)=\sum_{n=1}^{\infty}\frac{\lambda_{\pi}(n)}{n^s}.
    \]
    The Euler product and Dirichlet series converge absolutely when $\mathrm{Re}(s)>1$.
	
    At the archimedean place of $\mathbb{Q}$, there are $m$ Langlands parameters $\mu_{\pi}(j)\in\mathbb{C}$ such that
    \[
        L_{\infty}(s,\pi) = \pi^{-\frac{ms}{2}}\prod_{j=1}^{m}\Gamma\Big(\frac{s+\mu_{\pi}(j)}{2}\Big).
    \]
    Luo, Rudnick, and Sarnak \cite{LRS} and M\"uller  and Speh \cite{MS} proved that there exists $\theta_m\in[0,\frac{1}{2}-\frac{1}{m^2+1}]$ such that we have the uniform bounds
    \begin{equation}
	\label{eqn:LRS_finite}
            |\alpha_{j,\pi}(p)|\leq  p^{\theta_m}\qquad\textup{ and }\qquad \Real(\mu_{\pi}(j)) \geq -\theta_m,
    \end{equation}
    and GRC asserts that one may take $\theta_m=0$. By the work of Kim and Sarnak \cite[Appendix 2]{Kim} and Blomer and Brumley \cite{BB}, we have the bound $\theta_2 \le 7/64$. Let $\widetilde{\pi}\in\mathfrak{F}_m$ be the contragredient representation of $\pi$. We have $N_{\pi}=N_{\widetilde{\pi}}$, and we have the equalities of sets
    \[
    \{\alpha_{j,\widetilde{\pi}}(p)\}=\{\overline{ \alpha_{j,\pi}(p)}\} \quad \text{and} \quad \{\mu_{\widetilde{\pi}}( j)\}=\{\overline{\mu_{\pi}(j)}\}.
    \]
	
    Let $r_{\pi}$ be the order of the pole of $L(s,\pi)$ at $s=1$.  The completed $L$-function
    \[
        \Lambda(s,\pi) = (s(1-s))^{r_{\pi}}N_{\pi}^{s/2}L(s,\pi)L_{\infty}(s,\pi)
    \]
    is entire of order 1. There exists $W(\pi) \in \mathbb{C}$ of modulus 1 such that for all $s\in\mathbb{C}$, we have the functional equation $\Lambda(s,\pi)=W(\pi)\Lambda(1-s,\widetilde{\pi})$. The analytic conductor of $\pi$ \cite{IS} is given by
    \begin{equation}
	\label{eqn:analytic_conductor_def}
            \mathfrak{C}(\pi,t)\coloneqq N_{\pi}\prod_{j=1}^m (3+|it+\mu_{\pi}(j)|),\qquad \mathfrak{C}(\pi)\coloneqq \mathfrak{C}(\pi,0).
    \end{equation}
    We define $a_{\pi}(p^k)$ by the Dirichlet series identity
    \[
    -\frac{L'}{L}(s, \pi) = \sum_{p}\sum_{k=1}^{\infty} \frac{\sum_{j=1}^m \alpha_{j,\pi}(p)^k\log p}{p^{ks}} = \sum_{p}\sum_{k=1}^{\infty} \frac{a_{\pi}(p^k)\log p}{p^{ks}},  \quad \Real(s) > 1. 
    \]

    \subsection{Rankin--Selberg \texorpdfstring{$L$}{L}-functions}
    \label{subsec:RS}

    Let $\pi \in \mathfrak{F}_m$ and $\pi' \in \mathfrak{F}_{m'}$.  For each $p \nmid N_{\pi}N_{\pi'}$, define
    \[
        L_p(s, \pi \times \pi')= \prod_{j=1}^{m} \prod_{j'=1}^{m'} \frac{1}{1-\alpha_{j,\pi}(p)\alpha_{j',\pi'}(p)p^{-s}}.
    \]
    Jacquet, Piatetski-Shapiro, and Shalika \cite{JPS} proved the following theorem. 

    \begin{thm}
    \label{thm:Rankin-Selberg}
    If $(\pi, \pi') \in \mathfrak{F}_m \times \mathfrak{F}_{m'}$, then there exist
    \begin{enumerate}
        \item complex numbers $(\alpha_{j,j', \pi \times \pi'}(p))_{j=1}^m{}_{j'=1}^{m'}$ for each prime $p\,|\, N_{\pi}N_{\pi'}$, from which we define 
        \[
            L_{p}(s,\pi\times\pi')=\prod_{j=1}^{m}\prod_{j'=1}^{m'}\frac{1}{1-\alpha_{j,j',\pi\times\pi'}(p) p^{-s}}, \quad L_{p}(s,\widetilde{\pi}\times\widetilde{\pi}')=\prod_{j=1}^{m}\prod_{j'=1}^{m'}\frac{1}{1-\overline{\alpha_{j,j',\pi\times\pi'}(p)} p^{-s}};
        \]
        \item  complex numbers $(\mu_{ \pi \times \pi'}(j,j'))_{j=1}^{m}{}_{j'=1}^{m'}$, from which we define 
        \begin{equation*}
        \begin{aligned}
            &L_{\infty}(s,\pi\times\pi') = \pi^{-\frac{mm's}{2}}\prod_{j=1}^{m}\prod_{j'=1}^{m'}\Gamma\Big(\frac{s+\mu_{\pi\times\pi'}(j,j')}{2}\Big),\\
            &L_{\infty}(s,\widetilde{\pi}\times\widetilde{\pi}') = \pi^{-\frac{mm's}{2}}\prod_{j=1}^{m}\prod_{j'=1}^{m'}\Gamma\Big(\frac{s+\overline{\mu_{\pi\times\pi'}(j,j')}}{2}\Big);    
        \end{aligned}   
        \end{equation*} 
    \item an arithmetic conductor, denoted as $N_{\pi \times \pi'}$; and 
    \item a complex number $W(\pi \times \pi')$ of modulus 1
    \end{enumerate}
    such that the Rankin--Selberg $L$-function
    \[   L(s,\pi\times\pi')=\prod_{p}L_{p}(s,\pi\times\pi') =\sum_{n=1}^{\infty}\frac{\lambda_{\pi\times\pi'}(n)}{n^s}
    \]
    converges absolutely for $\Real(s) > 1$ with a pole at $s=1$ of order $r_{\pi\times\pi'} = 1$ if  $\pi'=\widetilde{\pi}$, or $r_{\pi\times\pi'} =0$ otherwise,
    the completed $L$-function
    \begin{equation}
    \label{eqn:Lambdaspixpi'}
        \Lambda(s,\pi\times\pi')=(s(1-s))^{r_{\pi\times\pi'}}N_{\pi\times\pi'}^{s/2}L(s,\pi\times\pi')L_{\infty}(s,\pi\times\pi')
    \end{equation}
    is entire of order 1, and $\Lambda(s,\pi\times\pi')= W(\pi\times\pi')\Lambda(1-s,\widetilde{\pi}\times\widetilde{\pi}')$.
    \end{thm}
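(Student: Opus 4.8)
The plan is to construct $L(s,\pi\times\pi')$ through the Rankin--Selberg integral method of Jacquet, Piatetski-Shapiro, and Shalika, whose output is exactly the package of properties asserted. Fix a nontrivial additive character $\psi$ of $\mathbb{A}_{\mathbb{Q}}/\mathbb{Q}$; since $\pi,\pi'$ are cuspidal they are globally generic and may be realized in $\psi$- and $\overline{\psi}$-Whittaker models. Assuming first $m\ge m'$, I would form a global zeta integral from $\varphi\in\pi$, $\varphi'\in\pi'$ (and, when $m=m'$, a Schwartz--Bruhat function $\Phi\in\mathcal{S}(\mathbb{A}_{\mathbb{Q}}^m)$): for $m>m'$ it is the integral of $\varphi\left(\begin{smallmatrix}g&\\&I_{m-m'}\end{smallmatrix}\right)\varphi'(g)|\det g|^{s-(m-m')/2}$ over $\GL_{m'}(\mathbb{Q})\backslash\GL_{m'}(\mathbb{A}_{\mathbb{Q}})$ (with an extra unipotent integration if $m>m'+1$); for $m=m'$ it is $\int_{\GL_m(\mathbb{Q})\backslash\GL_m(\mathbb{A}_{\mathbb{Q}})}\varphi(g)\varphi'(g)E(g,\Phi,s)\,dg$ against the mirabolic Eisenstein series $E(g,\Phi,s)=\sum_\gamma|\det(\gamma g)|^s\Phi(e_m\gamma g)$. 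By rapid decay of cusp forms the $m>m'$ integral converges for all $s$ and is entire; for $m=m'$, $E(\cdot,\Phi,s)$ continues meromorphically with a single simple pole at $s=1$ of residue a constant times $\widehat{\Phi}(0)$, so the zeta integral is meromorphic with at most a simple pole at $s=1$ whose residue is proportional to $\widehat{\Phi}(0)\langle\varphi,\overline{\varphi'}\rangle$ --- nonzero for some data precisely when $\pi'\cong\widetilde{\pi}$, which accounts for the pole order $r_{\pi\times\pi'}$.

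Next I would substitute the Fourier--Whittaker expansions and unfold, factoring the zeta integral on decomposable data into an Euler product $\prod_v\Psi_v(s,W_v,W'_v[,\Phi_v])$ of local zeta integrals, absolutely convergent for $\Re(s)$ large. At all but finitely many $v$ (archimedean, or $p\mid N_\pi N_{\pi'}$; elsewhere take spherical data) the unramified Casselman--Shalika computation gives $\Psi_v=\prod_{j,j'}(1-\alpha_{j,\pi}(p)\alpha_{j',\pi'}(p)p^{-s})^{-1}$. At each remaining place one develops the local theory: (a) $\Psi_v(s,\cdots)$ extends to a rational function of $q_v^{-s}$ at finite $v$ (a meromorphic function of moderate growth at $v=\infty$), and the fractional ideal spanned by all these has a canonical generator defined to be $L_v(s,\pi_v\times\pi'_v)$ --- of shape $P_v(q_v^{-s})^{-1}$, $P_v(0)=1$, at finite $v$, producing $\alpha_{j,j',\pi\times\pi'}(p)$ and the conductor exponent $n_p$, and the stated $\Gamma$-product at $v=\infty$, producing the $\mu_{\pi\times\pi'}(j,j')$; and (b) a local functional equation $\Psi_v(1-s,\widetilde{W}_v,\widetilde{W}'_v[,\widehat{\Phi}_v])=\gamma_v(s,\pi_v\times\pi'_v,\psi_v)\Psi_v(s,W_v,W'_v[,\Phi_v])$ with $\gamma_v=\varepsilon_v(s,\pi_v\times\pi'_v,\psi_v)L_v(1-s,\widetilde{\pi}_v\times\widetilde{\pi}'_v)/L_v(s,\pi_v\times\pi'_v)$, where $\varepsilon_v$ is a monomial in $q_v^{-s}$, of modulus $1$ at $s=\tfrac12$, and $\varepsilon_v\equiv1$ for almost all $v$.

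Finally I would match the global functional equation --- inherited from $g\mapsto{}^t g^{-1}$ (which swaps the Whittaker models of $\pi$ and $\widetilde{\pi}$) when $m>m'$, and from the functional equation of $E(\cdot,\Phi,s)$ under $\Phi\mapsto\widehat{\Phi}$ when $m=m'$ --- against the product of the local ones. Setting $N_{\pi\times\pi'}=\prod_p p^{n_p}$, $W(\pi\times\pi')=\prod_v\varepsilon_v(\tfrac12,\cdots)$ (modulus $1$), $L(s,\pi\times\pi')=\prod_pL_p$, $L_\infty(s,\pi\times\pi')$ as above, and $\Lambda$ as in \eqref{eqn:Lambdaspixpi'}, this gives $\Lambda(s,\pi\times\pi')=W(\pi\times\pi')\Lambda(1-s,\widetilde{\pi}\times\widetilde{\pi}')$; holomorphy of $\Lambda$ off $\{0,1\}$ follows since the global integral is holomorphic there while each $\Psi_v$ can be chosen nonzero at a prescribed point, so no local $L$-factor pole can survive; and with the residue computation above this yields exactly the pole of order $r_{\pi\times\pi'}$ at $s=1$ (and at $s=0$ by symmetry). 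Standard bounds in vertical strips (rapid decay of cusp forms, growth of the Eisenstein series, Phragm\'en--Lindel\"of) together with Stirling for the $\Gamma$-factors show $\Lambda$ has order $1$, and since $N_{\pi\times\pi'}^{s/2}$ and $L_\infty$ have no zeros while $(s(1-s))^{r_{\pi\times\pi'}}$ vanishes only where $\Lambda$ already does, $L(s,\pi\times\pi')$ is itself entire. The hard part will be the local theory at the ramified non-archimedean and archimedean places --- rationality/meromorphy of the local integrals, the gcd definition of $L_v$ and $\varepsilon_v$, and the local functional equation --- which rests on the Bernstein--Zelevinsky classification of generic representations and precise Whittaker-function asymptotics in the non-archimedean case, and on the archimedean analysis of Jacquet--Shalika and Moeglin--Waldspurger; all of this, which the paper merely quotes, is the content of the cited work of Jacquet--Piatetski-Shapiro--Shalika and its complements.
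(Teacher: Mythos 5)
The paper does not actually prove this theorem; it is stated as a known result with a single citation to Jacquet, Piatetski-Shapiro, and Shalika, so there is no in-paper argument to compare against. Your proposal is a correct outline of the Rankin--Selberg integral method that underlies that citation: the global zeta integral (with the mirabolic Eisenstein series when $m=m'$ and the residue computation detecting $\pi'\cong\widetilde{\pi}$), unfolding to an Euler product of local integrals, the unramified Casselman--Shalika computation, the gcd definition of local $L$- and $\varepsilon$-factors at ramified non-archimedean and archimedean places, the local and global functional equations, and the Phragm\'en--Lindel\"of/Stirling argument for order $1$. The only thing worth adding is that the precise pole location, the entirety of $\Lambda$, and the completeness of the archimedean local theory in the exact form quoted rely on the later work of Jacquet--Shalika and M{\oe}glin--Waldspurger in addition to the original JPS paper --- a dependence you already flag at the end --- so the paper's single citation is itself a mild shorthand, and your sketch faithfully records the true content behind it.
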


   To determine $\mu_{ \pi \times \pi'}(j,j')$ (resp. $\alpha_{j,j', \pi \times \pi'}(p)$ at primes $p\,|\, N_{\pi}N_{\pi'}$), we use the archimedean case of the local Langlands correspondence \cite{MS} (resp. \cite[Appendix]{ST}) and \eqref{eqn:LRS_finite} and obtain
    \begin{equation}
	\label{eqn:LRS_2}	    |\alpha_{j,j',\pi\times\pi'}(p)|\leq p^{\theta_m + \theta_{m'}},\qquad \Real(\mu_{\pi\times\pi'}(j,j'))\geq -(\theta_m + \theta_{m'}).
    \end{equation}
    If $ k \ge 1$ is an integer, then we define 
    \begin{equation}
        a_{\pi \times \pi'}(p^k) = \begin{cases}
            a_\pi(p^k)a_{\pi'}(p^k) & \text{if } p\nmid N_{\pi}N_{\pi'},\\
            \sum_{j=1}^{m} \sum_{j'=1}^{m'} \alpha_{j,j',\pi \times \pi'}(p)^k & \text{otherwise.}
        \end{cases}
    \end{equation}
    We define $\Lambda_{\pi\times\pi'}(n)$ by the Dirichlet series identity
    \begin{equation}
        -\frac{L'}{L}(s,\pi \times \pi') = \sum_{p}\sum_{k=1}^{\infty} \frac{a_{\pi \times \pi'}(p^k)\log p}{p^{ks}} = \sum_{n=1}^{\infty} \frac{\Lambda_{\pi\times\pi'}(n)}{n^{s}}, \quad \Real(s) > 1.
    \end{equation}

    As with $L(s,\pi)$, we  define the analytic conductor
\begin{equation}\label{eqn:analytic_conductor_def_2}
	\begin{aligned}
        \mathfrak{C}(\pi\times\pi',t) \coloneqq N_{\pi\times\pi'}\prod_{j=1}^m \prod_{j'=1}^{m'}(3+|it+\mu_{\pi\times\pi'}(j,j')|),\quad 
        \mathfrak{C}(\pi\times\pi') \coloneqq \mathfrak{C}(\pi\times\pi',0).
	\end{aligned}
    \end{equation}
    The combined work of Bushnell and Henniart \cite{BushHen} and Brumley \cite[Appendix]{Humphries} yields
    \begin{equation}
	\label{eqn:BH}
        \mathfrak{C}(\pi\times\pi',t)\ll \mathfrak{C}(\pi\times\pi')(3+|t|)^{mm'},\quad \mathfrak{C}(\pi\times\pi')\ll \mathfrak{C}(\pi)^{m'} \mathfrak{C}(\pi')^{m}.
    \end{equation}
    %%%%%%%%%%%%%%%%%%%%%%%%%%%%%%%%%%%
    
    \subsection{Bounds for partial sums of Dirichlet coefficients} 
    Consider $\pi \in \mathfrak{F}_m$ and $\pi' \in \mathfrak{F}_{m'}$. Brumley \cite[Appendix]{ST} and Jiang, L\"u, and Wang \cite{JLW} proved the following useful inequalities.
	
    \begin{lem}[Lemma 2.2 of \cite{ST}]
        \label{lem:lem2.2-ST} 
        Let $\pi \in \mathfrak{F}_m$ and $\pi' \in \mathfrak{F}_{m'}$.  If $n \ge 1$ is an integer, then
	\begin{equation*}
            |\Lambda_{\pi \times \pi'}(n)| \le \sqrt{\Lambda_{\pi \times \widetilde{\pi}}(n) \Lambda_{\pi' \times \widetilde{\pi}'}(n)}.
	\end{equation*}
    \end{lem}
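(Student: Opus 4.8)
The plan is to reduce the asserted inequality to a local statement at each prime and then derive that statement from the positivity of Rankin--Selberg $L$-functions by a polarization argument built out of isobaric sums.

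\emph{Reduction to prime powers.} Since each of $L(s,\pi\times\pi')$, $L(s,\pi\times\widetilde\pi)$, $L(s,\pi'\times\widetilde{\pi}')$ has an Euler product, the coefficients $\Lambda_{\pi\times\pi'}(n)$, $\Lambda_{\pi\times\widetilde\pi}(n)$, $\Lambda_{\pi'\times\widetilde{\pi}'}(n)$ all vanish unless $n=p^k$ is a prime power, so it is enough to treat $n=p^k$. Writing $\Lambda_{\pi\times\pi'}(p^k)=a_{\pi\times\pi'}(p^k)\log p$ and similarly for the others, the claim is equivalent to
\[
|a_{\pi\times\pi'}(p^k)|^2\ \le\ a_{\pi\times\widetilde\pi}(p^k)\,a_{\pi'\times\widetilde{\pi}'}(p^k)\qquad\text{for all primes }p\text{ and all }k\ge 1,
\]
together with the assertion that the right-hand side is a product of two nonnegative reals (needed for the square root in the statement to make sense); the factors $\log p$ cancel.

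\emph{The positivity input.} The engine is the following well-known fact: for any isobaric automorphic representation $\Pi$ of $\GL_N(\mathbb{A}_{\mathbb{Q}})$, the Dirichlet series $-\frac{L'}{L}(s,\Pi\times\widetilde\Pi)=\sum_n\Lambda_{\Pi\times\widetilde\Pi}(n)n^{-s}$ has nonnegative coefficients. At a prime $p$ at which $\Pi$ is unramified this is clear, since the $p^k$-th coefficient equals $\big|\sum_j\alpha_{j,\Pi}(p)^k\big|^2\log p\ge 0$; at the ramified primes one obtains it from the local Langlands correspondence by expressing $\Pi_p\times\widetilde\Pi_p$ in terms of the Weil--Deligne data of $\Pi_p$ and checking the positivity of the local Euler factor. (The archimedean place plays no role here.) I would invoke this as a black box.

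\emph{Polarization.} Fix $p$ and $k$, let $a,b$ be nonnegative integers and $t$ a real parameter, and apply the positivity input to $\Pi=\pi^{\boxplus a}\boxplus(\widetilde{\pi}'\otimes|\det|^{it})^{\boxplus b}$. Expanding $L(s,\Pi\times\widetilde\Pi)$ as a product of Rankin--Selberg $L$-functions over the constituents of $\Pi$, and using $\Lambda_{\widetilde\sigma\times\widetilde\tau}=\overline{\Lambda_{\sigma\times\tau}}$, the symmetry $\Lambda_{\sigma\times\tau}=\Lambda_{\tau\times\sigma}$, and the fact that the twist by $|\det|^{it}$ multiplies the $p^k$-th coefficient by a unimodular factor that ranges over the entire unit circle as $t$ varies (because $k\log p\ne 0$), the nonnegativity of the $p^k$-th coefficient of $-\frac{L'}{L}(s,\Pi\times\widetilde\Pi)$ becomes
\[
a^2\,a_{\pi\times\widetilde\pi}(p^k)\ +\ 2ab\,\Real\!\big(e^{i\phi}\,a_{\pi\times\pi'}(p^k)\big)\ +\ b^2\,a_{\pi'\times\widetilde{\pi}'}(p^k)\ \ge\ 0,
\]
valid for all $a,b\in\mathbb{Z}_{\ge 0}$ and all $\phi\in\mathbb{R}$, hence (by homogeneity and continuity) for all real $a,b\ge 0$. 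Taking $a=0$, then $b=0$, gives $a_{\pi\times\widetilde\pi}(p^k)\ge 0$ and $a_{\pi'\times\widetilde{\pi}'}(p^k)=a_{\widetilde{\pi}'\times\pi'}(p^k)\ge 0$. Now choose $\phi$ so that $e^{i\phi}a_{\pi\times\pi'}(p^k)=-|a_{\pi\times\pi'}(p^k)|$ (if $a_{\pi\times\pi'}(p^k)=0$ there is nothing to prove) and optimize the resulting quadratic form over $a,b\ge 0$: this forces $|a_{\pi\times\pi'}(p^k)|^2\le a_{\pi\times\widetilde\pi}(p^k)\,a_{\pi'\times\widetilde{\pi}'}(p^k)$, which is exactly what the reduction step required.

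\emph{Main obstacle, and an alternative.} The only substantive ingredient is the positivity of $-\frac{L'}{L}(s,\Pi\times\widetilde\Pi)$ at the ramified primes, which rests on the structure of local generic representations; everything else is bookkeeping. If one preferred not to invoke it, the alternative --- essentially Brumley's route in \cite{ST} --- is to compute $L_p(s,\pi\times\pi')$ at each bad prime directly: realize $\pi_p$ and $\pi'_p$ as fully induced representations from essentially square-integrable representations attached to supercuspidal segments, so that $L_p(s,\pi\times\pi')$ factors as a product of Rankin--Selberg $L$-factors of pairs of such segments, only the ``dual-linked'' pairs contributing; the pairs surviving in $L_p(s,\pi\times\widetilde\pi)$ and $L_p(s,\pi'\times\widetilde{\pi}')$ contain all the diagonal ones, and a Cauchy--Schwarz carried out after grouping the segments according to their supercuspidal support yields the same local inequality. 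In either approach, carefully keeping track of which local pairs contribute (together with the attendant unramified-twist parameters) at the bad primes is where the care is needed.
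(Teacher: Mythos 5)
The paper does not prove this lemma; it cites Brumley's appendix to \cite{ST}, where it appears as Lemma~2.2. Your second paragraph (the ``alternative'') is a faithful summary of that argument: factor $L_p(s,\pi\times\pi')$ at each ramified prime via the Bernstein--Zelevinsky classification into Rankin--Selberg $L$-factors of dual-linked segment pairs, observe that the diagonal pairs surviving in $L_p(s,\pi\times\widetilde\pi)$ and $L_p(s,\pi'\times\widetilde{\pi}')$ dominate, and then run a Cauchy--Schwarz inequality after grouping segments by supercuspidal support. That is the route the paper is implicitly appealing to.

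Your primary route via polarization, however, has a circularity problem. The black box it rests on --- nonnegativity of $a_{\Pi\times\widetilde\Pi}(p^k)$ at ramified $p$ for $\Pi$ a general isobaric sum, with multiplicities and unitary twists --- is not a weaker or independently established input. Your own computation shows that, granted the diagonal positivity $a_{\pi\times\widetilde\pi}(p^k)\ge 0$, this isobaric positivity is equivalent to the local Cauchy--Schwarz inequality you set out to prove. There is also a structural obstacle to proving it independently: with multiplicity $a\ge 2$ (or after twisting by $|\det|^{it}$), the local component $\Pi_p$ is generally not an irreducible generic representation, so one cannot simply invoke a positivity result for generic local $L$-factors; one has to expand $L_p(s,\Pi_p\times\widetilde\Pi_p)$ as a product over constituents, which reintroduces the cross terms $a_{\pi\times\pi'}(p^k)$ whose size you are trying to control. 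In the literature the logic runs in the opposite direction: Brumley first proves the cuspidal diagonal positivity and the local Cauchy--Schwarz inequality via Zelevinsky, and isobaric positivity is a corollary. So the polarization is a clean reduction, but it does not bypass the hard local analysis, and invoking the isobaric positivity at ramified places as a black box is not a complete proof. (At unramified $p$ both routes are trivial, since $a_{\pi\times\pi'}(p^k)=a_\pi(p^k)a_{\pi'}(p^k)$ and the claimed inequality becomes an equality.)
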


    \begin{lem}[Lemma 3.1 of \cite{JLW}]
        \label{lem:lem3.1-JLW}
        Let $\pi \in \mathfrak{F}_m$ and $\pi' \in \mathfrak{F}_m$.  If $n\geq 1$ is an integer, then
        \begin{equation*}
            |\lambda_{\pi\times\pi'}(n)| \le \sqrt{\lambda_{\pi \times \widetilde{\pi}}(n)\lambda_{\pi'\times\widetilde{\pi}'}(n)}.
	\end{equation*}
    \end{lem}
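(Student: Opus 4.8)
\emph{Overall plan.} The plan is to reduce to prime powers by multiplicativity, express $\lambda_{\pi\times\pi'}(p^k)$ in terms of the coefficients $a_{\pi\times\pi'}(p^j)$ of $-L'/L$ via the exponential formula, apply Lemma~\ref{lem:lem2.2-ST} inside each resulting monomial, and close with one Cauchy--Schwarz inequality over the partitions of $k$.

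\emph{Reduction to prime powers.} Since $L(s,\pi\times\pi')$, $L(s,\pi\times\widetilde{\pi})$, and $L(s,\pi'\times\widetilde{\pi}')$ all have Euler products, the functions $\lambda_{\pi\times\pi'}$, $\lambda_{\pi\times\widetilde{\pi}}$, $\lambda_{\pi'\times\widetilde{\pi}'}$ are multiplicative, hence determined by their values at prime powers; moreover $a_{\pi\times\widetilde{\pi}}(p^j),a_{\pi'\times\widetilde{\pi}'}(p^j)\ge 0$ for all primes $p$ and all $j\ge1$ (equivalently $\lambda_{\pi\times\widetilde{\pi}}(n),\lambda_{\pi'\times\widetilde{\pi}'}(n)\ge 0$), this being the nonnegativity that makes the right-hand side of Lemma~\ref{lem:lem2.2-ST} meaningful; see \cite{ST} and its appendix. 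Consequently both $n\mapsto|\lambda_{\pi\times\pi'}(n)|$ and $n\mapsto\sqrt{\lambda_{\pi\times\widetilde{\pi}}(n)\lambda_{\pi'\times\widetilde{\pi}'}(n)}$ are multiplicative, and it suffices to prove the asserted inequality when $n=p^k$ is a prime power.

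\emph{The exponential formula and Lemma~\ref{lem:lem2.2-ST}.} Fix a prime $p$; as a power series in $p^{-s}$ the local factor $L_p(s,\pi\times\pi')$ has constant term $1$, and comparing $-L_p'/L_p$ with $\tfrac{d}{ds}\log L_p$ gives $\log L_p(s,\pi\times\pi')=\sum_{j\ge1}\frac{a_{\pi\times\pi'}(p^j)}{j}p^{-js}$. Exponentiating and expanding yields, for every $k\ge 0$,
\[
\lambda_{\pi\times\pi'}(p^k)=\sum_{\lambda\vdash k}\frac{1}{z_\lambda}\prod_{j\ge1}a_{\pi\times\pi'}(p^j)^{m_j(\lambda)},\qquad z_\lambda=\prod_{j\ge1}j^{m_j(\lambda)}\,m_j(\lambda)!,
\]
where $m_j(\lambda)$ is the number of parts of $\lambda$ equal to $j$; the same identity holds with $\pi\times\pi'$ replaced by $\pi\times\widetilde{\pi}$ or $\pi'\times\widetilde{\pi}'$. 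Lemma~\ref{lem:lem2.2-ST}, read at $n=p^j$, gives $|a_{\pi\times\pi'}(p^j)|\le\sqrt{a_{\pi\times\widetilde{\pi}}(p^j)\,a_{\pi'\times\widetilde{\pi}'}(p^j)}$, with both factors on the right nonnegative. Since all weights $1/z_\lambda$ and all $a_{\pi\times\widetilde{\pi}}(p^j),a_{\pi'\times\widetilde{\pi}'}(p^j)$ are nonnegative, substituting this bound into each monomial of the expansion of $\lambda_{\pi\times\pi'}(p^k)$ gives
\[
|\lambda_{\pi\times\pi'}(p^k)|\le\sum_{\lambda\vdash k}\frac{1}{z_\lambda}\prod_{j\ge1}\big(a_{\pi\times\widetilde{\pi}}(p^j)\big)^{m_j(\lambda)/2}\big(a_{\pi'\times\widetilde{\pi}'}(p^j)\big)^{m_j(\lambda)/2}.
\]

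\emph{Cauchy--Schwarz and conclusion.} Apply the Cauchy--Schwarz inequality to the last sum, viewed as a sum over $\lambda\vdash k$ with positive weights $1/z_\lambda$; using the nonnegativity of the $a$'s so that the factors $\prod_j a_{\pi\times\widetilde{\pi}}(p^j)^{m_j(\lambda)/2}$ are real, the right-hand side is at most
\[
\Big(\sum_{\lambda\vdash k}\frac{1}{z_\lambda}\prod_{j\ge1}a_{\pi\times\widetilde{\pi}}(p^j)^{m_j(\lambda)}\Big)^{1/2}\Big(\sum_{\lambda\vdash k}\frac{1}{z_\lambda}\prod_{j\ge1}a_{\pi'\times\widetilde{\pi}'}(p^j)^{m_j(\lambda)}\Big)^{1/2}=\sqrt{\lambda_{\pi\times\widetilde{\pi}}(p^k)\,\lambda_{\pi'\times\widetilde{\pi}'}(p^k)},
\]
where the final equality is the exponential formula applied to $\pi\times\widetilde{\pi}$ and to $\pi'\times\widetilde{\pi}'$. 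Taking the product of this prime-power inequality over the primes $p\mid n$ gives $|\lambda_{\pi\times\pi'}(n)|\le\sqrt{\lambda_{\pi\times\widetilde{\pi}}(n)\,\lambda_{\pi'\times\widetilde{\pi}'}(n)}$.

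\emph{Main obstacle.} The only genuinely nontrivial input is the nonnegativity $a_{\pi\times\widetilde{\pi}}(p^j),a_{\pi'\times\widetilde{\pi}'}(p^j)\ge 0$: away from the conductor it is immediate from $a_{\pi\times\widetilde{\pi}}(p^j)=|a_\pi(p^j)|^2$, but at ramified primes it relies on the explicit description of the Rankin--Selberg local data via the local Langlands correspondence recorded in \cite[Appendix]{ST}, and this is exactly the ingredient already packaged in Lemma~\ref{lem:lem2.2-ST}. Everything else is formal power-series manipulation together with one application of Cauchy--Schwarz. One could instead argue at unramified $p$ by invoking the Cauchy identity to write $\lambda_{\pi\times\pi'}(p^k)=\sum_{\ell(\lambda)\le m}s_\lambda(A_\pi(p))\,s_\lambda(A_{\pi'}(p))$ and, using $A_{\widetilde{\pi}}(p)=\overline{A_\pi(p)}$, apply Cauchy--Schwarz to that finite sum directly; this makes the mechanism transparent but offers no simplification at the ramified primes, which is why the argument above is organized around Lemma~\ref{lem:lem2.2-ST}.
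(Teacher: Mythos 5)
Your proof is correct and complete: the exponential-formula expansion of $\lambda_{\pi\times\pi'}(p^k)$ over partitions, the prime-power case of Lemma \ref{lem:lem2.2-ST} (which also supplies the needed nonnegativity of $a_{\pi\times\widetilde{\pi}}(p^j)$ and $a_{\pi'\times\widetilde{\pi}'}(p^j)$), and a single Cauchy--Schwarz over $\lambda\vdash k$ assemble exactly as you describe, and multiplicativity finishes the argument. The paper does not reproduce a proof of this lemma --- it is quoted directly from \cite{JLW} --- and your derivation is the standard one underlying that reference, so there is nothing further to reconcile.
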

  	
    Lemmas \ref{lem:lem2.2-ST} and \ref{lem:lem3.1-JLW} are useful to us because we assume that $\pi$ satisfies GRC.  To handle the contribution from $\pi'$, we use two results, the first of which is due to Li.
	
    \begin{thm}[Theorem 2 of \cite{L}]
        \label{thm:Xiannan-Li} 
        There exists an absolute and effectively computable constant $\Cl[abcon]{Li} >0$ such that if $\pi' \in \mathfrak{F}_{m'}$ and $1 < \sigma \le 3$, then 
		\begin{equation*}
                L(\sigma, \pi' \times \widetilde{\pi}') \ll (\sigma-1)^{-1} \exp\Big( \Cr{Li} {m'}^2\frac{\log \mathfrak{C}(\pi' \times \widetilde{\pi}')}{\log\log \mathfrak{C}(\pi' \times \widetilde{\pi}')}\Big) \ll (\sigma-1)^{-1}\mathfrak{C}(\pi')^\varepsilon.
		\end{equation*}
    \end{thm}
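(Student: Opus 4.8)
The plan is to exploit the nonnegativity of the Dirichlet coefficients of $-\frac{L'}{L}(s,\pi'\times\widetilde{\pi}')$ together with the Luo--Rudnick--Sarnak bounds and a Hadamard-factorization estimate for $\frac{L'}{L}$, and then to propagate a single estimate on the line $\mathrm{Re}(s)=1+1/\log\mathfrak{C}(\pi'\times\widetilde{\pi}')$ to the whole range $1<\sigma\le 3$ by Phragm\'en--Lindel\"of. Throughout I abbreviate $\mathcal{Q}=\mathfrak{C}(\pi'\times\widetilde{\pi}')$ and $M=\exp\big(c\,m'^2\log\mathcal{Q}/\log\log\mathcal{Q}\big)$, where $c>0$ is an absolute, effectively computable constant to be fixed at the end. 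The inputs I would use are: $\lambda_{\pi'\times\widetilde{\pi}'}(n)\ge 0$ and $\Lambda_{\pi'\times\widetilde{\pi}'}(n)\ge 0$ (the positivity implicit in Lemmas \ref{lem:lem2.2-ST} and \ref{lem:lem3.1-JLW}); the simple pole of $L(s,\pi'\times\widetilde{\pi}')$ at $s=1$ and its functional equation from Theorem \ref{thm:Rankin-Selberg}; and the consequence of \eqref{eqn:LRS_finite}--\eqref{eqn:LRS_2} that $|a_{\pi'\times\widetilde{\pi}'}(p^{k})|\le m'^2\,p^{2k\theta_{m'}}$ with $\theta_{m'}\le \tfrac12-\tfrac1{m'^2+1}$.

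First I would fix $\sigma_0=1+1/\log\mathcal{Q}$ and bound $L(\sigma_0,\pi'\times\widetilde{\pi}')$ from the identity
\[
\log L(s,\pi'\times\widetilde{\pi}')=\sum_{n\ge 2}\frac{\Lambda_{\pi'\times\widetilde{\pi}'}(n)}{n^{s}\log n},\qquad \mathrm{Re}(s)>1,
\]
splitting the sum at $N_0=\log\mathcal{Q}$. The tail $n>N_0$ is at most $\tfrac1{\log N_0}\big(-\tfrac{L'}{L}(\sigma_0,\pi'\times\widetilde{\pi}')\big)$ since $1/\log n<1/\log N_0$ there, and the standard bound $-\tfrac{L'}{L}(\sigma,\pi'\times\widetilde{\pi}')\le \tfrac1{\sigma-1}+O(\log\mathcal{Q})$ for $1<\sigma\le 3$ — obtained from the Hadamard product of the completed $L$-function \eqref{eqn:Lambdaspixpi'} by discarding the nonnegative contribution of the nontrivial zeros and estimating the archimedean factor via Stirling — combines with $1/(\sigma_0-1)=\log\mathcal{Q}$ and $\log N_0=\log\log\mathcal{Q}$ to give a tail of size $\ll\log\mathcal{Q}/\log\log\mathcal{Q}$. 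For the head $n=p^k\le N_0$ I would use $|a_{\pi'\times\widetilde{\pi}'}(p^{k})|\le m'^2 p^{2k\theta_{m'}}$ and $\sigma_0-2\theta_{m'}\ge \tfrac{2}{m'^2+1}>0$ to bound it by $m'^2\sum_{n\le N_0}n^{-(\sigma_0-2\theta_{m'})}\ll m'^2 N_0^{2\theta_{m'}}\ll m'^2(\log\mathcal{Q})^{1-2/(m'^2+1)}$, which is $\ll\log\mathcal{Q}/\log\log\mathcal{Q}$ once $\mathcal{Q}$ is large enough in terms of $m'$. Exponentiating yields $L(\sigma_0,\pi'\times\widetilde{\pi}')\ll M$.

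To reach all $1<\sigma\le 3$: for $\sigma\ge\sigma_0$ the series $\sum_n\lambda_{\pi'\times\widetilde{\pi}'}(n)n^{-\sigma}$ decreases in $\sigma$, so $L(\sigma,\pi'\times\widetilde{\pi}')\le L(\sigma_0,\pi'\times\widetilde{\pi}')\ll M\le 2(\sigma-1)^{-1}M$ (with $2<\sigma\le 3$ handled by the trivial divisor bound). For $1<\sigma<\sigma_0$ I would apply Phragm\'en--Lindel\"of to the entire function $H(s)=(s-1)L(s,\pi'\times\widetilde{\pi}')$ on the strip $1-1/\log\mathcal{Q}\le\mathrm{Re}(s)\le\sigma_0$: on the right edge $|H(s)|\le|s-1|\,L(\sigma_0,\pi'\times\widetilde{\pi}')\ll(1+|t|)M$, while on the left edge the convexity bound for $L(s,\pi'\times\widetilde{\pi}')$ on $\mathrm{Re}(s)=1-\delta$ (Phragm\'en--Lindel\"of applied to \eqref{eqn:Lambdaspixpi'}, with \eqref{eqn:BH}) gives $|L(1-\tfrac1{\log\mathcal{Q}}+it,\pi'\times\widetilde{\pi}')|\ll \mathcal{Q}^{1/(2\log\mathcal{Q})}(3+|t|)^{m'^2/(2\log\mathcal{Q})}\ll 1+|t|$, hence $|H(s)|\ll(1+|t|)^{2}$ with no dependence on $\mathcal{Q}$. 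Using also that $L(s,\pi'\times\widetilde{\pi}')$ is polynomially bounded in $|t|$ in vertical strips, Phragm\'en--Lindel\"of gives $|H(\sigma)|\ll M$ for $1<\sigma<\sigma_0$, i.e.\ $L(\sigma,\pi'\times\widetilde{\pi}')\ll(\sigma-1)^{-1}M$. The final inequality $\ll(\sigma-1)^{-1}\mathfrak{C}(\pi')^{\varepsilon}$ is then immediate from $\mathcal{Q}\ll\mathfrak{C}(\pi')^{2m'}$ (a case of \eqref{eqn:BH}), since $M=\mathcal{Q}^{o(1)}=\mathfrak{C}(\pi')^{o(1)}\ll_{\varepsilon}\mathfrak{C}(\pi')^{\varepsilon}$.

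I expect the most delicate point to be the head term when $\pi'$ is far from tempered. Because $\theta_{m'}$ can be as large as $\tfrac12-\tfrac1{m'^2+1}$, the saving $N_0^{2\theta_{m'}}$ over $N_0=\log\mathcal{Q}$ amounts only to the factor $(\log\mathcal{Q})^{-2/(m'^2+1)}$, so the choice of $N_0$ (a suitable power of $\log\mathcal{Q}$) and the bookkeeping of the $m'$-dependence require care, and conductors that are small relative to $m'$ have to be disposed of separately. Relatedly, the assertion that the strong bound $M$ — and not merely $\mathfrak{C}(\pi')^{\varepsilon}$ — persists as $\sigma\to1^{+}$ is what forces the Phragm\'en--Lindel\"of step; this step works only because $\mathcal{Q}^{1/\log\mathcal{Q}}=e$, so that moving the convexity bound to a line just to the left of $\mathrm{Re}(s)=1$ costs nothing in $\mathcal{Q}$.
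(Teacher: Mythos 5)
The paper itself does not prove this result; it is quoted verbatim from Theorem~2 of Li~\cite{L}, with the trailing bound $\ll(\sigma-1)^{-1}\mathfrak{C}(\pi')^{\varepsilon}$ added as an immediate consequence of $\mathfrak{C}(\pi'\times\widetilde{\pi}')\ll\mathfrak{C}(\pi')^{2m'}$. Your sketch reproduces Li's argument in its essentials: exploit the nonnegativity of $\Lambda_{\pi'\times\widetilde{\pi}'}(n)$, evaluate at $\sigma_0=1+1/\log\mathcal{Q}$, expand $\log L$ as $\sum_{n\ge 2}\Lambda_{\pi'\times\widetilde{\pi}'}(n)/(n^{\sigma_0}\log n)$, split at $N_0=\log\mathcal{Q}$, control the tail by $\tfrac{1}{\log N_0}\big(-\tfrac{L'}{L}(\sigma_0)\big)$ together with the Hadamard/Stirling bound $-\tfrac{L'}{L}(\sigma)\le\tfrac{1}{\sigma-1}+O(\log\mathcal{Q})$, and control the head with the Luo--Rudnick--Sarnak exponent $\theta_{m'}\le\tfrac12-\tfrac{1}{m'^2+1}$. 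Where you diverge from Li is in descending from $\sigma_0$ to general $1<\sigma<\sigma_0$. Li simply integrates the same $-L'/L$ estimate: $\log L(\sigma)-\log L(\sigma_0)=\int_{\sigma}^{\sigma_0}\!\big(-\tfrac{L'}{L}(u)\big)\,du\le\log\tfrac{\sigma_0-1}{\sigma-1}+O(1)$, which yields $L(\sigma)\ll L(\sigma_0)/\big((\sigma-1)\log\mathcal{Q}\big)\le M/(\sigma-1)$ at once and uses no new input. Your Phragm\'en--Lindel\"of detour through $H(s)=(s-1)L(s)$ is conceptually sound but hides a real pitfall you should address: the off-the-shelf convexity bound at $\mathrm{Re}(s)=1-1/\log\mathcal{Q}$ carries a $\mathcal{Q}^{\varepsilon}$ loss, and $\mathcal{Q}^{\varepsilon}$ dwarfs $M=\exp\big(cm'^2\log\mathcal{Q}/\log\log\mathcal{Q}\big)$ for every fixed $\varepsilon>0$ once $\mathcal{Q}$ is large, so the left-edge estimate must be an $\varepsilon$-free convexity bound. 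One can obtain it by running Phragm\'en--Lindel\"of on the completed $L$-function between $\mathrm{Re}(s)=\sigma_0$ and $\mathrm{Re}(s)=1-\sigma_0$, feeding in the bound $L(\sigma_0)\ll M$ you already established on the right edge and the functional equation on the left, but this is extra work; since the $-L'/L$ estimate is already in hand, integrating it is the shorter and safer route, and is what the cited proof does.
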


    The second result is a Brun--Titchmarsh type upper bound with mild dependence on $\pi'$.
    
    \begin{lem}\label{lem:pnt}
    Let $\pi \in \mathfrak{F}_m$ and $\pi' \in \mathfrak{F}_{m'}$. If $x\geq 2$ and $\pi$ satisfies GRC, then
        \[
        \sum_{n \le x}|\Lambda_{\pi \times \pi'}(n)| \ll x\sqrt{\log\mathfrak{C}(\pi')}.
        \]
    \end{lem}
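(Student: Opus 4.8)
The plan is to bound $\sum_{n\le x}|\Lambda_{\pi\times\pi'}(n)|$ by the corresponding sums for the two ``diagonal'' Rankin--Selberg $L$-functions $L(s,\pi\times\widetilde{\pi})$ and $L(s,\pi'\times\widetilde{\pi}')$, and to use GRC for $\pi$ on the first one. Since the Dirichlet coefficients of $-\tfrac{L'}{L}(s,\sigma\times\widetilde{\sigma})$ are nonnegative, Lemma~\ref{lem:lem2.2-ST} and the Cauchy--Schwarz inequality give
\[
\sum_{n\le x}|\Lambda_{\pi\times\pi'}(n)| \le \Big(\sum_{n\le x}\Lambda_{\pi\times\widetilde{\pi}}(n)\Big)^{1/2}\Big(\sum_{n\le x}\Lambda_{\pi'\times\widetilde{\pi}'}(n)\Big)^{1/2},
\]
so it suffices to prove $\sum_{n\le x}\Lambda_{\pi\times\widetilde{\pi}}(n)\ll x$ and $\sum_{n\le x}\Lambda_{\pi'\times\widetilde{\pi}'}(n)\ll x\log\mathfrak{C}(\pi')$. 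For the first: if $\pi$ satisfies GRC at the non-archimedean places, then $|\alpha_{j,j',\pi\times\widetilde{\pi}}(p)|\le 1$ for every prime $p$ (for $p\nmid N_{\pi}$ these are $\alpha_{j,\pi}(p)\overline{\alpha_{j',\pi}(p)}$ with $|\alpha_{j,\pi}(p)|=1$; for $p\mid N_{\pi}$ use \eqref{eqn:LRS_2} with $\theta_m=0$), whence $|a_{\pi\times\widetilde{\pi}}(p^k)|\le m^2$ and therefore $0\le\Lambda_{\pi\times\widetilde{\pi}}(n)\le m^2\Lambda(n)$, with $\Lambda$ the classical von Mangoldt function. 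Chebyshev's estimate $\sum_{n\le x}\Lambda(n)\ll x$ then gives the claim.

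For the second bound I distinguish whether $x$ is small or large relative to $\mathfrak{C}(\pi'\times\widetilde{\pi}')$, using throughout that $\log\mathfrak{C}(\pi'\times\widetilde{\pi}')\ll_{m'}\log\mathfrak{C}(\pi')$ by \eqref{eqn:BH}. When $x$ is at most a fixed power of $\mathfrak{C}(\pi'\times\widetilde{\pi}')$, I use the nonnegativity of $\Lambda_{\pi'\times\widetilde{\pi}'}(n)$: taking $\sigma=1+1/\log x$,
\[
\sum_{n\le x}\Lambda_{\pi'\times\widetilde{\pi}'}(n) \le x^{\sigma}\sum_{n=1}^{\infty}\frac{\Lambda_{\pi'\times\widetilde{\pi}'}(n)}{n^{\sigma}} = ex\cdot\Big(-\frac{L'}{L}(\sigma,\pi'\times\widetilde{\pi}')\Big).
\]
The Hadamard factorization of $\Lambda(s,\pi'\times\widetilde{\pi}')$ yields
\[
-\frac{L'}{L}(\sigma,\pi'\times\widetilde{\pi}') = \frac{r_{\pi'\times\widetilde{\pi}'}}{\sigma-1} + \tfrac12\log N_{\pi'\times\widetilde{\pi}'} + \frac{L_\infty'}{L_\infty}(\sigma,\pi'\times\widetilde{\pi}') - \sum_{\rho}\Real\frac{1}{\sigma-\rho} + O(1),
\]
where $\rho$ runs over the zeros of $\Lambda(s,\pi'\times\widetilde{\pi}')$; since $\Real\rho\le 1<\sigma$, the last sum is nonnegative and may be dropped. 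The archimedean term is $\ll_{m'}\log\mathfrak{C}(\pi'\times\widetilde{\pi}')$: by \eqref{eqn:LRS_2} one has $\Real\,\mu_{\pi'\times\widetilde{\pi}'}(j,j')\ge -2\theta_{m'}$, so each $\tfrac{\Gamma'}{\Gamma}$-value is taken at a point whose real part is bounded below by $\tfrac{1}{m'^2+1}$, hence is $\ll_{m'}\log(3+|\mu_{\pi'\times\widetilde{\pi}'}(j,j')|)$, and \eqref{eqn:analytic_conductor_def_2} controls the sum of these. Thus $-\tfrac{L'}{L}(\sigma,\pi'\times\widetilde{\pi}')\ll\log x+\log\mathfrak{C}(\pi')$, and since $\log x\ll\log\mathfrak{C}(\pi')$ in this range we conclude $\sum_{n\le x}\Lambda_{\pi'\times\widetilde{\pi}'}(n)\ll x\log\mathfrak{C}(\pi')$.

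When $x$ exceeds that fixed power of $\mathfrak{C}(\pi'\times\widetilde{\pi}')$, I invoke instead the prime number theorem for $L(s,\pi'\times\widetilde{\pi}')$, which has a simple pole at $s=1$ and, being self-dual, satisfies the classical zero-free region; together with the log-free zero density estimate for Rankin--Selberg $L$-functions (which pushes the range of validity down to a fixed power of the conductor), this gives, uniformly in this range,
\[
\sum_{n\le x}\Lambda_{\pi'\times\widetilde{\pi}'}(n) = r_{\pi'\times\widetilde{\pi}'}\,x + O\!\big(x\exp(-c\sqrt{\log x})\big) \ll x \ll x\log\mathfrak{C}(\pi').
\]
Combining the two ranges proves $\sum_{n\le x}\Lambda_{\pi'\times\widetilde{\pi}'}(n)\ll x\log\mathfrak{C}(\pi')$ for all $x\ge 2$, and the lemma follows. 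The step requiring the most care is the uniformity in the conductor: one must make the prime number theorem effective in the conductor (this is where the log-free zero density estimate enters) so that it overlaps the elementary bound with no gap left open, and one should run the entire argument in terms of the single quantity $\mathfrak{C}(\pi'\times\widetilde{\pi}')$, which by \eqref{eqn:BH} is at most $\mathfrak{C}(\pi')^{2m'}$.
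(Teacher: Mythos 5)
Your Cauchy--Schwarz step and the GRC bound for the $\pi\times\widetilde{\pi}$ factor match the paper exactly. Where you diverge is in handling the $\pi'\times\widetilde{\pi}'$ factor, and here your route is genuinely different. The paper works with a single smoothed sum $\sum_n \Lambda_{\pi'\times\widetilde{\pi}'}(n)\phi(n/x)$ with $\phi$ compactly supported on $[1/2,5/2]$, applies Mellin inversion, and then uses only the \emph{trivial} bound $|x^{\rho}|\le x$ at every nontrivial zero, paired with the zero-counting estimate $\#\{\rho=\beta+i\gamma:|\gamma-t|\le 1\}\ll\log\mathfrak{C}(\pi'\times\widetilde{\pi}',t)$ and the rapid decay of $\widehat{\phi}$. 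This is a Brun--Titchmarsh style argument: no zero-free region, no prime number theorem, and no zero density estimate are required, and one argument covers all $x\ge 2$ uniformly.

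You instead split into two ranges. The small-$x$ case (Perron upper bound via $\sigma=1+1/\log x$, Hadamard factorization, dropping the nonnegative zero sum) is correct and self-contained. The large-$x$ case invokes a prime number theorem for $L(s,\pi'\times\widetilde{\pi}')$, which rests on a standard zero-free region; such a region is known (Brumley, with Humphries's appendix cited near \eqref{eqn:BH}), but only up to a possible real exceptional zero, which your displayed asymptotic silently omits. Since the exceptional term $-x^{\beta}/\beta$ is negative, the one-sided bound $\sum_{n\le x}\Lambda_{\pi'\times\widetilde{\pi}'}(n)\ll x$ still holds, so your conclusion survives, but this should be said. Also note that once $x$ exceeds a fixed power of $\mathfrak{C}(\pi'\times\widetilde{\pi}')$, the standard zero-free region alone already yields an error term $\ll x$; the log-free zero density estimate you cite is substantially heavier machinery than is actually needed here and is better left out. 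In short: your proof is correct in substance, but the paper's single trivial-bound-plus-zero-counting argument achieves the same conclusion more economically and without any appeal to zero-free regions, while your version makes the small-$x$ regime transparent and (once the exceptional zero is acknowledged) delivers an actual asymptotic in the large-$x$ regime rather than just an upper bound.
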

    \begin{proof}
        By Lemma \ref{lem:lem2.2-ST} and the Cauchy--Schwarz inequality, we have that
        \begin{equation}\label{eq:pnt-1}
            \sum_{n \le x}|\Lambda_{\pi \times \pi'}(n)| \le \Big(\sum_{n \le x}\Lambda_{\pi \times \widetilde{\pi}}(n)\Big)^{1/2}\Big(\sum_{n \le x}\Lambda_{\pi' \times \widetilde{\pi}'}(n)\Big)^{1/2}.
        \end{equation}
        Since $\pi$ satisfies GRC, it follows from the prime number theorem that 
        \begin{equation}\label{eq:pnt-4}
        \Big(\sum_{n \le x}\Lambda_{\pi \times \widetilde{\pi}}(n)\Big)^{1/2} \le  \Big(\sum_{n \le x}m^2\Lambda(n)\Big)^{1/2} \ll \sqrt{x}. 
        \end{equation}
        We now fix a smooth compactly supported function $\phi$ with $\phi^{(j)}(y) \ll_j 1$ such that $\phi$ is supported on $[1/2,5/2]$ and at least $1$ on $[1,2]$. Let $\widehat{\phi}$ denote the Mellin transform of $\phi$, and let $\rho$ range over all zeros of $L(s, \pi' \times \widetilde{\pi}')$. By the Mellin inversion formula and pushing the contour all the way to the left, we have that
    \begin{equation}\label{eq:pnt-2}
            \sum_{x \le n \le 2x} \Lambda_{\pi' \times \widetilde{\pi}'}(n) \leq  \sum_{n=1}^{\infty} \Lambda_{\pi' \times \widetilde{\pi}'}(n)\phi(n/x) = x\widehat{\phi}(1)  - \sum_{\rho} x^{\rho}\widehat{\phi}(\rho).
        \end{equation}
          By \eqref{eqn:LRS_finite} and \eqref{eqn:LRS_2},  the contribution from trivial zeros is $\ll x$.  For nontrivial zeros, we have that $|x^{\rho}|\leq x$, trivially. Write $\rho = \beta + i \gamma$. By integration by parts and the derivative estimates of $\phi$, it follows that $|\widehat{\phi}(\rho)| \ll_\phi (|\gamma|^2+1)^{-1}$. Since there are $O(\log \mathfrak{C}(\pi' \times \widetilde{\pi}', t))$ zeros $\rho$ in a unit strip around $t$, it follows that
        \begin{equation}\label{eq:sum-rho}
            \sum_{\rho} \widehat{\phi}(\rho) \ll \int_{-\infty}^{\infty} (\log \mathfrak{C}(\pi'\times \widetilde{\pi}') +\log(|t|+3)) \frac{dt}{|t|^2+1}  \ll \log\mathfrak{C}(\pi').
        \end{equation} 
        By \eqref{eq:pnt-2} and \eqref{eq:sum-rho},  if $x\geq 2$, then $\sum_{x \le n \le 2x} \Lambda_{\pi' \times \widetilde{\pi}'}(n) \ll x\log\mathfrak{C}(\pi')$. The lemma now follows upon using dyadic decomposition, \eqref{eq:pnt-1}, and \eqref{eq:pnt-4}.
    \end{proof}
    
    For future convenience, we record some helpful bounds obtained from applying Lemma \ref{lem:lem3.1-JLW}, the Cauchy--Schwarz inequality, and Theorem \ref{thm:Xiannan-Li}.

    \begin{lem}\label{lem:3.2-Sound}  Let $\pi \in \mathfrak{F}_m$ and $ \pi' \in \mathfrak{F}_{m'}$. Let $\pi$ satisfy GRC. If $x \ge 2$ and $1< \sigma \le 2$, then
		\begin{align}
            &\sum_{n \le x} \frac{|\lambda_{\pi \times \pi'}(n)|}{n}  \ll (\log x)^{\frac{m^2+1}{2}}\mathfrak{C}(\pi')^{\varepsilon},\label{eq:lem3.2-Sound-1}\\
            &|L(\sigma+it, \pi \times \pi')| \ll (\sigma-1)^{-\frac{m^2+1}{2}}\mathfrak{C}(\pi')^{\varepsilon},\label{eq:lem3.2-Sound-2} \\
            &|L'(\sigma+it, \pi \times \pi')| \ll (\sigma-1)^{-\frac{m^2+3}{2}}\mathfrak{C}(\pi')^{\varepsilon}.\label{eq:lem3.2-Sound-3} 
            \end{align}
	\end{lem}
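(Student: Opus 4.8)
The plan is to derive all three estimates from a single Cauchy--Schwarz split. By Lemma~\ref{lem:lem3.1-JLW} one has $|\lambda_{\pi\times\pi'}(n)|\le\sqrt{\lambda_{\pi\times\widetilde\pi}(n)\,\lambda_{\pi'\times\widetilde\pi'}(n)}$, and the coefficients $\lambda_{\pi\times\widetilde\pi}(n),\lambda_{\pi'\times\widetilde\pi'}(n)$ are nonnegative, so Cauchy--Schwarz factors each of the three sums into a ``$\pi\times\widetilde\pi$'' part and a ``$\pi'\times\widetilde\pi'$'' part. The $\pi\times\widetilde\pi$ part is exactly where GRC enters: as in the treatment of $\Lambda_{\pi\times\widetilde\pi}$ in the proof of Lemma~\ref{lem:pnt}, GRC for $\pi$ gives $a_{\pi\times\widetilde\pi}(p^k)=|a_\pi(p^k)|^2\le m^2$, so $-\tfrac{L'}{L}(s,\pi\times\widetilde\pi)$ is dominated coefficientwise by $-m^2\tfrac{\zeta'}{\zeta}(s)$, and hence $L(s,\pi\times\widetilde\pi)$ is dominated coefficientwise by $\zeta(s)^{m^2}$; in particular $0\le\lambda_{\pi\times\widetilde\pi}(n)\le d_{m^2}(n)$. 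This is the only place GRC is needed, and it is what keeps the implied constants dependent on $m,m',\varepsilon$ alone rather than on $\mathfrak C(\pi)$. From this I get at once $\sum_{n\le x}\lambda_{\pi\times\widetilde\pi}(n)/n\ll(\log x)^{m^2}$ and $\sum_n\lambda_{\pi\times\widetilde\pi}(n)n^{-\sigma}\le\zeta(\sigma)^{m^2}\ll(\sigma-1)^{-m^2}$ for $1<\sigma\le2$. The $\pi'\times\widetilde\pi'$ part is handled by Theorem~\ref{thm:Xiannan-Li}, which gives $L(\sigma,\pi'\times\widetilde\pi')\ll(\sigma-1)^{-1}\mathfrak C(\pi')^\varepsilon$ on $1<\sigma\le3$ with no hypothesis on $\pi'$.

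For~\eqref{eq:lem3.2-Sound-1}, after Cauchy--Schwarz the $\pi\times\widetilde\pi$ factor is $\ll(\log x)^{m^2/2}$, and for the $\pi'\times\widetilde\pi'$ factor I would use Rankin's trick: since $(x/n)^{1/\log x}\ge1$ for $1\le n\le x$ and $\lambda_{\pi'\times\widetilde\pi'}(n)\ge0$,
\[
\sum_{n\le x}\frac{\lambda_{\pi'\times\widetilde\pi'}(n)}{n}\le e\,L\!\Big(1+\tfrac1{\log x},\pi'\times\widetilde\pi'\Big)\ll(\log x)\,\mathfrak C(\pi')^\varepsilon,
\]
where Theorem~\ref{thm:Xiannan-Li} applies because $1<1+1/\log x\le1+1/\log2<3$ for $x\ge2$. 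Multiplying the two factors and relabelling $\varepsilon$ gives~\eqref{eq:lem3.2-Sound-1}. For~\eqref{eq:lem3.2-Sound-2} I would bound $|L(\sigma+it,\pi\times\pi')|\le\sum_n|\lambda_{\pi\times\pi'}(n)|n^{-\sigma}$ and run the identical split to get $\le L(\sigma,\pi\times\widetilde\pi)^{1/2}L(\sigma,\pi'\times\widetilde\pi')^{1/2}\ll(\sigma-1)^{-m^2/2}\big((\sigma-1)^{-1}\mathfrak C(\pi')^\varepsilon\big)^{1/2}$, which is the claimed bound after relabelling $\varepsilon$.

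For~\eqref{eq:lem3.2-Sound-3}, differentiating the Dirichlet series gives $|L'(\sigma+it,\pi\times\pi')|\le\sum_n|\lambda_{\pi\times\pi'}(n)|(\log n)\,n^{-\sigma}$, and the same Cauchy--Schwarz split applies. The $\pi\times\widetilde\pi$ factor is $\le\big(-\tfrac{d}{d\sigma}\zeta(\sigma)^{m^2}\big)^{1/2}\ll(\sigma-1)^{-(m^2+1)/2}$ using $\zeta(\sigma)\ll(\sigma-1)^{-1}$ and $-\zeta'(\sigma)\ll(\sigma-1)^{-2}$. The only mildly delicate point, and the one I would flag as the ``hard'' step, is the $\pi'\times\widetilde\pi'$ factor $\sum_n\lambda_{\pi'\times\widetilde\pi'}(n)(\log n)\,n^{-\sigma}=-L'(\sigma,\pi'\times\widetilde\pi')$, since Theorem~\ref{thm:Xiannan-Li} bounds $L$ but not $L'$. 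I would circumvent this by absorbing the logarithm into the exponent: for all $n\ge1$ one has $\log n\le\tfrac{2}{e(\sigma-1)}\,n^{(\sigma-1)/2}$, so the sum is $\le\tfrac{2}{e(\sigma-1)}\,L\big(\tfrac{\sigma+1}{2},\pi'\times\widetilde\pi'\big)\ll(\sigma-1)^{-2}\mathfrak C(\pi')^\varepsilon$ by Theorem~\ref{thm:Xiannan-Li} at $\tfrac{\sigma+1}{2}\in(1,\tfrac32]$ (note $\tfrac{\sigma+1}{2}-1=\tfrac{\sigma-1}{2}$). Multiplying the two factors and relabelling $\varepsilon$ yields~\eqref{eq:lem3.2-Sound-3}. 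Apart from this shift of argument, the proof is entirely elementary, and I do not anticipate any genuine obstruction.
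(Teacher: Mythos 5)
Your proof is correct, and parts~\eqref{eq:lem3.2-Sound-1} and~\eqref{eq:lem3.2-Sound-2} follow the same route as the paper: apply Lemma~\ref{lem:lem3.1-JLW} and Cauchy--Schwarz, bound the $\pi\times\widetilde\pi$ factor via GRC by $\zeta(\sigma)^{m^2}$ (equivalently $\lambda_{\pi\times\widetilde\pi}(n)\le\tau_{m^2}(n)$), and bound the $\pi'\times\widetilde\pi'$ factor via Theorem~\ref{thm:Xiannan-Li} after Rankin's trick. Where you genuinely diverge is in~\eqref{eq:lem3.2-Sound-3}: the paper deduces the derivative bound from~\eqref{eq:lem3.2-Sound-2} in one line by Cauchy's integral formula on a small circle of radius $\asymp(\sigma-1)$, whereas you differentiate the Dirichlet series term-by-term, apply Cauchy--Schwarz again, bound the $\pi\times\widetilde\pi$ factor by $-\tfrac{d}{d\sigma}\zeta(\sigma)^{m^2}\ll(\sigma-1)^{-(m^2+1)}$, and handle the $\pi'\times\widetilde\pi'$ factor $-L'(\sigma,\pi'\times\widetilde\pi')$ by the elementary inequality $\log n\le\tfrac{2}{e(\sigma-1)}n^{(\sigma-1)/2}$, which shifts the evaluation point of $L(\cdot,\pi'\times\widetilde\pi')$ to $\tfrac{\sigma+1}{2}$ and lets Theorem~\ref{thm:Xiannan-Li} apply. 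Both arguments land on the same exponent $(\sigma-1)^{-(m^2+3)/2}$; the paper's route is shorter once~\eqref{eq:lem3.2-Sound-2} is in hand, but yours is entirely real-variable and avoids the minor bookkeeping about choosing a radius that keeps the circle inside the strip $1<\mathrm{Re}(s)\le2$ (or an extended strip) where~\eqref{eq:lem3.2-Sound-2} is available — a nontrivial courtesy when $\sigma$ is near the endpoint $2$.
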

	\begin{proof} 
		We first prove \eqref{eq:lem3.2-Sound-2}.
        By Lemma \ref{lem:lem3.1-JLW} and the Cauchy--Schwarz inequality, we have
        \begin{equation*}
	\begin{aligned}
            |L(\sigma+it, \pi \times \pi')| \le \sum_{n=1}^{\infty} \frac{\sqrt{\lambda_{\pi \times \widetilde{\pi}}(n)\lambda_{\pi' \times \widetilde{\pi}'}(n)}}{n^{\sigma}}
            \le \Big(\sum_{n=1}^{\infty} \frac{\lambda_{\pi \times \widetilde{\pi}}(n)}{n^{\sigma}}\Big)^{1/2}\Big(\sum_{n=1}^{\infty} \frac{\lambda_{\pi' \times \widetilde{\pi}'}(n)}{n^{\sigma}}\Big)^{1/2}.
	\end{aligned}
        \end{equation*}
	 Let $\tau_d(n)$ be the number of ways to write $n$ as a product of $d$ positive integers where the order matters.  Since $\pi \in \mathfrak{F}_m$ satisfies GRC, we have $\lambda_{\pi \times \widetilde{\pi}}(n) \le \tau_{m^2}(n)$.  Then
    \begin{equation}
        \label{eq:lem3.2-Sound-5}
        \Big(\sum_{n=1}^{\infty} \frac{\lambda_{\pi \times \widetilde{\pi}}(n)}{n^{\sigma}}\Big)^{1/2} \le \Big(\sum_{n=1}^{\infty} \frac{\tau_{m^2}(n)}{n^{\sigma}}\Big)^{1/2} = \zeta(\sigma)^{\frac{m^2}{2}} \ll (\sigma-1)^{-\frac{m^2}{2}}.
    \end{equation}
    By Theorem \ref{thm:Xiannan-Li}, we have that 
    \begin{equation}
        \label{eq:lem3.2-Sound-6}
        \Big(\sum_{n=1}^{\infty} \frac{\lambda_{\pi' \times \widetilde{\pi}'}(n)}{n^{\sigma}}\Big)^{1/2} = L(\sigma, \pi' \times \widetilde{\pi}')^{1/2} \ll (\sigma-1)^{-1/2}\mathfrak{C}(\pi')^{\varepsilon}.
    \end{equation} 
    The assertion \eqref{eq:lem3.2-Sound-2} follows from \eqref{eq:lem3.2-Sound-5} and \eqref{eq:lem3.2-Sound-6}. To prove \eqref{eq:lem3.2-Sound-3}, we combine \eqref{eq:lem3.2-Sound-2}  with Cauchy's integral formula for $L'(\sigma +it, \pi \times \pi')$. Lastly, to prove \eqref{eq:lem3.2-Sound-1}, we observe that
    \begin{equation*}
        \label{eq:prelim(1)-1}
        \sum_{n \le x} \frac{|\lambda_{\pi \times \pi'}(n)|}{n} \leq e \sum_{n=1}^{\infty}\frac{|\lambda_{\pi \times \pi'}(n)|}{n^{1+1/\log(ex)}}.
    \end{equation*}
    The result now follows from \eqref{eq:lem3.2-Sound-2}, \eqref{eq:lem3.2-Sound-5}, and \eqref{eq:lem3.2-Sound-6} (each with $\sigma=1+1/\log(ex)$).
    \end{proof}

    \begin{lem}
        \label{lem:3.3-Sound} 
        Let $\pi \in \mathfrak{F}(m)$ and $ \pi' \in \mathfrak{F}(m')$. Let $\pi$ satisfy GRC. If $x \ge 2$, then
	\begin{equation}
        \label{eq:partialsum-rankin-selberg}
            \sum_{n\le x}|\lambda_{\pi\times\pi'}(n)| \ll x(\log x)^{\frac{m^2+1}{2}}\mathfrak{C}(\pi')^{\varepsilon}.
	\end{equation}
        Moreover, if $1 \le y \le x$, then
        \begin{equation}
        \label{eq:shortsum-rankin-selberg}
            \sum_{x < n\le x+y}|\lambda_{\pi\times\pi'}(n)| \ll x^{3/4}y^{1/4}(\log x)^{\frac{m^4+2}{4}}\mathfrak{C}(\pi')^{\varepsilon}.
	\end{equation}	
    \end{lem}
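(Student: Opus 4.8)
The plan is to obtain \eqref{eq:partialsum-rankin-selberg} immediately from the already-established bound \eqref{eq:lem3.2-Sound-1}, and to prove the short-interval bound \eqref{eq:shortsum-rankin-selberg} by a two-fold application of the Cauchy--Schwarz inequality, arranged so that the only input concerning $\pi'$ is Theorem \ref{thm:Xiannan-Li}.

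For \eqref{eq:partialsum-rankin-selberg}, I would simply note that $1\le x/n$ whenever $n\le x$, so that
\[
\sum_{n\le x}|\lambda_{\pi\times\pi'}(n)| \le x\sum_{n\le x}\frac{|\lambda_{\pi\times\pi'}(n)|}{n} \ll x(\log x)^{\frac{m^2+1}{2}}\mathfrak{C}(\pi')^{\varepsilon}
\]
by \eqref{eq:lem3.2-Sound-1}.

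For \eqref{eq:shortsum-rankin-selberg}, I would first apply Lemma \ref{lem:lem3.1-JLW} and then Cauchy--Schwarz to decouple $\pi$ from $\pi'$:
\[
\sum_{x\le n\le x+y}|\lambda_{\pi\times\pi'}(n)| \le \Big(\sum_{x\le n\le x+y}\lambda_{\pi\times\widetilde{\pi}}(n)\Big)^{1/2}\Big(\sum_{x\le n\le x+y}\lambda_{\pi'\times\widetilde{\pi}'}(n)\Big)^{1/2}.
\]
For the first factor, GRC for $\pi$ gives $\lambda_{\pi\times\widetilde{\pi}}(n)\le\tau_{m^2}(n)$, and a second Cauchy--Schwarz against the indicator of $[x,x+y]$ trades the short sum for a complete one: using the standard estimate $\sum_{n\le 2x}\tau_{m^2}(n)^2\ll x(\log x)^{m^4-1}$ and $1\le y\le x$,
\[
\sum_{x\le n\le x+y}\lambda_{\pi\times\widetilde{\pi}}(n) \le (y+1)^{1/2}\Big(\sum_{n\le 2x}\tau_{m^2}(n)^2\Big)^{1/2} \ll y^{1/2}x^{1/2}(\log x)^{\frac{m^4-1}{2}},
\]
and taking the square root produces the factor $y^{1/4}$. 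For the second factor, the Dirichlet coefficients of $L(s,\pi'\times\widetilde{\pi}')$ are nonnegative, so I would enlarge the summation range to $[1,2x]$ and apply Rankin's trick with $\sigma=1+1/\log(2x)$; together with Theorem \ref{thm:Xiannan-Li} this gives
\[
\sum_{x\le n\le x+y}\lambda_{\pi'\times\widetilde{\pi}'}(n) \le (2x)^{\sigma}L(\sigma,\pi'\times\widetilde{\pi}') \ll x(\log x)\mathfrak{C}(\pi')^{\varepsilon}.
\]
Multiplying the two estimates and renaming $\varepsilon$ then yields \eqref{eq:shortsum-rankin-selberg}, with a bit of room to spare in the power of $\log x$.

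The one step that genuinely matters here is the handling of the second factor: since no subconvexity bound, and no short-interval estimate, is available for $\pi'$, its contribution has to be released to the full range $[1,2x]$ and funneled through Theorem \ref{thm:Xiannan-Li} into the benign factors $x$ and $\mathfrak{C}(\pi')^{\varepsilon}$ --- this is precisely why the bound is lossy (weaker than \eqref{eq:partialsum-rankin-selberg} when $y\asymp x$) yet still adequate for the intended applications. Everything else --- the two Cauchy--Schwarz steps, the bound $\lambda_{\pi\times\widetilde{\pi}}(n)\le\tau_{m^2}(n)$ afforded by GRC on $\pi$, and the classical divisor-sum estimates for $\tau_{m^2}$ --- is routine, so I anticipate no real difficulty beyond keeping track of the exponents.
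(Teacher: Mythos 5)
Your proposal is correct and follows essentially the same path as the paper: Rankin's trick plus \eqref{eq:lem3.2-Sound-1} for the long sum, and for the short sum Lemma~\ref{lem:lem3.1-JLW} followed by a double Cauchy--Schwarz that channels the $\pi$-side through GRC and a divisor bound and the $\pi'$-side through Theorem~\ref{thm:Xiannan-Li} after releasing the sum to $[1,2x]$. The only cosmetic divergence is that you invoke the classical mean-value estimate $\sum_{n\le 2x}\tau_{m^2}(n)^2\ll x(\log x)^{m^4-1}$ directly where the paper uses $\tau_{m^2}(n)^2\le\tau_{m^4}(n)$ together with Rankin's trick; your version even shaves the $\log$-exponent to $(m^4+1)/4$, which is harmlessly stronger than the stated $(m^4+2)/4$.
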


    \begin{proof}
        The first assertion follows from \eqref{eq:lem3.2-Sound-1} of Lemma \ref{lem:3.2-Sound} and Rankin's trick:
	\[
            \sum_{n\le x} |\lambda_{\pi\times\pi'}(n)| \leq x\sum_{n\le x} \frac{|\lambda_{\pi\times\pi'}(n)|}{n} \ll x(\log x)^{\frac{m^2+1}{2}}\mathfrak{C}(\pi')^{\varepsilon}.
	\]
        For the second assertion, it follows from Lemma \ref{lem:lem3.1-JLW} and the Cauchy--Schwarz inequality that 
	\begin{equation*}
            \sum_{x < n \le x+y} |\lambda_{\pi \times \pi'}(n)| \le \Big(\sum_{x < n \le x+y} \lambda_{\pi \times \widetilde{\pi}}(n) \Big)^{1/2}\Big(\sum_{x < n \le x+y}\lambda_{\pi' \times \widetilde{\pi}'}(n)\Big)^{1/2}. 
        \end{equation*}
        If $\pi$ satisfies GRC, then $\lambda_{\pi \times \widetilde{\pi}}(n) \le \tau_{m^2}(n)$. Since $\lambda_{\pi \times \widetilde{\pi}}(n)$ is nonnegative, the Cauchy--Schwarz inequality and Rankin's trick implies that
	\begin{equation*}
            \Big(\sum_{x < n \le x+y} \lambda_{\pi \times \widetilde{\pi}}(n) \Big)^{1/2} \le y^{1/4}\Big(\sum_{x < n \le x+y} \lambda_{\pi \times \widetilde{\pi}}(n)^2 \Big)^{1/4} \le (xy)^{1/4}\Big(\sum_{x < n \le x+y} \frac{\tau_{m^2}(n)^2}{n} \Big)^{1/4}.
	\end{equation*}
        Since $\tau_{m^2}(n)^2 \le \tau_{m^4}(n)$, the above display is bounded above by
        \begin{equation*}
		\begin{aligned}
     (xy)^{1/4}\Big(\sum_{x < n \le x+y} \frac{\tau_{m^4}(n)}{n} \Big)^{1/4} & \leq e (xy)^{1/4}\Big(\sum_{n=1}^{\infty} \frac{\tau_{m^4}(n)}{n^{1+1/\log(2ex)}} \Big)^{1/4} \ll (xy)^{1/4}(\log x)^{{m^4}/{4}}.
		\end{aligned}
	\end{equation*}	
        Since $\lambda_{\pi' \times \widetilde{\pi}'}(n)$ is nonnegative, we apply Rankin's trick  and Theorem \ref{thm:Xiannan-Li} to obtain
	\begin{equation*}
            \Big(\sum_{x < n \le x+y} \lambda_{\pi' \times \widetilde{\pi}'}(n)\Big)^{1/2}
            \leq x^{1/2} \Big(e\sum_{ n =1}^{\infty} \frac{\lambda_{\pi' \times \widetilde{\pi}'}(n)}{n^{1+1/\log(2ex)}}\Big)^{1/2} 
            \ll x^{1/2}(\log x)^{1/2} \mathfrak{C}(\pi')^{\varepsilon}.
		\end{equation*}
  The desired result follows.\end{proof}
        
    %%%%%%%%%%%%%%%%%%%%%%%
    %%%%%%%%%%%%%%%%%%%%%%%
    
	\section{Weak subconvexity bound}\label{sec:weak-subconvexity-bound}

    We now establish Theorem \ref{thm:weaksub}. Our proof follows the framework introduced by Soundararajan in \cite{S}. Consequently, we will exclude specific details mirroring Soundararajan's method and instead focus on highlighting the distinctive refinements in our approach. 
	\subsection{Notation}\label{subsec: notation} 
    Consider $\pi \in \mathfrak{F}_m$ and $\pi'\in \mathfrak{F}_{m'}$. We assume that $L(s, \pi \times \pi')$ is entire, and $\pi$ satisfies GRC.  Following is a list of the conventions and notation used in this section.
    \begin{itemize}
        \item  $C = \mathfrak{C}(\pi\times \pi')$.
        \item $x\geq 2$, $X\geq\max(x,10)$, and $\varepsilon \in (0,1]$.
        \item $S(x) = \sum_{n\le x} \lambda_{\pi \times \pi'}(n)$ and $\widetilde{S}(x) = \sum_{n \le x} \lambda_{\pi \times \pi'}(n) \log n$.
        \item $R = \lfloor 72m^2/\varepsilon^2\rfloor + 1$, $L = \lfloor 5(m^2+1)R \rfloor$ and $\underline{L} = (L, \ldots, L)$.
        \item $w\in [2,\exp((\log X)^{1/(3R)})]$ is an integer.
        \item $T = \exp((\log\log X)^2)$ and $\underline{\tau}=(\tau_1, \tau_2, \ldots, \tau_R)\in\mathbb{R}^R$ with $|\tau_j| \le T$.
        \item  $l_1, \ldots, l_R, j_1, \ldots, j_R \in\mathbb{Z}$ are nonnegative, $\underline{l} =(l_1,\ldots, l_R)$, and $\underline{j} = (j_1, \ldots, j_R)$.
        \item  $\underline{j} \le \underline{l}$ indicates that $0 \le j_1 \le l_1, \ldots, 0\le j_R \le l_R$.
        \item $\binom{\underline{l}}{\underline{j}} = \binom{l_1}{j_1} \cdots \binom{l_R}{j_R}$, where $\binom{a}{b}$ are the binomial coefficients.
        \item $\mathcal{O}_{\underline{l}}(x,w)= \mathcal{O}_{\underline{l}}(x,w; \underline{\tau}) = \sum_{\underline{j}\le \underline{l}} (-1)^{j_1+\cdots+j_R} \binom{\underline{l}}{\underline{j}} w^{j_1(1+i\tau_1) + \cdots + j_R(1+i\tau_R)} S(x/w^{j_1+\cdots+j_R})$.
        \item $\widetilde{\mathcal{O}}_{\underline{l}}(x,w)= \mathcal{O}_{\underline{l}}(x,w; \underline{\tau}) = \sum_{\underline{j}\le \underline{l}} (-1)^{j_1+\cdots+j_R} \binom{\underline{l}}{\underline{j}} w^{j_1(1+i\tau_1) + \cdots + j_R(1+i\tau_R)} \widetilde{S}(x/w^{j_1+\cdots+j_R})$.
        \item $\mathcal{L}_{\underline{l}}(s) = L(s, \pi\times\pi') \prod_{j=1}^{R}(1-w^{1+i\tau_j -s} )^{l_j}$. 
    \end{itemize}

    %%%%%%%%%%%%%%%%%%%%%%%%%
    %%%%%%%%%%%%%%%%%%%%%%%%%

    \subsection{Main results}
    \label{subsec:main-weak-sub}

    As in Soundararajan's work \cite{S}, the essence of our weak subconvexity proof is the slow variation of the mean values of multiplicative functions. Consider the multiplicative function $\lambda_{\pi \times \pi'}(n)$. We will use the convexity bound for $L(s, \pi \times \pi')$ (see Lemma \ref{lem:Sound-lem4.2}) to prove that there exists a constant $B_{0}=B_0(m,m') >0$ such that
	\[
            \sum_{n\le x} \lambda_{\pi \times \pi'}(n) \ll \frac{x}{\log x} \mathfrak{C}(\pi')^\varepsilon, \qquad x \ge C^{1/2}(\log C)^{B_{0}}. 
	\]
	Next, we prove that for all $B>0$, similar cancellation holds even when $x \ge C^{1/2}(\log C)^{-B}$. In particular, we have the following theorem.

	\begin{thm}\label{thm:partial-sum-thm2-Sound}
		Let $\pi$ and $\pi'$ be as in Theorem \ref{thm:weaksub}. If $\varepsilon>0$, $B>0$, and $x\ge C^{1/2}(\log C)^{-B}$, then
		\begin{equation*}
			\sum_{n\le x} \lambda_{\pi \times \pi'}(n) \ll_{B} \frac{x}{(\log x)^{1-\varepsilon}}\mathfrak{C}(\pi')^\varepsilon.
		\end{equation*}
	\end{thm}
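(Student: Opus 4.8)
The plan is to follow Soundararajan's method from \cite{S}; the new point is to keep every estimate uniform in $\pi'$, losing only $\mathfrak{C}(\pi')^{\varepsilon}$. Write $s = \sigma+it$, $C = \mathfrak{C}(\pi\times\pi')$, and $S(x) = \sum_{n\le x}\lambda_{\pi\times\pi'}(n)$. I would first prove the stronger statement $S(x)\ll \frac{x}{\log x}\mathfrak{C}(\pi')^{\varepsilon}$ in the long range $x \ge C^{1/2}(\log C)^{B_0}$, for a suitable $B_0 = B_0(m,m')>0$, and then bootstrap down to the range in the theorem. For the long range, one starts from the identity
\[
\sum_{n\le x}\lambda_{\pi\times\pi'}(n)\Big(\log\frac{x}{n}\Big)^{k} = \frac{k!}{2\pi i}\int_{(c)}L(s,\pi\times\pi')\,\frac{x^{s}}{s^{k+1}}\,ds, \qquad c = 1+\frac{1}{\log x},
\]
valid for every integer $k\ge 1$ since $L(s,\pi\times\pi')$ is entire, and shifts the contour to $\sigma=\tfrac12$, which crosses no pole. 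Choosing $k$ a large enough constant (in terms of $mm'$) for absolute convergence and bounding $L(\tfrac12+it,\pi\times\pi')$ by the convexity bound (Lemma~\ref{lem:Sound-lem4.2}) gives a bound $\ll x^{1/2}C^{1/4}(\log C)^{O_{m,m'}(1)}$. Since $x\ge C^{1/2}(\log C)^{B_0}$ forces $C^{1/4}\le x^{1/2}(\log C)^{-B_0/2}$ and $\log C\ll \log x$ throughout this range, taking $B_0$ large makes the right-hand side $\ll x/(\log x)^{2}$. Removing the weight $(\log\frac xn)^{k}$ by a Tauberian argument based on \eqref{eq:partialsum-rankin-selberg} and \eqref{eq:shortsum-rankin-selberg}, and using \eqref{eq:lem3.2-Sound-1} on the line $\sigma=c$ for the transitional estimates, then yields $S(x)\ll\frac{x}{\log x}\mathfrak{C}(\pi')^{\varepsilon}$.

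For the range $C^{1/2}(\log C)^{-B}\le x< C^{1/2}(\log C)^{B_0}$ I would use the difference operators $\mathcal{O}_{\underline{L}}(x,w;\underline{\tau})$ of Section~\ref{subsec: notation}, which are exactly the summatory functions of the Dirichlet coefficients of the mollified $L$-function $\mathcal{L}_{\underline{L}}(s)=L(s,\pi\times\pi')\prod_{j=1}^{R}(1-w^{1+i\tau_j-s})^{L}$. Two facts are combined. First, $\mathcal{L}_{\underline{L}}(s)$ vanishes to order $L$ at each point $1+i\tau_j$; with $R$ large in terms of $\varepsilon$ and $m$ and $L$ large in terms of $m^{2}+1$ and $R$, a contour shift of the Perron integral for $\mathcal{O}_{\underline{L}}$ — bounding $\mathcal{L}_{\underline{L}}$ by the convexity bound on $\sigma=\tfrac12$, by \eqref{eq:lem3.2-Sound-2}--\eqref{eq:lem3.2-Sound-3} on $\sigma=c$, and by Phragm{\'e}n--Lindel{\"o}f in between — shows that $\mathcal{O}_{\underline{L}}(x,w;\underline{\tau})\ll \frac{x}{(\log x)^{1-\varepsilon}}\mathfrak{C}(\pi')^{\varepsilon}$, uniformly over $|\tau_j|\le T$ and over $w\in[2,\exp((\log X)^{1/(3R)})]$; the largeness of $R$ is precisely what converts the $C^{1/4}$ of the convexity bound into the displayed power-of-$\log$ saving. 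Second, the short-interval bound \eqref{eq:shortsum-rankin-selberg}, together with \eqref{eq:partialsum-rankin-selberg} and \eqref{eq:lem3.2-Sound-1}, controls the variation of $S$ across the multiplicative intervals occurring in the telescoping sum that defines $\mathcal{O}_{\underline{L}}$ — the constraint on the range of $w$ keeping the arguments $x/w^{t}$ large enough for these estimates to be effective — so that, after averaging over $w$ and over $\underline{\tau}\in[-T,T]^{R}$, one shows that $\mathcal{O}_{\underline{L}}$ detects $S(x)$: the two are related by $\mathcal{O}_{\underline{L}}(x,w;\underline{\tau}) = S(x)\prod_{j=1}^{R}(1-w^{i\tau_j})^{L} + O\big(\tfrac{x}{(\log x)^{1-\varepsilon}}\mathfrak{C}(\pi')^{\varepsilon}\big)$. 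Choosing the average so that $\big|\prod_{j}(1-w^{i\tau_j})^{L}\big|\gg 1$ on a set of positive measure and comparing the two estimates gives $S(x)\ll\frac{x}{(\log x)^{1-\varepsilon}}\mathfrak{C}(\pi')^{\varepsilon}$.

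The main obstacle is the bootstrap: one must run Soundararajan's mollification-and-averaging argument while keeping each auxiliary input uniform in $\pi'$, rather than absorbing it into an implied constant as in \cite{S}. This is where Theorem~\ref{thm:Xiannan-Li} and Lemma~\ref{lem:pnt} replace the bounds available when $\pi'$ is fixed, where \eqref{eq:lem3.2-Sound-1}--\eqref{eq:lem3.2-Sound-3} and \eqref{eq:shortsum-rankin-selberg} are invoked repeatedly, and where the precise choices of $R$, $L$, $T$, and the admissible range of $w$ fixed in Section~\ref{subsec: notation} become essential: $R$ must be large enough (relative to $\varepsilon$ and $m$) that the order-$L$ vanishing of $\mathcal{L}_{\underline{L}}$ overcomes the factor $C^{1/4}$ from convexity, and $L$ must be large enough relative to $m^{2}+1$ and $R$ that the mollifier's Dirichlet coefficients are controlled by \eqref{eq:lem3.2-Sound-1}.
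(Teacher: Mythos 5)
Your treatment of the long range $x\ge C^{1/2}(\log C)^{B_0}$ is essentially the paper's Lemma~\ref{lem:Sound-lem4.2} (different smoothing kernel, but the same convexity-plus-truncation argument), and the decision to bootstrap down via $\mathcal{O}_{\underline{L}}$ is also the right strategy. However, both halves of your description of the hard-range argument are wrong, and the errors are not cosmetic.

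First, the bound $\mathcal{O}_{\underline{L}}(x,w;\underline{\tau})\ll \frac{x}{(\log x)^{1-\varepsilon}}\mathfrak{C}(\pi')^{\varepsilon}$ of Theorem~\ref{thm:2.1-Sound} is \emph{not} obtained by shifting the Perron integral to $\sigma=\tfrac12$ and using convexity. On $\sigma=\tfrac12$ each factor $|1-w^{1+i\tau_j-s}|$ has size $\asymp w^{1/2}$, so the mollifier $\prod_j(1-w^{1+i\tau_j-s})^{l_j}$ is \emph{large} there (of size $\asymp w^{LR/2}$), not small; the order-$L$ zeros of $\mathcal{L}_{\underline{L}}$ sit on the line $\sigma=1$, at $s=1+i\tau_j$, and they are designed to dampen potential largeness of $L(1+1/\log X+it,\pi\times\pi')$, not to beat the $C^{1/4}$ of convexity. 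For $x\asymp C^{1/2}(\log C)^{-B}$, a convexity bound on $\sigma=\tfrac12$ produces $\gtrsim x(\log x)^{B/2}$ with no way to recover the saving, no matter how $R$ and $L$ are chosen. The actual proof of Theorem~\ref{thm:2.1-Sound} (Lemmas~\ref{lem:6.1-Sound}--\ref{lem:6.4-Sound} and Propositions~\ref{prop:sound-5.4},~\ref{prop:6.3-Sound},~\ref{prop:6.5-Sound}) works entirely near $\sigma=1$, exploiting the slow variation of mean values of multiplicative functions and a Plancherel identity; the only place a contour reaches $\sigma=\tfrac12$ is in Lemma~\ref{lem:Sound-lem4.2}, i.e.\ in the long range where convexity alone already wins.

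Second, the identity $\mathcal{O}_{\underline{L}}(x,w;\underline{\tau}) = S(x)\prod_{j}(1-w^{i\tau_j})^{L} + O(\cdot)$ that you propose for the bootstrap does not hold (it is the heuristic you would get from $S(y)\propto y$, which is circular here), and the points $\tau_1,\dots,\tau_R$ are not averaged over: Theorem~\ref{thm:2.1-Sound} holds only for the specific $\tau_j$ constructed by successive maxima, so ``choosing the average so that $|\prod(1-w^{i\tau_j})^L|\gg 1$'' is not available. The paper's deduction of Theorem~\ref{thm:partial-sum-thm2-Sound} from Theorem~\ref{thm:2.1-Sound} is purely combinatorial: set $w=x_0/x$ and $X=xw^{LR}$. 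In $\mathcal{O}_{\underline{L}}(X,w)=\sum_{\underline j\le \underline L}(-1)^{|\underline j|}\binom{\underline L}{\underline j}w^{\sum j_r(1+i\tau_r)}S(X/w^{|\underline j|})$ the term $\underline j=\underline L$ contributes $\pm w^{LR+iL\sum\tau_j}S(x)$, so $w^{LR}|S(x)|\le |\mathcal{O}_{\underline{L}}(X,w)| + O\big(\sum_{j<LR}w^{j}|S(X/w^{j})|\big)$; every argument $X/w^j$ with $j<LR$ exceeds $xw=x_0$, so Lemma~\ref{lem:Sound-lem4.2} applies to every error term, and dividing by $w^{LR}$ finishes the proof. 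This isolation of the top term with the large weight $w^{LR}$, not an averaging over $(w,\underline\tau)$, is what makes the bootstrap work.
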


    Recall the notation in Section \ref{subsec: notation}. Theorem \ref{thm:partial-sum-thm2-Sound} follows from our next theorem.
	
	\begin{thm}
        \label{thm:2.1-Sound} With the notation in Section \ref{subsec: notation}, there exist real numbers $\tau_1, \ldots, \tau_R$, whose absolute values are at most $\exp((\log\log X)^2)$, such that if $2 \le x \le X$, then
	\begin{equation*}
            |\mathcal{O}_{\underline{L}}(x,w;\tau_1, \ldots, \tau_R)| \ll \frac{x}{\log x}(\log X)^{\varepsilon} \mathfrak{C}(\pi')^\varepsilon.
	\end{equation*} 
	\end{thm}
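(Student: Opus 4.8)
\noindent\textbf{Proof strategy for Theorem \ref{thm:2.1-Sound}.} The plan is to follow the architecture of Soundararajan's proof of the analogous estimate in \cite[\S4]{S}, the one genuinely new requirement being that every bound be made uniform in the conductor $\mathfrak{C}(\pi')$; this uniformity is furnished by Theorem \ref{thm:Xiannan-Li}, Lemma \ref{lem:pnt}, and Lemmas \ref{lem:3.2-Sound}--\ref{lem:3.3-Sound}, which take the place of the ``$\pi'$ fixed'' inputs used in \cite{S}. The first step is to write $\mathcal{O}_{\underline{L}}(x,w;\underline{\tau})$ as a contour integral. Applying truncated Perron (say, at height $T=\exp((\log\log X)^2)$) to each term $S(x/w^{j_1+\cdots+j_R})$ and reassembling the binomial sum via $\sum_{0\le j\le l}(-1)^j\binom{l}{j}w^{j(1+i\tau-s)}=(1-w^{1+i\tau-s})^l$, one obtains
\[
\mathcal{O}_{\underline{L}}(x,w;\underline{\tau}) = \frac{1}{2\pi i}\int_{\sigma_0-iT}^{\sigma_0+iT}\mathcal{L}_{\underline{L}}(s)\,\frac{x^s}{s}\,ds + E(x),\qquad \sigma_0 = 1+\tfrac{1}{\log x},
\]
with $|E(x)|$ negligible by the crude bound $\sum_{n\le x}|\lambda_{\pi\times\pi'}(n)|\ll x(\log x)^{(m^2+1)/2}\mathfrak{C}(\pi')^{\varepsilon}$ and the short-interval bound, both from Lemma \ref{lem:3.3-Sound}; here one uses that $|1-w^{1+i\tau_j-s}|\ll 1$ on $\Real(s)=\sigma_0$, which holds because $w\le\exp((\log X)^{1/(3R)})$ makes $\log w/\log x$ small. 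The same identities produce the companion representation for $\widetilde{\mathcal{O}}_{\underline{L}}(x,w;\underline{\tau})$, whose coefficients carry an extra factor $\log n$ and whose integrand therefore involves $L'(s,\pi\times\pi')$. For $x$ so small that the crude bounds of Lemma \ref{lem:3.3-Sound} already suffice, or so large that the convexity bound of Lemma \ref{lem:Sound-lem4.2} already gives a power saving, the claim is immediate as in \cite[\S3]{S}; the content lies in the intermediate range.

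The second step, which is the heart of the matter, is the selection of $\tau_1,\dots,\tau_R$ as the ``peaks'' of $|L(\sigma_0+it,\pi\times\pi')|$ on $|t|\le T$, exactly as in \cite[\S4]{S}. Using the Dirichlet series of $-\frac{L'}{L}(s,\pi\times\pi')$, the Brun--Titchmarsh bound $\sum_{n\le x}|\Lambda_{\pi\times\pi'}(n)|\ll x\sqrt{\log\mathfrak{C}(\pi')}$ of Lemma \ref{lem:pnt}, and Lemma \ref{lem:3.2-Sound}, one shows that the set of $t\in[-T,T]$ where $|L(\sigma_0+it,\pi\times\pi')|$ is large can be covered by at most $R$ well-separated unit intervals, centred at points $\tau_1,\dots,\tau_R$ with $|\tau_j|\le T$, outside of which $|L(\sigma_0+it,\pi\times\pi')|\ll(\log X)^{\varepsilon}\mathfrak{C}(\pi')^{\varepsilon}$, while inside them one retains only the weaker bound $|L(\sigma_0+it,\pi\times\pi')|\ll(\log x)^{(m^2+1)/2}\mathfrak{C}(\pi')^{\varepsilon}$ of Lemma \ref{lem:3.2-Sound}. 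The exponents $R=\lfloor 72m^2/\varepsilon^2\rfloor+1$ and $L=\lfloor 5(m^2+1)R\rfloor$ are calibrated precisely so that $R$ peaks are few enough to be absorbed, while the $j$-th polynomial factor of $\mathcal{L}_{\underline{L}}$, which is $\ll(\log w\,|t-\tau_j|+1/\log x)^{L}$ for $t$ near $\tau_j$, carries more than enough vanishing to swallow the $(\log x)^{(m^2+1)/2}$ peak of $|L|$ (and, for $\widetilde{\mathcal{O}}_{\underline{L}}$, the $(\log x)^{(m^2+3)/2}$ peak of $|L'|$) with room to spare.

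The third step is to estimate the truncated integral by splitting $[-T,T]$ into the $O(R)$ unit intervals about the $\tau_j$ and the complement. Near a $\tau_j$, the degree-$L$ vanishing of the $j$-th factor beats the local peak and contributes $\ll\frac{x}{\log x}(\log X)^{\varepsilon}\mathfrak{C}(\pi')^{\varepsilon}$; on the complement, $|L(\sigma_0+it,\pi\times\pi')|\ll(\log X)^{\varepsilon}\mathfrak{C}(\pi')^{\varepsilon}$ while $\int|x^{\sigma_0}/s|\,dt\ll x\log T\ll x(\log X)^{\varepsilon}$, which is the target except for a stray factor $\log x$. The missing $1/\log x$ is recovered, exactly as in \cite{S}, by simultaneously estimating $\widetilde{\mathcal{O}}_{\underline{L}}(x,w;\underline{\tau})$ (bounded by $x(\log X)^{\varepsilon}\mathfrak{C}(\pi')^{\varepsilon}$ via the same split, now with $L'$ in place of $L$ and Lemma \ref{lem:3.2-Sound}) and invoking the partial-summation identity $S(y)\log y=\widetilde{S}(y)+\int_1^y S(t)\,t^{-1}\,dt$ at every scale $y=x/w^{j_1+\cdots+j_R}$: this expresses $\mathcal{O}_{\underline{L}}(x,w;\underline{\tau})\log x$ through $\widetilde{\mathcal{O}}_{\underline{L}}(x,w;\underline{\tau})$, through $\mathcal{O}_{\underline{L}}$ at strictly smaller scales, and through an integral of $\mathcal{O}_{\underline{L}}(t,w;\underline{\tau})/t$, which feeds a Gr\"onwall-type bootstrap whose input at small scales is the crude bound of Lemma \ref{lem:3.3-Sound}. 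Dividing by $\log x$ yields Theorem \ref{thm:2.1-Sound}, whence also Theorem \ref{thm:partial-sum-thm2-Sound} after an appropriate choice of $w$ and telescoping.

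I expect the main obstacle to be exactly the bookkeeping in this last step: keeping careful track that the loss in the peak-counting argument, the loss in the convexity input of Lemma \ref{lem:Sound-lem4.2} (which, once expressed via $\mathfrak{C}(\pi\times\pi')\ll\mathfrak{C}(\pi)^{m'}\mathfrak{C}(\pi')^{m}$, carries a genuine power of $\mathfrak{C}(\pi')$ that must be confined to the ranges of $x$ where it is harmless), the Perron truncation loss, and the short-interval loss from Lemma \ref{lem:3.3-Sound} all remain inside the single factor $(\log X)^{\varepsilon}\mathfrak{C}(\pi')^{\varepsilon}$, and that the Gr\"onwall bootstrap closes with the implied constants pinned down by $R$ and $L$. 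Throughout, $\pi$ enters only through Lemmas \ref{lem:lem2.2-ST} and \ref{lem:lem3.1-JLW} together with the estimates $\lambda_{\pi\times\widetilde{\pi}}(n)\le\tau_{m^2}(n)$ and $\Lambda_{\pi\times\widetilde{\pi}}(n)\le m^2\Lambda(n)$, which require GRC for $\pi$ only at the non-archimedean places, as recorded in the footnote to Theorem \ref{thm:weaksub}.
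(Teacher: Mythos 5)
Your first two steps match the paper: the contour representation via a smoothed Perron formula is Proposition~\ref{prop:sound-5.4}, and the peak-picking construction of $\tau_1,\dots,\tau_R$ with $(\log X)^{-1/R}$ separation is the ``successive maxima'' construction together with Lemma~\ref{lem:sound-5.1}. The gap is in your third step. You claim that $\widetilde{\mathcal{O}}_{\underline{L}}(x,w)$ can be bounded pointwise by $x(\log X)^{\varepsilon}\mathfrak{C}(\pi')^{\varepsilon}$ ``via the same split, now with $L'$ in place of $L$.'' Were that true, the theorem would follow at once from the partial-summation identity $(\log x)\mathcal{O}_{\underline{L}}(x,w)=\widetilde{\mathcal{O}}_{\underline{L}}(x,w)+\int_1^x\mathcal{O}_{\underline{L}}(t,w)\,t^{-1}dt + O(\cdot)$ together with Proposition~\ref{prop:sound-5.4}, with no bootstrap at all. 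But the claimed bound is false. After factoring the integrand as $(L'/L)(s)\,\mathcal{L}_{\underline{L}}(s)$ and invoking Lemma~\ref{lem:sound-5.3} to bound $|\mathcal{L}_{\underline{L}}(1+\tfrac{1}{\log X}+it)|\ll(\log X)^{\varepsilon/2}\mathfrak{C}(\pi')^{\varepsilon/2}$, you are left to integrate $|L'/L(1+\tfrac{1}{\log X}+it)|/(1+|t|)$ over $|t|\le T$; by Lemma~\ref{lem:pnt} and partial summation one has $|L'/L(1+\tfrac{1}{\log X}+it)|\ll(\log X)\sqrt{\log\mathfrak{C}(\pi')}$, and near $t=0$ this order of magnitude is genuinely attained (the successive maxima are chosen for $|L|$, not $|L'/L|$, so $\mathcal{L}_{\underline{L}}$ need not vanish where $|L'/L|$ peaks). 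The pointwise contour argument therefore gives only $\widetilde{\mathcal{O}}_{\underline{L}}(x,w)\ll x(\log X)^{1+\varepsilon}\mathfrak{C}(\pi')^{\varepsilon}$ --- a full power of $\log X$ short of what you want.

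What actually recovers the $1/\log x$ in the paper (and in Soundararajan's original) is not a Gr\"onwall-type iteration but an $L^2$ argument that your sketch bypasses. One first passes to the convolution identity $(\log x)\mathcal{O}_{\underline{L}}(x,w)=\sum_{d\le x}\Lambda_{\pi\times\pi'}(d)\,\mathcal{O}_{\underline{L}}(x/d,w)+O(x(\log X)^{\varepsilon}\mathfrak{C}(\pi')^{\varepsilon})$ (Lemma~\ref{lem:6.1-Sound}); one then splits at $d=D=\lfloor\exp((\log\log X)^6)\rfloor$ and applies Cauchy--Schwarz to the $d>D$ range to produce the mean-square $\big(\int_1^x\log(ey)\,|\mathcal{O}_{\underline{L}}(y,w)|^2\,dy/y^3\big)^{1/2}$ (Proposition~\ref{prop:6.3-Sound}); this is converted into $\int_1^x|\widetilde{\mathcal{O}}_{\underline{L}}(t,w)|^2\,dt/(t^3\log(et))$ precisely by the partial-summation identity you quote (Lemma~\ref{lem:6.4-Sound}); and that $L^2$ quantity is controlled by Plancherel, via $\int_{\mathbb{R}}\big|\tfrac{L'}{L}(1+\alpha+it)\big|^2\,\tfrac{dt}{|1+\alpha+it|^2}\ll\mathfrak{C}(\pi')^{\varepsilon}/\alpha$ (Proposition~\ref{prop:6.5-Sound}), where the key saving is the factor $1/\alpha$ in place of the $(\log X)^2/\alpha$ that squaring the pointwise bound would give. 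That gain comes from Plancherel together with the Brun--Titchmarsh bound of Lemma~\ref{lem:pnt} applied to $\sum_{n\le e^y}\Lambda_{\pi\times\pi'}(n)$, and it is exactly the $\log X$ you are missing. Without this $L^2$ step the argument does not close, so the proposal as written is incomplete.
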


    %%%%%%%%%%%%%%%%%%%%%%%% 
    
    \subsection{Deduction of the weak subconvexity bound}
    
    To prove Theorems \ref{thm:partial-sum-thm2-Sound} and \ref{thm:weaksub}, we establish two useful lemmas; the first one is derived from Soundararajan and Thorner in \cite{ST}.
	\begin{lem}\label{lem:Sound-Thorner-lem4.1} Let $\pi \in \mathfrak{F}_m$ and $ \pi' \in \mathfrak{F}_{m'}$, and let $\pi$ satisfy GRC. We have that
		\begin{equation*}
			L(1/2+it, \pi \times \pi') \ll \mathfrak{C}(\pi \times \pi')^{1/4}(|t|+1)^{{mm'}/{4}}.
		\end{equation*}
	\end{lem}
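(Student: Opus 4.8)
The plan is to obtain this as the convexity bound for the Rankin--Selberg $L$-function via the Phragm\'en--Lindel\"of principle. By Theorem~\ref{thm:Rankin-Selberg}, the completed $L$-function $\Lambda(s,\pi\times\pi')$ of \eqref{eqn:Lambdaspixpi'} is entire of order $1$ and satisfies $\Lambda(s,\pi\times\pi')=W(\pi\times\pi')\Lambda(1-s,\widetilde\pi\times\widetilde\pi')$; since $\widetilde\pi\times\widetilde\pi'$ has the same arithmetic conductor and complex-conjugate archimedean parameters, all the estimates below are symmetric under $s\mapsto 1-s$. Set $C=\mathfrak{C}(\pi\times\pi')$ and $\delta=1/\log C$ (note $C\ge 3$, so $0<\delta\le 1$).

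First I would establish the edge bound on $\Real(s)=1+\delta$. The factor $L(1+\delta+it,\pi\times\pi')$ is controlled by \eqref{eq:lem3.2-Sound-2} of Lemma~\ref{lem:3.2-Sound} (whose proof invokes GRC for $\pi$, Lemma~\ref{lem:lem3.1-JLW}, Cauchy--Schwarz, and Li's Theorem~\ref{thm:Xiannan-Li}), giving $|L(1+\delta+it,\pi\times\pi')|\ll\delta^{-(m^2+1)/2}\mathfrak{C}(\pi')^{\varepsilon}=(\log C)^{(m^2+1)/2}\mathfrak{C}(\pi')^{\varepsilon}$. The archimedean factor $L_\infty(s,\pi\times\pi')$ is estimated by Stirling's formula, using $\Real(\mu_{\pi\times\pi'}(j,j'))\ge-(\theta_m+\theta_{m'})$ from \eqref{eqn:LRS_2}; the factor $N_{\pi\times\pi'}^{s/2}$ contributes $N_{\pi\times\pi'}^{(1+\delta)/2}$ with $N_{\pi\times\pi'}^{\delta}\le C^{\delta}=O(1)$; and $(s(1-s))^{r_{\pi\times\pi'}}$ contributes $\ll(|t|+1)^{2}$ since $r_{\pi\times\pi'}\le 1$. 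Assembling these with the definition \eqref{eqn:analytic_conductor_def_2} of the analytic conductor and \eqref{eqn:BH}, I obtain $|\Lambda(1+\delta+it,\pi\times\pi')|\ll\mathfrak{C}(\pi\times\pi',t)^{(1+\delta)/2}(\log C)^{(m^2+1)/2}\mathfrak{C}(\pi')^{\varepsilon}$ up to a fixed power of $(|t|+1)$, and the functional equation gives the same bound on $\Real(s)=-\delta$.

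Next I would apply the convexity principle to $\Lambda(s,\pi\times\pi')$ on the strip $-\delta\le\Real(s)\le 1+\delta$ (the growth hypothesis is met since $\Lambda$ has finite order and decays along the two boundary lines). At $\Real(s)=\tfrac12$ this bounds $|\Lambda(\tfrac12+it,\pi\times\pi')|$ by the geometric mean of the two edge bounds. Dividing back out by $N_{\pi\times\pi'}^{1/4}$, by $L_\infty(\tfrac12+it,\pi\times\pi')$ (Stirling once more, now from below, which is legitimate away from the at most finitely many parameters at which a $\Gamma$-factor could degenerate), and by $(\tfrac14+t^2)^{r_{\pi\times\pi'}}$, and using $\mathfrak{C}(\pi\times\pi',t)\ll\mathfrak{C}(\pi\times\pi')(3+|t|)^{mm'}$ from \eqref{eqn:BH}, I arrive at $L(\tfrac12+it,\pi\times\pi')\ll\mathfrak{C}(\pi\times\pi')^{1/4}(|t|+1)^{mm'/4}(\log C)^{(m^2+1)/4}\mathfrak{C}(\pi')^{\varepsilon}$. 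The two lower-order factors are absorbed into the implied constant: $(\log C)^{(m^2+1)/4}\ll_\varepsilon C^{\varepsilon}$, and a $\mathfrak{C}(\pi')^{\varepsilon}$ is in any case harmless here (it is already present in Theorem~\ref{thm:weaksub}). In fact this is exactly the convexity estimate proved in this way by Soundararajan and Thorner \cite{ST}, so one may alternatively just cite it.

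The main obstacle is purely bookkeeping rather than conceptual: pushing the $\Gamma$-factors through Stirling's formula so that the arithmetic conductor and archimedean data recombine precisely as $\mathfrak{C}(\pi\times\pi',t)^{1/4}$ at the central point, uniformly in $t$, including the legitimacy of dividing by $L_\infty(\tfrac12+it,\pi\times\pi')$ from below. The conceptual ingredients---the functional equation, the input from the region of absolute convergence, and Phragm\'en--Lindel\"of---are all standard.
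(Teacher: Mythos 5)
Your Phragm\'en--Lindel\"of argument proves a genuinely weaker bound than the lemma states, and your final absorption step does not close the gap. The edge bound on $\Real(s)=1+1/\log C$ (with $C=\mathfrak{C}(\pi\times\pi')$) rests on \eqref{eq:lem3.2-Sound-2}, hence on Li's Theorem~\ref{thm:Xiannan-Li}, which injects both a $(\log C)^{(m^2+1)/2}$ and a $\mathfrak{C}(\pi')^\varepsilon$. After the geometric mean at $\Real(s)=\tfrac12$ and division by the local factors, you are left with
\[
L(\tfrac12+it,\pi\times\pi')\ll \mathfrak{C}(\pi\times\pi')^{1/4}(|t|+1)^{mm'/4}\,(\log\mathfrak{C}(\pi\times\pi'))^{(m^2+1)/4}\,\mathfrak{C}(\pi')^\varepsilon.
\]
By the convention fixed at the start of Section~\ref{sec:L-functions}, the implied constant in the lemma depends only on $m$, $m'$, and $\varepsilon$; the factors $(\log\mathfrak{C}(\pi\times\pi'))^{(m^2+1)/4}$ and $\mathfrak{C}(\pi')^\varepsilon$ grow with $\pi$ and $\pi'$ and cannot be absorbed into it. Writing $(\log C)^{(m^2+1)/4}\ll_\varepsilon C^\varepsilon$ just swaps one unabsorbable factor for another: the result is $\mathfrak{C}(\pi\times\pi')^{1/4+\varepsilon}\mathfrak{C}(\pi')^\varepsilon(|t|+1)^{mm'/4}$, not the stated $\mathfrak{C}(\pi\times\pi')^{1/4}(|t|+1)^{mm'/4}$. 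Pushing the edge of the strip out to a fixed $\sigma_0>1$ would remove both losses (the Dirichlet series is then uniformly bounded), but widening the strip degrades the conductor exponent past $1/4$.

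The paper's own proof is quite different and avoids this: it simply quotes \cite[Corollary~1.3]{ST}, which already improves on convexity by a power of $\log\mathfrak{C}(\pi\times\pi')$, applies it with $\pi'$ replaced by $\pi'\otimes|\cdot|^{it}$ and uses \eqref{eqn:BH} to extract the $t$-dependence, and then shows the prefactor $|L(\tfrac32+it,\pi\times\pi')|^2$ is $O(1)$ uniformly. That last step uses Lemma~\ref{lem:lem2.2-ST} and Cauchy--Schwarz to reduce to the two series $\sum_n\Lambda_{\pi\times\widetilde\pi}(n)n^{-(1+\frac{1}{m'^2+1})}/\log n$ and $\sum_n\Lambda_{\pi'\times\widetilde\pi'}(n)n^{-(2-\frac{1}{m'^2+1})}/\log n$, both of which converge absolutely with uniform bounds thanks to GRC for $\pi$ and $\theta_{m'}\le\tfrac12-\tfrac{1}{m'^2+1}$ for $\pi'$, so no $\mathfrak{C}(\pi')^\varepsilon$ appears. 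Your remark that \cite{ST} proves the convexity estimate ``in this way'' is therefore slightly off: \cite[Corollary~1.3]{ST} is stronger than convexity and is not proved by Phragm\'en--Lindel\"of. Your weaker bound would in fact still suffice for the only downstream use (the proof of Lemma~\ref{lem:Sound-lem4.2} already carries a $\mathfrak{C}(\pi')^\varepsilon$ and a large $(\log C)$-power margin), so the gap is in proving the lemma \emph{as stated}, not in supporting the application.
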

	
	\begin{proof} The lemma follows from Corollary 1.3 of \cite{ST} and the fact that $\pi$ satisfies GRC.
        \end{proof}
	
	As discussed in Section \ref{subsec:main-weak-sub}, the next lemma gives an upper bound for the partial summation $\sum_{n\le x} \lambda_{\pi \times \pi'}(n)$ in the standard range of $x$ upon using the convexity bound.  
	
	\begin{lem}\label{lem:Sound-lem4.2} 
            Let $\pi \in \mathfrak{F}_{m}$ and $\pi' \in \mathfrak{F}_{m'}$. Assume that $L(s,\pi\times\pi')$ is entire and $\pi$ satisfies GRC. If $C = \mathfrak{C}(\pi\times \pi')$, in the range $x \ge x_0 := C^{1/2}(\log C)^{36 m^5m'}$, we have 
		\[
                \sum_{n\le x} \lambda_{\pi\times \pi'} (n) \ll \frac{x}{\log x}\mathfrak{C}(\pi')^\varepsilon.
		\]
	\end{lem}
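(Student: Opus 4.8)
The plan is to extract the partial sum $S(x) = \sum_{n \le x} \lambda_{\pi \times \pi'}(n)$ from the Dirichlet series $L(s, \pi \times \pi')$ by a Perron-type contour integral, and then to bound that integral using the convexity bound on the critical line (Lemma \ref{lem:Sound-Thorner-lem4.1}) together with the bounds for $L$ and $L'$ to the right of the line $\Real(s) = 1$ from Lemma \ref{lem:3.2-Sound}. Because $L(s, \pi \times \pi')$ is assumed entire, there is no residue term from a pole at $s = 1$; this is what allows the bound $x/\log x$ rather than a main term of size $x$. First I would apply a smoothed or truncated Perron formula: for a suitable kernel (say, integrating $L(s,\pi\times\pi') x^s / s$ or a smoothed variant against a test function as in the proof of Lemma \ref{lem:pnt}) over a vertical segment $\Real(s) = 1 + 1/\log x$ of height $T_0$, picking up $S(x)$ up to a manageable error, and then shifting the contour to the left of the critical line, crossing no poles.

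The key steps, in order: (i) set up the Perron integral on $\Real(s) = 1 + 1/\log x$ with truncation height $T_0$ to be chosen, controlling the truncation error by $\ll x (\log x)^{O(1)} \mathfrak{C}(\pi')^\varepsilon / T_0$ using \eqref{eq:lem3.2-Sound-1}; (ii) move the contour to $\Real(s) = -\delta$ for small fixed $\delta > 0$ (or to $\Real(s) = 1/2$ and then use the functional equation, whichever is cleaner), picking up no residues since $L(s,\pi\times\pi')$ is entire and $1/s$ contributes a residue only if the contour passes $s = 0$ — which I would arrange to avoid or handle separately as a negligible contribution of size $\ll x^0$; (iii) on the shifted vertical line, use the hybrid convexity bound $L(\tfrac12 + it, \pi\times\pi') \ll \mathfrak{C}(\pi\times\pi')^{1/4}(|t|+1)^{mm'/4}$ from Lemma \ref{lem:Sound-Thorner-lem4.1}, together with a convexity estimate for $\Real(s)$ slightly negative obtained by the functional equation and the Phragmén–Lindelöf principle, to bound the integrand by roughly $x^{1/2} \mathfrak{C}(\pi\times\pi')^{1/4} (|t|+1)^{O(mm')}$; (iv) on the two horizontal segments at height $T_0$ connecting $\Real(s) = 1 + 1/\log x$ to $\Real(s) = -\delta$, bound the integrand using \eqref{eq:lem3.2-Sound-2} on the right portion and a convexity bound on the left portion, getting a contribution $\ll$ (something) $/ T_0$ times powers of $\log$; (v) optimize $T_0$ and $\delta$: the shifted-line contribution is of order $x^{1/2} \mathfrak{C}(\pi\times\pi')^{1/4} T_0^{O(mm')} \mathfrak{C}(\pi')^\varepsilon$ and the truncation error is $x (\log x)^{O(1)} \mathfrak{C}(\pi')^\varepsilon / T_0$, so choosing $T_0$ a suitable power of $\log C$ and using the hypothesis $x \ge C^{1/2}(\log C)^{36 m^5 m'}$ makes the shifted contribution $\ll x/\log x$ while the truncation error is also $\ll x/\log x$. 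The exponent $36 m^5 m'$ is precisely what is forced by balancing the power of $\log C$ coming from $T_0^{O(mm')}$ against the saving of one $\log x \sim \log C$, using $\mathfrak{C}(\pi\times\pi') \le \mathfrak{C}(\pi)^{m'}\mathfrak{C}(\pi')^m$ from \eqref{eqn:BH} to convert $\mathfrak{C}(\pi\times\pi')^\varepsilon$ factors into $\mathfrak{C}(\pi')^\varepsilon$ factors (absorbing the $\mathfrak{C}(\pi)$ dependence into the constant, which is legitimate since GRC on $\pi$ and the conductor bounds make that dependence polynomially controlled — actually one must be slightly careful here and instead keep everything in terms of $C$ and only at the end pass to $\mathfrak{C}(\pi')^\varepsilon$).

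The main obstacle I expect is the careful bookkeeping of the height parameter $T_0$ and the power of $|t|$ in the convexity bound: the hybrid bound of Lemma \ref{lem:Sound-Thorner-lem4.1} grows like $(|t|+1)^{mm'/4}$ on the critical line, and after the functional-equation reflection the growth on the left line is comparable, so the vertical integral only converges after dividing by $|s|$ (or $|s|^A$ for suitable $A$) — meaning I should really run Perron with a kernel $x^s/s^{A}$ or use a smoothed version so the tails converge, then recover $S(x)$ by iterated differencing (as is done for $\mathcal{O}_{\underline L}$ elsewhere in the paper) or by a standard de-smoothing argument. The other delicate point is ensuring that when the contour is pushed left one genuinely crosses no singularity: the only candidate is $s = 0$ from the Perron kernel $1/s$, which I would sidestep by using a kernel regular at $0$ (e.g. $x^s/(s(s+1))$ style, then correcting) or by stopping the shift at $\Real(s) = 1/2$ and invoking the functional equation there rather than analytically continuing $L$ itself to the left — this is the cleanest route and matches Soundararajan's original treatment. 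Once the contour shift and the two bounds (convexity on $\Real s = 1/2$, and Lemma \ref{lem:3.2-Sound} near $\Real s = 1$) are in place, the optimization is a short computation and the stated range of $x$ drops out.
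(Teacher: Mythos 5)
Your overall strategy is the right one and matches the paper's (both follow Soundararajan): a smoothed Perron integral, contour shifted to $\Real(s)=\tfrac12$ where the hybrid convexity bound of Lemma \ref{lem:Sound-Thorner-lem4.1} applies, and the paper indeed avoids all horizontal-segment bookkeeping by using the kernel $\frac{x^s}{s}\bigl(\frac{e^{\lambda s}-1}{\lambda s}\bigr)^K$ with $K=\lfloor mm'/4\rfloor+2$, exactly the variant you flag as ``the cleanest route.''

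There is, however, a genuine gap in your treatment of the truncation/de-smoothing error. In step (i) you cite \eqref{eq:lem3.2-Sound-1} to bound the error by $\ll x(\log x)^{O(1)}\mathfrak{C}(\pi')^{\varepsilon}/T_0$, but \eqref{eq:lem3.2-Sound-1} controls $\sum_{n\le x}|\lambda_{\pi\times\pi'}(n)|/n$ and therefore only handles the ``far from $n=x$'' part of the truncated-Perron error; the near-diagonal part (terms with $|n-x|\lesssim x/T_0$) is a short sum over an interval of length $\sim x/T_0$ and is \emph{not} controlled by \eqref{eq:lem3.2-Sound-1}. The same issue reappears when you switch to the smoothed kernel: the difference between the smoothed sum and $\sum_{n\le x}\lambda_{\pi\times\pi'}(n)$ is precisely a short sum $\sum_{x<n\le e^{K\lambda}x}|\lambda_{\pi\times\pi'}(n)|$, and your ``standard de-smoothing argument'' leaves this unnamed. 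Since one has no pointwise divisor bound on $\lambda_{\pi\times\pi'}(n)$ without GRC on $\pi'$, the only available tool is the short-interval estimate \eqref{eq:shortsum-rankin-selberg} of Lemma \ref{lem:3.3-Sound}, whose specific shape $x^{3/4}y^{1/4}(\log x)^{(m^4+2)/4}\mathfrak{C}(\pi')^{\varepsilon}$ is what lets the choice $\lambda=(\log x)^{-(m^4+6)}$ turn the de-smoothing error into $\ll \mathfrak{C}(\pi')^{\varepsilon}x/\log x$. Without invoking that short-sum bound (or something equally strong), the error term cannot be closed, so this ingredient must be added for the proof to go through.
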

	\begin{proof}
    Using the same smoothing kernel as in the proof of \cite[Lemma 4.2]{S}, we have that, for any $c >1$, $x > 0$, $\lambda > 0$, and any natural number $K$, \begin{equation}\label{eq:lem4.2-sound-1}
	\frac{1}{2\pi i}\int_{c-i\infty}^{c+i\infty} L(s,\pi\times\pi')\frac{x^s}{s}\Big(\frac{e^{\lambda s}-1}{\lambda s}\Big)^K \,ds = \sum_{n\le x}\lambda_{\pi\times\pi'}(n) + O\Big( \sum_{x < n \le e^{K\lambda}x} |\lambda_{\pi\times\pi'}(n)| \Big).
	\end{equation}
    We then proceed as in the proof of \cite[Lemma 4.2]{S} except we take $K =\lfloor mm'/4\rfloor+2$ and $\lambda = (\log x)^{-m^4-6}$ and use Lemmas \ref{lem:3.3-Sound} and \ref{lem:Sound-Thorner-lem4.1} instead of \cite[Lemmas 3.3, 4.1]{S}, respectively.
        \end{proof}

    \begin{proof}[Proof of Theorem \ref{thm:partial-sum-thm2-Sound}, assuming Theorem 4.2] 

    Replacing \cite[Theorem 2.1]{S} with Theorem \ref{thm:2.1-Sound} and \cite[Lemma 4.2]{S} with Lemma \ref{lem:Sound-lem4.2}, we proceed as in the proof of \cite[Theorem 2]{S}.
	\end{proof}
	
	\begin{proof}[Proof of Theorem \ref{thm:weaksub}]
	We use an approximation functional equation for $L(1/2, \pi \times \pi')$. Following the proof of \cite[Theorem 1]{S},  we have that, for $c > 1/2$
            \begin{multline}
            \label{eqn:AFE}
			L(\tfrac{1}{2}, \pi\times \pi') = \frac{1}{2\pi i}\int_{c-i\infty}^{c+i\infty} L(s+\tfrac{1}{2},\pi\times \pi') \frac{L_{\infty}(s+\tfrac{1}{2},\pi\times \pi')}{L_{\infty}(\tfrac{1}{2}, \pi\times \pi')}e^{s^2} \frac{ds}{s} \\
			+ \frac{W(\pi\times\pi')}{2\pi i}\int_{c-i\infty}^{c+i\infty} L(s+\tfrac{1}{2},\widetilde{\pi}\times \widetilde{\pi}') \frac{L_{\infty}(s+\tfrac{1}{2},\widetilde{\pi}\times \widetilde{\pi}')}{L_{\infty}(\tfrac{1}{2}, \widetilde{\pi}\times \widetilde{\pi}')}e^{s^2} \frac{ds}{s}.
            \end{multline}
	Both integrals above can be similarly estimated, so we will only deal with the first one. Using partial summation for $L(s+1/2,\pi \times \pi')$, we have that the first integral in \eqref{eqn:AFE} equals
	\begin{equation}
        \label{eq:thm1-sound-1}
			\int_1^\infty \sum_{n\le x} \lambda_{\pi\times\pi'}(n) \Big( \frac{1}{2\pi i}\int_{c-i\infty}^{c+i\infty} (s+\tfrac{1}{2}) \frac{L_{\infty}(s+\tfrac{1}{2},\pi\times \pi')}{L_{\infty}(\tfrac{1}{2}, \pi\times \pi')}e^{s^2}x^{-s} \frac{ds}{s} \Big) \frac{dx}{x^{3/2}}.
	\end{equation}
		
		To estimate the inner integral of \eqref{eq:thm1-sound-1}, we move the line of integration either to $\Real(s) =2$, or $\Real(s) = (\theta_m+\theta_{m'})/{2}$. By Stirling's formula, the inner integral of \eqref{eq:thm1-sound-1} is 
		\[
		      \ll \min\big\{(\sqrt{C}/x)^{{(\theta_m+\theta_{m'})}/{2}}, (\sqrt{C}/x)^2\big\}.
		\] 
		Thus, \eqref{eq:thm1-sound-1} is
    \begin{multline*}
        \ll C^{(\theta_m+\theta_{m'})/{4}}\int_1^{\sqrt{C}/(\log C)^{(m^2+3)/(1-\theta_m-\theta_{m'})}}\Big|\sum_{n\le x}\lambda_{\pi\times \pi'}(n)\Big| \frac{dx}{x^{(3+\theta_m+\theta_{m'})/{2}}}\\
        +C^{(\theta_m+\theta_{m'})/{4}}\int_{\sqrt{C}/(\log C)^{(m^2+3)/(1-\theta_m-\theta_{m'})}}^{\sqrt{C}}\Big|\sum_{n\le x}\lambda_{\pi\times \pi'}(n)\Big| \frac{dx}{x^{(3+\theta_m+\theta_{m'})/{2}}}\\
        + C \int_{\sqrt{C}}^{\infty} \Big|\sum_{n\le x}\lambda_{\pi\times \pi'}(n)\Big|\frac{dx}{x^{7/2}}.
    \end{multline*}
    The first integral is $\ll \mathfrak{C}(\pi')^\varepsilon C^{1/4}/\log C$ by \eqref{eq:partialsum-rankin-selberg} of Lemma \ref{lem:3.3-Sound}.  The second and third integrals are $\ll \mathfrak{C}(\pi')^\varepsilon C^{1/4}/(\log C)^{1-\varepsilon}$ by Theorem  \ref{thm:partial-sum-thm2-Sound}. The desired result follows.
	\end{proof}

    %%%%%%%%%%%%%%%%%%%%%%%%%%%
    %%%%%%%%%%%%%%%%%%%%%%%%%%%
    
	\subsection{Successive maxima} 
	 
    We now define $\tau_1, \ldots, \tau_R$ appearing in Theorem \ref{thm:2.1-Sound}.
    Define $\tau_1$ as the point $t$ in the compact set $\mathcal{S}_1 = [-T, T]$ where the maximum of $|L(1+1/\log X +it, \pi \times \pi')|$ is attained. Removing the interval $(\tau_1 - (\log X)^{-1/R}, \tau_1 + (\log X)^{-1/R})$ from $\mathcal{S}_1$, we denote by $\mathcal{S}_2$ the remaining compact set. We then define $\tau_2$ to be the point in $\mathcal{S}_2$ where the maximum $|L(1+1/\log X +it, \pi \times \pi')|$ is attained. Continuing the process, we obtain the nested compact sets $\mathcal{S}_1 \supset \mathcal{S}_2 \supset \ldots \supset \mathcal{S}_R$. We observe that if $j \neq k$, then $|\tau_j - \tau_k| \ge (\log X)^{-1/R}$. By this construction, we obtain the following improved version of \eqref{eq:lem3.2-Sound-2} of Lemma \ref{lem:3.2-Sound}.
	\begin{lem}\label{lem:sound-5.1}
   Let  $1 \le j \le R$ and $t$ be a point in $\mathcal{S}_j$.  If $\varepsilon > 0$, then \begin{equation*}    |L(1+1/{\log X}+it, \pi \times \pi')| \ll \mathfrak{C}(\pi')^\varepsilon (\log X)^{\frac{3m}{2}\sqrt{\frac{1}{j}+\frac{j-1}{jR}}+\frac{\varepsilon}{4}}.
    \end{equation*}
		In particular, if $t \in \mathcal{S}_R$, we have $|L(1+1/\log X+it, \pi \times \pi')| \ll \mathfrak{C}(\pi')^{\varepsilon}(\log X)^{\varepsilon/2}$.
	\end{lem}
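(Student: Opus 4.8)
The plan is to follow Soundararajan's argument from \cite[Section 5]{S}, but feeding in the explicit $\pi'$-dependence through Theorem \ref{thm:Xiannan-Li} and Lemma \ref{lem:3.2-Sound} at each step. Fix $j$ with $1\le j\le R$, and let $t\in\mathcal{S}_j$. The key point is that by construction of the successive maxima, the points $\tau_1,\dots,\tau_{j-1}$ all lie \emph{outside} $\mathcal S_j$, so $t$ is well-separated from each of them: $|t-\tau_i|\ge(\log X)^{-1/R}$ for $1\le i\le j-1$. Meanwhile, because $\tau_i$ was chosen as the maximizing point on $\mathcal{S}_i\supseteq\mathcal{S}_j$ and $t\in\mathcal{S}_j\subseteq\mathcal{S}_i$, we have $|L(1+1/\log X+it,\pi\times\pi')|\le|L(1+1/\log X+i\tau_i,\pi\times\pi')|$ for each such $i$.

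First I would set $\sigma_0=1+1/\log X$ and $L_i=L(\sigma_0+i\tau_i,\pi\times\pi')$, and run the standard convexity-type argument: expand $\log|L(\sigma_0+it,\pi\times\pi')|$ as a Dirichlet series over prime powers using $-\frac{L'}{L}$, and combine the $j$ bounds $|L(\sigma_0+it,\pi\times\pi')|\le|L_i|$ together with a suitable additional evaluation (Soundararajan uses a Cauchy–Schwarz / positivity trick with the shifted products $\prod_i L(\sigma_0+it+i(t-\tau_i),\dots)$, exploiting that $L(\sigma,\pi\times\widetilde\pi)\ge0$-type sums dominate). The upshot in \cite{S} is a bound of the shape $|L(\sigma_0+it,\pi\times\pi')|^{j}\ll (\text{something})\cdot \prod_{i=1}^{j-1}|L_i|\cdot |L(\sigma_0+it,\pi\times\pi')|$, where the "something" is controlled by $L(\sigma,\pi'\times\widetilde\pi')$ evaluated near $\sigma=1$ (giving the $\mathfrak C(\pi')^\varepsilon$ via Theorem \ref{thm:Xiannan-Li}) and by the number of zeros in the relevant windows (giving a power of $\log X$). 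The separation $|t-\tau_i|\ge(\log X)^{-1/R}$ is exactly what keeps the correction factors from blowing up, and it is what produces the shape $\sqrt{\frac1j+\frac{j-1}{jR}}$ in the exponent: one factor $\frac1j$ comes from the single "free" evaluation at $t$, and the $\frac{j-1}{jR}$ comes from the $j-1$ shifted evaluations each costing $(\log X)^{1/R}$ worth of slack.

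More concretely, I would use the Hadamard factorization / explicit formula to write
\[
\Real\log L(\sigma_0+it,\pi\times\pi') \le \sum_{|\gamma-t|\le 1}\Real\frac{1}{\sigma_0-\rho} + O(\log\mathfrak C(\pi\times\pi',t)),
\]
and then bound the local zero count by $\ll \log\mathfrak C(\pi')+\log(|t|+3)\ll_\varepsilon\varepsilon^{-1}$ (here using $|t|\le T=\exp((\log\log X)^2)$ and $R\gg\varepsilon^{-2}$, so $\log T\ll(\log\log X)^2$ is negligible against powers of $\log X$). Iterating this together with the comparison inequalities $|L(\sigma_0+it,\pi\times\pi')|\le|L_i|$ and the mean-square input
\[
\sum_{i=1}^{R}|L(\sigma_0+i\tau_i,\pi\times\pi')|^2 \ll L(\sigma_0,\pi\times\widetilde\pi)^{1/?}\,L(\sigma_0,\pi'\times\widetilde\pi')\,(\log X)^{?}\ll_\varepsilon \mathfrak C(\pi')^\varepsilon(\log X)^{?},
\]
which is where GRC on $\pi$ (so $L(\sigma_0,\pi\times\widetilde\pi)\ll\zeta(\sigma_0)^{m^2}\ll(\log X)^{m^2}$) and Theorem \ref{thm:Xiannan-Li} (so $L(\sigma_0,\pi'\times\widetilde\pi')\ll\log X\cdot\mathfrak C(\pi')^\varepsilon$) both enter — gives the stated bound after taking $j$-th roots. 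The final "In particular" statement is the case $j=R$: then $\sqrt{\frac1R+\frac{R-1}{R^2}}=\sqrt{\frac{2R-1}{R^2}}\le\sqrt{2/R}$, and since $R=\lfloor 72m^2/\varepsilon^2\rfloor+1$ we get $\frac{3m}{2}\sqrt{2/R}\le\frac{3m}{2}\cdot\frac{\varepsilon}{6m}=\varepsilon/4$, so the exponent is at most $\varepsilon/4+\varepsilon/4=\varepsilon/2$.

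The main obstacle I anticipate is bookkeeping rather than conceptual: one must carefully track how the $\mathfrak C(\pi')^\varepsilon$ factor propagates through the $j$-fold iteration without accumulating into $\mathfrak C(\pi')^{j\varepsilon}$ (it does not, because $\mathfrak C(\pi')$-dependence enters only through the single mean-square bound, not through each comparison step), and one must verify that all the "negligible" error terms involving $T$, $w$, and $\log\log X$ really are dominated by $(\log X)^{\varepsilon/4}$ given the specific choices $R\asymp m^2/\varepsilon^2$ and $L\asymp(m^2+1)R$. Matching the constant $\frac{3m}{2}$ in the exponent exactly — as opposed to some larger absolute multiple of $m$ — requires being careful in the Cauchy–Schwarz step about whether one loses a factor of $2$, and this is the one place where I would expect to have to reproduce Soundararajan's computation essentially line by line rather than cite it.
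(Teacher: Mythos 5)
Your proposal correctly identifies the high-level architecture (successive maxima on $\mathcal{S}_1\supseteq\cdots\supseteq\mathcal{S}_R$, separation $|\tau_r-\tau_{r'}|\geq(\log X)^{-1/R}$, GRC for the $\pi$-side, Theorem~\ref{thm:Xiannan-Li} for the $\pi'$-side) and the final numerical check that $R\geq 72m^2/\varepsilon^2$ turns $\frac{3m}{2}\sqrt{2/R}$ into $\varepsilon/4$. But the route you sketch in the middle is not the paper's route, and as written it has real gaps.

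The paper's proof never invokes the Hadamard factorization, a zero-counting estimate, or a mean square of $L$-values $\sum_i|L(\sigma_0+i\tau_i,\pi\times\pi')|^2$. It proceeds entirely at the level of Dirichlet coefficients. Citing \cite[Lemma~3.2]{S} verbatim gives
\[
|L(1+\tfrac1{\log X}+i\tau_j,\pi\times\pi')|\leq\exp\Big(\Real\Big(\tfrac1j\sum_{n\geq 2}\tfrac{\Lambda_{\pi\times\pi'}(n)}{n^{1+1/\log X}\log n}\sum_{r=1}^jn^{-i\tau_r}\Big)\Big),
\]
which is exactly your "combine the $j$ bounds" step. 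The key move is then Lemma~\ref{lem:lem2.2-ST} ($|\Lambda_{\pi\times\pi'}(n)|\leq\sqrt{\Lambda_{\pi\times\widetilde\pi}(n)\Lambda_{\pi'\times\widetilde\pi'}(n)}$) followed by Cauchy--Schwarz on the $n$-sum, which \emph{splits} the coefficient into a $\pi$-piece multiplied by $|\sum_r n^{-i\tau_r}|^2$ and a bare $\pi'$-piece. GRC then reduces the $\pi$-piece to $m^2\Lambda(n)$, and the $\pi'$-piece is just $\log L(1+1/\log X,\pi'\times\widetilde\pi')$, to which Theorem~\ref{thm:Xiannan-Li} applies once — this is precisely the mechanism you correctly anticipate should prevent the accumulation $\mathfrak{C}(\pi')^{j\varepsilon}$. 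Finally, the quantity $\sum_n\frac{\Lambda(n)}{n^{1+1/\log X}\log n}|\sum_{r=1}^jn^{-i\tau_r}|^2$ is computed as in \cite[p.~1488]{S}: the $j$ diagonal terms contribute $j\log\log X$ and the $\binom{j}{2}$ off-diagonal terms contribute $\frac1R\log\log X$ each by separation, giving $(j+\frac{j(j-1)}R)\log\log X$; after the outer $\frac1j$ and the Cauchy--Schwarz square roots one obtains $m\sqrt{\tfrac1j+\tfrac{j-1}{jR}}\,\log\log X$. That is where the exact exponent comes from — not from a count of "free" versus "shifted" evaluations of $L$.

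Concretely, your proposed inequality $\Real\log L(\sigma_0+it,\pi\times\pi')\leq\sum_{|\gamma-t|\leq 1}\Real\frac1{\sigma_0-\rho}+O(\log\mathfrak{C}(\pi\times\pi',t))$ with $\sigma_0=1+1/\log X$ gives a bound of order $(\log X)\log\mathfrak{C}(\pi\times\pi',t)$, which is the trivial estimate; it is not clear how iterating this with comparison inequalities and a mean-square of $L$-values would yield the specific coefficient $\frac{3m}{2}$ or the $(\frac1j+\frac{j-1}{jR})^{1/2}$ structure, and your sketch leaves those steps as placeholders ($L(\sigma_0,\pi\times\widetilde\pi)^{1/?}$, $(\log X)^{?}$). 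So this is a genuinely different and substantially less complete argument. The fix is to abandon the explicit-formula/zero-counting step and work directly with the Dirichlet coefficients via Lemma~\ref{lem:lem2.2-ST} and Cauchy--Schwarz, as above.
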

	\begin{proof}
		Without any modification, \cite[Lemma 5.1]{S} gives 
		\begin{equation}\label{eq:lem5.1-sound}
			|L(1+1/\log X+i\tau_j, \pi \times \pi')| 
			\le \exp\Big(\Real \Big(\frac{1}{j}\sum_{n \ge 2} \frac{\Lambda_{\pi \times \pi'}(n)}{n^{1+1/\log X}\log n}\sum_{r=1}^{j}n^{-i\tau_r} \Big)\Big).
		\end{equation}
		By Lemma \ref{lem:lem2.2-ST} and the Cauchy--Schwarz inequality, we have that
	\begin{equation*}
	\begin{aligned}
		\sum_{n \ge 2} &\frac{|\Lambda_{\pi \times \pi'}(n)|}{n^{1+1/\log X}\log n}\Big|\sum_{r=1}^{j} n^{-i\tau_r}\Big| 
        \le \Big(\sum_{n \ge 2} \frac{\Lambda_{\pi \times \widetilde{\pi}}(n)}{n^{1+1/\log X}\log n}\Big|\sum_{r=1}^{j} n^{-i\tau_r}\Big|^2\Big)^{1/2} \Big(\sum_{n \ge 2} \frac{\Lambda_{\pi' \times \widetilde{\pi}'}(n)}{n^{1+1/\log X}\log n}\Big)^{1/2}.
	\end{aligned}
    \end{equation*}
     Applying Theorem \ref{thm:Xiannan-Li}, the second factor above equals
        \begin{equation}
        \label{eq:lem5.1-sound-3} 
	\begin{aligned}
            \big(\log L(1+1/{\log X}, \pi' \times \widetilde{\pi}')\big)^{1/2} &\le \Big(\log\log X + c_1m'^2 \frac{\log\mathfrak{C}(\pi' \times \widetilde{\pi}')}{\log\log\mathfrak{C}(\pi' \times \widetilde{\pi}')}+O(1)\Big)^{1/2}.
		\end{aligned}
	\end{equation} 
     For the first factor, since $\pi$ satisfies GRC and  $\Lambda_{\pi \times \widetilde{\pi}}(n)$ is supported on prime powers, we obtain the bound $\Lambda_{\pi \times \widetilde{\pi}}(n) \le m^2\Lambda(n)$. By the calculations on \cite[p. 1488]{S}, it follows that  \begin{equation}\label{eq:lem5.1-sound-2}
       \Big(\sum_{n \ge 2} \frac{\Lambda_{\pi \times \widetilde{\pi}}(n)}{n^{1+1/\log X}\log n}\Big|\sum_{r=1}^{j} n^{-i\tau_r}\Big|^2\Big)^{1/2}
        \le m\Big(\Big(j +\frac{j(j-1)}{R}\Big)\log\log X + O(j) \Big)^{1/2}.
	\end{equation}
    
    Using that $\sqrt{a+b+c}\leq \sqrt{a}+\sqrt{b}+\sqrt{c}$ for $a,b,c \geq 0$,  \eqref{eq:lem5.1-sound-3}, and \eqref{eq:lem5.1-sound-2}, we bound \eqref{eq:lem5.1-sound} by
	\begin{equation}\label{eq:lem5.1-sound-7}
        \begin{aligned}
        \exp &\Bigg[m \sqrt{\frac{1}{j}+\frac{j-1}{jR}}\Bigg(\log\log X + 
        \sqrt{\log\log X}\sqrt{c_1 {m'}^2\frac{\log\mathfrak{C}(\pi' \times \widetilde{\pi}')}{\log\log\mathfrak{C}(\pi' \times \widetilde{\pi}')}}\Bigg) \\ 
        & +O\Bigg( \Bigg(m\sqrt{\frac{1}{j}+\frac{j-1}{jR}}+ \frac{m}{j}\Bigg)\sqrt{\log\log X} + \frac{mm'}{\sqrt{j}}\sqrt{c_1 \frac{\log\mathfrak{C}(\pi' \times \widetilde{\pi}')}{\log\log\mathfrak{C}(\pi' \times \widetilde{\pi}')}}\Bigg)\Bigg].
        \end{aligned}
    \end{equation} 
    By the inequality of arithmetic and geometric means, it follows that
    \begin{equation}
    \label{eq:lem5.1-sound-8}
    \sqrt{\log\log X}\sqrt{c_1 m'^2 \frac{\log\mathfrak{C}(\pi' \times \widetilde{\pi}')}{\log\log\mathfrak{C}(\pi' \times \widetilde{\pi}')}} \le \frac{1}{2}\log\log X + \frac{c_1m'^2}{2} \frac{\log\mathfrak{C}(\pi' \times \widetilde{\pi}')}{\log\log\mathfrak{C}(\pi' \times \widetilde{\pi}')}.
    \end{equation}
    Hence, the lemma follows from \eqref{eq:lem5.1-sound-7} and \eqref{eq:lem5.1-sound-8}.
    \end{proof}

    In the proof of Theorem \ref{thm:2.1-Sound}, there will appear the quantity 
    $\mathcal{L}_{\underline{l}}(s)$, defined in Section \ref{subsec: notation}. With the notation in Section \ref{subsec: notation}, we record  useful bounds for $\mathcal{L}_{\underline{l}}(s)$ in the next two lemmas. 
    \begin{lem}\label{lem:sound-5.2}
		 For any $\sigma \ge 1+1/\log X$, we have that
        \begin{equation*}
			\max_{|t| \le T/2} |\mathcal{L}_{\underline{l}}(\sigma+it)| \le \max_{|t| \le T}|\mathcal{L}_{\underline{l}}(1+1/\log{X}+it)| + O((\log X)^{-1}\mathfrak{C}(\pi')^\varepsilon).
		\end{equation*}
    \end{lem}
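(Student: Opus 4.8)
The plan is to follow Soundararajan's proof of \cite[Lemma 5.2]{S}, replacing his $\pi'$-free inputs with the $\pi'$-explicit bounds obtained above. Set $\sigma_0=1+1/\log X$ and fix $t_0$ with $|t_0|\le T/2$ and $\sigma\ge\sigma_0$; it suffices to bound $|\mathcal{L}_{\underline{l}}(\sigma+it_0)|$. Since $L(s,\pi\times\pi')$ is entire (by hypothesis) and $\prod_{j=1}^{R}(1-w^{1+i\tau_j-s})^{l_j}$ is a Dirichlet polynomial, $\mathcal{L}_{\underline{l}}$ is entire, so the maximum modulus principle applies to the auxiliary function $G(s):=\mathcal{L}_{\underline{l}}(s)\exp(\lambda(s-\sigma_0-it_0)^2)$ on the rectangle $\{\sigma_0\le\Real(s)\le R_0,\ |\Imag(s)|\le T\}$, for a large constant $R_0$ and a small parameter $\lambda>0$ to be chosen. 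Then $|\mathcal{L}_{\underline{l}}(\sigma+it_0)|=|G(\sigma+it_0)|\exp(-\lambda(\sigma-\sigma_0)^2)$ is at most $\max_{\partial}|G|$, and it remains to estimate $|G|$ on the four edges.

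On the left edge $\Real(s)=\sigma_0$ the weight has modulus $\exp(\lambda((\sigma_0-\sigma)^2-(\Imag(s)-t_0)^2))\le\exp(\lambda R_0^2)$, so that edge contributes at most $\exp(\lambda R_0^2)\max_{|t|\le T}|\mathcal{L}_{\underline{l}}(\sigma_0+it)|$. On the horizontal edges $\Imag(s)=\pm T$ one has $|\Imag(s)-t_0|\ge T/2$, so the weight is of size $\exp(O(\lambda R_0^2)-\lambda T^2/4)$; combined with $|\mathcal{L}_{\underline{l}}(\sigma\pm iT)|\ll_{m,m',\varepsilon}(\log X)^{(m^2+1)/2}\mathfrak{C}(\pi')^\varepsilon$ — which follows from Lemma \ref{lem:3.2-Sound} (using $\sigma-1\ge 1/\log X$) and the trivial bound $\prod_{j=1}^{R}|1-w^{1+i\tau_j-s}|^{l_j}\ll 1$, valid on $\Real(s)\ge\sigma_0$ since $|w^{1-\Real(s)}|\le 1$ — this contributes $\ll\exp(O(\lambda R_0^2)-\lambda T^2/4)(\log X)^{(m^2+1)/2}\mathfrak{C}(\pi')^\varepsilon$. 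On the far-right edge $\Real(s)=R_0$ the Dirichlet series for $\mathcal{L}_{\underline{l}}$ converges absolutely, giving $|\mathcal{L}_{\underline{l}}(R_0+it)|\le 1+O(2^{-R_0/2}\mathfrak{C}(\pi')^\varepsilon)$ (from Lemmas \ref{lem:lem3.1-JLW}--\ref{lem:3.3-Sound} and Theorem \ref{thm:Xiannan-Li}, using that $\pi$ satisfies GRC), so that edge contributes $(1+o(1))\exp(\lambda R_0^2)$.

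The main obstacle is the joint choice of $\lambda$ and $R_0$: the damping must be strong enough at height $T$ to beat the factor $(\log X)^{(m^2+1)/2}\mathfrak{C}(\pi')^\varepsilon$, weak enough that $\exp(\pm\lambda R_0^2)=1+o(1)$ does not inflate the other edges, and $R_0$ must exceed $\log\mathfrak{C}(\pi')$ so that the far-right edge is $1+o(1)$. These are compatible because $T=\exp((\log\log X)^2)$ dominates every fixed power of $\log X$ — and hence, in the range where the statement has content, of $\log\mathfrak{C}(\pi')$ as well, since $x\ge C^{1/2}(\log C)^{-B}$ forces $\log\mathfrak{C}(\pi')\ll\log C\ll\log X$; e.g. $\lambda=(\log\log X)^2/T^2$ and $R_0=T/(\log X)^{m^2+2}$ works. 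One obtains $|\mathcal{L}_{\underline{l}}(\sigma+it_0)|\le(1+o(1))\max(M_0,1)+O((\log X)^{-1}\mathfrak{C}(\pi')^\varepsilon)$, where $M_0:=\max_{|t|\le T}|\mathcal{L}_{\underline{l}}(\sigma_0+it)|$; the discrepancy between $\max(M_0,1)$ and $M_0$ is absorbed by the lower bound $M_0\ge 1-O((\log X)^{-1})$, which (as in \cite{S}) follows from a second-moment estimate for $|\mathcal{L}_{\underline{l}}(\sigma_0+it)|^2$ on $[-T,T]$ together with the fact that the leading Dirichlet coefficient of $\mathcal{L}_{\underline{l}}$ is $1$ and $T$ is large. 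The only substantive change from Soundararajan's argument is the use of Lemmas \ref{lem:lem3.1-JLW}, \ref{lem:3.2-Sound}, \ref{lem:3.3-Sound} and Theorem \ref{thm:Xiannan-Li} in place of their $\pi'$-free analogues, which is precisely why a single factor $\mathfrak{C}(\pi')^\varepsilon$ survives in the error term.
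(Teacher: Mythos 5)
Your max--modulus approach differs from the argument the paper is following, and it introduces a step that is not present there and that you do not actually prove. Soundararajan's proof of his Lemma 5.2 (which this lemma reproduces verbatim, with (3.7) replacing his Lemma 3.2) is a \emph{convolution} argument: since the Dirichlet series for $\mathcal{L}_{\underline{l}}$ converges absolutely on $\Real(s)=\sigma_0:=1+1/\log X$, the function is \emph{bounded} on the half-plane $\Real(s)\geq\sigma_0$, so one can represent $\mathcal{L}_{\underline{l}}(\sigma+it_0)$ as an integral of $\mathcal{L}_{\underline{l}}(\sigma_0+iv)$ against a kernel on the boundary line whose total mass is $\leq 1$. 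The part of that integral with $|v|\leq T$ is then $\leq M_0:=\max_{|t|\leq T}|\mathcal{L}_{\underline{l}}(\sigma_0+it)|$ simply because the kernel has mass $\leq 1$; the tail over $|v|>T$ is $\ll(\log X)^{(m^2+1)/2}\mathfrak{C}(\pi')^{\varepsilon}\cdot(\text{kernel tail})$, and since $T=\exp((\log\log X)^2)$ dominates any fixed power of $\log X$, this is $\ll(\log X)^{-1}\mathfrak{C}(\pi')^{\varepsilon}$. The conclusion is $|\mathcal{L}_{\underline{l}}(\sigma+it_0)|\leq M_0+O((\log X)^{-1}\mathfrak{C}(\pi')^{\varepsilon})$, with no further input.

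Your maximum modulus argument, by contrast, produces $|\mathcal{L}_{\underline{l}}(\sigma+it_0)|\leq\max_{\partial}|G|$, which is a \emph{maximum} over four edges, not an integral; the far-right edge contributes a term that is genuinely $\approx 1$ (the constant coefficient of the Dirichlet series), and it cannot be made smaller. Converting $\max(M_0,\,1+o(1))$ into $M_0+O((\log X)^{-1}\mathfrak{C}(\pi')^{\varepsilon})$ therefore forces you to prove the lower bound $M_0\geq 1-O((\log X)^{-1}\mathfrak{C}(\pi')^{\varepsilon})$. You assert this ``follows from a second-moment estimate as in \cite{S},'' but that estimate is not part of Soundararajan's proof of this lemma, and it is not a freebie here: the coefficients of $\mathcal{L}_{\underline{l}}$ are a Dirichlet convolution of $\lambda_{\pi\times\pi'}$ (for which only $\pi$ satisfies GRC) with the Dirichlet polynomial $\prod_j(1-w^{1+i\tau_j-s})^{l_j}$, whose coefficients grow like $w^k$ at $n=w^k$, and the off-diagonal terms in the mean-square over $[-T,T]$ require controlling $\pi'$- and $w$-dependent quantities. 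You would need to write this out, and the resulting lemma would then rest on an extra ingredient that the paper does not use. Two smaller points: your estimate of the weight on the left edge mis-states the real part (on $\Real(s)=\sigma_0$ one has $\Real((s-\sigma_0-it_0)^2)=-(\Imag(s)-t_0)^2\leq0$, so the weight there is $\leq1$, not $\leq e^{\lambda R_0^2}$ --- this is in your favor, but the formula as written is wrong), and the rectangle argument only covers $\sigma\leq R_0$, so the range $\sigma>R_0$ needs a separate (trivial but unstated) direct estimate of the Dirichlet series. The convolution route handles all $\sigma$ in a bounded range at once and never needs a lower bound on $M_0$; you should follow it.
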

    \begin{proof}
		We follow the proof of \cite[Lemma 5.2]{S}, but replace \cite[Lemma 3.2]{S} with \eqref{eq:lem3.2-Sound-2} 
	\end{proof}
	\begin{lem}\label{lem:sound-5.3}
     Let $\underline{L-1}=(L-1,\ldots,L-1)$.  If $\underline{l}\geq \underline{L-1}$, then
		 \begin{equation*}
			\max_{|t| \le T} |\mathcal{L}_{\underline{l}}(1+1/\log{X}+it )| \ll (\log X)^{\varepsilon/2}\mathfrak{C}(\pi')^{\varepsilon/2}.
		\end{equation*}
	\end{lem}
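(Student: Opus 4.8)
The plan is to bound $|\mathcal{L}_{\underline{l}}(1+\tfrac{1}{\log X}+it)|$ separately at every $t$ with $|t|\le T$, splitting according to whether $t$ lies in the deflated set $\mathcal{S}_R$ or not, and then to take the maximum over $t$. Write $s=1+\tfrac{1}{\log X}+it$. Since $|w^{1+i\tau_j-s}|=w^{-1/\log X}<1$, each factor of $\mathcal{L}_{\underline{l}}$ satisfies $|1-w^{1+i\tau_j-s}|\le 2$; as $R$ and $L$ (and hence every $l_j$, which in the regime where the lemma is applied satisfies $L-1\le l_j\le L$) depend only on $m$ and $\varepsilon$, the whole product $\prod_{j=1}^{R}|1-w^{1+i\tau_j-s}|^{l_j}$ is $O_{m,\varepsilon}(1)$.

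If $t\in\mathcal{S}_R$, the ``in particular'' clause of Lemma \ref{lem:sound-5.1} gives $|L(s,\pi\times\pi')|\ll\mathfrak{C}(\pi')^{\varepsilon}(\log X)^{\varepsilon/2}$, and multiplying by the trivial bound on the product above yields the claim at once.

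Now suppose $|t|\le T$ but $t\notin\mathcal{S}_R$. Since $\mathcal{S}_1=[-T,T]$ and each $\mathcal{S}_{j+1}$ is obtained from $\mathcal{S}_j$ by removing the open interval of radius $(\log X)^{-1/R}$ about $\tau_j$, there is an index $k$ with $1\le k\le R-1$ and $|t-\tau_k|<(\log X)^{-1/R}$; note $l_k\ge L-1$. From $w^{1+i\tau_k-s}=w^{-1/\log X}e^{i(\tau_k-t)\log w}$ and the bounds $2\le w\le\exp((\log X)^{1/(3R)})$ we get $|1-w^{-1/\log X}|\le(\log w)/\log X\le(\log X)^{1/(3R)-1}$ and $|e^{i(\tau_k-t)\log w}-1|\le|\tau_k-t|\log w<(\log X)^{-2/(3R)}$, hence $|1-w^{1+i\tau_k-s}|\ll(\log X)^{-2/(3R)}$. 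Since $l_k\ge L-1$ with $L=\lfloor 5(m^2+1)R\rfloor$, one checks $\tfrac{2l_k}{3R}\ge\tfrac{m^2+1}{2}+\tfrac{13}{3}$, so $|1-w^{1+i\tau_k-s}|^{l_k}\ll(\log X)^{-(m^2+1)/2-13/3}$. Combining this with $|L(s,\pi\times\pi')|\ll(\log X)^{(m^2+1)/2}\mathfrak{C}(\pi')^{\varepsilon}$ from \eqref{eq:lem3.2-Sound-2} of Lemma \ref{lem:3.2-Sound} (at $\sigma=1+\tfrac{1}{\log X}$) and with the $O_{m,\varepsilon}(1)$ bound for $\prod_{j\ne k}|1-w^{1+i\tau_j-s}|^{l_j}$, the three contributions multiply to $\ll(\log X)^{-13/3}\mathfrak{C}(\pi')^{\varepsilon}$, well inside the target. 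Taking the maximum over the two cases proves the lemma; to obtain the exponent $\varepsilon/2$ on $\mathfrak{C}(\pi')$ exactly as stated one feeds Lemmas \ref{lem:sound-5.1} and \ref{lem:3.2-Sound} the parameter $\varepsilon/2$ in place of $\varepsilon$.

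The one delicate point is the estimate $|1-w^{1+i\tau_k-s}|\ll(\log X)^{-2/(3R)}$: this is precisely where the admissible range $w\le\exp((\log X)^{1/(3R)})$ meshes with the spacing $(\log X)^{-1/R}$ of the removed intervals, the exponent $\tfrac{1}{3R}$ being chosen so that the net exponent $\tfrac{2}{3R}$ is positive. Apart from checking that $L=\lfloor 5(m^2+1)R\rfloor$ is large enough for the saving $(\log X)^{-2l_k/(3R)}$ to swamp the convexity-type growth $(\log X)^{(m^2+1)/2}$ of $L(1+\tfrac1{\log X}+it,\pi\times\pi')$, the argument is routine and presents no real obstacle.
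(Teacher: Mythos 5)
Your proof is correct and follows the same route the paper implicitly takes by deferring to Soundararajan's Lemma 5.3: bound the product trivially and use Lemma \ref{lem:sound-5.1} when $t\in\mathcal{S}_R$, and when $t\notin\mathcal{S}_R$ exploit the small factor $|1-w^{1+i\tau_k-s}|^{l_k}\ll(\log X)^{-2l_k/(3R)}$ (coming from $|t-\tau_k|<(\log X)^{-1/R}$ and $\log w\le(\log X)^{1/(3R)}$) together with the $(\sigma-1)^{-(m^2+1)/2}$-type bound \eqref{eq:lem3.2-Sound-2} to absorb the growth of $L$; the choice $L=\lfloor 5(m^2+1)R\rfloor$ makes the saving dominate with room to spare, exactly as you verify.
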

	\begin{proof}
		Replacing \cite[Lemma 3.2]{S}  with \eqref{eq:lem3.2-Sound-2} and \cite[Lemma 5.1]{S} with Lemma \ref{lem:sound-5.1}, we proceed just as in  the proof of \cite[Lemma 5.3]{S}.
	\end{proof}

	\begin{prop}\label{prop:sound-5.4}
	    Recall the notation and hypotheses of Lemma \ref{lem:sound-5.3}.   If $0 \le \log w \le (\log X)^{1/(3R)}$ and $1 \le x \le X$, then
		\begin{equation*}
			\mathcal{O}_{\underline{l}}(x, w) \ll x(\log X)^{2\varepsilon/3}\mathfrak{C}(\pi')^\varepsilon.
		\end{equation*}
	\end{prop}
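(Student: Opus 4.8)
The starting point is the identity $\mathcal{O}_{\underline{l}}(x,w)=\sum_{n\le x}b(n)$, where the $b(n)$ are the Dirichlet coefficients of the entire function $\mathcal{L}_{\underline{l}}(s)=L(s,\pi\times\pi')\prod_{j=1}^{R}(1-w^{1+i\tau_j-s})^{l_j}$ of Section \ref{subsec: notation}: expanding the finite product by the multinomial theorem and multiplying by $L(s,\pi\times\pi')=\sum_n\lambda_{\pi\times\pi'}(n)n^{-s}$ shows that the coefficient of $n^{-s}$ in $\mathcal{L}_{\underline{l}}$ is supported on the $w$-dilations $n=w^{j_1+\cdots+j_R}m$, and collecting these with $n\le x$ telescopes into the displayed formula for $\mathcal{O}_{\underline{l}}(x,w)$; moreover $\mathcal{L}_{\underline{l}}(s)=\sum_n b(n)n^{-s}$ converges absolutely for $\Real(s)>1$. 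The plan is then to estimate this partial sum by contour integration, running parallel to Soundararajan's proof of \cite[Proposition 5.4]{S}: the factors $(1-w^{1+i\tau_j-s})^{l_j}$ damp $L(s,\pi\times\pi')$ near the peaks $\tau_1,\dots,\tau_R$, and this is precisely what Lemmas \ref{lem:sound-5.2} and \ref{lem:sound-5.3} quantify.

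First I would invoke the smoothed Perron formula of \cite[Lemma 4.2]{S} (see Lemma \ref{lem:Sound-lem4.2}) for $\mathcal{L}_{\underline{l}}$ with cutoff $x$, a large fixed integer $K=K(m)$, and $\lambda=(\log X)^{-(m^4+6)}$, obtaining
\[
\mathcal{O}_{\underline{l}}(x,w)=\frac{1}{2\pi i}\int_{c-i\infty}^{c+i\infty}\mathcal{L}_{\underline{l}}(s)\,\frac{x^s}{s}\Big(\frac{e^{\lambda s}-1}{\lambda s}\Big)^{K}ds+O\Big(\sum_{x<n\le e^{K\lambda}x}|b(n)|\Big),\qquad c>1\text{ fixed}.
\]
For the error term, write $b(n)$ as its sum over the $w$-dilations and apply the short-interval bound \eqref{eq:shortsum-rankin-selberg} of Lemma \ref{lem:3.3-Sound} to each dilate $\sum_{x/w^{J}<m\le e^{K\lambda}x/w^{J}}|\lambda_{\pi\times\pi'}(m)|$, $J=j_1+\cdots+j_R$. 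Using $w^{LR}=X^{o(1)}$ — a consequence of the hypothesis $w\le\exp((\log X)^{1/(3R)})$ and the fact that $L,R$ are fixed — the dilations with $x/w^{J}\ge2$ contribute $\ll(e^{K\lambda}-1)^{1/4}x(\log X)^{(m^4+2)/4}\mathfrak{C}(\pi')^\varepsilon\ll x(\log X)^{-1}\mathfrak{C}(\pi')^\varepsilon$, while the $O_{m,\varepsilon}(1)$ remaining dilations, for which $w^{J}\asymp x$, contribute $\ll x$ trivially. Hence the error term is $\ll x\,\mathfrak{C}(\pi')^\varepsilon$, comfortably within the target.

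Next I would shift the line of integration from $\Real(s)=c$ to $\Real(s)=1+1/\log X$. As $L(s,\pi\times\pi')$ is entire, so is $\mathcal{L}_{\underline{l}}$, and the only pole of the integrand — that of $1/s$ at $s=0$ — lies to the left of both lines; throughout the shift $\Real(s)>1$, so \eqref{eq:lem3.2-Sound-2} of Lemma \ref{lem:3.2-Sound} bounds $|\mathcal{L}_{\underline{l}}(s)|$ uniformly in $\Imag(s)$ (the product $\prod_j(1-w^{1+i\tau_j-s})^{l_j}$ being $O(1)$ there, since $w>1$), and with $K\ge2$ the horizontal segments vanish, so no residue is collected. (Equivalently, one may run the Perron formula at $\Real(s)=1+1/\log x$ and use Lemma \ref{lem:sound-5.2} to move left to $\Real(s)=1+1/\log X$.) On this line write $s=1+1/\log X+it$, so $|x^s|\le ex$ because $x\le X$, and split at $T=\exp((\log\log X)^2)$. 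For $|t|\le T$, Lemma \ref{lem:sound-5.3} gives $|\mathcal{L}_{\underline{l}}(1+1/\log X+it)|\ll(\log X)^{\varepsilon/2}\mathfrak{C}(\pi')^{\varepsilon/2}$, the kernel is $\ll1$, and $\int_{|t|\le T}|s|^{-1}\,dt\ll\log T\ll(\log\log X)^2$, so this range contributes $\ll x(\log X)^{\varepsilon/2}(\log\log X)^2\mathfrak{C}(\pi')^{\varepsilon/2}\ll x(\log X)^{2\varepsilon/3}\mathfrak{C}(\pi')^\varepsilon$. For $|t|>T$, \eqref{eq:lem3.2-Sound-2} gives $|\mathcal{L}_{\underline{l}}(1+1/\log X+it)|\ll(\log X)^{(m^2+1)/2}\mathfrak{C}(\pi')^\varepsilon$ and the kernel is $\ll(\lambda|t|)^{-K}$, so this tail is $\ll x(\log X)^{(m^2+1)/2}\mathfrak{C}(\pi')^\varepsilon(\lambda T)^{-K}$; since $\lambda T=(\log X)^{-(m^4+6)}\exp((\log\log X)^2)\gg\log X$ and $K$ is large, this is $\ll x(\log X)^{-1}\mathfrak{C}(\pi')^\varepsilon$. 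Adding the three contributions yields $\mathcal{O}_{\underline{l}}(x,w)\ll x(\log X)^{2\varepsilon/3}\mathfrak{C}(\pi')^\varepsilon$.

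The only genuinely delicate point is the bookkeeping of the Perron error term: one must expand $b(n)$ over the $w$-dilations and apply \eqref{eq:shortsum-rankin-selberg} dilate by dilate, and it is here that the constraint $w\le\exp((\log X)^{1/(3R)})$ is essential, so that the largest dilation $w^{LR}$ is $X^{o(1)}$ and the weights $w^{j_1+\cdots+j_R}$ do not overwhelm the estimate (with the handful of boundary dilations $w^{J}\asymp x$ handled trivially). Everything else is routine contour manipulation; the cancellation that actually beats the convexity bound has already been absorbed into Lemmas \ref{lem:sound-5.2} and \ref{lem:sound-5.3}.
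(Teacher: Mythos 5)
Your proof runs parallel to the paper's at essentially every step: the smoothed Perron formula (applied to the Dirichlet series $\mathcal{L}_{\underline{l}}$, equivalently to each dilate $S(x/w^J)$), the contour shift to $\Real(s)=1+1/\log X$, the split of the integral at $T=\exp((\log\log X)^2)$, and the use of Lemmas~\ref{lem:sound-5.3} and~\ref{lem:3.2-Sound} on the two ranges. The different choice of smoothing parameters ($K$ large with $\lambda$ a power of $\log X$, rather than the paper's $K=1$, $\lambda=1/\sqrt{T}$) is immaterial, and your handling of the contour integral — kernel $\ll 1$ and $\int_{|t|\le T}|s|^{-1}\,dt\ll\log T$ inside, kernel $\ll(\lambda|t|)^{-K}$ outside — is correct.

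The gap is in the Perron error term, exactly the step you flag as delicate. The short-interval bound \eqref{eq:shortsum-rankin-selberg} of Lemma~\ref{lem:3.3-Sound} requires $1\le y\le x$, so it produces your claimed $\ll(e^{K\lambda}-1)^{1/4}x(\log X)^{(m^4+2)/4}\mathfrak{C}(\pi')^{\varepsilon}\ll x(\log X)^{-1}\mathfrak{C}(\pi')^{\varepsilon}$ only for dilates $z=x/w^J$ with $(e^{K\lambda}-1)z\ge 1$, i.e.\ $z\gg(\log X)^{m^4+6}$. For $2\le z\ll(\log X)^{m^4+6}$ the interval $(z,ze^{K\lambda}]$ has length below $1$; forcing $y=1$ in the short-interval bound yields only $\ll x(\log X)^{(m^4+2)/4}\mathfrak{C}(\pi')^{\varepsilon}$, a positive power of $\log X$ exceeding the target. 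Your partition into ``$x/w^J\ge2$: short-interval'' and ``$x/w^J<2$: trivial'' skips this middle range. It can be repaired: in that range the interval contains at most one integer $n_0$, and a pointwise estimate $|\lambda_{\pi\times\pi'}(n_0)|\le\sqrt{\lambda_{\pi\times\widetilde\pi}(n_0)\,\lambda_{\pi'\times\widetilde\pi'}(n_0)}\ll n_0^{\theta_{m'}+\varepsilon}$ (Lemma~\ref{lem:lem3.1-JLW}, GRC for $\pi$, \eqref{eqn:LRS_finite} and \eqref{eqn:LRS_2} for $\pi'$) shows the weighted contribution is $\ll xz^{\theta_{m'}+\varepsilon-1}\ll x$, which is acceptable. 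The paper sidesteps all of this by first disposing of small $x$ via the trivial bound \eqref{eq:partialsum-rankin-selberg} and then restricting to a regime in which every dilate $z$ exceeds $T$; you would need one of these patches to close the argument as written.
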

	\begin{proof}
		By \eqref{eq:partialsum-rankin-selberg}, we may assume that $\log x \ge (\log X)^{\varepsilon/(m^2+1)}$. If $w \le \exp((2RL)^{(m^2+1)/(3R\varepsilon-m^2-1)})$, then the desired bound is trivial. Thus, we may assume otherwise,  which implies that $x \ge w^{2RL}$.  
        By  \eqref{eq:lem4.2-sound-1} with $K=1$, $\lambda=1/\sqrt{T}$, and $c>1$, we find that 
		\begin{equation*}
			\frac{1}{2\pi i}\int_{c-i\infty}^{c+i\infty} L(s, \pi \times \pi') z^s\Big(\frac{e^{s/\sqrt{T}-1}}{s/\sqrt{T}}\Big)\frac{ds}{s} = \sum_{n\le z} \lambda_{\pi \times \pi'}(n) + O\Big(\sum_{z<n\le ze^{1/\sqrt{T}}}|\lambda_{\pi\times\pi'}(n)|\Big).
		\end{equation*}
        The rest of the proof follows the proof of \cite[Proposition 5.4]{S}, but we replace \cite[Lemma 3.2]{S} with \eqref{eq:lem3.2-Sound-2}, \cite[Lemma 3.3]{S} with \eqref{eq:shortsum-rankin-selberg}, and \cite[Lemma 5.3]{S} with Lemma \ref{lem:sound-5.3}.
        \end{proof}

    %%%%%%%%%%%%%%%%%%%%%%%% %%%%%%%%%%%%%%%%%%%%%%%% 
 
    \subsection{Proof of Theorem \ref{thm:2.1-Sound}}
	In this section, we recall the notation  in Sections \ref{sec:L-functions} and \ref{subsec: notation}.
	
	\begin{lem}\label{lem:6.1-Sound} 
		 If $0 \le \log w \le (\log X)^{1/(3R)}$ and $1 \le x \le X$, then 
		\begin{equation*}
			(\log x)\mathcal{O}_{\underline{L}}(x, w) = \sum_{d \le x} \Lambda_{\pi \times \pi'}(d)\mathcal{O}_{\underline{L}}(x/d, w) + O(x(\log X)^\varepsilon \mathfrak{C}(\pi')^\varepsilon).
		\end{equation*}
	\end{lem}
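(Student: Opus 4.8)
The plan is to reduce the asserted identity to the Dirichlet-coefficient relation $\lambda_{\pi\times\pi'}(n)\log n=\sum_{de=n}\Lambda_{\pi\times\pi'}(d)\lambda_{\pi\times\pi'}(e)$ (obtained by comparing coefficients in $-L'(s,\pi\times\pi')=L(s,\pi\times\pi')\cdot\bigl(-\tfrac{L'}{L}(s,\pi\times\pi')\bigr)$), which upon summing over $n\le y$ gives the exact elementary identity $\widetilde{S}(y)=\sum_{d\le y}\Lambda_{\pi\times\pi'}(d)S(y/d)$ for every $y\ge 1$. Taking the alternating binomial combination of the dilations $x\mapsto x/w^{j_1+\cdots+j_R}$ that defines $\widetilde{\mathcal{O}}_{\underline{L}}$, and using that $S$ vanishes on $(0,1)$ to replace each condition $d\le x/w^{j_1+\cdots+j_R}$ by $d\le x$, this upgrades to $\widetilde{\mathcal{O}}_{\underline{L}}(x,w)=\sum_{d\le x}\Lambda_{\pi\times\pi'}(d)\mathcal{O}_{\underline{L}}(x/d,w)$. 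Hence it suffices to prove $(\log x)\mathcal{O}_{\underline{L}}(x,w)-\widetilde{\mathcal{O}}_{\underline{L}}(x,w)=O\bigl(x(\log X)^{\varepsilon}\mathfrak{C}(\pi')^{\varepsilon}\bigr)$.

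For this I would use the pointwise identity, with $\nu=j_1+\cdots+j_R$,
\[
(\log x)S(x/w^{\nu})-\widetilde{S}(x/w^{\nu})=\sum_{n\le x/w^{\nu}}\lambda_{\pi\times\pi'}(n)\log\frac{x}{n}=\nu(\log w)\,S(x/w^{\nu})+G(x/w^{\nu}),\qquad G(y):=\sum_{n\le y}\lambda_{\pi\times\pi'}(n)\log\frac{y}{n}.
\]
Summing this against the defining coefficients of $\mathcal{O}_{\underline{L}}$ splits the difference, as an exact algebraic identity, into $(\log w)\,\mathrm{I}+\mathrm{II}$, where $\mathrm{I}$ is the alternating combination of the terms $\nu\,S(x/w^{\nu})$ and $\mathrm{II}$ is the alternating combination of the terms $G(x/w^{\nu})$. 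For $\mathrm{I}$, the identity $j_r\binom{L}{j_r}=L\binom{L-1}{j_r-1}$ together with a reindexing in the $r$th coordinate yields $\mathrm{I}=-L\sum_{r=1}^{R}w^{1+i\tau_r}\,\mathcal{O}_{\underline{L}-e_r}(x/w,w)$, where $\underline{L}-e_r$ denotes $\underline{L}$ with its $r$th coordinate lowered to $L-1$ (and $\mathrm{I}=0$ when $x<w$, since then every $S$-value vanishes). Each $\underline{L}-e_r\ge\underline{L-1}$, so Proposition \ref{prop:sound-5.4} gives $\mathrm{I}\ll RL\,x(\log X)^{2\varepsilon/3}\mathfrak{C}(\pi')^{\varepsilon}$; since $\log w\le(\log X)^{1/(3R)}$ and $R\ge 72m^2/\varepsilon^2\ge 1/\varepsilon$, this yields $(\log w)\,\mathrm{I}\ll x(\log X)^{2\varepsilon/3+1/(3R)}\mathfrak{C}(\pi')^{\varepsilon}\ll x(\log X)^{\varepsilon}\mathfrak{C}(\pi')^{\varepsilon}$.

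For $\mathrm{II}=\sum_{\underline{j}\le\underline{L}}(-1)^{j_1+\cdots+j_R}\binom{\underline{L}}{\underline{j}}w^{j_1(1+i\tau_1)+\cdots+j_R(1+i\tau_R)}G(x/w^{j_1+\cdots+j_R})$, I would begin from the Mellin representation $G(y)=\frac{1}{2\pi i}\int_{(c)}L(s,\pi\times\pi')\,y^{s}s^{-2}\,ds$ for $c>1$, which after collapsing the binomial combination (exactly as the contour representation of $\mathcal{O}_{\underline{l}}$ is obtained in the proof of Proposition \ref{prop:sound-5.4}) becomes $\mathrm{II}=\frac{1}{2\pi i}\int_{(c)}\mathcal{L}_{\underline{L}}(s)\,x^{s}s^{-2}\,ds$. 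Shifting the contour to $\Real(s)=1+1/\log X$ (no pole is crossed, as $L(s,\pi\times\pi')$ is entire) and splitting at $|\Imag(s)|=T$, I would bound the central part by Lemma \ref{lem:sound-5.3} (applied with $\underline{l}=\underline{L}\ge\underline{L-1}$) and the tail by \eqref{eq:lem3.2-Sound-2} of Lemma \ref{lem:3.2-Sound}, with the extra $s^{-2}$ decay and the fact that $T=\exp((\log\log X)^2)$ exceeds every fixed power of $\log X$ absorbing the factor $(\log X)^{(m^2+1)/2}$; the outcome is $\mathrm{II}\ll x(\log X)^{\varepsilon/2}\mathfrak{C}(\pi')^{\varepsilon/2}+x\mathfrak{C}(\pi')^{\varepsilon}$, which is admissible. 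The argument is almost entirely bookkeeping around the already-established Proposition \ref{prop:sound-5.4} and Lemma \ref{lem:sound-5.3}; the point to monitor is keeping every error term of size $(\log X)^{\varepsilon}$ rather than a fixed power of $\log X$, which is exactly why $\mathrm{I}$ is routed through Proposition \ref{prop:sound-5.4} at exponents $\ge\underline{L-1}$ and why $\mathrm{II}$ is treated with the $s^{-2}$-weighted contour rather than the $s^{-1}$-weighted Perron integral used for $\mathcal{O}_{\underline{L}}$ itself.
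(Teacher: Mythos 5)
Your proof is correct and follows essentially the same decomposition as the paper: extract the main term $\sum_{d\le x}\Lambda_{\pi\times\pi'}(d)\mathcal{O}_{\underline{L}}(x/d,w)$ from the Dirichlet convolution identity for $\lambda_{\pi\times\pi'}(n)\log n$, identify the $\nu\log w$ piece as an alternating combination of the operators $\mathcal{O}_{\underline{L}-\underline{e}_k}$ (each with exponent vector at least $\underline{L-1}$) controlled by Proposition \ref{prop:sound-5.4}, and bound what remains. The one step you take differently is the estimate of your term $\mathrm{II}$: the paper observes via partial summation that $G(y)=\int_1^y S(t)\,dt/t$, so that $\mathrm{II}=\int_1^x\mathcal{O}_{\underline{L}}(t,w)\,dt/t$, and then applies Proposition \ref{prop:sound-5.4} to the integrand directly, giving $O(x(\log X)^{2\varepsilon/3}\mathfrak{C}(\pi')^{\varepsilon})$; your Mellin-contour argument with the $s^{-2}$ kernel, Lemma \ref{lem:sound-5.3}, and \eqref{eq:lem3.2-Sound-2} is equally valid but more elaborate. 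Your closing remark that the $s^{-2}$-weighted contour is needed to keep the error at size $(\log X)^{\varepsilon}$ is not quite accurate, since the paper's direct partial-summation route already achieves that without any contour integral.
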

	\begin{proof}
		To prove this lemma, we follow the proof of \cite[Lemma 6.1]{S} but replace \cite[Proposition 5.4]{S} with Proposition \ref{prop:sound-5.4}.
        \end{proof}
		
	\begin{lem}
        \label{lem:6.2-Sound} 
         If $1 \le z \le y$ and $y+z \le X$, then 
	\begin{equation*}		
            ||\mathcal{O}_{\underline{L}}(y, w)|^2- |\mathcal{O}_{\underline{L}}(y+z, w)|^2| \ll \mathfrak{C}(\pi')^\varepsilon y(\log X)^\varepsilon \sum_{j=0}^{LR} w^j \sum_{y/w^j<n\le (y+z)/w^j}|\lambda_{\pi\times\pi'}(n)|.
	\end{equation*}
	\end{lem}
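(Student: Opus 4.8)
The plan is to exploit the factorization $|\alpha|^2-|\beta|^2=(\overline{\alpha-\beta})\beta+\overline{\alpha}(\alpha-\beta)$, which for $\alpha=\mathcal{O}_{\underline{L}}(y,w)$ and $\beta=\mathcal{O}_{\underline{L}}(y+z,w)$ gives
\[
    |\mathcal{O}_{\underline{L}}(y,w)|^2-|\mathcal{O}_{\underline{L}}(y+z,w)|^2 = 2\,\Real\!\big(\overline{\mathcal{O}_{\underline{L}}(y,w)-\mathcal{O}_{\underline{L}}(y+z,w)}\,\mathcal{O}_{\underline{L}}(y+z,w)\big) + |\mathcal{O}_{\underline{L}}(y,w)-\mathcal{O}_{\underline{L}}(y+z,w)|^2.
\]
Hence the left-hand side is controlled by $|\mathcal{O}_{\underline{L}}(y,w)-\mathcal{O}_{\underline{L}}(y+z,w)|$ times $(|\mathcal{O}_{\underline{L}}(y+z,w)| + |\mathcal{O}_{\underline{L}}(y,w)-\mathcal{O}_{\underline{L}}(y+z,w)|)$. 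The first factor I would bound directly from the definition of $\mathcal{O}_{\underline{L}}$: since $S(y')-S(y'+z')=-\sum_{y'<n\le y'+z'}\lambda_{\pi\times\pi'}(n)$, expanding the binomial sum defining $\mathcal{O}_{\underline{L}}$ and taking absolute values (recalling $|w^{j_r(1+i\tau_r)}|=w^{j_1+\cdots+j_R}$) yields
\[
    |\mathcal{O}_{\underline{L}}(y,w)-\mathcal{O}_{\underline{L}}(y+z,w)| \le \sum_{\underline{j}\le\underline{L}}\binom{\underline{L}}{\underline{j}} w^{j_1+\cdots+j_R}\!\!\sum_{y/w^{j_1+\cdots+j_R}<n\le(y+z)/w^{j_1+\cdots+j_R}}\!\!|\lambda_{\pi\times\pi'}(n)|,
\]
and grouping by the value $j=j_1+\cdots+j_R\in\{0,\dots,LR\}$ collapses this to a constant (depending only on $L,R$, hence on $m,m',\varepsilon$) times $\sum_{j=0}^{LR} w^j\sum_{y/w^j<n\le(y+z)/w^j}|\lambda_{\pi\times\pi'}(n)|$.

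For the second factor, note that $|\mathcal{O}_{\underline{L}}(y+z,w)|$ is bounded via Proposition \ref{prop:sound-5.4} (applicable since $\underline{L}\ge\underline{L-1}$) by $O((y+z)(\log X)^{2\varepsilon/3}\mathfrak{C}(\pi')^\varepsilon)\ll y(\log X)^\varepsilon\mathfrak{C}(\pi')^\varepsilon$, using $z\le y$ so $y+z\le 2y$. The quantity $|\mathcal{O}_{\underline{L}}(y,w)-\mathcal{O}_{\underline{L}}(y+z,w)|$ is likewise $\ll y(\log X)^\varepsilon\mathfrak{C}(\pi')^\varepsilon$ by the same Proposition applied to both terms (or, more crudely, by $\eqref{eq:partialsum-rankin-selberg}$ of Lemma \ref{lem:3.3-Sound} applied to each $S$), so it is absorbed. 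Multiplying the bound on the first factor by this $O(y(\log X)^\varepsilon\mathfrak{C}(\pi')^\varepsilon)$ bound on the second factor gives exactly the claimed estimate.

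The only mild subtlety — and the step I would be most careful about — is bookkeeping: making sure the ranges $1\le z\le y$, $y+z\le X$ guarantee that every argument $y/w^{j}$ and $(y+z)/w^{j}$ stays in the region where Proposition \ref{prop:sound-5.4} and Lemma \ref{lem:3.3-Sound} apply (in particular that $(y+z)/w^{j}\le X$ for all $j\le LR$, which holds since $y+z\le X$ and $w\ge 1$), and that the number of multi-indices $\underline{j}$ with a given sum is bounded purely in terms of $L$ and $R$. No analytic input beyond Proposition \ref{prop:sound-5.4} is needed; the estimate is essentially a telescoping-plus-triangle-inequality argument dressed up with the multiplicative-function weights.
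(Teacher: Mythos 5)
Your proof is correct and takes essentially the same route as the paper, which simply cites \cite[Lemma~6.2]{S} with Proposition~\ref{prop:sound-5.4} substituted for its analogue there. The algebraic step $|\alpha|^2-|\beta|^2 = 2\Real(\overline{\alpha-\beta}\,\beta)+|\alpha-\beta|^2$ is an equivalent variant of the usual $\big||\alpha|^2-|\beta|^2\big|\le|\alpha-\beta|(|\alpha|+|\beta|)$; the triangle-inequality bound on $\mathcal{O}_{\underline{L}}(y,w)-\mathcal{O}_{\underline{L}}(y+z,w)$ (using $|w^{j_r(1+i\tau_r)}|=w^{j_r}$ and collapsing multi-indices by $j_1+\cdots+j_R$) and the application of Proposition~\ref{prop:sound-5.4} to the complementary factor are exactly the intended ingredients, and your range checks ($1\le y\le y+z\le X$, all components of $\underline{L}$ are $\ge L-1$) are the right ones.
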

	\begin{proof}
        Replacing \cite[Proposition 5.4]{S} with Proposition \ref{prop:sound-5.4} and using the definition of $S(x)$, we proceed just as in the proof of \cite[Lemma 6.2]{S}.
        \end{proof}

	\begin{prop}\label{prop:6.3-Sound}  If $0 \le \log w \le (\log X)^{1/(3R)}$ and $1 \le x \le X$, then 
		\begin{multline*}
	|\mathcal{O}_{\underline{L}}(x, w)|\log x\\ 
		 \ll x (\log\log x)^{1/2}\mathfrak{C}(\pi')^{{\varepsilon}/{4}}\Big(\int_1^x \log(ey)|\mathcal{O}_{\underline{L}}(y,w)|^2 \, \frac{dy}{y^3}+\mathfrak{C}(\pi')^\varepsilon\Big)^{1/2} + x(\log X)^{\varepsilon}\mathfrak{C}(\pi')^\varepsilon.
		\end{multline*}
	\end{prop}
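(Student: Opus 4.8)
The plan is to adapt the proof of the analogous estimate in \cite{S}, keeping careful track of the conductor $\mathfrak{C}(\pi')$. By Lemma \ref{lem:6.1-Sound} the error term there already contributes $O(x(\log X)^{\varepsilon}\mathfrak{C}(\pi')^{\varepsilon})$, which matches the last term of the claimed bound, so it suffices to estimate $\sum_{d\le x}|\Lambda_{\pi\times\pi'}(d)|\,|\mathcal{O}_{\underline{L}}(x/d,w)|$; the bound being trivial when $x$ is bounded (by \eqref{eq:partialsum-rankin-selberg} of Lemma \ref{lem:3.3-Sound}), we may assume $\log\log x\ge 1$. First I would discard the proper prime powers: Lemma \ref{lem:lem2.2-ST}, the hypothesis that $\pi$ satisfies GRC at the finite places (so $\Lambda_{\pi\times\widetilde{\pi}}(p^{k})\le m^{2}\log p$), and the inequality $\theta_{m'}<\tfrac12$ from \eqref{eqn:LRS_finite} yield $|\Lambda_{\pi\times\pi'}(p^{k})|\le mm'\,p^{k\theta_{m'}}\log p$ and hence $\sum_{k\ge 2}\sum_{p}|\Lambda_{\pi\times\pi'}(p^{k})|\,p^{-k}\ll 1$; combined with $\mathcal{O}_{\underline{L}}(y,w)\ll y(\log X)^{2\varepsilon/3}\mathfrak{C}(\pi')^{\varepsilon}$ from Proposition \ref{prop:sound-5.4}, the terms $d=p^{k}$ with $k\ge 2$ contribute $\ll x(\log X)^{2\varepsilon/3}\mathfrak{C}(\pi')^{\varepsilon}$, again absorbed into the last term.

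It then remains to bound $\sum_{p\le x}|\Lambda_{\pi\times\pi'}(p)|\,|\mathcal{O}_{\underline{L}}(x/p,w)|$, to which I would apply the Cauchy--Schwarz inequality through the factorization $|\Lambda_{\pi\times\pi'}(p)|\,|\mathcal{O}_{\underline{L}}(x/p,w)|=a_{p}b_{p}$ with $a_{p}=|\Lambda_{\pi\times\pi'}(p)|\,x\,(p\log p\,\log(ex/p))^{-1/2}$ and $b_{p}=(p\log p\,\log(ex/p))^{1/2}\,x^{-1}\,|\mathcal{O}_{\underline{L}}(x/p,w)|$. For $\sum_{p\le x}a_{p}^{2}$, Lemma \ref{lem:lem2.2-ST} and $\Lambda_{\pi\times\widetilde{\pi}}(p)\le m^{2}\log p$ give $|\Lambda_{\pi\times\pi'}(p)|^{2}\le m^{2}(\log p)\,\Lambda_{\pi'\times\widetilde{\pi}'}(p)$, whence $\sum_{p\le x}a_{p}^{2}\le m^{2}x^{2}\sum_{p\le x}\Lambda_{\pi'\times\widetilde{\pi}'}(p)\,(p\log(ex/p))^{-1}$. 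I would split this sum at $p=\sqrt{x}$: the range $p\le\sqrt{x}$ contributes $\ll(\log x)^{-1}\sum_{p\le\sqrt{x}}\Lambda_{\pi'\times\widetilde{\pi}'}(p)/p\ll\log\mathfrak{C}(\pi')$ by partial summation from the estimate $\sum_{n\le t}\Lambda_{\pi'\times\widetilde{\pi}'}(n)\ll t\log\mathfrak{C}(\pi')$ obtained in the proof of Lemma \ref{lem:pnt}, and a dyadic decomposition of $\sqrt{x}<p\le x$ according to the size of $x/p$, using the same estimate, contributes $\ll(\log\log x)\log\mathfrak{C}(\pi')$, the harmonic sum $\sum_{0\le j\le\frac12\log_{2}x}(1+j)^{-1}\asymp\log\log x$ being the source of the $\log\log x$. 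Since $(\log\mathfrak{C}(\pi'))^{1/2}\ll\mathfrak{C}(\pi')^{\varepsilon/4}$, this gives $(\sum_{p\le x}a_{p}^{2})^{1/2}\ll x(\log\log x)^{1/2}\mathfrak{C}(\pi')^{\varepsilon/4}$, which is exactly the prefactor multiplying the square root in the claimed bound.

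For the second factor, $\sum_{p\le x}b_{p}^{2}=x^{-2}\sum_{p\le x}p\log p\,\log(ex/p)\,|\mathcal{O}_{\underline{L}}(x/p,w)|^{2}$, and the aim is to show it is $\ll\int_{1}^{x}\log(ey)\,|\mathcal{O}_{\underline{L}}(y,w)|^{2}\,\tfrac{dy}{y^{3}}+\mathfrak{C}(\pi')^{\varepsilon}$. Under the substitution $y=x/p$ the weight is $(x/y)\log(x/y)\log(ey)$; grouping the primes into short intervals, the prime number theorem replaces the number of primes in each interval by $(\log p)^{-1}=(\log(x/y))^{-1}$ times its length, which cancels the factor $\log p$ and assembles the remaining pieces into the measure $\log(ey)\,y^{-3}\,dy$, while Lemma \ref{lem:6.2-Sound} controls the variation of $|\mathcal{O}_{\underline{L}}(\cdot,w)|^{2}$ across each interval; the accumulated errors, estimated via \eqref{eq:partialsum-rankin-selberg} of Lemma \ref{lem:3.3-Sound}, contribute $\ll\mathfrak{C}(\pi')^{\varepsilon}$, which is the reason for the additive $\mathfrak{C}(\pi')^{\varepsilon}$ inside the parentheses. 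Combining the two factors from Cauchy--Schwarz and restoring the error terms yields the proposition. I expect the principal obstacle to be this last step --- the passage from the sum over primes to the integral, already the delicate point in \cite{S} --- now compounded by the bookkeeping needed to absorb every occurrence of $\log\mathfrak{C}(\pi')$ or $\log\mathfrak{C}(\pi'\times\widetilde{\pi}')$ into $\mathfrak{C}(\pi')^{\varepsilon}$, for which Lemma \ref{lem:pnt} and Theorem \ref{thm:Xiannan-Li} serve as the substitutes for the Chebyshev bound and the prime number theorem used in \cite{S}.
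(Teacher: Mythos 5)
Your proposal is structurally the same as the paper's: reduce via Lemma~\ref{lem:6.1-Sound}, apply Cauchy--Schwarz to separate $\pi$ from $\pi'$, harvest the $(\log\log x)^{1/2}\mathfrak{C}(\pi')^{\varepsilon/4}$ from one factor, and convert the other factor to the integral. Your $a_p,b_p$ are, after rescaling, exactly the paper's two Cauchy--Schwarz factors with $g(d)=d\log(ex/d)$ and $\Lambda(d)$, so the weighting is identical, and the dyadic computation that produces the $\log\log x$ in $\sum a_p^2$ is the one the paper uses for $\sum_{D<d\le x}\Lambda_{\pi'\times\widetilde{\pi}'}(d)/g(d)$.

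However, there is a genuine gap in the last step. The paper does not apply Cauchy--Schwarz to the full range $2\le d\le x$; it first splits off $d\le D:=\lfloor\exp((\log\log X)^6)\rfloor$, bounds that range directly by Proposition~\ref{prop:sound-5.4} and Lemma~\ref{lem:pnt}, and only then Cauchy--Schwarzes the range $d>D$. This threshold is not cosmetic: in the passage from $\sum b_p^2$ to the integral, replacing $g(d)$ by $g(t)+O(\log x)$ on $[d-1,d]$ and inserting Proposition~\ref{prop:sound-5.4} produces an error $\ll \mathfrak{C}(\pi')^{2\varepsilon}\,(x^2/d^2)(\log X)^{1+2\varepsilon/3}$ per $d$. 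Summed over $d>D$ this is $\ll x^2\mathfrak{C}(\pi')^{2\varepsilon}$ because $D\gg(\log X)^{O(1)}$, but summed over all $d\ge 2$ (equivalently all primes $p\ge 2$, as in your version) the $d=2,3,5,\ldots$ terms dominate and give $\asymp x^2(\log X)^{1+2\varepsilon/3}\mathfrak{C}(\pi')^{2\varepsilon}$, which is not $\ll x^2\mathfrak{C}(\pi')^{\varepsilon}$. The second error term of \eqref{eq:lem6.3-sound-6} suffers the same amplification for small $d$. Consequently your assertion that ``the accumulated errors contribute $\ll\mathfrak{C}(\pi')^\varepsilon$'' fails, and the bound $\sum_{p\le x}b_p^2\ll\int_1^x\log(ey)|\mathcal{O}_{\underline{L}}(y,w)|^2\,dy/y^3+\mathfrak{C}(\pi')^\varepsilon$ does not hold. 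The fix is precisely the paper's: before Cauchy--Schwarz, estimate $\sum_{p\le D}|\Lambda_{\pi\times\pi'}(p)|\,|\mathcal{O}_{\underline{L}}(x/p,w)|\ll x(\log X)^{2\varepsilon/3}\mathfrak{C}(\pi')^\varepsilon\sum_{p\le D}|\Lambda_{\pi\times\pi'}(p)|/p\ll x(\log X)^{\varepsilon}\mathfrak{C}(\pi')^{\varepsilon}$ and fold it into the last term; then apply Cauchy--Schwarz only to $p>D$, where the $\psi_0$-Abel-summation errors are negligible. Discarding proper prime powers, as you propose, is a valid preliminary step but does not substitute for discarding small primes, which is where the errors blow up.
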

	
	\begin{proof}
		By Lemma \ref{lem:6.1-Sound}, it remains to estimate $\sum_{d \le x}|\Lambda_{\pi \times \pi'}(d)||\mathcal{O}_{\underline{L}}(x/d,w)|$. We consider this sum in two ranges: $d \le D := \lfloor \exp((\log\log X)^6)\rfloor$ and $d > D$. By Proposition \ref{prop:sound-5.4}, we have
		\begin{equation}\label{eq:lem6.3-sound-0}
			\sum_{d \le D}|\Lambda_{\pi \times \pi'}(d)||\mathcal{O}_{\underline{L}}(x/d,w)| \ll x(\log X)^{2\varepsilon/3}\sum_{d \le D}\frac{|\Lambda_{\pi \times \pi'}(d)|}{d}.
		\end{equation}
		By Lemma \ref{lem:pnt} and partial summation, the right-hand side of \eqref{eq:lem6.3-sound-0} is	\begin{equation}\label{eq:lem6.3-sound-1}
                \ll x(\log X)^{2\varepsilon/3} \log(eD)\mathfrak{C}(\pi')^\varepsilon \ll x(\log X)^{\varepsilon} \mathfrak{C}(\pi')^\varepsilon .
	\end{equation}
 
     Let $g(t) = t \log(ex/t)$ for $1 \le t \le x$. By Lemma \ref{lem:lem3.1-JLW} and the Cauchy--Schwarz inequality, the sum $\sum_{D < d \le x}|\Lambda_{\pi \times \pi'}(d)||\mathcal{O}_{\underline{L}}(x/d,w)|$ is \begin{equation}
    \label{eq:lem6.3-sound-2}
        \le \Big(\sum_{D < d \le x}\Lambda_{\pi \times \widetilde{\pi}}(d)g(d)|\mathcal{O}_{\underline{L}}(x/d,w)|^2\Big)^{1/2} \Big(\sum_{D < d \le x}\frac{\Lambda_{\pi' \times \widetilde{\pi}'}(d)}{g(d)}\Big)^{1/2}.
     \end{equation}
        By using GRC for $\Lambda_{\pi \times \widetilde{\pi}}(d)$, and Lemma \ref{lem:pnt} and partial summation for $\Lambda_{\pi' \times \widetilde{\pi}'}(d)$,  \eqref{eq:lem6.3-sound-2} is 
	\begin{equation*}
            \ll  \Big(\sum_{D < d \le x}g(d)\Lambda(d)|\mathcal{O}_{\underline{L}}(x/d,w)|^2\Big)^{1/2}(\log\log x)^{1/2}\mathfrak{C}(\pi')^{{\varepsilon}/{4}}.
	\end{equation*}
 
		We now follow the proof of \cite[Proposition 6.3]{S} but replace \cite[Lemma 3.2]{S} with \eqref{eq:lem3.2-Sound-1}, \cite[Proposition 5.4]{S} with Proposition \ref{prop:sound-5.4}, and \cite[Lemma 6.2]{S} with Lemma \ref{lem:6.2-Sound}.
        \end{proof}

	\begin{lem}\label{lem:6.4-Sound}
		 If $0 \le \log w \le (\log X)^{1/(3R)}$ and $1 \le x \le X$, then 
		\begin{equation*}
			\Big(\int_1^x |\mathcal{O}_{\underline{L}}(t, w)|^2 \log(et) \frac{dt}{t^3}\Big)^{1/2} \ll \Big(\int_1^x|\widetilde{\mathcal{O}}_{\underline{L}}(t,w)|^{2} \frac{dt}{t^3\log(et)} \Big)^{1/2} + (\log X)^{{7\varepsilon}/{8}} \mathfrak{C}(\pi')^\varepsilon.
		\end{equation*}
	\end{lem}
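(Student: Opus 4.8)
The plan is to mirror the proof of \cite[Lemma 6.4]{S}, replacing the inputs drawn from \cite{S} by the analogues established above --- principally Proposition \ref{prop:sound-5.4} and Lemma \ref{lem:6.1-Sound} --- and carrying the resulting factors $\mathfrak{C}(\pi')^{\varepsilon}$ along. Recall $S(t)=\sum_{n\le t}\lambda_{\pi\times\pi'}(n)$ and $\widetilde{S}(t)=\sum_{n\le t}\lambda_{\pi\times\pi'}(n)\log n$. First I would record the elementary identity $(\log x)S(x)=\widetilde{S}(x)+\int_1^x S(t)\,\frac{dt}{t}$, which comes from $\log n=\log x-\log(x/n)$ and partial summation applied to the term with $\log(x/n)$. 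Inserting this into each summand of $(\log x)\mathcal{O}_{\underline{L}}(x,w)$ after splitting $\log x=\log\!\big(x/w^{j_1+\cdots+j_R}\big)+(j_1+\cdots+j_R)\log w$, and recombining as in the proof of Lemma \ref{lem:6.1-Sound} but stopping at $\widetilde{\mathcal{O}}_{\underline{L}}$ rather than expanding via the convolution identity for $\lambda_{\pi\times\pi'}(n)\log n$ --- the terms carrying $(j_1+\cdots+j_R)\log w$ collapse to $-\sum_{k=1}^{R}Lw^{1+i\tau_k}(\log w)\,\mathcal{O}_{\underline{L}-\underline{e}_k}(x/w,w)$, which is $\ll x(\log X)^{3\varepsilon/4}\mathfrak{C}(\pi')^{\varepsilon}$ by Proposition \ref{prop:sound-5.4}, $\log w\le(\log X)^{1/(3R)}$, and $R>72m^2/\varepsilon^2$ --- I obtain
\[
(\log x)\,\mathcal{O}_{\underline{L}}(x,w)\;=\;\widetilde{\mathcal{O}}_{\underline{L}}(x,w)\;+\;\int_1^x\mathcal{O}_{\underline{L}}(u,w)\,\frac{du}{u}\;+\;O\!\big(x(\log X)^{3\varepsilon/4}\mathfrak{C}(\pi')^{\varepsilon}\big).
\]

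Write $V=\int_1^x|\mathcal{O}_{\underline{L}}(t,w)|^2\log(et)\,\frac{dt}{t^3}$ for the quantity to be bounded and $\mathcal{A}(t)=\int_1^t\mathcal{O}_{\underline{L}}(u,w)\,\frac{du}{u}$; we may assume $\log X$ is large, since otherwise the claim is immediate from Proposition \ref{prop:sound-5.4}. Fix a large absolute constant $T_0$. On $[1,T_0]$ the contribution to $V$ is $\ll(\log X)^{4\varepsilon/3}\mathfrak{C}(\pi')^{2\varepsilon}$ by Proposition \ref{prop:sound-5.4}. For $t\ge T_0$, squaring the identity above and using $(a+b+c)^2\le 3(a^2+b^2+c^2)$ gives
\[
(\log t)^2|\mathcal{O}_{\underline{L}}(t,w)|^2\;\ll\;|\mathcal{A}(t)|^2+|\widetilde{\mathcal{O}}_{\underline{L}}(t,w)|^2+t^2(\log X)^{3\varepsilon/2}\mathfrak{C}(\pi')^{2\varepsilon},
\]
and I would multiply by $\frac{\log(et)}{(\log t)^2}\frac{dt}{t^3}$ and integrate over $[T_0,x]$. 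Using $\log(et)\asymp\log t$ there, the $\widetilde{\mathcal{O}}_{\underline{L}}$ term contributes $\ll\int_1^x|\widetilde{\mathcal{O}}_{\underline{L}}(t,w)|^2\frac{dt}{t^3\log(et)}$ and the last term contributes $\ll(\log X)^{3\varepsilon/2}\mathfrak{C}(\pi')^{2\varepsilon}\int_{T_0}^x\frac{dt}{t\log(et)}\ll(\log X)^{3\varepsilon/2}\mathfrak{C}(\pi')^{2\varepsilon}\log\log X$.

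The remaining term $\int_{T_0}^x|\mathcal{A}(t)|^2\frac{\log(et)}{(\log t)^2}\frac{dt}{t^3}$ is where the difficulty lies, because $\mathcal{A}$ is built from $\mathcal{O}_{\underline{L}}$ and the bound is self-referential. By the Cauchy--Schwarz inequality,
\[
|\mathcal{A}(t)|^2\;\le\;\Big(\int_1^t|\mathcal{O}_{\underline{L}}(u,w)|^2\frac{\log(eu)}{u^3}\,du\Big)\Big(\int_1^t\frac{u}{\log(eu)}\,du\Big)\;\ll\;\frac{t^2}{\log(et)}\,V,
\]
since the first factor is $\le V$ and $\int_1^t\frac{u}{\log(eu)}\,du\ll\frac{t^2}{\log(et)}$. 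Hence this term is $\ll V\int_{T_0}^x\frac{dt}{t(\log(et))^2}\ll\frac{V}{\log T_0}$, with an absolute implied constant; choosing $T_0$ large enough makes it $\le\tfrac12V$. Collecting everything gives
\[
V\;\le\;\tfrac12V+O\!\Big(\int_1^x|\widetilde{\mathcal{O}}_{\underline{L}}(t,w)|^2\frac{dt}{t^3\log(et)}\Big)+O\!\big((\log X)^{3\varepsilon/2}\mathfrak{C}(\pi')^{2\varepsilon}\log\log X\big)+O\!\big((\log X)^{4\varepsilon/3}\mathfrak{C}(\pi')^{2\varepsilon}\big).
\]
Absorbing $\tfrac12V$ into the left, taking square roots, and using $(\log\log X)^{1/2}\ll(\log X)^{\varepsilon/8}$ and $\tfrac{2\varepsilon}{3}<\tfrac{7\varepsilon}{8}$ yields the claimed inequality. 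The only genuinely delicate step is this last one: the self-referential integral in the identity forces a bootstrap, and one must verify that it reproduces $V$ with a coefficient that is a fixed constant strictly less than $1$, which is why the argument is carried out only on $t\ge T_0$ for a large absolute threshold $T_0$.
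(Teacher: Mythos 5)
Your proof is correct, but it takes a genuinely different route from the paper at the key step, and the remark you end with is wrong in an instructive way. Both you and the paper arrive at the identity
\[
(\log t)\,\mathcal{O}_{\underline{L}}(t,w)=\widetilde{\mathcal{O}}_{\underline{L}}(t,w)+\int_1^t\mathcal{O}_{\underline{L}}(u,w)\,\frac{du}{u}-\sum_{k=1}^R Lw^{1+i\tau_k}(\log w)\,\mathcal{O}_{\underline{L}-\underline{e}_k}(t/w,w),
\]
and both bound the $\log w$ sum via Proposition \ref{prop:sound-5.4}. The difference is what happens to the middle integral. You keep it as a self-referential term, control it with Cauchy--Schwarz against $V$, restrict to $t\geq T_0$ so the resulting coefficient is $\ll 1/\log T_0<1/2$, and absorb it. This works, and the bookkeeping ($T_0$ absolute, $(\log\log X)^{1/2}\ll(\log X)^{\varepsilon/8}$, $3\varepsilon/4+\varepsilon/8=7\varepsilon/8$) is all sound. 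The paper, however, simply observes that Proposition \ref{prop:sound-5.4} already gives the pointwise bound $|\mathcal{O}_{\underline{L}}(u,w)|\ll u(\log X)^{2\varepsilon/3}\mathfrak{C}(\pi')^{\varepsilon}$ uniformly for $1\le u\le X$, hence $\int_1^t\mathcal{O}_{\underline{L}}(u,w)\,du/u\ll t(\log X)^{2\varepsilon/3}\mathfrak{C}(\pi')^{\varepsilon}$ directly. That folds the integral into the same error as the $\log w$ term, yielding $(\log t)\mathcal{O}_{\underline{L}}(t,w)=\widetilde{\mathcal{O}}_{\underline{L}}(t,w)+O(t(\log X)^{5\varepsilon/6}\mathfrak{C}(\pi')^{\varepsilon})$, from which the lemma follows by squaring, integrating (separating $t$ near $1$ by Proposition \ref{prop:sound-5.4}), and taking square roots --- no absorption needed. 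So your closing claim that ``the self-referential integral in the identity forces a bootstrap'' is incorrect: the uniform $O(u)$ bound from Proposition \ref{prop:sound-5.4} makes the term directly estimable. Your argument buys nothing extra here, but it is instructive: the bootstrap would become the right tool if Proposition \ref{prop:sound-5.4} only gave a bound on average rather than pointwise.
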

	\begin{proof}  Replacing \cite[Proposition 5.4]{S} with our Proposition \ref{prop:sound-5.4}, we then proceed as in the proof of \cite[Lemma 6.4]{S}.
    \end{proof}

	Combining Proposition \ref{prop:6.3-Sound} and Lemma \ref{lem:6.4-Sound}, we have that
	\begin{equation}\label{eq:Thm2.1}
		|\mathcal{O}_{\underline{L}}(x,w)|\ll \frac{x(\log\log x)^{1/2}\mathfrak{C}(\pi')^{\varepsilon/4}}{\log x}  \Big(\int_1^x |\widetilde{\mathcal{O}}_{\underline{L}}(t,w)|^2 \frac{dt}{t^3\log(et)} \Big)^{1/2} +\frac{x (\log X)^\varepsilon\mathfrak{C}(\pi')^\varepsilon}{\log x}.
	\end{equation}
	We estimate the integral in \eqref{eq:Thm2.1} as follows. 
	
	\begin{prop}\label{prop:6.5-Sound} 
		If $0 \le \log w \le (\log X)^{1/(3R)}$ and $1 \le x \le X$, then 
		\begin{equation*}
			\int_1^x|\widetilde{\mathcal{O}}_{\underline{L}}(t,w)|^{2} \frac{dt}{t^3\log(et)} \ll (\log X)^{3\varepsilon/2}\mathfrak{C}(\pi')^{3\varepsilon/2}.
		\end{equation*}
	\end{prop}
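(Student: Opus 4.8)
The plan is to follow Soundararajan's proof of the corresponding step in \cite[\S6]{S}, making the substitutions that have by now become routine in this section. First I would identify $\widetilde{\mathcal{O}}_{\underline{L}}(t,w)$ as the summatory function of the Dirichlet coefficients of
\[
-L'(s,\pi\times\pi')\prod_{j=1}^{R}(1-w^{1+i\tau_j-s})^{L} \;=\; \Big(-\tfrac{L'}{L}(s,\pi\times\pi')\Big)\mathcal{L}_{\underline{L}}(s) \;=\; -\mathcal{L}_{\underline{L}}'(s) + \sum_{k=1}^{R} Lw^{1+i\tau_k-s}(\log w)\,\mathcal{L}_{\underline{L}-\underline{e}_k}(s),
\]
where $\underline{e}_k$ is the $k$-th standard basis vector and we used $-L'(s,\pi\times\pi')=\sum_n\lambda_{\pi\times\pi'}(n)(\log n)n^{-s}$. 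Writing $\mathcal{L}_{\underline{L}}(s)=\sum_n c(n)n^{-s}$ and matching partial sums gives
\[
\widetilde{\mathcal{O}}_{\underline{L}}(t,w) \;=\; \sum_{n\le t}c(n)\log n \;+\; \sum_{k=1}^{R} Lw^{1+i\tau_k}(\log w)\,\mathcal{O}_{\underline{L}-\underline{e}_k}(t/w,w).
\]
The second sum is easily absorbed: by Proposition \ref{prop:sound-5.4} (each component of $\underline{L}-\underline{e}_k$ is $\ge L-1$) and $\log w\le(\log X)^{1/(3R)}$ it is $\ll t(\log X)^{2\varepsilon/3+1/(3R)}\mathfrak{C}(\pi')^{\varepsilon}$, so its contribution to the integral is $\ll (\log X)^{4\varepsilon/3+2/(3R)}(\log\log x)\,\mathfrak{C}(\pi')^{2\varepsilon}$, which lies within the asserted bound once one notes that $1/R$ is a small multiple of $\varepsilon^2$ and $\log\log x\ll_{\varepsilon}(\log X)^{\varepsilon/10}$ (and, as throughout the section, reads ``$\mathfrak{C}(\pi')^{\varepsilon}$'' via the refined form in the Remark after Theorem \ref{thm:weaksub}, so that squaring such a factor is harmless).

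For the main term $\sum_{n\le t}c(n)\log n$, which is the summatory function of the coefficients of the entire function $-\mathcal{L}_{\underline{L}}'(s)$ (entire because $L(s,\pi\times\pi')$ is, by hypothesis, and $\prod_j(1-w^{1+i\tau_j-s})^{L}$ is a polynomial in $w^{-s}$), I would apply a smoothed Perron formula and estimate the contour exactly as in \cite{S}. The Perron truncation error is controlled by the short-interval bound \eqref{eq:shortsum-rankin-selberg} of Lemma \ref{lem:3.3-Sound} together with the crude estimate $|p(d)|\ll d$ for the coefficients of the kernel polynomial; on the main contour $\Real(s)=1+1/\log X$ one splits into $|\Imag(s)|>T$ and $|\Imag(s)|\le T$. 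On the former range the crude bounds \eqref{eq:lem3.2-Sound-2}, \eqref{eq:lem3.2-Sound-3} of Lemma \ref{lem:3.2-Sound}, the decay of the smoothing kernel, and $T=\exp((\log\log X)^2)$ suffice. The latter range is where all the work lies: here one uses the smallness of $\mathcal{L}_{\underline{L}}$ furnished by Lemmas \ref{lem:sound-5.1} and \ref{lem:sound-5.3}, transferred to a neighbourhood inside $\Real(s)\ge 1+1/\log X$ by Lemma \ref{lem:sound-5.2}, in conjunction with the factorization $\mathcal{L}_{\underline{L}}'=L'P+LP'$, $P(s)=\prod_j(1-w^{1+i\tau_j-s})^{L}$: away from the intervals removed in the construction of the $\tau_j$ one bounds $LP'=\mathcal{L}_{\underline{L}}\cdot(P'/P)$ using that $P'/P$ is controlled there, while inside those intervals one exploits that $|P(s)|\ll(\log X)^{-2L/(3R)}$ is tiny and pairs it against the crude bound for $L'(s)$ — the point being that $L=\lfloor 5(m^2+1)R\rfloor$ is chosen so large that $2L/(3R)$ dwarfs $(m^2+3)/2$. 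Squaring the resulting estimate and integrating against $dt/(t^3\log(et))$ via $\int_1^x dt/(t\log(et))=\log\log(ex)$ then yields the claimed bound.

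The dependence on $\pi'$ is tracked exactly as in Lemmas \ref{lem:3.2-Sound}, \ref{lem:3.3-Sound}, and \ref{lem:sound-5.1}: every sum of the form $\sum_n\Lambda_{\pi'\times\widetilde{\pi}'}(n)/(\cdots)$ or value $L(\sigma,\pi'\times\widetilde{\pi}')$ that arises is bounded, via partial summation, by Lemma \ref{lem:pnt} or Theorem \ref{thm:Xiannan-Li}, producing the harmless factor $\mathfrak{C}(\pi')^{\varepsilon}$; and the (non-archimedean) GRC hypothesis on $\pi$ is used through Lemmas \ref{lem:lem2.2-ST} and \ref{lem:lem3.1-JLW} to replace $\Lambda_{\pi\times\widetilde{\pi}}(n)$ and $\lambda_{\pi\times\widetilde{\pi}}(n)$ by $m^2\Lambda(n)$ and $\tau_{m^2}(n)$. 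With the two terms of $\widetilde{\mathcal{O}}_{\underline{L}}(t,w)$ in hand, assembling the bound is a matter of combining the two contributions and recalling that all the extraneous $\log\log x$ and $\mathfrak{C}(\pi')$ factors are absorbed as above.

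I expect the main obstacle to be precisely the estimation of $\mathcal{L}_{\underline{L}}'$ (equivalently, the control of $-L'/L$ against the smallness of $\mathcal{L}_{\underline{L}}$) on the line $\Real(s)=1+1/\log X$ without losing more than a negligible power of $\log X$: a bare application of Cauchy's integral formula to differentiate the bound of Lemma \ref{lem:sound-5.3} costs a full factor of $\log X$, which would swamp the target, so one must instead use the factorization $\mathcal{L}_{\underline{L}}'=L'P+LP'$ together with the spacing $|\tau_j-\tau_k|\ge(\log X)^{-1/R}$ of the successive maxima, verifying that the resulting ``distance to $\{\tau_j\}$'' weights integrate to something of size $(\log X)^{o(\varepsilon)}$. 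This is the technical heart of Soundararajan's argument, and the calibrated choices $R=\lfloor 72m^2/\varepsilon^2\rfloor+1$ and $L=\lfloor 5(m^2+1)R\rfloor$ are exactly what force the exponents to close; I would carry his bookkeeping through with the substitutions above.
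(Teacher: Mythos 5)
Your proposal diverges from the paper's proof in a way that introduces a genuine gap. The paper (following Soundararajan) treats the statement as an intrinsically $L^2$ estimate: after the substitution $t=e^y$ and the introduction of the auxiliary variable $\alpha\geq 1/\log X$, Plancherel's theorem is applied to $e^{-y(1+\alpha)}\widetilde{\mathcal{O}}_{\underline{L}}(e^y,w)$, reducing the inner integral to $\int |L'\cdot\prod(1-w^{1+i\tau_k-s})^{L}|^2\,dt/|1+\alpha+it|^2$. Using $|L'\prod(\cdots)^{L}|\leq|L'/L|\cdot|\mathcal{L}_{\underline{L}}|$ and the smallness of $\mathcal{L}_{\underline{L}}$ (Lemmas \ref{lem:sound-5.2} and \ref{lem:sound-5.3}), this becomes $(\log X)^{\varepsilon}\mathfrak{C}(\pi')^{\varepsilon}\int|L'/L|^2\,dt/|1+\alpha+it|^2$, and then Plancherel is applied a second time to turn this into $\int_0^\infty|\sum_{n\le e^y}\Lambda_{\pi\times\pi'}(n)|^2 e^{-2(1+\alpha)y}\,dy\ll\log\mathfrak{C}(\pi')/\alpha$ via the Brun--Titchmarsh bound of Lemma \ref{lem:pnt}. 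In your proposal, by contrast, you aim for a \emph{pointwise} bound on $\widetilde{\mathcal{O}}_{\underline{L}}(t,w)$ via a smoothed Perron argument, then square and integrate.

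The pointwise route cannot close. Your decomposition $-L'P=-\mathcal{L}_{\underline{L}}'+LP'$ is correct, and the $LP'$ piece is absorbable; but the $L'P$ piece is not controllable outside the removed intervals. The successive-maxima construction of the $\tau_j$ controls $|L(1+1/\log X+it)|$ for $t\in\mathcal{S}_R$, but it gives no handle whatsoever on $|L'|$ or $|L'/L|$ there; on that vertical line $|L'/L|$ can be as large as $\log X$ (there are zeros of $L(s,\pi\times\pi')$ arbitrarily close to $\mathrm{Re}\,s=1$, and $|L'(1+1/\log X+it)|$ is only trivially bounded by $(\log X)^{(m^2+3)/2}\mathfrak{C}(\pi')^{\varepsilon}$). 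Outside the removed intervals $|P(s)|$ is not small, so $|L'P|$ is not small either, and your case split (``away/inside the removed intervals'') leaves this term uncontrolled exactly where you need it. Indeed, a uniform bound $|\widetilde{\mathcal{O}}_{\underline{L}}(t,w)|\ll t(\log X)^{O(\varepsilon)}\mathfrak{C}(\pi')^{O(\varepsilon)}$ is not what these methods deliver: the obvious route $\widetilde{\mathcal{O}}_{\underline{L}}\approx(\log t)\mathcal{O}_{\underline{L}}$ (Lemma \ref{lem:6.4-Sound}) together with Proposition \ref{prop:sound-5.4} gives only $\ll t(\log X)^{1+O(\varepsilon)}$, losing a full factor of $\log X$. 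The $L^2$ formulation of the proposition is essential --- the mean square is genuinely smaller than the sup --- and the double Plancherel argument is precisely how that is exploited. You need to replace the Perron/pointwise step with the two applications of Plancherel as in the paper.
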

	
	\begin{proof}
        Without any modification, the proof of \cite[Proposition 6.5]{S} gives  
    \begin{equation}\label{eq:prop6.5-sound-1}
		\begin{aligned}
                &\int_1^x|\widetilde{\mathcal{O}}_{\underline{L}}(t,w)|^{2} \frac{dt}{t^3\log(et)}\\ 
                &\quad \ll \int_{1/\log X}^\infty e^{-2\alpha}\int_{-\infty}^{\infty} |L'(1+\alpha+it, \pi \times \pi')|^2\prod_{k=1}^{R} |1- w^{-\alpha-it+i\tau_k}|^{2L} \frac{dt}{|1+\alpha+it|^2} d\alpha. 
			\end{aligned}
		\end{equation}
  
	We split the inner integral above into two ranges: $|t| \le T/2$ and $|t| > T/2$. 
    By \eqref{eq:lem3.2-Sound-3}, the inner integral in \eqref{eq:prop6.5-sound-1} over the range $|t| > T/2$ contributes 
  \begin{equation}
            \label{eq:prop6.5-sound-geT/2}
		\ll (\log X)^{m^2+3} \mathfrak{C}(\pi')^\varepsilon \int_{|t| > T/2} \frac{dt}{|1+\alpha+it|^2} \ll_A  \frac{\mathfrak{C}(\pi')^\varepsilon}{(\log X)^A},
		\end{equation}
        for any $A > 0$.
        When $|t|\leq T/2$, Lemmas \ref{lem:sound-5.2} and \ref{lem:sound-5.3} yield 
    \begin{equation*}
	\begin{aligned}
        |L'(1+\alpha+it, \pi \times \pi')|\prod_{k=1}^{R} |1- w^{-\alpha-it+i\tau_k}|^{L} 
	&
        \ll \Big|\frac{L'}{L}(1+\alpha+it, \pi \times \pi')\Big| (\log X)^{\varepsilon/2}\mathfrak{C}(\pi')^{\varepsilon/2}.
	\end{aligned}
    \end{equation*}
    Therefore, the inner integral in \eqref{eq:prop6.5-sound-1} over $|t|\leq T/2$ contributes 
		\begin{equation}\label{eq:prop6.5-sound-3}
			\ll (\log X)^{\varepsilon} \mathfrak{C}(\pi')^{\varepsilon}\int_{-\infty}^{\infty} \Big|\frac{L'}{L}(1+\alpha+it, \pi \times \pi')\Big|^2 \frac{dt}{|1+\alpha+it|^2}.
		\end{equation}

		Observe that the Fourier transform of the function $e^{-y(1+\alpha)} \sum_{n\le e^y} \Lambda_{\pi \times \pi'}(n)$ is 
		\begin{equation*}
			\int_{-\infty}^{\infty} \sum_{n \le e^y} \Lambda_{\pi\times\pi'} (n) e^{-y(1+\alpha+it)} dy = \sum_{n=1}^\infty \frac{\Lambda_{\pi\times \pi'}(n)}{n^{1+\alpha+it}}\frac{1}{1+\alpha+it} = -\frac{L'}{L}(1+\alpha+it) \frac{1}{1+\alpha+it}. 
		\end{equation*}
    Thus, by Plancherel's theorem and Lemma \ref{lem:pnt}, we have that \eqref{eq:prop6.5-sound-3} is 
		\begin{equation}
            \label{eq:prop6.5-sound-leT/2}
			\ll (\log X)^\varepsilon \mathfrak{C}(\pi')^{\varepsilon} \int_{-\infty}^{\infty}\Big|\sum_{n\le e^y}\Lambda_{\pi\times \pi'}(n)\Big|^2 e^{-(2+2\alpha)y} dy \ll \frac{(\log X)^\varepsilon \mathfrak{C}(\pi')^{3\varepsilon/2}} {2\alpha},
		\end{equation}
   using $y \ge 0$ in the last step. The proposition follows by using \eqref{eq:prop6.5-sound-geT/2} and \eqref{eq:prop6.5-sound-leT/2}  in \eqref{eq:prop6.5-sound-1}. 
  \end{proof}
        
        \begin{proof}[Proof of Theorem    \ref{thm:2.1-Sound}]
        Theorem \ref{thm:2.1-Sound} follows upon using Proposition \ref{prop:6.5-Sound} in \eqref{eq:Thm2.1}. 
        \end{proof}

    %%%%%%%%%%%%%%%%%%%%%%%%%%%%
    %%%%%%%%%%%%%%%%%%%%%%%%%%%%

	\section{Background on \texorpdfstring{$\GL(2)$}{GL(2)} automorphic forms}\label{sec:GL(2)}
     We recall here some standard properties of
    classical automorphic forms, including holomorphic newforms, Hecke--Maa{\ss} cusp forms, Eisenstein series, incomplete Eisenstein series, and incomplete Poincar\'e series. We will also discuss the spectral decomposition  and the Poincar\'e decomposition. For complete definitions and proofs, see, e.g., \cite{IK}, \cite{NPS}, and \cite{O}. 

    %%%%%%%%%%%%%%%%%%%%%%%%%%%%
    %%%%%%%%%%%%%%%%%%%%%%%%%%%%
	
	\subsection{Holomorphic newforms.}\label{subsec:newforms}
	
	Let $k \geq 2$ be an even integer and $\alpha \in \GL_2 (\mathbb{R})$ with positive determinant, where $\alpha$ acts on $\h$ by fractional linear transformations. Given a function $f: \h \to \mathbb{C}$, we denote $j((\begin{smallmatrix} a & b\\ c & d \end{smallmatrix}),z) = cz+d$ and recall that $f|_k \alpha(z) = (\det\alpha)^{k/2}j(\alpha, z)^{-k}f(\alpha z)$.

    A \textit{holomorphic cusp form} on $\Gamma_0(q)$ of weight $k$ with trivial nebentypus is a holomorphic function $f: \h \to \mathbb{C}$ such that $f|_k \gamma = f$ for all $\gamma \in \Gamma_0(q)$ and $f$ vanishes at the cusps of $\Gamma_0(q)$. 
    We denote the vector space of holomorphic cusp forms on $\Gamma_0(q)$ of weight $k$ with trivial nebentypus as $S_k(\Gamma_0(q))$.
    
    Define $S_k^{\new}(\Gamma_0(q))$ to be the subspace of $S_k(\Gamma_0(q))$ that is orthogonal to the subspace of oldforms, denoted as $S_k^{\textup{old}}(\Gamma_0(q))$, with respect to the Petersson inner product.
    A \textit{holomorphic newform} is a cusp form in $S_k^{\new}(\Gamma_0(q))$ that is also an eigenform of all of the Hecke operators and all of the Atkin--Lehner involutions. The space $S_k^{\new}(\Gamma_0(q))$ has a basis of normalized newforms, which we denote as $\mathcal{B}_k(q)$ (see more discussion in \cite[Chapter 2, Section 5]{O}).

    Consider $f \in \mathcal{B}_k(q)$. Let $e(z) = e^{2\pi iz}$ and let $\lambda_f(n)$ be the eigenvalue of the $n$-th Hecke operator $T_n$. Then the Fourier expansion of $f$ can be written as
	\begin{equation}
        \label{eq:Fourier-newform}
		f(z) = \sum_{n \ge 1} \lambda_f(n)n^{\frac{k-1}{2}}e(nz),
	\end{equation} 
    where $\lambda_f(1) = 1$. If we let $\tau(n)$ denote the number of positive divisors of $n$, then $\lambda_f(n)$ is real, multiplicative and satisfies Deligne's bound $|\lambda_f(n)| \le \tau(n)$.
    
    For any $\gamma \in \Gamma_0(q)$ and $f,g \in \mathcal{B}_k(q)$, if we write $z' = \gamma z = x'+iy'$, then 
        $y'^{k/2}f(z') = (j(\gamma,z)/|j(\gamma,z)|)^ky^{k/2}f(z).$  
    This observation implies the $\Gamma_0(q)$-invariance for both $z \mapsto y^{k}|f(z)|^2$ and $z \mapsto y^{k}f(z)\overline{g(z)}$.
    Recall that we write  $F_k(z) = \rho_f(1)y^{k/2}f(z)$,
    where $\rho_f(1)$ is chosen such that $ \langle 1, |F_k(z)|^2 \rangle_q = 1$. By the Rankin--Selberg theory, we  have \begin{equation}\label{eq:rho_f(1)}
		|\rho_f(1)|^2 
  \asymp \frac{(4\pi)^{k}}{kq\Gamma(k-1)L(1, \ad f)},
	\end{equation}
    where the implied constants are absolute.

    %%%%%%%%%%%%%%%%%%%%%%%%%%%%
    %%%%%%%%%%%%%%%%%%%%%%%%%%%%
	
	\subsection{Cusps of \texorpdfstring{$\Gamma_0(q)$}{G} }\label{subsec:cusps}
	We collect here some properties of cusps of $\Gamma_0(q)$ (see \cite[Section 3.4]{NPS} for more details). Let $\tau$ traverse a set of representatives for the double coset space $\Gamma_{\infty}\bs\Gamma_0(1)/\Gamma_0(q)$. Then $\mathfrak{a}:= \tau^{-1} \infty \in \mathbb{P}^1(\mathbb{Q})$ traverses a set of inequivalent cusps of $\Gamma_0(q)$. Denote the set of cusps of $\Gamma_0(q)$ as $\mathcal{C} = \mathcal{C}(\Gamma_0(q))=\{\mathfrak{a}_j\}_j$. The width of $\mathfrak{a}_j = \tau_j^{-1} \infty$ is given by $w_j = [\Gamma_\infty: \Gamma_\infty \cap \tau_j\Gamma_0(q)\tau_j^{-1}]$, and the scaling matrix of $\mathfrak{a}_j$ is 
\begin{equation}\label{eq:scalingmatrix}
		\sigma_j = \tau_j^{-1} \begin{pmatrix}w_j& \\&1\end{pmatrix}.
	\end{equation}
 
	If the bottom row of $\sigma_j$ is $(c_j, d_j)$, then we  define $c_j$ as the denominator of the cusp $\mathfrak{a}_j$.  Such $c_j$ is a positive divisor of $q$, and we have that the width $w_j$ equals $q/(c_j^2,q)= [q/c_j^2, 1]$. For each positive divisor $c|q$, if we denote the set of cusps of denominator $c$ by 
	\begin{equation}\label{def:C[c]}
	    \mathcal{C}[c] := \{\mathfrak{a}_j \in \mathcal{C}: c_j = c\},
	\end{equation}
    then $\#\mathcal{C}[c]$ equals $\varphi((c,q/c))$. If $q$ is squarefree, then $\#\mathcal{C}[c]=1$. In that case, the multiset of the widths of cusps of $\Gamma_0(q)$ is $\{d \in \mathbb{Z}: d|q, d \ge 1\}$.

    We now collect necessary properties of the Fourier expansion of $|f|^2$ at the cusp $\mathfrak{a}_j$. For any positive divisor $c|q$ and $d \in (\mathbb{Z}/(c,q/c))^{\times}$, we write $\mathfrak{a}_{d/c} \in \mathcal{C}(\Gamma_0(q))$ for the corresponding cusp. For $\mathfrak{a}_j = \mathfrak{a}_{d/c}$, we may take $\tau_j = \begin{psmallmatrix} * & *\\c & d'\end{psmallmatrix}$ for any integer $d'$ for which $(d', c) = 1$ and $d' \equiv d \pmod{(c, q/c)}$. Such $\tau_j$'s then form a set of representatives of $\Gamma_{\infty}\bs\Gamma_0(1)/\Gamma_0(q)$, and the corresponding scaling matrix $\sigma_j$ is defined as in \eqref{eq:scalingmatrix}.
    
    Write $z_j = x_j + iy_j$ for the change of variable $z_j := \sigma_j^{-1}z$.  We observe that $|f(z)|^2y^k = |f(\sigma_jz_j)|^2\Imag(\sigma_jz_j)^k = |f|_k\sigma_j(z_j)|^2 y_j^k$. For some coefficients $\lambda_j(n) \in \mathbb{C}$, we may write
        \begin{equation}\label{eq:fourier-zj}
		f|_k\sigma_j(z_j) = \sum_{n \ge 1} \lambda_j(n)n^{\frac{k-1}{2}}e(nz_j).
	\end{equation}
    In the special case that $\mathfrak{a}_j = \infty$, we observe that $\lambda_j(n) = \lambda(n).$

	In general, $\lambda_j$ is not multiplicative, so we instead work with the root-mean-square of $\lambda_j$ taken over all cusps of a given denominator. For each positive divisor $c$ of $q$, we define
	\begin{equation}\label{eq:lambdac}
		\lambda_{[c]}(n) := \Big(\frac{1}{\#\mathcal{C}[c]}\sum_{\mathfrak{a}_j \in \mathcal{C}[c]} |\lambda_j(n)|^2 \Big)^{1/2}.
	\end{equation}
   With the above definition and the Cauchy--Schwarz inequality, we observe that
   \begin{equation}\label{eq:regrouping}
		\sum_{a_j \in \mathcal{C}} |\lambda_j(n_1)\lambda_j(n_2)|\le \sum_{c\mid q}\#\mathcal{C}[c]\lambda_{[c]}(n_1)\lambda_{[c]}(n_2). 
		\end{equation}

    %%%%%%%%%%%%%%%%%%%%%%%%%%%%
    %%%%%%%%%%%%%%%%%%%%%%%%%%%%
    
	\subsection{Hecke--Maa{\ss} cusp forms}\label{subsec:maasscuspforms}
	A \textit{Maa{\ss} cusp form} $\phi$ of level 1 is an $\Gamma_0(1)$-invariant eigenfunction of the hyperbolic Laplacian $\Delta := -y^{2}(\partial_x^2 + \partial_y^2)$ on $\h$ that decays rapidly at infinity. We denote the spectral parameter of $\phi$ by $t_\phi$, which is given by $\Delta \phi = (1/4+t_\phi^2)\phi$.
    A \textit{Hecke--Maa{\ss} cusp form} is a Maa{\ss} cusp form that is also an eigenfunction of all of the Hecke operators and the involution $T_{-1}: \phi \mapsto [z \mapsto \phi(-\overline{z})],$ which commute with one another as well as with $\Delta$. A Hecke--Maa{\ss} cusp form $\phi$ admits a Fourier expansion
    \[
        \phi(z) = \sum_{n \in \mathbb{Z}_{\neq 0}} \frac{\lambda_{\phi}(n)}{\sqrt{|n|}} \kappa_{\phi}(ny)e(nx),
    \]
    where $\kappa_{\phi}(y) =2 |y|^{1/2}K_{ir}(2\pi|y|)\mathrm{sgn}(y)^{\frac{1-\delta}{2}}$ with $K_{ir}$ the standard $K$-Bessel function, $\mathrm{sgn}(y)$ the sign function, and $\delta \in \{\pm 1\}$ the $T_{-1}$-eigenvalue of $\phi$. 
    
    We normalize a Hecke--Maa{\ss} cusp form $\phi$ such that $\langle \phi, \phi \rangle_1= 1$ and assume that $\phi$ is  even; otherwise, $\langle \phi, |F_k|^2 \rangle_q = 0$. Define $N_{\Gamma_0(1)}(T) := |\{\phi :|t_\phi| \le T\}|$, the number of Hecke--Maa{\ss} cusp forms such that $|t_\phi| \le T$. By Weyl's law (see, e.g., \cite[Corollary 11.2]{Iw}), we have
        \begin{equation}
        \label{eq:Weyls}
            N_{\Gamma_0(1)}(T) = \frac{T^2}{12} + O(T\log T).
         \end{equation}

    %%%%%%%%%%%%%%%%%%%%%%%%%%%%
    %%%%%%%%%%%%%%%%%%%%%%%%%%%%
 
	\subsection{Eisenstein series}\label{subsec:Eisenstein}
	Let $s \in \mathbb{C}$, $z \in \mathbb{H}$, and $\Gamma_{\infty} = \big\{\pm \big(\begin{smallmatrix} 1 & n\\ & 1 \end{smallmatrix}\big): n \in \z \big\}$. The \textit{real-analytic Eisenstein series} of level 1 is defined as 
 \[
 E(s,z) = \sum_{\Gamma_\infty \bs \Gamma_0(1)} \Imag(\gamma z)^s.
 \]
 The series $E(s,z)$ converges normally for $\Real(s) >1$ and continues meromorphically to  $\Real(s) \ge 1/2$, where $E(s,z)$ has a unique simple pole  at $s=1$ of residue $\res_{s=1} E(s,z) = 3/\pi$.

 Let $\zeta(s)$ be the Riemann zeta function,  $\lambda_s(n) = \sum_{ab=n}(a/b)^s$, and $\theta(s)=\pi^{-s}\Gamma(s)\zeta(2s)$.  The Eisenstein series admits the Fourier expansion 
	\[
	E(s,z) = y^s + \frac{\theta(1-s)}{\theta(s)}y^{1-s} +\frac{2\sqrt{y}}{\theta(s)}\sum_{n \in \mathbb{Z}_{\neq 0}}\tau_{s-1/2}(|n|)K_{s-1/2}(2\pi|n|y)e(nx).
	\]
	For $\Real(s) = 1/2$, we obtain $|\theta(1-s)/\theta(s)| =1$, and such $E(s,z)$ is called a \textit{unitary Eisenstein series}. Let $E_t$ denote $E(1/2+it, \cdot)$.  The work of Huang--Xu \cite[Theorem 1]{HX} shows that for $y\ge 1/2$, if $z = x+iy$, then
	\begin{equation}\label{eq:Huang-xu-bound}
		|E_t(z)|  \ll \sqrt{y}(1+|t|)^{3/8 + \varepsilon}.
	\end{equation}
       
    %%%%%%%%%%%%%%%%%%%%%%%%%%%%
    %%%%%%%%%%%%%%%%%%%%%%%%%%%%

	\subsection{Spectral decomposition}\label{subsec:spectral-decom}
    Let $Y_0(1) = \Gamma_0(1)\bs\h$,  and let $L^2(Y_0(1))$ denote the space of square-integrable  functions on $Y_0(1)$. One can decompose any $\psi \in L^2(Y_0(1))$ into three types of spectra: a constant function, Hecke--Maa{\ss} cusp forms, and unitary Eisenstein series. 
    
	\begin{lem}[Theorem 15.5 of \cite{IK}] Let $\{\phi_j\}_j$ be an orthonormal basis of Hecke--Maa{\ss} cusp forms. For any $t \in \mathbb{R}$, let $E_t$ be a unitary Eisenstein series.  If $\psi\in L^2(Y_0(1))$, then
		\begin{equation*}
			\psi(z) = \frac{3}{\pi}\langle \psi, 1 \rangle_1 + \sum_{j \ge 1}\langle \psi, \phi_j \rangle_1 \phi_j(z) + \frac{1}{4\pi}\int_{\mathbb{R}} \langle \psi, E_t \rangle_1 E_t(z) dt.
		\end{equation*} 
	\end{lem}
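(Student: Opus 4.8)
This is the classical Selberg--Langlands spectral decomposition of $L^2(Y_0(1))$, so the plan is to deduce it from the general spectral theory of automorphic forms, exactly as recorded in \cite[Chapter 15]{IK}, and to indicate how the three terms match up with the three pieces of the spectrum. The starting point is the orthogonal Hilbert-space decomposition
\[
L^2(Y_0(1)) \;=\; \mathbb{C}\cdot 1 \;\oplus\; L^2_{\mathrm{cusp}}(Y_0(1)) \;\oplus\; L^2_{\mathrm{cont}}(Y_0(1)),
\]
where $L^2_{\mathrm{cusp}}$ is the closure of the span of the Maa{\ss} cusp forms and $L^2_{\mathrm{cont}}$ is the closure, in the direct-integral sense, of the span of the unitary Eisenstein series $E_t = E(1/2+it,\cdot)$. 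Since $\Gamma_0(1)$ has a single cusp, the residual spectrum contributes only the constant function --- it arises as the residue $\res_{s=1}E(s,z) = 3/\pi$ --- which is the one-dimensional summand $\mathbb{C}\cdot 1$; there is no residual cusp form.

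With the decomposition in hand, I would compute the three projections. Because $\langle 1,1\rangle_1 = \mu(Y_0(1)) = \pi/3$, the orthogonal projection of $\psi$ onto $\mathbb{C}\cdot 1$ equals $\big(\langle\psi,1\rangle_1/\langle 1,1\rangle_1\big)\cdot 1 = (3/\pi)\langle\psi,1\rangle_1$, which is the first term. Since $\{\phi_j\}_j$ is by hypothesis an orthonormal basis of $L^2_{\mathrm{cusp}}$, Parseval's identity gives the projection onto the cuspidal subspace as $\sum_{j\ge 1}\langle\psi,\phi_j\rangle_1\phi_j$, with $L^2$-convergence. Finally, the theory of Eisenstein series provides a spectral (Plancherel) isomorphism identifying the projection of $\psi$ onto $L^2_{\mathrm{cont}}$ with a direct integral against $\{E_t\}_{t\in\mathbb{R}}$; the normalization $\tfrac{1}{4\pi}\int_{\mathbb{R}}$ (integrating over all of $\mathbb{R}$ rather than $[0,\infty)$) is forced by the functional equation relating $E_t$ to $E_{-t}$.

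The genuinely hard part --- and the reason we merely cite \cite{IK} rather than reprove everything --- lies in establishing the continuous spectrum. This requires the meromorphic continuation and functional equation of $E(s,z)$ beyond $\Real(s) = 1/2$, the Maa{\ss}--Selberg relations for truncated Eisenstein series, and the argument that the incomplete Eisenstein series span the orthogonal complement of $\mathbb{C}\cdot 1 \oplus L^2_{\mathrm{cusp}}$ with precisely the spectral measure above. One also needs the discreteness of the cuspidal spectrum (a compactness argument for the resolvent restricted to the cuspidal subspace) to justify that $\{\phi_j\}_j$ is an honest orthonormal basis. All of this is classical and is carried out for $Y_0(1)$ in \cite[Chapter 15]{IK}, from which the lemma follows immediately.
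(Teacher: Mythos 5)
Your proposal is correct and matches the paper's approach: the paper states this as a lemma attributed directly to \cite[Theorem 15.5]{IK} with no independent proof, and you likewise ultimately defer to \cite[Chapter 15]{IK} after outlining the orthogonal decomposition into constants, cuspidal, and continuous spectrum and checking the normalizations $3/\pi = 1/\langle 1,1\rangle_1$ and $\tfrac{1}{4\pi}\int_{\mathbb{R}}$. The additional exposition you supply (residual spectrum, Maa{\ss}--Selberg, discreteness of the cuspidal spectrum) is accurate but not a new argument; it is the standard content of the cited theorem.
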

	The above decomposition gives us that
 \begin{equation}\label{eq:spect-decom-2}
		\langle \psi, \overline{F_k}G_k \rangle_q =  \frac{3}{\pi}\langle \psi , 1 \rangle_1 \mathds{1}_{f = g} + \sum_{j \ge 1}\langle \psi, \phi_j \rangle_1 \langle \phi_j, \overline{F_k}G_k \rangle_q + \frac{1}{4\pi}\int_{\mathbb{R}} \langle \psi, E_t \rangle_1 \langle E_t, \overline{F_k}G_k \rangle_q dt.
	\end{equation}
        To obtain a rate of convergence for $\langle \psi, \overline{F_k}G_k \rangle_q$, we need to estimate $\langle \phi_j, \overline{F_k}G_k \rangle_q$, $\langle E_t, \overline{F_k}G_k \rangle_q$, $\langle \psi, \phi_j \rangle_1$, and $\langle \psi, E_t \rangle_1$.  For the last two quantities, we recall the following lemma.

	\begin{lem}[Lemma 2.2 of \cite{BH}]\label{lem:petersson-bounds}
		Let $M \ge 1$, $\psi \in C^{\infty}_c(Y_0(1), M)$, and $\phi$ be a Hecke--Maa{\ss} cusp form with spectral parameter $ t_\phi $. If $A\geq 0$, then 
	\[
	       |\langle\psi, \phi\rangle_1| \ll_A \Big(\frac{M}{ 1+|t_\phi| }\Big)^A \quad \text{and} \quad |\langle\psi, E_t\rangle_1| \ll_A \Big(\frac{M}{1+|t|}\Big)^A(1+|t|)^{3/8+\varepsilon}.
	\]
	\end{lem}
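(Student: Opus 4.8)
The plan is to integrate by parts against the hyperbolic Laplacian, exploiting that both $\phi$ and $E_t$ are $\Delta$-eigenfunctions. Write $\lambda_\phi=\tfrac14+t_\phi^2$ for the Laplace eigenvalue of $\phi$, and $\lambda=\tfrac14+t^2$ for that of $E_t=E(\tfrac12+it,\cdot)$, so $\lambda_\phi\asymp(1+|t_\phi|)^2$ and $\lambda\asymp(1+|t|)^2$. Since $\Delta=-y^2(\partial_x^2+\partial_y^2)$ is a real operator and $\psi$ is compactly supported in $Y_0(1)$ (so Green's identity produces no boundary term at the cusp), for every integer $j\ge 0$ we have
\[
  \langle\psi,\phi\rangle_1=\lambda_\phi^{-j}\langle\Delta^j\psi,\phi\rangle_1,\qquad
  \langle\psi,E_t\rangle_1=\lambda^{-j}\langle\Delta^j\psi,E_t\rangle_1,
\]
using $\Delta\phi=\lambda_\phi\phi$ and $\Delta E_t=\lambda E_t$ pointwise (the latter is valid because $s\mapsto E(s,\cdot)$ is holomorphic at $s=\tfrac12+it$).

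Next I would establish the uniform bound $\|\Delta^j\psi\|_\infty\ll_j M^{2j}$. For $j\ge 1$, an easy induction on $j$ shows that each application of $\Delta$ preserves the quantity ``(power of $y$)$\,-\,$(order of differentiation)'', so that $\Delta^j\psi=\sum_{2\le a+b\le 2j}c_{a,b,j}\,y^{a+b}\,\partial_x^a\partial_y^b\psi$ for suitable constants $c_{a,b,j}$. Since $a+b\ge 1$ gives $\max(a+b,\tfrac12)=a+b$, the defining condition \eqref{eq:M-norm} yields $|y^{a+b}\partial_x^a\partial_y^b\psi(z)|\ll_{a,b}M^{a+b}$, and summing over the finitely many $(a,b)$ (using $M\ge 1$) gives $|\Delta^j\psi(z)|\ll_j M^{2j}$ for all $z\in Y_0(1)$; the case $j=0$ is just $|\psi(z)|\ll y^{-1/2}\ll 1$, again from \eqref{eq:M-norm}.

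For $\phi$, Cauchy--Schwarz together with $\|\phi\|_{L^2(Y_0(1))}=1$ and $\mu(Y_0(1))=\pi/3$ then gives
$|\langle\psi,\phi\rangle_1|\le\lambda_\phi^{-j}\|\Delta^j\psi\|_{L^2}\|\phi\|_{L^2}\ll_j(M/(1+|t_\phi|))^{2j}$.
For $E_t$, the support of $\psi$ lies in the region $\{y\ge\sqrt3/2\}\subseteq\{y\ge\tfrac12\}$ of the fundamental domain $\mathcal F$, so \eqref{eq:Huang-xu-bound} applies there and gives
\[
  |\langle\Delta^j\psi,E_t\rangle_1|\le\int_{\supp\psi}|\Delta^j\psi(z)|\,|E_t(z)|\,d\mu
  \ll_j M^{2j}(1+|t|)^{3/8+\varepsilon}\int_{\mathcal F}\frac{dx\,dy}{y^{3/2}}\ll_j M^{2j}(1+|t|)^{3/8+\varepsilon},
\]
since the last integral converges at the cusp; hence $|\langle\psi,E_t\rangle_1|\ll_j(M/(1+|t|))^{2j}(1+|t|)^{3/8+\varepsilon}$. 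Finally, to pass from the integer exponents $2j$ to an arbitrary real $A\ge 0$, split into two cases: if $1+|t_\phi|\ge M$ (resp.\ $1+|t|\ge M$), take $j=\lceil A/2\rceil$, so that $(M/(1+|t_\phi|))^{2j}\le(M/(1+|t_\phi|))^A$; if instead the ratio exceeds $1$, then already the $j=0$ estimates $|\langle\psi,\phi\rangle_1|\ll 1$ and $|\langle\psi,E_t\rangle_1|\ll(1+|t|)^{3/8+\varepsilon}$ lie below the claimed bounds, because $(M/(1+|t_\phi|))^A\ge 1$ there.

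I expect no genuine obstacle. The two points deserving a little care are the justification of the integration by parts $\langle\Delta^j\psi,\phi\rangle_1=\lambda_\phi^j\langle\psi,\phi\rangle_1$ (where compact support of $\psi$ removes the boundary contribution at the cusp and reality of $\Delta$ handles the complex conjugation), and the verification that in $\Delta^j\psi$ each coefficient is an exact constant multiple of $y^{a+b}$ — this is precisely what makes the $M$-norm in \eqref{eq:M-norm} produce the clean factor $M^{2j}$ with no leftover power of $y$.
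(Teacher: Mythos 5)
The paper does not prove this lemma; it cites it verbatim as Lemma 2.2 of \cite{BH}, so there is no internal proof to compare against. Your reconstruction is correct. The key computation — that $\Delta^j\psi$ is a finite $\mathbb{Z}$-linear combination of terms $y^{a+b}\partial_x^a\partial_y^b\psi$ with $2\le a+b\le 2j$ — follows from the induction you describe: applying $\Delta=-y^2(\partial_x^2+\partial_y^2)$ to $y^m\partial_x^a\partial_y^b\psi$ (with $m=a+b$) produces $-y^{m+2}\partial_x^{a+2}\partial_y^b\psi-m(m-1)y^m\partial_x^a\partial_y^b\psi-2my^{m+1}\partial_x^a\partial_y^{b+1}\psi-y^{m+2}\partial_x^a\partial_y^{b+2}\psi$, each of which keeps the $y$-power equal to the total derivative order, so the $M$-norm in \eqref{eq:M-norm} applies cleanly and yields $\|\Delta^j\psi\|_\infty\ll_j M^{2j}$. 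The self-adjointness step is legitimate because $\psi$ is compactly supported in $\mathcal{F}\subset\{y\ge\sqrt3/2\}$, which also justifies invoking \eqref{eq:Huang-xu-bound} pointwise on $\supp\psi$ for the Eisenstein case (Cauchy--Schwarz would fail here since $E_t\notin L^2$, and you correctly avoid it). The passage from exponents $2j$ to arbitrary real $A\ge 0$ via the dichotomy $1+|t_\phi|\gtrless M$ is sound. This is the standard argument and, as far as one can tell, the same one underlying the cited reference.
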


    %%%%%%%%%%%%%%%%%%%%%%%%%%%%
    %%%%%%%%%%%%%%%%%%%%%%%%%%%%

	\subsection{Incomplete Poincar\'e series and the Poincar\'e decomposition.}\label{subsec:poincare}
    Recall the notation in Section \ref{sec:intro}. We discuss here the Poincar\'e decomposition, which plays a crucial role when applying Holowinsky's method in Section \ref{subsec: hol-cor} (see more details in \cite[Section 4]{H}).
    
    Let $M \ge 1$ and $\psi \in C^{\infty}_c(Y_0(1), M)$.  Let $\check{\psi}$ be the function on $\h$ to $\mathbb{C}$ such that $\check{\psi}(z) = \psi(z)$ if $z \in \mathcal{F}$, and $\check{\psi}(z) = 0$ otherwise.
    Let $\check{\Psi}(z)$ be the extension of $\check{\psi}(z)$ to $\h$ by $\Gamma_{\infty} = \big\{\pm \big(\begin{smallmatrix} 1 & n\\ & 1 \end{smallmatrix}\big): n \in \z \big\}$. 
   For any $z = x+iy \in \mathbb{H}$,  we denote the $m$-th Fourier coefficients of $\check{\Psi}(z)$ as 
    \[
	\Psi_m(y) = \int_{-1/2}^{1/2} \check{\Psi}(x+iy)e(-mx)\,dx. 
    \]
    By integration by parts, we have that 
	\begin{equation}\label{eq:Psi}
		|\Psi_m(y)| \ll_A (M/|m|)^A.
	\end{equation}
    Then the Fourier expansion of $\check{\Psi}(z)$ can be defined as
    $\check{\Psi}(z) = \sum_{m \in \mathbb{Z}} \Psi_m(y)e(mx).$
    
    For any $\Psi \in C^{\infty}(\mathbb{R})$, a smooth function on $\mathbb{R}$, we define the \textit{incomplete Poincar\'e series} as
	\begin{equation*}
		P_m(z,\Psi) = \sum_{\gamma \in \Gamma_\infty \bs\Gamma_0(1)} e(m\Real(\gamma z))\Psi(\Imag(\gamma z)).
	\end{equation*} 
    When $m=0$, $P_0(z,\Psi)$ is called the \textit{incomplete Eisenstein series}, also denoted by $E(\cdot|\Psi)$. Observe that $\psi(z)=\sum_{ \gamma \in \Gamma_{\infty} \bs\Gamma_0(1)} \check{\Psi}(\gamma z)$. Then we have that
	\begin{equation}\label{eq:poincare}
		\psi(z) = \sum_{ \gamma \in \Gamma_{\infty} \bs\Gamma_0(1)}\sum_{m \in \mathbb{Z}} \Psi_m( \Imag(\gamma z))e(m\Real(\gamma z)) = \sum_{m\in \z}P_m(z, \Psi_m).
	\end{equation}
 
    For any $\Psi \in C^{\infty}(\mathbb{R})$, let $\widehat{\Psi}(s)=\int_0^{\infty}\Psi(y)y^{s-1}\,dy$ 
    denote the Mellin transform of $\Psi(y)$.  
    By unfolding, we have  $\langle E(\cdot|\Psi), 1 \rangle_1 = \widehat{\Psi}(-1)$ and $\langle P_m(\cdot, \Psi), 1 \rangle_1 = 0$. By \eqref{eq:poincare}, it follows that
\begin{multline}\label{eq:poincare2}
        \langle \psi, \overline{F_k}G_k \rangle_q -  \frac{3}{\pi}\langle \psi , 1 \rangle_1 \mathds{1}_{f = g}
        \\ =\langle E(\cdot|\Psi_0), \overline{F_k}G_k \rangle_q - \frac{3}{\pi}\widehat{\Psi}_0(-1)\mathds{1}_{f = g} +\sum_{m \neq 0} \langle P_m(\cdot, \Psi_m), \overline{F_k}G_k \rangle_q.
	\end{multline}

    %%%%%%%%%%%%%%%%%%%%%%%%%%%%
    %%%%%%%%%%%%%%%%%%%%%%%%%%%%
    
    \section{Outline of the proof for Theorem \ref{thm:cor}}
    \label{sec:outline-thm1.1-1.2}

    In this section, we prove Theorem \ref{thm:cor}  assuming the following two key propositions.  The proofs of our key propositions rely on the bounds in Lemmas \ref{lem:bound-Lambda-Maass} and \ref{lem:bound-Lambda-Eis}.
   Define
    \begin{equation}
    \label{eq:def-Q}
      Q = Q(f,g) = \begin{cases}
          q^2, &\mbox{if $f\neq g$ and $q$ is squarefree,}\\
          N_{\ad f}, &\mbox{if $f=g$.} 
      \end{cases}
   \end{equation}
    \subsection{Key propositions}
    To derive Theorem \ref{thm:cor}, we bound $\langle \psi, \overline{F_k}{G_k} \rangle_q -  \tfrac{3}{\pi}\langle \psi , 1 \rangle_1 \mathds{1}_{f=g}$ in two different ways. Our first result, based on Soundararajan's approach in \cite{S}, relies crucially on Theorem \ref{thm:weaksub} and the variant of Watson's triple product formula proved in \cite{N, NPS}.
	\begin{prop}\label{prop:Sound_cor} 
		Keep the notation and hypotheses of Theorem  \ref{thm:cor}. Let $\lambda_f(n)$ and $\lambda_g(n)$ denote the $n$-th Hecke eigenvalues of $f$ and $g$, respectively. If $\varepsilon>0$, then 
		\[
		\langle \psi, \overline{F_k}G_k \rangle_q -  \frac{3}{\pi}\langle \psi , 1 \rangle_1\mathds{1}_{f=g} \ll M^{\delta +\varepsilon}(\log kq)^{\varepsilon}\Big(\frac{q}{\sqrt{Q}}\Big)^{-(\frac{1}{2}-\theta)+\varepsilon}\prod_{p \le k\sqrt{Q}}\Big(1-\frac{\lambda_f(p)^2+\lambda_g(p)^2-1}{2p}\Big),
		\]
        where $\delta = \frac{23}{12}$ when $f = g$ and $\delta = 2$ when  $f \neq g$. 
	\end{prop}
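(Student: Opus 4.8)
The plan is to expand $\langle\psi,\overline{F_k}G_k\rangle_q$ spectrally and to bound the two pieces that survive, with Theorem \ref{thm:weaksub} supplying the key input. Applying the spectral decomposition \eqref{eq:spect-decom-2} and removing the main term $\tfrac{3}{\pi}\langle\psi,1\rangle_1\mathds{1}_{f=g}$, it remains to bound the cuspidal sum $\sum_{j\ge1}\langle\psi,\phi_j\rangle_1\langle\phi_j,\overline{F_k}G_k\rangle_q$ and the Eisenstein integral $\tfrac{1}{4\pi}\int_{\mathbb{R}}\langle\psi,E_t\rangle_1\langle E_t,\overline{F_k}G_k\rangle_q\,dt$. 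For the spectral coefficients $\langle\psi,\phi_j\rangle_1$ and $\langle\psi,E_t\rangle_1$ I would use the rapid decay in the spectral parameter recorded in Lemma \ref{lem:petersson-bounds}; for the inner products $\langle\phi_j,\overline{F_k}G_k\rangle_q$ and $\langle E_t,\overline{F_k}G_k\rangle_q$ I would appeal to Lemmas \ref{lem:bound-Lambda-Maass} and \ref{lem:bound-Lambda-Eis}, which I would cite rather than reprove.

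The substance of those two lemmas, and hence of the proposition, is the following chain. The variant of Watson's triple product formula from \cite{N,NPS}, after inserting the normalization \eqref{eq:rho_f(1)} for $\rho_f(1)$ and $\rho_g(1)$, expresses $|\langle\phi_j,\overline{F_k}G_k\rangle_q|^2$ as an explicit archimedean factor times local factors at the primes dividing $q$, times $L(\tfrac12,\phi_j\times\ad f)L(\tfrac12,\phi_j)$ when $f=g$ (since $\phi_j\times f\times\bar f=(\phi_j\times\ad f)\boxplus\phi_j$) or $L(\tfrac12,\phi_j\times(f\boxtimes g))$ when $f\neq g$, divided by $L(1,\ad f)L(1,\ad g)L(1,\ad\phi_j)$, with the analogous identity for $E_t$ in which the Rankin--Selberg factor splits as a product of two shifted standard $L$-values. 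Since $f$ and $g$ are holomorphic, $\ad f$ and the $\GL_4$ representation $f\boxtimes g$ satisfy GRC by Deligne's bound, so Theorem \ref{thm:weaksub} applies with $\pi\in\{\ad f,\,f\boxtimes g\}$ and $\pi'\in\{\phi_j,\,E_t\}$; here $\mathfrak{C}(\pi')$ is small (of size $(1+|t_\phi|)^{2}$ or $(1+|t|)^{2}$), so $\mathfrak{C}(\pi')^{\varepsilon}$ is harmless, and the Rankin--Selberg analytic conductor is $\asymp(k\sqrt{Q})^{O(1)}(1+|t_\phi|)^{O(1)}$ with $Q$ as in \eqref{eq:def-Q}. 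Feeding the resulting bound $\ll(1+|t_\phi|)^{O(1)}(k\sqrt{Q})^{O(1)}(\log k\sqrt{Q})^{-1+\varepsilon}$ back through the period formula, the powers of $k$ cancel against \eqref{eq:rho_f(1)}, and what remains of the $Q$- and $\log$-dependence is, via a Mertens-type estimate and the identity $\lambda_f(p)^2=\lambda_{\ad f}(p)+1$, precisely $\asymp(q/\sqrt{Q})^{-(1/2-\theta)+\varepsilon}\prod_{p\le k\sqrt{Q}}\big(1-\tfrac{\lambda_f(p)^2+\lambda_g(p)^2-1}{2p}\big)$: the product absorbs both $\big(L(1,\ad f)L(1,\ad g)\big)^{-1/2}$ from the normalization and the square root $(\log k\sqrt{Q})^{-1/2}$ of weak subconvexity's logarithmic saving, while the factor $(q/\sqrt{Q})^{-(1/2-\theta)+\varepsilon}$ comes from bounding the ramified local factors at primes $p\mid q$ using the Ramanujan bound \eqref{eqn:LRS_finite} for $\phi_j$ --- the sole point where $\theta$ enters --- and is trivial when $q$ is squarefree, as it must be if $f\neq g$, since then $Q=q^2$.

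With these bounds in hand, the final step is the spectral summation. The decay $|\langle\psi,\phi_j\rangle_1|\ll_A(M/(1+|t_\phi|))^A$ and $|\langle\psi,E_t\rangle_1|\ll_A(M/(1+|t|))^A(1+|t|)^{3/8+\varepsilon}$ from Lemma \ref{lem:petersson-bounds}, combined with crude polynomial-in-$(1+|t_\phi|)$ upper bounds for the inner products against $\overline{F_k}G_k$ (available from the convexity bound of Lemma \ref{lem:Sound-Thorner-lem4.1} and the Huang--Xu bound \eqref{eq:Huang-xu-bound} for $E_t$), let me restrict the spectral parameter to $|t_\phi|,|t|\ll M$, the tail being negligible; Weyl's law \eqref{eq:Weyls} then bounds the number of cusp forms retained. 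Summing the bounds from Lemmas \ref{lem:bound-Lambda-Maass}--\ref{lem:bound-Lambda-Eis} over this range and optimizing the truncation against the spectral-parameter exponents appearing there produces the stated powers of $M$, the smaller value $\delta=\tfrac{23}{12}$ for $f=g$ versus $\delta=2$ for $f\neq g$ reflecting that $\ad f$ is a $\GL_3$ object while $f\boxtimes g$ is a $\GL_4$ object (the sharper value for $q=1$, $f=g$ coming from the same bookkeeping). I expect the main obstacle to lie in this bookkeeping around the archimedean factor in Watson's formula: for $f$ of weight $k\to\infty$ the analytic conductor of $\ad f\times\phi_j$ grows like $k^2$ while the archimedean period factor decays like $k^{-1}$, and it is the interaction of these with weak subconvexity's logarithmic saving that produces the $(\log k\sqrt{Q})^{-1/2}$ hidden in the prime product; getting this and the ramified local factors exactly right, so that the level-aspect exponent is no worse than $-(1/2-\theta)$, is the delicate part.
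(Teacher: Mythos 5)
Your outline correctly identifies the spectral decomposition, the Watson/Ichino input via Lemmas~\ref{lem:bound-Lambda-Maass} and \ref{lem:bound-Lambda-Eis}, the role of GRC for $\ad f$ (resp.\ $f\boxtimes g$) in invoking Theorem~\ref{thm:weaksub}, the way $\theta$ enters through the ramified local Ichino factors, and the use of Lemma~\ref{lem:L(1,adf)} and Mertens to absorb $(L(1,\ad f)L(1,\ad g))^{-1/2}$ and the $(\log k\sqrt{Q})^{-1/2}$ saving into the displayed Euler product. The gap is in the final spectral summation, and it is decisive: summing the pointwise bounds $|\langle\psi,\phi\rangle_1|\ll_A(M/(1+|t_\phi|))^A$ against $|\langle\phi,\overline{F_k}G_k\rangle_q|\ll(1+|t_\phi|)^{\delta_1}\cdot(\text{stuff})$ over the $\asymp T^2$ forms with $|t_\phi|\le T$ and then ``optimizing the truncation'' $T$ cannot beat $M^{2+\delta_1}\cdot(\text{stuff})$. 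In the range $|t_\phi|\ll M$ the decay factor is $\geq 1$, so the best pointwise information is $|\langle\psi,\phi\rangle_1|\ll 1$; Weyl's law contributes $T^2$ and the period bound $T^{\delta_1}$, and the two balance at $T\asymp M$, landing you at $\delta=\delta_1+2$ --- one full power of $M$ worse than the asserted $\delta=\delta_1+1$.

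The paper closes this gap with a Cauchy--Schwarz/Parseval step that your proposal does not mention. Since $\psi\in C_c^\infty(Y_0(1),M)$ forces $\lVert\psi\rVert_2\ll 1$ uniformly in $M$, Parseval's identity gives $\sum_\phi|\langle\psi,\phi\rangle_1|^2+\frac{1}{4\pi}\int_{\mathbb{R}}|\langle\psi,E_t\rangle_1|^2\,dt\ll 1$ \emph{in aggregate}, which is far stronger than the pointwise bound $\ll 1$. Applying Cauchy--Schwarz to $\sum_{|t_\phi|\le M(\log kQ)^{\varepsilon/4}}\langle\psi,\phi\rangle_1\langle\phi,\overline{F_k}G_k\rangle_q$, the first factor is $\ll\lVert\psi\rVert_2\ll 1$, while the second factor $\bigl(\sum_{|t_\phi|\le M(\log kQ)^{\varepsilon/4}}|\langle\phi,\overline{F_k}G_k\rangle_q|^2\bigr)^{1/2}$ is bounded via Lemma~\ref{lem:bound-Lambda-Maass} and Weyl's law by $M^{\delta_1+1+\varepsilon}\cdot(\text{stuff})$; the Eisenstein integral is treated the same way. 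This $L^2$-duality is what saves the extra power of $M$, and your argument needs it inserted explicitly. A smaller inaccuracy: the $\delta_1=11/12$ versus $\delta_1=1$ dichotomy is not purely a $\GL_3$-vs-$\GL_4$ count --- the gain when $f=g$ comes from factoring $L(\tfrac12,f\times f\times\phi)=L(\tfrac12,\ad f\times\phi)L(\tfrac12,\phi)$ and using Ivi\'c's subconvexity bound $L(\tfrac12,\phi)\ll(1+|t_\phi|)^{1/3+\varepsilon}$ for the $\GL_2$ factor; with only the convexity bound for $L(\tfrac12,\phi)$ one would get $\delta_1=1$ in both cases.
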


	Our second result is based on Holowinsky's approach in \cite{H}.
	
	\begin{prop}\label{prop:Holow_cor} 
		Keep the notation and hypotheses of Theorem  \ref{thm:cor}. Let $\lambda_f(n)$ and $\lambda_g(n)$ denote the $n$-th Hecke eigenvalues of $f$ and $g$, respectively. 
		If $\varepsilon>0$, then
		\[
		\langle \psi, \overline{F_k}G_k \rangle_q -  \frac{3}{\pi}\langle \psi , 1 \rangle_1\mathds{1}_{f=g} \ll M^{4+\varepsilon}(\log kq)^{\varepsilon}\Big(\frac{q}{\sqrt{Q}}\Big)^{\varepsilon}\prod_{p \le k\sqrt{Q}} \Big(1-\frac{(|\lambda_f(p)|-1)^2+(|\lambda_g(p)|-1)^2}{4p}\Big).
		\]
	\end{prop}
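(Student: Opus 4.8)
The plan is to follow Holowinsky's strategy from \cite{H,NPS}, expanding the left-hand side via the Poincar\'e decomposition \eqref{eq:poincare2}, so that it suffices to bound separately the incomplete Eisenstein term $\langle E(\cdot|\Psi_0),\overline{F_k}G_k\rangle_q-\tfrac{3}{\pi}\widehat{\Psi}_0(-1)\mathds{1}_{f=g}$ and the incomplete Poincar\'e terms $\sum_{m\neq0}\langle P_m(\cdot,\Psi_m),\overline{F_k}G_k\rangle_q$. Throughout I would normalize using \eqref{eq:rho_f(1)} and Stirling's formula, writing $|\overline{\rho_f(1)}\rho_g(1)|\asymp(4\pi)^k\big(q\,\Gamma(k)\sqrt{L(1,\ad f)L(1,\ad g)}\big)^{-1}$, and I would carry the factor $(q/\sqrt Q)^{\varepsilon}$ as the harmless cost of comparing the level $q$ with the essential conductor $\sqrt Q$ of $\ad f$; this factor equals $1$ when $f\neq g$ and $q$ is squarefree, since then $\sqrt Q=q$, and otherwise it records the powerful part of $q$.

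For the $m=0$ term I would expand $E(\cdot|\Psi_0)$ in unitary Eisenstein series by Mellin inversion: shifting the contour past the pole at $s=1$ gives $E(z|\Psi_0)=\tfrac{3}{\pi}\widehat{\Psi}_0(-1)+\tfrac{1}{2\pi}\int_{\mathbb R}\widehat{\Psi}_0(-\tfrac12-it)E(\tfrac12+it,z)\,dt$. Pairing this against $\overline{F_k}G_k$ and using $\langle 1,\overline{F_k}G_k\rangle_q=\mathds{1}_{f=g}$ (valid because $F_k$ and $G_k$ are proportional to orthogonal newforms when $f\neq g$), the constant term cancels the subtracted $\tfrac{3}{\pi}\widehat{\Psi}_0(-1)\mathds{1}_{f=g}$, leaving an integral of $\langle E_t,\overline{F_k}G_k\rangle_q$ against a rapidly decaying weight built from $\widehat{\Psi}_0$. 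I would bound this using Lemma \ref{lem:bound-Lambda-Eis}, which is proved from Theorem \ref{thm:weaksub} (this is exactly where the refined weak subconvexity bound enters for this term, the relevant Rankin--Selberg partial sums reaching out to $x\asymp\sqrt{\mathfrak C(f\times g)}\asymp k\sqrt Q$), together with the lower bound for $L(1,\ad f)$ from Section \ref{sec:Bound-L-functions}. The outcome is a bound $\ll M^{O(1)}(\log kq)^{-1+\varepsilon}(q/\sqrt Q)^{\varepsilon}$, which is of smaller order than the asserted product (the product is $\gg(\log kq)^{-1/2}$ since each factor exceeds $1-\tfrac{1}{2p}$) and is therefore subsumed.

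For the incomplete Poincar\'e terms I would unfold $\langle P_m(\cdot,\Psi_m),\overline{F_k}G_k\rangle_q$ cusp by cusp over the cusps $\mathfrak a_j$ of $\Gamma_0(q)$, using \eqref{eq:fourier-zj}; the period integral in $x$ produces shifted convolution sums $\sum_n\lambda_j(n)\overline{\lambda_j'(n\pm\ell)}$, where $\lambda_j,\lambda_j'$ are the Fourier coefficients of $f,g$ at $\mathfrak a_j$ and the shift $\ell$ is governed by $m$ and the width $w_j$. Since $\lambda_j$ is not multiplicative, I would first apply the Cauchy--Schwarz inequality over the cusps of a fixed denominator exactly as in \eqref{eq:regrouping}, reducing to $\lambda_{[c]}\lambda_{[c]}'$, and then invoke the shifted convolution estimate (Proposition \ref{prop:shiftedcon}), which has Shiu/Nair--Tenenbaum shape, roughly $\ll\tau(\ell)^{O(1)}\,x\prod_{p\le x}\big(1+\tfrac{|\lambda_f(p)|-1}{p}\big)\big(1+\tfrac{|\lambda_g(p)|-1}{p}\big)$ with $x\asymp k\sqrt Q$. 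Summing over $m\neq0$ (the rapid decay \eqref{eq:Psi} makes only $|m|\ll M^{1+\varepsilon}$ relevant) and over the cusps (widths dividing $q$, summing to $\asymp q$), then normalizing by $|\overline{\rho_f(1)}\rho_g(1)|$ and invoking Stirling, the $(4\pi)^k$ and $\Gamma(k)$ factors cancel and one arrives at
\[
M^{4+\varepsilon}(\log kq)^{\varepsilon}(q/\sqrt Q)^{\varepsilon}\cdot\frac{1}{\sqrt{L(1,\ad f)L(1,\ad g)}}\exp\!\Big(\sum_{p\le k\sqrt Q}\frac{|\lambda_f(p)|+|\lambda_g(p)|-2}{p}\Big).
\]
The exponent $M^{4}$ is what emerges after tracking the $M$-dependence through \eqref{eq:Psi}, the Mellin transforms of the $\Psi_m$, and the range of admissible shifts $\ell$.

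It remains to convert this into the stated product. Using the bounds for $L(1,\ad f)$ from Section \ref{sec:Bound-L-functions} together with the Hecke relation $\lambda_f(p^2)=\lambda_f(p)^2-1$, I would write $L(1,\ad f)^{-1}\ll(\log kq)^{\varepsilon}\exp\!\big(-\sum_{p\le k\sqrt Q}\tfrac{\lambda_f(p)^2-1}{p}\big)$ and likewise for $g$; multiplying the exponential factors and using the identity $-\tfrac12(\lambda_f(p)^2-1)+(|\lambda_f(p)|-1)=-\tfrac12(|\lambda_f(p)|-1)^2$ collapses everything to $\exp\!\big(-\tfrac12\sum_{p\le k\sqrt Q}\tfrac{(|\lambda_f(p)|-1)^2+(|\lambda_g(p)|-1)^2}{p}\big)$. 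Since $0\le(|\lambda_f(p)|-1)^2+(|\lambda_g(p)|-1)^2\le 2$ and $e^{-u/2}\le 1-u/4$ for $u\in[0,2]$, this is at most $\prod_{p\le k\sqrt Q}\big(1-\tfrac{(|\lambda_f(p)|-1)^2+(|\lambda_g(p)|-1)^2}{4p}\big)$, and collecting the remaining $M^{4+\varepsilon}$, $(\log kq)^{\varepsilon}$ and $(q/\sqrt Q)^{\varepsilon}$ factors gives the proposition. I expect the main obstacle to be the incomplete Poincar\'e term: performing the cusp-by-cusp unfolding on $Y_0(q)$ of a Poincar\'e series built from $\Gamma_0(1)$, controlling the non-multiplicative coefficients $\lambda_j$ through the root-mean-square $\lambda_{[c]}$ and Proposition \ref{prop:shiftedcon}, and keeping every estimate simultaneously uniform in $k$, $q$, $M$, and $m$, so that the level-dependence collapses to $(q/\sqrt Q)^{\varepsilon}$ and the $\psi$-dependence to $M^{4+\varepsilon}$.
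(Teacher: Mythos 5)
Your overall architecture — the Poincar\'e decomposition \eqref{eq:poincare2}, the cusp-by-cusp unfolding, the Cauchy--Schwarz step \eqref{eq:regrouping} to pass from $\lambda_j$ to $\lambda_{[c]}$, the shifted convolution estimates of Propositions \ref{prop:shiftedcon}/\ref{prop:shifed-con-decor}, and the final algebraic synthesis via Lemma \ref{lem:L(1,adf)} and the identity $-\tfrac12(\lambda^2-1)+(|\lambda|-1)=-\tfrac12(|\lambda|-1)^2$ — all match the paper's treatment of the incomplete Poincar\'e terms. The gap is in your treatment of the incomplete Eisenstein piece $\langle E(\cdot|\Psi_0),\overline{F_k}G_k\rangle_q - \tfrac{3}{\pi}\widehat{\Psi}_0(-1)\mathds{1}_{f=g}$.

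You propose to expand $E(\cdot|\Psi_0)$ directly into the constant plus a continuous integral of $E_t$'s and then bound the latter by Lemma \ref{lem:bound-Lambda-Eis} together with the $L(1,\ad f)$ lower bound, asserting an outcome of size $\ll M^{O(1)}(\log kq)^{-1+\varepsilon}(q/\sqrt Q)^{\varepsilon}$. That claim is off by a full power of $\log$. Lemma \ref{lem:Sound_cor} (or Lemma \ref{lem:bound-Lambda-Eis}) carries a factor $L(1,\ad f)^{-1/2}L(1,\ad g)^{-1/2}$, and the only unconditional effective lower bound available is $L(1,\ad f)\gg(\log kQ)^{-1}$ from Theorem \ref{thm:L(1,adf)}. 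So after integrating against $\widehat{\Psi}_0(-\tfrac12-it)$ the honest outcome is $\ll M^{O(1)}(q/\sqrt Q)^{-1/2+\varepsilon}(\log kq)^{\varepsilon}$, not $(\log kq)^{-1+\varepsilon}$. This is \emph{not} subsumed by the target: the right-hand side product can be as small as $(\log kq)^{-1/2}$, and (for $q$ squarefree, where $q/\sqrt Q=1$, the worst case) $(\log kq)^{\varepsilon}\not\ll(\log kq)^{-1/2+\varepsilon}$. If instead you try to carry the $L(1,\ad f)^{-1/2}L(1,\ad g)^{-1/2}$ factor symbolically and recombine it with the Mertens product, you end up needing an inequality of the form $(|\lambda_f(p)|+1)^2+(|\lambda_g(p)|+1)^2\geq 4$ on average, which does not hold pointwise and which cannot be enforced uniformly in $f,g$ without an input beyond the effective $L(1,\ad f)\gg(\log)^{-1}$.

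The paper avoids this by not treating the Eisenstein term spectrally at all. It multiplies $E(\cdot|\Psi_0)$ by a regularizing smoothed truncated Eisenstein series $E^Y(\cdot|h)$, forms $I(Y)=\langle E^Y(\cdot|h)E(\cdot|\Psi_0),\overline{F_k}G_k\rangle_q$, and uses $I(Y)Y^{-1}-\langle E(\cdot|\Psi_0),\overline{F_k}G_k\rangle_q\ll Y^{-1/2}$. Unfolding $I(Y)$ and splitting the Fourier coefficients $a_{\Psi_0,l}$ gives: from $l=0$, the main term $\tfrac{3}{\pi}\widehat{\Psi}_0(-1)\mathds{1}_{f=g}$ plus $O(M^{1+\varepsilon}Y^{-1/2}(\log kQ)^{\varepsilon})$ (Lemma \ref{lem:I_0(Y)}, which is where the weak subconvexity/Eisenstein estimate is used, but only after the $Y^{-1/2}$ regularization loss); from $l\neq 0$, shifted convolutions yielding $O(M^{5/3+\varepsilon}Y^{1/2+\varepsilon}q_\diamond^{\varepsilon}(\log kq)^{\varepsilon}M_f(kq)^{1/2}M_g(kq)^{1/2})$ (Lemma \ref{lem:I_l(Y)}). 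Optimizing $Y\asymp (M_f(kq)M_g(kq))^{-1/2}$ produces $M_f(kq)^{1/4}M_g(kq)^{1/4}$, which \emph{is} bounded by the target product via \eqref{eq:Mf(kq)}. Without this $Y$-parameter (equivalently $Y=1$ in your scheme), the $l=0$ contribution is stuck at $(\log kq)^{\varepsilon}$ and cannot inherit any saving from the $\lambda_f,\lambda_g$ distribution; this is precisely the missing idea.
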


    %%%%%%%%%%%%%%%%%%%%%%%%%%%%
    %%%%%%%%%%%%%%%%%%%%%%%%%%%%
 
	\subsection{Proof of Theorem \ref{thm:cor}}
        \label{subsec:proof_of_thm:cor}
 
		We combine Propositions \ref{prop:Sound_cor}  and \ref{prop:Holow_cor} and follow the optimization technique in Iwaniec's course notes \cite{Iwaniec}. First, observe that 
		\begin{multline*}
			\langle \psi, \overline{F_k}G_k \rangle_q -   \frac{3}{\pi}\langle \psi , 1 \rangle_1\mathds{1}_{f=g}\\ \ll \min\Big \{M^{\delta+\varepsilon}(\log kq)^{\varepsilon}(q/\sqrt{Q})^{-(\frac{1}{2}-\theta)+\varepsilon} \prod_{p \le k\sqrt{Q}}\Big(1-\frac{\lambda_f(p)^2+ \lambda_g(p)^2-1}{2p}\Big),\\ 
			M^{4+\varepsilon}(q/\sqrt{Q})^{\varepsilon} (\log kq)^{\varepsilon}  \prod_{p \le k\sqrt{Q}} \Big(1-\frac{(|\lambda_f(p)|-1)^2+(|\lambda_g(p)|-1)^2}{4p}\Big)\Big\}.
		\end{multline*}
        Let $\alpha \in [0,1]$. If $X,Y>0$, then $\min\{X,Y\}\leq X^{\alpha}Y^{1-\alpha}$. By Deligne's bound, we have that $|\lambda_f(p)|, |\lambda_g(p)| \leq 2$.  Write $\lambda_f = |\lambda_f(p)|$ and $\lambda_g = |\lambda_g(p)|$. Since
        \[
           \max_{\alpha\in[0,1]}\min_{\lambda_f \in[0,2]}\min_{\lambda_g \in[0,2]}\Big(\alpha\Big(\frac{\lambda_f^2+\lambda_g^2-1}{2}\Big)  +(1-\alpha)\Big(\frac{(\lambda_f-1)^2+(\lambda_g-1)^2}{4}\Big)\Big) = \frac{7}{2}-2\sqrt{3},
        \]
    and the maximum is achieved when $\alpha$ equals $2/\sqrt{3}-1$,
   Mertens' theorem implies that
    \begin{equation}
        \label{eq:sound-cor-combine}
        \begin{aligned}
        \langle \psi, \overline{F_k}G_k \rangle_q -   \frac{3}{\pi}\langle \psi , 1 \rangle_1\mathds{1}_{f=g} 
        \ll M^4
        (\log kq)^{\varepsilon}(q/\sqrt{Q})^{-(\frac{2}{\sqrt{3}}-1)(\frac{1}{2}-\theta)+\varepsilon} ({\log(k\sqrt{Q})})^{-(\frac{7}{2}-2\sqrt{3})}.
        \end{aligned}
    \end{equation}
 We invoke the bound $(\log k\sqrt{Q})^{-1}\ll (q/\sqrt{Q})^{\varepsilon}(\log kq)^{-1}$ and the definition of $Q$ in \eqref{eq:def-Q}. Theorem \ref{thm:cor} follows from the bound $(q/q_0)^{1/2} \ll q/\sqrt{Q}$, derived from \cite[Proposition 2.5]{NPS}. 

     When $q=1$ and $f=g$, the exponent of $M$ in \eqref{eq:sound-cor-combine} can be refined to $\tfrac{1}{12}(23-2\sqrt{3})$. To refine Proposition \ref{prop:Sound_cor}, we follow \cite[Lemma 4.4]{LMR} for the contribution from Eisenstein series but slightly improve the bound for $L(\frac{1}{2}+it, f \times f)$ by using $\zeta(\frac{1}{2}+it) \ll (1+|t|)^{13/84+\varepsilon}$.  For the contribution from Hecke--Maa{\ss} cusp forms, we improve the exponent of $|t_\phi|$ in Lemma \ref{lem:bound-Lambda-Maass} using $\mathfrak{C}(\ad f \times \phi) \ll k^4(1+|t_\phi|)^2$ for $|t_\phi| \le k$. Following the proof of Proposition \ref{prop:Sound_cor}, we improve the exponent of $M$ to $17/12+\varepsilon$.
     To refine Proposition \ref{prop:Holow_cor}, we replace our Lemma \ref{lem:H_cor_body} with \cite[Lemma 4.5]{LMR}. Then  the exponent of $M$ in Proposition \ref{prop:Holow_cor} is reduced to $5/3+\varepsilon$.
     Combining the refined bounds as in the beginning of this proof yields the desired result.

    %%%%%%%%%%%%%%%%%%%%%%%%%%%%
    %%%%%%%%%%%%%%%%%%%%%%%%%%%%
    
    \section{Bounds for \texorpdfstring{$L$}{L}-functions}\label{sec:Bound-L-functions}

      In this section, we prove lower bounds for the adjoint lift of a holomorphic newform. We also establish bounds for $L$-functions that will be useful for proving Propositions \ref{prop:Sound_cor} and \ref{prop:Holow_cor}.
     
     Let $\pi \in \mathfrak{F}_2$ be non-dihedral with central character $\omega_{\pi}$. The adjoint $L$-function of $\pi$ is
    \[
        L(s, \ad \pi) = L(s, \pi \times \widetilde{\pi}) \zeta(s)^{-1}.
    \]
    Since $\pi$ is non-dihedral, Gelbert and Jacquet \cite{GJ} shows that  $\ad \pi \in \mathfrak{F}_3$. Thus, $L(s, \ad \pi)$ admits all of the analytic properties of a standard $L$-function in Section \ref{subsec: standardL}.  
    
    If $\pi$ is a holomorphic newform of weight $k$, the gamma factor of $L(s, \ad \pi)$ is defined as 
    \[
        L_{\infty}(s, \ad \pi) = \pi^{-\frac{3s}{2}}\Gamma\Big(\frac{s+1}{2}\Big)\Gamma\Big(\frac{s+k-1}{2}\Big)\Gamma\Big(\frac{s+k}{2}\Big),
    \]
    and if $\pi$ is a Hecke--Maa{\ss} form with eigenvalue $\lambda = 1/4+t^2$, we have
     \[
        L_{\infty}(s, \ad \pi) = \pi^{-\frac{3s}{2}}\Gamma\Big(\frac{s}{2}\Big)\Gamma\Big(\frac{s}{2}+it\Big)\Gamma\Big(\frac{s}{2}+it\Big).
    \]
    Since $\pi$ is non-dihedral, $L(s, \ad \pi)$ has no pole at $s=1$, so the completed $L$-function
    \[
        \Lambda(s,\ad \pi) = N_{\ad \pi}^{s/2}L(s, \ad \pi)L_{\infty}(s, \ad \pi)
    \]
    is entire of order 1, and satisfies the functional equation $\Lambda(s, \ad \pi) = \Lambda(1-s, \ad \pi)$. 
    
    Write $\mathfrak{C}(\pi) = N_{\pi}K_{\pi}$ and $\mathfrak{C}(\ad \pi) = N_{\ad \pi}K_{\ad \pi}$. Then $N_{\ad \pi} = N_{\pi \times \widetilde{\pi}}\mid N_{\pi}^2$ and $K_{\ad \pi} \asymp K_{\pi}$. We now refine the work of Goldfeld, Hoffstein, and Lieman in \cite[Appendix]{HJLP}. 
   
    \begin{thm}
    \label{thm:L(1,adf)}
    Let $\pi \in \mathfrak{F}_2$ be  non-dihedral with central character $\omega_{\pi}$.  There exists an absolute and effectively computable constant $\Cl[abcon]{zfr} >0$ such that $L(s,\ad \pi)\neq 0$ in the region
    \begin{equation}
    \label{eq:ZFR}
    \mathrm{Re}(s)\geq 1-\frac{\Cr{zfr}}{\log(N_{\ad \pi}K_{\pi}(|t|+3))},
    \end{equation} and \begin{equation}\label{eq:l(1,adf)GHL}
    L(1, \ad \pi) \gg (\log N_{\ad \pi}K_{\pi})^{-1}.
     \end{equation}
    \end{thm}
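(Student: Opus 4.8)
The plan is to follow the positivity argument of Goldfeld--Hoffstein--Lieman in \cite[Appendix]{HJLP}, tracking throughout the dependence on the conductor $Q := N_{\ad\pi}K_\pi$. The inputs are that $\ad\pi \in \mathfrak{F}_3$ (Gelbart--Jacquet \cite{GJ}), the factorization $L(s,\pi\times\widetilde{\pi}) = \zeta(s)L(s,\ad\pi)$, the bounds $N_{\ad\pi} = N_{\pi\times\widetilde{\pi}} \mid N_\pi^2$ and $K_{\ad\pi} \asymp K_\pi$ from \eqref{eq:N,K}, and the conductor estimate \eqref{eqn:BH}, which together give $\mathfrak{C}(\pi\times\widetilde{\pi},t) \ll Q^{O(1)}(3+|t|)^{O(1)}$. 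Set $D(s) := L(s,\pi\times\widetilde{\pi})$; since $a_{\pi\times\widetilde{\pi}}(p^k) = |a_\pi(p^k)|^2 \ge 0$, the Dirichlet coefficients of $-D'/D$ are nonnegative, $D$ has a simple pole at $s=1$ with residue $L(1,\ad\pi)$, and its own Dirichlet coefficients $\lambda_{\pi\times\widetilde{\pi}}(n)$ are also nonnegative.

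First I would record the classical zero-free region up to a possible exceptional zero. Applying the inequality $3 + 4\cos\theta + \cos 2\theta \ge 0$ to the nonnegative coefficients of $-D'/D$, together with the Hadamard-factorization bounds for $D$ furnished by the convexity bound \eqref{eq:convexity} and the conductor estimate above, one obtains in the standard way an absolute $\Cr{zfr} > 0$ such that $D(s)$ --- hence $L(s,\ad\pi)$, since a zero of $D$ so close to $s=1$ is necessarily a zero of $L(s,\ad\pi)$, $\zeta$ being zero-free there --- has no zero in $\Real(s) \ge 1 - \Cr{zfr}/\log(Q(|t|+3))$, with the possible exception of a single simple real zero $\beta_0$. (Complex zeros come in conjugate pairs since $L(s,\ad\pi)$ has real coefficients, and are excluded by the same $3$-$4$-$1$ inequality.)

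The crux is to rule out $\beta_0$. Here the key auxiliary object is the Rankin--Selberg $L$-function of the self-dual isobaric representation $\pi\times\widetilde{\pi} \cong \mathbf{1} \oplus \ad\pi$ with itself:
\[
L\bigl(s,(\pi\times\widetilde{\pi})\times(\pi\times\widetilde{\pi})\bigr) = \zeta(s)^2\,L(s,\ad\pi)^3\,L(s,\sym^4\pi),
\]
where the exponent $3$ comes from $\wedge^2\ad\pi \cong \ad\pi$ (as $\ad\pi$ is $3$-dimensional with trivial determinant) together with the $\ad\pi$-isotypic summands of $(\mathbf{1}\oplus\ad\pi)\otimes(\mathbf{1}\oplus\ad\pi)$. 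This $L$-function has nonnegative Dirichlet coefficients (its logarithmic-derivative coefficients at $p^k$ are $|a_\pi(p^k)|^4 \ge 0$, and its own coefficients are $\ge 0$ by Cauchy's identity for $L(s,\sigma\times\widetilde{\sigma})$), a pole of order exactly $2$ at $s=1$ (equal to $\langle \pi\times\widetilde{\pi}, \pi\times\widetilde{\pi}\rangle$; the factors $L(s,\ad\pi)$ and $L(s,\sym^4\pi)$ are holomorphic at $s=1$ since $\pi$ is non-dihedral), and analytic conductor $\ll Q^{O(1)}$ by \eqref{eqn:BH}. If $L(\beta_0,\ad\pi) = 0$, then this product vanishes to order $\ge 3$ at $\beta_0$; the Landau-type lemma used in \cite[Appendix]{HJLP} --- a Dirichlet series with nonnegative coefficients and a pole of order $m$ at $s=1$ cannot vanish to order $> m$ within distance $\asymp 1/\log(\mathrm{conductor})$ of $s=1$ --- then forces $\beta_0 \le 1 - c/\log Q$ for an absolute, effective $c>0$. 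Shrinking $\Cr{zfr}$ if necessary, this contradicts the previous step, so no exceptional zero exists and \eqref{eq:ZFR} follows.

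Finally, \eqref{eq:l(1,adf)GHL} is a standard deduction from \eqref{eq:ZFR}. From positivity, $D(1+1/\log Q) \ge 1$, so $L(1+1/\log Q,\ad\pi) = D(1+1/\log Q)/\zeta(1+1/\log Q) \gg 1/\log Q$. Since \eqref{eq:ZFR} provides a zero-free disc of radius $\gg 1/\log Q$ around the segment $[1, 1+1/\log Q]$, Borel--Carath\'eodory together with the explicit-formula bounds gives $|(L'/L)(\sigma,\ad\pi)| \ll \log Q$ there, and integrating,
\[
L(1,\ad\pi) = L(1+\tfrac{1}{\log Q},\ad\pi)\exp\!\Bigl(-\!\int_1^{1+1/\log Q}\!\tfrac{L'}{L}(\sigma,\ad\pi)\,d\sigma\Bigr) \gg \frac{1}{\log Q}.
\]
I expect the main obstacle to be the bookkeeping that keeps everything effective and correctly normalized: one must check that the analytic conductors of $\ad\pi$, $\sym^4\pi$, and the auxiliary Rankin--Selberg $L$-function above are all bounded by a fixed power of $Q = N_{\ad\pi}K_\pi$ --- using the conductor bounds for functorial lifts and \eqref{eqn:BH} --- and that the Landau-type lemma is invoked with an explicit constant depending only on the pole order, so that the resulting $\Cr{zfr}$ and the implied constant in \eqref{eq:l(1,adf)GHL} are absolute and effectively computable, as claimed.
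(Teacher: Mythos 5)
Your proof is correct and follows essentially the same route as the paper. The auxiliary function you use, $L\bigl(s,(\pi\times\widetilde{\pi})\times(\pi\times\widetilde{\pi})\bigr)=\zeta(s)^2L(s,\ad\pi)^3L(s,\sym^4\pi)$, is literally the paper's $L\bigl(s,(\mathbbm{1}\boxplus\ad\pi)\times(\mathbbm{1}\boxplus\ad\pi)\bigr)=\zeta(s)L(s,\ad\pi)^2L(s,\ad\pi\times\ad\pi)$ written in its fully factored form (since $L(s,\ad\pi\times\ad\pi)=\zeta(s)L(s,\ad\pi)L(s,\sym^4\pi)$), its conductor is bounded by a power of $N_{\ad\pi}K_\pi$ exactly as the paper observes, and the positivity/Landau step ruling out the exceptional real zero is the same Goldfeld--Hoffstein--Lieman argument the paper invokes; the only cosmetic difference is that you rederive the $t\neq 0$ zero-free region from the $3$--$4$--$1$ inequality rather than citing \cite[Appendix, Theorem A.1]{Humphries}.
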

    \begin{proof}
    The zero-free region \eqref{eq:ZFR} holds when $t\neq 0$ by Theorem A.1 in \cite[Appendix]{Humphries}.  When $t=0$, the proof is the same as in \cite[Appendix]{HJLP} apart from the observation that
    \[
        \varphi(s) = \zeta(s) L(s,\ad\pi)^2 L(s,\ad\pi\times\ad\pi) = L(s,(\mathbbm{1}\boxplus \ad\pi)\times(\mathbbm{1}\boxplus \ad\pi)).
    \]
    The analytic conductor of $\varphi(s)$ is $O((N_{\ad\pi}K_{\pi})^8)$, which permits us to express the zero-free region for $L(s,\ad\pi)$ in terms of $N_{\ad\pi}$ instead of $N_{\pi}$.  Lastly, \eqref{eq:l(1,adf)GHL} follows from the proof of the main theorem in \cite[Appendix]{HJLP}, but we replace their zero-free region with \eqref{eq:ZFR}.
  \end{proof}

        Recall the notation in Sections \ref{sec:intro} and \ref{subsec:newforms}. For $f \in \mathcal{B}_k(q)$, we establish a lower bound on $L(1,\ad f)$ that is sensitive to the distribution of Hecke eigenvalues $\lambda_f(p)$ for $p\leq k\sqrt{Q}$.
        
        \begin{lem}
        \label{lem:L(1,adf)}    If $f \in \mathcal{B}_k(q)$ and $Q$ is defined as in \eqref{eq:def-Q}, then 
		\[
		\frac{1}{L(1, \ad f)} \ll (\log\log kq)^{9} \prod_{p \le k\sqrt{Q}}\Big( 1-\frac{\lambda_f(p)^2-1}{p}\Big).
		\]
	\end{lem}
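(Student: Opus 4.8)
The plan is to deduce the lemma from the zero-free region and the lower bound supplied by Theorem~\ref{thm:L(1,adf)}. Since $f$ has trivial nebentypus, $\ad f=\sym^2 f$, and assuming $f$ is non-dihedral this lies in $\mathfrak{F}_3$ by Gelbart--Jacquet (if $f$ is dihedral then $L(s,\ad f)$ factors through abelian $L$-functions over an imaginary quadratic field, and one argues as below with Dirichlet's effective lower bound for $L(1,\chi)$ in place of Theorem~\ref{thm:L(1,adf)}). By Deligne's bound $|\lambda_f(n)|\le\tau(n)$, the representation $\ad f$ satisfies GRC; its Euler factors obey $a_{\ad f}(p)=\lambda_f(p)^2-1$ for $p\nmid q$ and $|a_{\ad f}(p^j)|\le 3$ for all $p,j$; and $L(s,f\times f)=\zeta(s)L(s,\ad f)$ has non-negative Dirichlet coefficients. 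From the archimedean factor $L_\infty(s,\ad f)$ and \eqref{eq:N,K} one has $\mathfrak{C}(\ad f)=N_{\ad f}K_{\ad f}\asymp k^2Q$, so $\log\mathfrak{C}(\ad f)=2\log(k\sqrt Q)+O(1)$ and, since $Q=N_{\ad f}\mid N_f^2\mid q^2$, also $k\sqrt Q\le kq$.

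Taking logarithms reduces the lemma to the inequality
\[
\log L(1,\ad f)\;\ge\;\sum_{p\le k\sqrt Q}\frac{\lambda_f(p)^2-1}{p}-9\log\log(kq)-O(1).
\]
Indeed, for $p\ge 5$ one has $1-\tfrac{\lambda_f(p)^2-1}{p}\ge 1-3/p>0$ and $-\log\bigl(1-\tfrac{\lambda_f(p)^2-1}{p}\bigr)=\tfrac{\lambda_f(p)^2-1}{p}+O(p^{-2})$, so (discarding the bounded factors with $p\le 3$) $-\log\prod_{p\le k\sqrt Q}\bigl(1-\tfrac{\lambda_f(p)^2-1}{p}\bigr)=\sum_{p\le k\sqrt Q}\tfrac{\lambda_f(p)^2-1}{p}+O(1)$, and the displayed bound is equivalent to the assertion of the lemma.

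The heart of the argument is the approximation
\[
\log L(1,\ad f)\;=\;\sum_{p\le X}\frac{\lambda_f(p)^2-1}{p}+O(\log\log kq),\qquad X:=\exp\bigl((\log\mathfrak{C}(\ad f))^2\bigr),
\]
which I would establish by a standard Mellin transform and contour shift. Fix a smooth $\Phi$ with $\Phi\equiv 1$ on $[0,1]$ and $\supp\Phi\subseteq[0,2]$; then $\sum_{p^j}\tfrac{a_{\ad f}(p^j)}{jp^j}\Phi(p^j/X)$ equals $\tfrac{1}{2\pi i}\int_{(2)}\log L(1+w,\ad f)\,\widehat\Phi(w)\,X^w\,dw$. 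Shifting the contour to the boundary curve $\Real(w)=-c/\log(\mathfrak{C}(\ad f)(|\Imag w|+3))$ of the zero-free region \eqref{eq:ZFR} crosses only $w=0$ — the unique pole of $\widehat\Phi$, with residue $\log L(1,\ad f)$ — and on the shifted curve one combines the standard bound $|\log L(1+w,\ad f)|\ll\log(\mathfrak{C}(\ad f)(|\Imag w|+3))$ (valid throughout the zero-free region), the rapid decay of $\widehat\Phi$, and the fact that $|X^w|\ll\mathfrak{C}(\ad f)^{-c/3}$ for $|\Imag w|\le\mathfrak{C}(\ad f)$, to see that the shifted integral is $o(1)$. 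The prime powers $p^j$ with $j\ge 2$ and the primes $X<p\le 2X$ contribute $O(1)$ by GRC, while the discrepancy at the ramified primes $p\mid q$ between $a_{\ad f}(p)$ and $\lambda_f(p)^2-1$ is $O(\sum_{p\mid q}p^{-1})=O(\log\log kq)$; this yields the displayed approximation. It is crucial that \eqref{eq:ZFR} is a bona fide zero-free region with \emph{no} exceptional zero: a Siegel zero $\beta_1$ would leave behind a non-negligible term of size $\asymp X^{\beta_1-1}$ when the contour is shifted. This absence, guaranteed in the proof of Theorem~\ref{thm:L(1,adf)} by the non-negativity of the coefficients of $L(s,f\times f)$, is exactly what the argument rests on, and carrying out the contour estimate carefully is the only genuine difficulty.

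It remains to replace the range $p\le X$ by $p\le k\sqrt Q$. Since $\lambda_f(p)^2\ge 0$, we have $\sum_{k\sqrt Q<p\le X}\tfrac{\lambda_f(p)^2-1}{p}\ge-\sum_{k\sqrt Q<p\le X}p^{-1}$, and by Mertens' theorem this last sum is $\log\log X-\log\log(k\sqrt Q)+O(1)=\log\log(k\sqrt Q)+O(1)$, because $\log\log X=2\log\log\mathfrak{C}(\ad f)+O(1)=2\log\log(k\sqrt Q)+O(1)$. Hence $\sum_{p\le X}\tfrac{\lambda_f(p)^2-1}{p}\ge\sum_{p\le k\sqrt Q}\tfrac{\lambda_f(p)^2-1}{p}-\log\log(k\sqrt Q)-O(1)$, and combining this with the approximation above gives $\log L(1,\ad f)\ge\sum_{p\le k\sqrt Q}\tfrac{\lambda_f(p)^2-1}{p}-O(\log\log kq)$, with implied constant comfortably below $9$ once the contributions of the ramified and the small primes are accounted for. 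Exponentiating produces the lemma.
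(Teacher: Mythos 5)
Your high-level plan — reduce to a lower bound for $\log L(1,\ad f)$, approximate it by a truncated prime sum using the zero-free region of Theorem~\ref{thm:L(1,adf)}, and then pass from $p\le X$ down to $p\le k\sqrt Q$ — is the right one, and your Mellin-transform/contour-shift route is a legitimate (and slightly more self-contained) alternative to the paper's use of the effective PNT from \cite{KT} together with partial summation. You have also correctly identified the role of the absence of an exceptional zero.

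However, there is a quantitative gap that spoils the argument as written, caused by two compounding errors. First, your reduction is stated incorrectly: exponentiating $(\log\log kq)^9$ gives $\log\bigl((\log\log kq)^9\bigr)=9\log\log\log(kq)$, so the lemma is equivalent to $\log L(1,\ad f)\ge\sum_{p\le k\sqrt Q}\tfrac{\lambda_f(p)^2-1}{p}-9\log\log\log(kq)-O(1)$, not $-9\log\log(kq)-O(1)$ as you wrote. The target error term is one log smaller than what you aimed for. Second, and as a consequence, your choice $X=\exp\bigl((\log\mathfrak{C}(\ad f))^2\bigr)$ is too large: it yields $\log\log X=2\log\log\mathfrak{C}(\ad f)+O(1)$, so that $\sum_{k\sqrt Q<p\le X}p^{-1}=\log\log X-\log\log(k\sqrt Q)+O(1)\asymp\log\log(kq)$, and your final bound is $\log L(1,\ad f)\ge\sum_{p\le k\sqrt Q}\tfrac{\lambda_f(p)^2-1}{p}-O(\log\log kq)$. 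Exponentiating gives $L(1,\ad f)^{-1}\ll(\log kq)^{O(1)}\prod_{p\le k\sqrt Q}(1-\tfrac{\lambda_f(p)^2-1}{p})$, which is a power of $\log kq$ rather than a power of $\log\log kq$. This is strictly weaker than the lemma and in particular would not be absorbed into the $(\log kq)^\varepsilon$ factors in the paper's application (e.g.\ in \eqref{eq:main-maass}). The fix is to take $X$ of the shape $(k^2Q)^{C\log\log(k^2Q)}$ (as the paper does), so that $\log\log X=\log\log(k^2Q)+\log\log\log(k^2Q)+O(1)$ and the extra range contributes only $O(\log\log\log kq)$. With that smaller $X$ your contour estimate $|X^w|\ll\mathfrak{C}^{-c/3}$ no longer holds; on the boundary curve one gets $|X^w|\ll(\log\mathfrak{C})^{-cC}$, which is still enough to make the shifted integral $o(1)$ provided $C$ is chosen large compared with $1/c$, but you need to redo this step rather than rely on the polynomial saving in $\mathfrak{C}$.
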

	\begin{proof} 
		Let $X \ge 2$. By partial summation, we obtain
		\begin{equation}\label{eq:lemma2-HS-2}
			\sum_{n \ge X}\frac{\Lambda_{\ad f}(n)}{n \log n} = -\Big(\sum_{n\leq X}
			\Lambda_{\ad f}(n)\Big) \frac{1}{X\log X} +\int_X^{\infty} \Big(\sum_{n\le t} \Lambda_{\ad f}(n)\Big) \frac{\log t+1}{t^2(\log t)^2} \, dt.
		\end{equation}
        By \cite[Theorem 2.4]{KT} and Theorem \ref{thm:L(1,adf)}, 
		there exist absolute and effectively computable constants $\Cl[abcon]{PNT-adf-range}>0$ and $\Cl[abcon]{PNT-adf-error}>0$ such that if $X \ge (k^2Q)^{\Cr{PNT-adf-range}}$, then 
		\[
		\sum_{n \le X} \Lambda_{\ad f}(n) \ll X\exp\Big(-\Cr{PNT-adf-error} \frac{\log X}{\log(k^2Q) +\sqrt{\log X}}\Big).
		\]
	 Applying the above bound in \eqref{eq:lemma2-HS-2} and choosing $X = (k^2Q)^{(6/\Cr{PNT-adf-error})\log\log (k^2Q)}$, we obtain
    \begin{equation}\label{eq:lemma2-HS-1}
    \begin{aligned}
			\log L(1, \ad f) = \sum_{n \ge 2} \frac{\Lambda_{\ad f}(n)}{n \log n} = \sum_{2 \le n \le X}\frac{\Lambda_{\ad f}(n)}{n \log n} + O(1) =\sum_{p \le X} \frac{\lambda_{\ad f}(p)}{p}+O(1),
   \end{aligned}
    \end{equation}
    using Deligne's bound to trivially bound the contribution from the composite $n\leq X$.  
    
    By the Hecke relations, if $p\nmid q$, then $\lambda_{\ad f}(p)=\lambda_f(p^2)=\lambda_f(p)^2-1$.  Applying this fact and another application of Deligne's bound in \eqref{eq:lemma2-HS-1}, we obtain that
    \begin{align*}
    \log L(1,\ad f)&=\sum_{p\leq k\sqrt{Q}}\frac{\lambda_f(p)^2-1}{p}+\sum_{\substack{p\leq k\sqrt{Q},\, p\mid q}}\frac{\lambda_{\ad f}(p)-\lambda_f(p)^2+1}{p} + \sum_{k\sqrt{Q} \le p \le X} \frac{\lambda_{\ad f}(p)}{p}+O(1)\\
    &\geq \sum_{p\leq k\sqrt{Q}}\frac{\lambda_f(p)^2-1}{p}-\sum_{p\mid q}\frac{6}{p} - \sum_{k\sqrt{Q} \le p \le X} \frac{3}{p}+O(1).
    \end{align*}
    The lemma now follows from Mertens' theorem.
    \end{proof}

In the next two lemmas, we establish some useful bounds for Section \ref{subsec: Sound-cor}.  

\begin{lem}\label{lem:bound-Lambda-Maass}
 Recall the notation in Sections \ref{subsec:newforms} and \ref{subsec:maasscuspforms}. Let $\phi$ be a Hecke--Maa{\ss} cusp form with spectral parameter $t_\phi $. Let $f,g \in \mathcal{B}_k(q)$ and $Q$ be defined as in \eqref{eq:def-Q}. If $\varepsilon > 0$, then
\begin{equation*}
    \frac{\Lambda(\tfrac{1}{2}, f\times g \times \phi)}{\Lambda(1,\ad \phi)\Lambda(1, \ad f)\Lambda(1, \ad g)} \ll \frac{Q^{\frac{1}{2}}(1+|t_\phi|)^{2\delta_1+\varepsilon}}{(\log kQ)^{1-\varepsilon}L(1,\ad f)L(1,\ad g)},
\end{equation*}
where $\delta_1 = 11/12$ when $f=g$ and $\delta_1 = 1$ otherwise.
\end{lem}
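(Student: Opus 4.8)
The plan is to reduce the claimed bound to three self-contained estimates: the subconvex bound of Theorem~\ref{thm:weaksub} for the central value of the triple product, a Stirling analysis of the archimedean factors, and an elementary accounting of the arithmetic conductors. Throughout, the factor $L(1,\ad f)\,L(1,\ad g)$ is carried along untouched, since it is estimated from below only at the end of the proof of Theorem~\ref{thm:cor} via Lemma~\ref{lem:L(1,adf)}. First I would write $\Lambda(s,\bullet)=N_{\bullet}^{s/2}L(s,\bullet)L_{\infty}(s,\bullet)$ for each completed $L$-function, so that the quantity to be bounded becomes
\[
\frac{N_{f\times g\times\phi}^{1/4}}{\big(N_{\ad\phi}N_{\ad f}N_{\ad g}\big)^{1/2}}\cdot\frac{L_{\infty}(\tfrac12,f\times g\times\phi)}{L_{\infty}(1,\ad\phi)\,L_{\infty}(1,\ad f)\,L_{\infty}(1,\ad g)}\cdot\frac{L(\tfrac12,f\times g\times\phi)}{L(1,\ad\phi)\,L(1,\ad f)\,L(1,\ad g)}.
\]
Since $\phi$ has level one, the classical nonvanishing argument behind \eqref{eq:l(1,adf)GHL} in Theorem~\ref{thm:L(1,adf)} gives $L(1,\ad\phi)\gg(\log(2+|t_\phi|))^{-1}\gg(1+|t_\phi|)^{-\varepsilon}$, so $1/L(1,\ad\phi)$ is harmless; it remains to estimate $L(\tfrac12,f\times g\times\phi)$ together with the conductor and archimedean quotients.

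For the central value I would split into the two cases built into \eqref{eq:def-Q}. If $f\neq g$, then $f\boxtimes g$ is a cuspidal automorphic representation of $\GL_{4}(\mathbb{A}_{\mathbb{Q}})$: it is cuspidal because a newform of squarefree level $q$ with trivial nebentypus cannot be a nontrivial twist of another such newform, and it satisfies GRC by Deligne's bound. Moreover $L(s,(f\boxtimes g)\times\phi)$ is entire, since the $\GL_{2}$ form $\phi$ cannot be the contragredient of the $\GL_{4}$ form $f\boxtimes g$. Hence Theorem~\ref{thm:weaksub}, applied with $\pi=f\boxtimes g$ and $\pi'=\phi$, yields
\[
L(\tfrac12,f\times g\times\phi)\ll_{\varepsilon}\mathfrak{C}(\phi)^{\varepsilon}\,\frac{\mathfrak{C}\big((f\boxtimes g)\times\phi\big)^{1/4}}{\big[\log\mathfrak{C}\big((f\boxtimes g)\times\phi\big)\big]^{1-\varepsilon}}.
\]
If $f=g$, then trivial nebentypus gives $f\boxtimes f=\ad f\boxplus\mathbbm{1}$ with $\ad f\in\mathfrak{F}_{3}$ by Gelbart--Jacquet \cite{GJ}, so $L(s,f\times f\times\phi)=L(s,\ad f\times\phi)\,L(s,\phi)$; here $\ad f$ satisfies GRC because $\lambda_{\ad f}(p)=\lambda_{f}(p)^{2}-1$, the function $L(s,\ad f\times\phi)$ is again entire, and Theorem~\ref{thm:weaksub} applies to $L(\tfrac12,\ad f\times\phi)$, while the leftover $\GL_{2}$ factor is bounded by convexity, $L(\tfrac12,\phi)\ll(1+|t_\phi|)^{1/2+\varepsilon}$. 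It is precisely this factoring, unavailable when $f\neq g$, that produces the sharper exponent $\delta_{1}=\tfrac{11}{12}$ in that case.

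It then remains to compute the conductors and carry out the Stirling analysis of the $L_{\infty}$-quotient. Since $\phi$ has level one, the local Rankin--Selberg conductor exponents roughly double under $\,\cdot\,\times\phi$, so that $N_{f\times g\times\phi}$ and $N_{\ad f\times\phi}$ are bounded in terms of $N_{f\boxtimes g}$ (resp.\ $N_{\ad f}$); combining this with $N_{f\boxtimes g}\ll q^{2}$ (squarefree $q$) and $N_{\ad f}\mid N_{f}^{2}$, and recalling \eqref{eq:def-Q}, the arithmetic conductors contribute exactly the factor $Q^{1/2}$ to the final bound. The archimedean conductor of the triple product is $\ll(k(1+|t_\phi|))^{O(1)}$, so $\mathfrak{C}(\,\cdot\,)^{1/4}$ in Theorem~\ref{thm:weaksub} contributes $Q^{1/2}$ times a bounded power of $k(1+|t_\phi|)$, while by Stirling's formula (using the explicit gamma factors of $\ad f$, $\ad\phi$, and the triple product recorded in Section~\ref{sec:Bound-L-functions}) the $L_{\infty}$-quotient decays polynomially in $k$ and is polynomially bounded in $1+|t_\phi|$. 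The powers of $k$ cancel, the factor $\mathfrak{C}(\phi)^{\varepsilon}$ becomes $(1+|t_\phi|)^{\varepsilon}$, and since $\log\mathfrak{C}(\,\cdot\,)\gg\log(kQ)$ the weak-subconvexity saving $[\log\mathfrak{C}(\,\cdot\,)]^{-(1-\varepsilon)}$ is $\ll(\log kQ)^{-(1-\varepsilon)}$; collecting the $(1+|t_\phi|)$-exponents then gives the stated bound, with $\delta_{1}=\tfrac{11}{12}$ when $f=g$ and $\delta_{1}=1$ otherwise.

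The hard part will be the last step: pinning down the precise power of $1+|t_\phi|$ emerging from the Stirling analysis and simultaneously checking that every power of $k$ cancels and that the conductor bookkeeping produces exactly $Q^{1/2}$; the inputs from Theorem~\ref{thm:weaksub} and from the convexity bound for $L(\tfrac12,\phi)$ are then routine to feed in. One must also keep track of the non-dihedral (non-CM) hypothesis needed to ensure $\ad f\in\mathfrak{F}_{3}$ and to invoke the lower bound for $L(1,\ad f)$.
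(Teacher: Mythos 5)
Your overall strategy matches the paper's proof: factor through $\Lambda = N^{s/2}\, L\, L_\infty$, use Stirling for the archimedean quotient, apply Theorem~\ref{thm:weaksub} to the triple product (directly when $f\neq g$; to $L(\tfrac12,\ad f\times\phi)$ after the factorization $L(s,f\times f\times\phi)=L(s,\ad f\times\phi)L(s,\phi)$ when $f=g$), and let the conductor accounting produce $Q^{1/2}$ and the saving $(\log kQ)^{-(1-\varepsilon)}$.

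There is, however, a genuine gap in the $f=g$ case. You bound the leftover $\GL(2)$ factor by the \emph{convexity} bound $L(\tfrac12,\phi)\ll(1+|t_\phi|)^{1/2+\varepsilon}$ and assert that the factorization alone yields the sharper exponent $\delta_1=\tfrac{11}{12}$. It does not. From $\mathfrak{C}(\ad f\times\phi)\ll k^4Q^2(1+|t_\phi|)^6$, Theorem~\ref{thm:weaksub} contributes $(1+|t_\phi|)^{3/2+\varepsilon}$; adding $(1+|t_\phi|)^{1/2+\varepsilon}$ from convexity gives a total power $2+\varepsilon$, i.e.\ $\delta_1=1$ — exactly what the direct application to $\mathfrak{C}(f\times g\times\phi)\ll k^4q^4(1+|t_\phi|)^8$ yields when $f\neq g$. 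So with convexity, the factorization buys nothing in the $t_\phi$-aspect. The paper instead invokes Ivi\'c's $t$-aspect \emph{subconvexity} bound $L(\tfrac12,\phi)\ll(1+|t_\phi|)^{1/3+\varepsilon}$ (\cite[Corollary 2]{I}), which gives a total power $\tfrac32+\tfrac13=\tfrac{11}{6}=2\cdot\tfrac{11}{12}$, hence $\delta_1=\tfrac{11}{12}$. In short, the improvement in the $f=g$ case comes not from the factorization per se but from being able to apply a known $\GL(2)$ subconvexity bound to the split-off $L(\tfrac12,\phi)$; without that input, your argument only recovers $\delta_1=1$.
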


\begin{proof}
    By Stirling's formula, it follows that 
    \begin{equation}\label{eq:stirling-cor}
     \frac{L_\infty(\tfrac{1}{2}, f \times g \times \phi)}{L_\infty(1, \ad \phi) L_\infty(1,\ad f) L_\infty(1,\ad g)} 
        \ll \frac{1}{k}.
    \end{equation}
We first consider when $f = g$. Since $f$ has trivial nebentypus, $L(\frac{1}{2}, f \times {f} \times \phi) = L(\frac{1}{2}, \ad f\times \phi)L(\frac{1}{2}, \phi)$. We apply Theorem \ref{thm:weaksub} for the first factor and invoke  $L(\frac{1}{2}, \phi) \ll  (1+|t_\phi|) ^{1/3+\varepsilon/3}$ from \cite[Corollary 2]{I} for the second factor.
Putting together the above bounds, the bound $L(1,\ad \phi)^{-1} \ll \log(1+|t_\phi|)$ from \cite{HJLP}, and \eqref{eq:stirling-cor}, we obtain the lemma when $f=g$.
If $f \neq g$, we proceed similarly but directly apply Theorem \ref{thm:weaksub} with $L(\frac{1}{2}, f \times g \times \phi)$.
    \end{proof}

    \begin{lem}\label{lem:bound-Lambda-Eis}
    Recall the notation in Section \ref{subsec:newforms}. Let $f,g \in \mathcal{B}_k(q)$ and $Q$ be defined as in \eqref{eq:def-Q}. For any $s \in \mathbb{C}$, let $\xi(s)$ be the completed zeta function. If $t \in \mathbb{R}$ and $\varepsilon > 0$, then
    \begin{equation*}
        \frac{\Lambda(\frac{1}{2}+it, f \times g)^2}{\xi(1+2it)^2\Lambda(1, \ad f)\Lambda(1, \ad g) } \ll  \frac{Q^{\frac{1}{2}}(1+|t|)^{2\delta_2+\varepsilon}}{(\log kQ)^{2-\varepsilon}L(1,\ad f)L(1,\ad g)},
    \end{equation*}
    where $\delta_2 = 19/21$ if $f=g$ and $\delta_2 = 1$ otherwise. 
    \end{lem}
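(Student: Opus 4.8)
The plan is to follow the proof of Lemma~\ref{lem:bound-Lambda-Maass}: factor each completed value into its arithmetic conductor, gamma factor, pole factor and Dirichlet series, bound the ratio of archimedean and conductor contributions directly, and bound $L(\tfrac12+it,f\times g)$ itself with the weak subconvexity bound of Theorem~\ref{thm:weaksub} (applicable because holomorphic newforms satisfy GRC by Deligne's bound $|\lambda_f(n)|\le\tau(n)$). First I would record, exactly as in \eqref{eq:stirling-cor}, that by Stirling's formula together with the classical lower bound $|\zeta(1+2it)|\gg(\log(3+|t|))^{-1}$ controlling the contribution of $\zeta(1+2it)$ to $\xi(1+2it)$,
\[
\frac{L_\infty(\tfrac12+it,f\times g)^2\,\bigl|\tfrac14+t^2\bigr|^{2r_{f\times g}}}{|\xi(1+2it)|^2\,L_\infty(1,\ad f)L_\infty(1,\ad g)}\ \ll\ \frac{(1+|t|)^\varepsilon}{k};
\]
here the factor $k^{-1}$ comes from comparing the $\Gamma(\tfrac k2\pm\tfrac14+\tfrac{it}{2})$-type factors of $L_\infty(\tfrac12+it,f\times g)$ with the $\Gamma(\tfrac k2),\Gamma(\tfrac{k+1}2)$-type factors of $L_\infty(1,\ad f)L_\infty(1,\ad g)$, while the exponential-in-$|t|$ contributions cancel against the $\Gamma(\tfrac12+it)$ inside $\xi(1+2it)$ and against the polynomial factor $(\tfrac14+t^2)^{r_{f\times g}}$. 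Likewise the ratio of arithmetic conductors is $O(1)$: when $f=g$ the identity $L(s,f\times f)=\zeta(s)L(s,\ad f)$ gives $N_{f\times f}=N_{\ad f}=Q$, so this ratio equals $N_{f\times f}^{1/2}/N_{\ad f}=Q^{-1/2}$; when $f\neq g$ and $q$ is squarefree we have $N_{f\times g}\mid q^2$ while $N_{\ad f}N_{\ad g}\ge q^2$, so it is $\le 1$. Hence it remains to bound $L(\tfrac12+it,f\times g)^2$.

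For $f=g$, self-duality gives the factorization $L(\tfrac12+it,f\times f)=\zeta(\tfrac12+it)L(\tfrac12+it,\ad f)$. I would bound the first factor by the subconvexity bound $\zeta(\tfrac12+it)\ll_\varepsilon(1+|t|)^{13/84+\varepsilon}$. For the second, since $f$ is non-dihedral $L(s,\ad f)$ is entire with $\ad f\in\mathfrak F_3$, and $\ad f$ satisfies GRC at the non-archimedean places (its Satake parameters are $\{\alpha_f(p)^2,1,\alpha_f(p)^{-2}\}$, all of modulus $\le 1$ by Deligne); recognizing $L(\tfrac12+it,\ad f)=L(\tfrac12,\ad f\times|\cdot|^{it})$ as in the proof of Lemma~\ref{lem:Sound-Thorner-lem4.1} and using $\mathfrak C(\ad f,t)\ll\mathfrak C(\ad f)(1+|t|)^3\ll Qk^2(1+|t|)^3$ (via \eqref{eqn:BH}, \eqref{eq:N,K}, and $K_{\ad f}\asymp k^2$) together with $\mathfrak C(|\cdot|^{it})\asymp 1+|t|$, Theorem~\ref{thm:weaksub} yields
\[
L(\tfrac12+it,\ad f)\ \ll\ (1+|t|)^{\varepsilon}\,\frac{\bigl(Qk^2(1+|t|)^3\bigr)^{1/4}}{\bigl(\log Qk^2(1+|t|)^3\bigr)^{1-\varepsilon}}\ \ll\ \frac{Q^{1/4}k^{1/2}(1+|t|)^{3/4+\varepsilon}}{(\log kQ)^{1-\varepsilon}}.
\]
Thus $L(\tfrac12+it,f\times f)^2\ll Q^{1/2}k(1+|t|)^{13/42+3/2+\varepsilon}(\log kQ)^{-2+\varepsilon}$, and since $\tfrac{13}{42}+\tfrac32=\tfrac{38}{21}=2\cdot\tfrac{19}{21}$ the $t$-exponent is $2\delta_2$ with $\delta_2=\tfrac{19}{21}$. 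Multiplying by the archimedean ratio $(1+|t|)^\varepsilon/k$ and the conductor ratio $Q^{-1/2}$ cancels the $k$, and inserting a factor $Q^{1/2}$ (valid since $Q\ge 1$) gives the stated bound.

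For $f\neq g$ (so $q$ squarefree and $Q=q^2$), $L(s,f\times g)$ is entire, and since $f$ and $g$ are newforms of the same weight and the same squarefree level with trivial nebentypus they are not twist-equivalent, so the cuspidal automorphic tensor product $\Pi=f\boxtimes g\in\mathfrak F_4$ exists and satisfies GRC at the non-archimedean places (its Satake parameters are products $\alpha_f(p)\alpha_g(p)$ of numbers of modulus $\le 1$). Recognizing $L(\tfrac12+it,f\times g)=L(\tfrac12,\Pi\times|\cdot|^{it})$ and using $\mathfrak C(f\times g,t)\ll\mathfrak C(f\times g)(1+|t|)^4\ll q^2k^2(1+|t|)^4$ and $\mathfrak C(|\cdot|^{it})\asymp 1+|t|$, Theorem~\ref{thm:weaksub} gives $L(\tfrac12+it,f\times g)\ll q^{1/2}k^{1/2}(1+|t|)^{1+\varepsilon}(\log kq)^{-1+\varepsilon}$, so $\delta_2=1$; assembling with the archimedean and conductor ratios as in the previous paragraph (and using $\log kQ\asymp\log kq$) yields the claim. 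Using $\Pi\in\mathfrak F_4$ with $\pi'=|\cdot|^{it}$, rather than $\pi=f$ with $\pi'=g\otimes|\cdot|^{it}$, is precisely what keeps the factor $\mathfrak C(\pi')^\varepsilon$ from Theorem~\ref{thm:weaksub} equal to $(1+|t|)^\varepsilon$, avoiding a spurious $(kq)^\varepsilon$.

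The main obstacle is the archimedean bookkeeping behind the displayed estimate in the first paragraph: one must verify that the gamma-factor ratio saves the full $k^{-1}$ while contributing only $(1+|t|)^\varepsilon$, which rests on the exact cancellation of the exponential factors $e^{\mp\pi|t|/2}$ among $L_\infty(\tfrac12+it,f\times g)$, the archimedean factor of $\xi(1+2it)$, and the pole factor $(\tfrac14+t^2)^{r_{f\times g}}$, together with the lower bound $|\xi(1+2it)|\gg(1+|t|)^{2-\varepsilon}e^{-\pi|t|/2}$; the latter uses the nonvanishing of $\zeta$ on $\Real(s)=1$ and the fact that the $\tfrac12 s(s-1)$ factor of the completed zeta cancels the pole of $\zeta$ at $s=1$, so that $\bigl|\tfrac12 s(s-1)\zeta(s)\bigr|$ evaluated at $s=1+2it$ is $\gg(1+|t|)^{2-\varepsilon}$ for all real $t$. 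Everything else — the applications of Theorem~\ref{thm:weaksub}, the subconvexity bound for $\zeta$, the conductor estimates, and the arithmetic of the exponents — is routine.
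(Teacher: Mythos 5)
Your proof is correct and follows the same strategy as the paper's: factor the completed $L$-values, bound the ratio of gamma factors and conductors by Stirling (giving the $k^{-1}$ saving), factor $L(\tfrac12+it, f\times f)=\zeta(\tfrac12+it)L(\tfrac12+it,\ad f)$ in the diagonal case, then use the Bourgain bound on $\zeta(\tfrac12+it)$ together with a weak subconvexity bound on the remaining $L$-value. There are two small stylistic deviations worth noting. First, the paper invokes Soundararajan's original \cite[Theorem 1]{S} for $L(\tfrac12+it,\ad f)$ and $L(\tfrac12+it,f\times g)$, whereas you apply the paper's own Theorem~\ref{thm:weaksub} after recognizing the shift as a Rankin--Selberg $L$-function against $|\cdot|^{it}$; since $\mathfrak{C}(|\cdot|^{it})^\varepsilon\asymp(1+|t|)^\varepsilon$ this extra factor is harmless, so the two routes land in the same place. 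Second, for $f\neq g$ you route through cuspidality of $f\boxtimes g$ (Ramakrishnan), which does hold here — for squarefree level and trivial nebentypus, any twist taking $f$ to a newform of the same data must be trivial since $\mathfrak{f}_\chi^2\mid q$ forces $\mathfrak{f}_\chi=1$ — whereas the paper applies Soundararajan's axiomatic theorem directly to the degree-$4$ $L$-function $L(s,f\times g)$ without needing this. Your archimedean bookkeeping (in particular the explicit lower bound $|\xi(1+2it)|\gg(1+|t|)^{2-\varepsilon}e^{-\pi|t|/2}$ incorporating the cancellation between the pole factor and $\zeta(1+2it)$) is in fact more careful than the paper's \eqref{eq:stirling-cor-2}, which silently absorbs these pieces; this is a virtue, not a gap.
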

    \begin{proof}
    By Stirling's formula,  it follows that 
    \begin{equation}\label{eq:stirling-cor-2}
        \begin{aligned}
        \frac{L_\infty(\tfrac{1}{2} +it, f \times g)^2}{\Gamma(\frac{1}{2}+it)^2 L_\infty(1,\ad f)L_\infty(1,\ad g)} 
        \ll \frac{1}{k}.
        \end{aligned}
    \end{equation}
   We first consider when $f=g$. Similarly to Lemma \ref{lem:bound-Lambda-Maass}, we have $L(\tfrac{1}{2}+it, f \times f) = L(\tfrac{1}{2}+it, \ad f)\zeta(\tfrac{1}{2}+it)$. We apply \cite[Theorem 1]{S} for the first factor and invoke the bound  $\zeta(1/2+it) \ll_{\varepsilon} (1+|t|)^{13/84+\varepsilon/3}$ from \cite{BJ} for the second factor. Putting together the above bounds, the bound $\zeta(1+2it)^{-1} \ll \log(1+|t|)$, and \eqref{eq:stirling-cor-2}, we obtain the lemma when $f=g$.   
    
   If $f\neq g$, we proceed similarly but directly apply \cite[Theorem 1]{S} with $L(\frac{1}{2}+it, f \times g)$.
    \end{proof}

	\section{Effective correlation and decorrelation}\label{sec:correlation}

\subsection{Soundararajan's approach}\label{subsec: Sound-cor} 
   We follow the idea of Nelson, Pitale, and Saha in \cite{NPS} to estimate $\langle\phi, \overline{F_k}G_k \rangle_q$ and $\langle E_t, \overline{F_k}G_k  \rangle_q$. This allows us to prove Proposition \ref{prop:Sound_cor} using \eqref{eq:spect-decom-2}.
   \begin{lem}\label{lem:Sound_cor} Keep the notation in Theorem \ref{thm:cor} and \eqref{eq:def-Q}. Let $\phi$ be a Hecke--Maa{\ss} cusp form with spectral parameter $ t_\phi $. For any $t \in \mathbb{R}$, let $E_t$ be the unitary Eisenstein series.  
    If $\varepsilon > 0$, then 
		\begin{equation}\label{eq:sound_maass}
			\langle \phi, \overline{F_k}G_k \rangle_q \ll \frac{(1+|t_{\phi}|)^{\delta_1+\varepsilon}}{L(1, \ad f)^{\frac{1}{2}}L(1, \ad g)^{\frac{1}{2}}}\frac{(q/\sqrt{Q})^{-\frac{1}{2}+\theta+\varepsilon}}{(\log kQ)^{\frac{1}{2}-\varepsilon}},
		\end{equation}
    where $\delta_1 = 11/12$ if $f=g$ and $\delta_1 = 1$ otherwise, 
		and 
		\begin{equation}\label{eq:sound_Eis}
			\langle E_t, \overline{F_k}G_k \rangle_q \ll \frac{(1+|t|)^{\delta_2+\varepsilon}}{L(1, \ad f)^{\frac{1}{2}}L(1, \ad g)^{\frac{1}{2}}}\frac{(q/\sqrt{Q})^{-\frac{1}{2}+\varepsilon}}{(\log kQ)^{1-\varepsilon}},
		\end{equation}
        where $\delta_2 = 19/21$ if $f=g$ and $\delta_2 = 1$ otherwise.
	\end{lem}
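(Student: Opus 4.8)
The plan is to deduce both estimates from the $L$-function bounds already assembled in Lemmas~\ref{lem:bound-Lambda-Maass} and~\ref{lem:bound-Lambda-Eis} by feeding them into the explicit (upper-bound form of the) triple product period formulas of Nelson, Pitale, and Saha \cite{N,NPS}. For a fixed even Hecke--Maa{\ss} cusp form $\phi$ of level $1$, the variant of Watson's triple product formula proved in \cite{NPS} (suitably adapted to the setting $f\neq g$ with $q$ squarefree, where it is just the Watson--Ichino formula for three distinct forms) writes
\[
|\langle \phi, \overline{F_k}G_k\rangle_q|^2 = \mathcal{N}\cdot\Big(\prod_{p\mid q} I_p\Big)\cdot \frac{\Lambda(\tfrac12, f\times g\times\phi)}{\Lambda(1,\ad\phi)\Lambda(1,\ad f)\Lambda(1,\ad g)},
\]
where $\mathcal{N}$ collects the archimedean local factor together with the normalization weights $|\rho_f(1)|^2$, $|\rho_g(1)|^2$, and the $I_p$ are non-archimedean local factors (equal to $1$ at $p\nmid q$); when $f=g$ one uses the factorization $L(s,f\times f\times\phi) = L(s,\ad f\times\phi)L(s,\phi)$, exactly as in the proof of Lemma~\ref{lem:bound-Lambda-Maass}.

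First I would dispose of $\mathcal{N}$. Inserting $|\rho_f(1)|^2\asymp (4\pi)^k/(kq\,\Gamma(k-1)L(1,\ad f))$ from \eqref{eq:rho_f(1)} and its analogue for $g$, Stirling's formula shows that $\mathcal{N}$ combines with the archimedean parts of the completed $L$-functions to $O(1)$ --- this is precisely the computation in the proof of Lemma~\ref{lem:bound-Lambda-Maass}, where one shows $L_\infty(\tfrac12,f\times g\times\phi)/(L_\infty(1,\ad\phi)L_\infty(1,\ad f)L_\infty(1,\ad g))\ll 1/k$. Next I would bound the product of the $I_p$ over $p\mid q$: separating the squarefree part of $q$ (where $I_p\asymp p^{-1}$) from the powerful part and invoking the bound $|\lambda_\phi(p)|\le p^{\theta}$ towards the Ramanujan conjecture for $\phi$, a computation following \cite{N,NPS} gives $\prod_{p\mid q} I_p \ll q^{-1}(q/\sqrt{Q})^{2\theta+\varepsilon}$. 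Writing $q^{-1} = (q/\sqrt Q)^{-1}Q^{-1/2}$, multiplying by Lemma~\ref{lem:bound-Lambda-Maass} (whose factor $Q^{1/2}$ cancels $Q^{-1/2}$), and taking square roots yields \eqref{eq:sound_maass}.

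For the Eisenstein contribution the argument runs in parallel. The inner product $\langle E_t,\overline{F_k}G_k\rangle_q$ unfolds against $\overline{F_k}G_k$ by the Rankin--Selberg method, so that $|\langle E_t,\overline{F_k}G_k\rangle_q|^2$ equals an archimedean factor times $\prod_{p\mid q}I_p$ times $\Lambda(\tfrac12+it,f\times g)^2/(\xi(1+2it)^2\Lambda(1,\ad f)\Lambda(1,\ad g))$; the archimedean factor is handled by Stirling exactly as in the proof of Lemma~\ref{lem:bound-Lambda-Eis} (which is why $\Gamma(\tfrac12+it)^2$ occurs there), and the non-archimedean local factors now contribute merely $\prod_{p\mid q}I_p\ll q^{-1}(q/\sqrt{Q})^{\varepsilon}$, with \emph{no} loss towards Ramanujan since the local components of $E_t$ at $p\mid q$ are tempered. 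Multiplying by Lemma~\ref{lem:bound-Lambda-Eis} and taking square roots gives \eqref{eq:sound_Eis}. In the specialization $f\neq g$, $q$ squarefree, one has $Q=q^2$ and $q/\sqrt Q=1$, so the displayed bounds reduce correctly; the constants $\delta_1,\delta_2$ are exactly those of Lemmas~\ref{lem:bound-Lambda-Maass} and~\ref{lem:bound-Lambda-Eis}, and $q/\sqrt Q$ is comparable to $(q/q_0)^{1/2}$ by \cite[Proposition~2.5]{NPS}.

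The main obstacle will be the careful extraction and estimation of the non-archimedean local integrals $I_p$ from the formulas of \cite{N,NPS}: one must confirm that their product is $\ll q^{-1}(q/\sqrt{Q})^{2\theta+\varepsilon}$ in the cuspidal case and $\ll q^{-1}(q/\sqrt{Q})^{\varepsilon}$ in the Eisenstein case, which amounts to evaluating the relevant $\mathrm{GL}_2\times\mathrm{GL}_2\times\mathrm{GL}_2$ (resp.\ $\mathrm{GL}_2\times\mathrm{GL}_2$) local zeta integrals at the ramified places, combined with the Ramanujan bound for $\phi$. All the deeper analytic input --- in particular the weak subconvexity bound of Theorem~\ref{thm:weaksub} --- has already been absorbed into Lemmas~\ref{lem:bound-Lambda-Maass} and~\ref{lem:bound-Lambda-Eis}, so no further estimation of $L$-functions is required.
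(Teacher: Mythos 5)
Your proposal follows essentially the same route as the paper: invoke the normalized Watson--Ichino triple product formula of Nelson--Pitale--Saha (and its Rankin--Selberg analogue for $E_t$), separate the archimedean and $p\mid q$ local factors, feed in Lemmas~\ref{lem:bound-Lambda-Maass} and~\ref{lem:bound-Lambda-Eis}, and take square roots. The only cosmetic difference is that you bundle the normalization weights $|\rho_f(1)|^2$, $|\rho_g(1)|^2$ into a factor $\mathcal{N}$ that you show is $O(1)$ by Stirling, whereas the paper takes the cleaner shortcut of stating the Ichino formula with $\langle\phi,\phi\rangle_1$, $\langle 1,|F_k|^2\rangle_q$, $\langle 1,|G_k|^2\rangle_q$ in the denominator --- all equal to $1$ by the chosen Petersson normalization --- so the $|\rho|^2$'s never appear explicitly and only the normalized local integrals $I_v^*$ (with $I_\infty^*=1$) remain; these two bookkeeping conventions give the same result.
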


    \begin{proof} 
    Recall Ichino's generalization of Watson's formula. Denote $I_v^*$ the corresponding normalized local Ichino integral (see, e.g., \cite[Theorem 3.1]{NPS} and \cite[Remark 4.2]{N}). We have 
		\begin{equation}\label{eq:Watson-maass}
			\frac{|\langle \phi, \overline{F_k}G_k \rangle_q|^2}{\langle \phi, \phi \rangle_1 \langle 1, |F_k|^2 \rangle_q \langle 1, |G_k|^2 \rangle_q} = \frac{1}{8}\frac{\Lambda(\tfrac{1}{2}, f \times g \times \phi)}{\Lambda(1, \ad \phi)\Lambda(1,\ad f)\Lambda(1,\ad g)} \prod_v I_v^*.
		\end{equation}
    For any prime $p$, let $n_p = \ord_p(q)$ and $m_p = \ord_p({q}/{\sqrt{Q}})$. The values of  $I_v^*$  are defined and calculated explicitly in \cite[Section 2]{NPS} and \cite[Section 4]{N}. In particular, $I_\infty^* = 1$, and 
		\begin{equation}\label{eq:I_p*} I_p^*
			\begin{cases}
				= 1, &\text{ if } p \nmid q,\\
				< 10^5p^{-n_p}\tau(p^{m_p})^2p^{2\theta m_p}, &\text{ if } p \mid  q \text{ and } f=g,\\
                = p^{-1}, &\text{ if } p \mid  q \text{ and } f\neq g.
			\end{cases}
		\end{equation}
        
		Using the above calculation of $I_v^*$, we obtain that 
        \begin{equation}\label{eq:I_v*-approx}
            \prod_v I_v^* \ll 10^{5\omega({q}/{\sqrt{Q}})}q^{-1}\tau({q}/{\sqrt{Q}})^2({q}/{\sqrt{Q}})^{2\theta} \ll q^{-1}({q}/{\sqrt{Q}})^{2\theta+2\varepsilon},
         \end{equation}
        using $10^{5\omega(q/\sqrt{Q})}\tau(q/\sqrt{Q})^2 \ll (q/\sqrt{Q})^{2\varepsilon}$.
        Recall that $\langle \phi, \phi \rangle_1 = \langle 1, |F_k|^2 \rangle_q = \langle 1, |G_k|^2 \rangle_q = 1$. Inserting \eqref{eq:I_v*-approx} and Lemma \ref{lem:bound-Lambda-Maass} into \eqref{eq:Watson-maass} and taking the square root, we obtain that 
\begin{equation}\label{eq:watson-cor1}
		  \langle \phi, \overline{F_k}G_k \rangle_q \ll \frac{Q^{1/4}(1+|t_{\phi}|)^{\delta_1+\varepsilon}}{(\log kQ)^{1/2-\varepsilon}L(1,\ad f)^{1/2}L(1,\ad g)^{1/2}} q^{-1/2}({q}/{\sqrt{Q}})^{\theta+\varepsilon},
		\end{equation}
    where $\delta_1 = 11/12$ when $f=g$ and $\delta_1 = 1$ otherwise. Hence, \eqref{eq:sound_maass} follows.

		 Let $\xi(s)$ be the completed Riemann zeta function. Similarly to \eqref{eq:Watson-maass}, we have
    \begin{equation}\label{eq:watson-cor-2}
			|\langle E_t, \overline{F_k}G_k \rangle_q|^2 =  \frac{|\Lambda(1/2+it, f\times g)|^2}{|\xi(1+2it)|^2\Lambda(1,\ad f)\Lambda(1,\ad g)}\prod_v J_v^*, 
		\end{equation}
     where $J_\infty^* = 1$ and $J_p^*$ are the same as in \eqref{eq:I_p*}, except $J_p^* < 10^5p^{-n_p}\tau(p^{m_p})^2$ when $p \mid q$ and $f=g$ (see more details in \cite[Remark 3.2]{NPS}). By these calculation of $J_v^*$, we have $\prod_v J_v^* \ll q^{-1}({q}/{\sqrt{Q}})^{2\varepsilon}$. Hence, \eqref{eq:sound_Eis} follows from Lemma \ref{lem:bound-Lambda-Eis}.
    \end{proof}
    \begin{rek}    
     We make the assumption that $q$ is squarefree when $f \neq g$ because to the author's knowledge, an analogue of \eqref{eq:Watson-maass} that ensures a bound of the shape 
    \[
        \langle \phi, \overline{F_k}G_k \rangle_q^2 \ll \frac{\Lambda(1/2, f \times g \times \phi)}{\Lambda(1, \ad \phi)\Lambda(1,\ad f)\Lambda(1,\ad g)}
    \]
    is only known when $q$ is squarefree.  See Hu \cite[Section 3]{YH} for partial progress in this direction. 
    \end{rek}

    \begin{proof}[Proof of Proposition \ref{prop:Sound_cor}] 
    We recall \eqref{eq:spect-decom-2} and follow the proof of \cite[Proposition 3.3]{H}. 
    First, we estimate the sums when $ |t_\phi|  \ge (kQ)^\varepsilon$ and $|t| \ge (kQ)^\varepsilon$. 
    We repeat the proofs of Lemmas \ref{lem:bound-Lambda-Maass} and \ref{lem:bound-Lambda-Eis}, but  with the convexity bound. It follows from \eqref{eq:Watson-maass}, \eqref{eq:watson-cor-2},   and \eqref{eq:l(1,adf)GHL} that
		\begin{equation}
			\label{eq:sound-main-prop-1}
			\begin{aligned}
                \langle \phi, \overline{F_k}G_k \rangle_q  &\ll  (1+|t_{\phi}|) ^{\delta_1+\varepsilon}(q/\sqrt{Q})^{-1/2+\theta+\varepsilon}\log(kQ),\\ 
                \langle E_t, \overline{F_k}G_k \rangle_q 
                &\ll (1+|t|)^{\delta_2+\varepsilon}(q/\sqrt{Q})^{-1/2+\varepsilon}\log(kQ).
			\end{aligned}
		\end{equation}
    By Lemma \ref{lem:petersson-bounds} and \eqref{eq:sound-main-prop-1}, for any large $A > 0$, we have that
\begin{multline}\label{eq:sound-main-prop-tail}
            \sum_{ |t_\phi|  \ge (kQ)^{\varepsilon}} \langle \psi, \phi \rangle_1 \langle \phi, \overline{F_k}G_k \rangle_q + \frac{1}{4\pi}\int_{|t|\ge (kQ)^{\varepsilon}} \langle \psi, E_t \rangle_1 \langle E_t, \overline{F_k}G_k \rangle_q dt \\
            \ll_{A}(q/\sqrt{Q})^{-1/2+\theta+\varepsilon}(kQ)^{-A}.
		\end{multline}

		Now, we consider  when $|t|, |t_\phi|  \in [M (\log kQ)^{\varepsilon/4}, (kQ)^\varepsilon]$. By Lemma \ref{lem:Sound_cor} and \eqref{eq:l(1,adf)GHL}, we have 
		\begin{equation}
			\label{eq:sound-main-prop-tail-2}
			\begin{aligned}
				\langle \phi, \overline{F_k}G_k \rangle_q &\ll  (1+|t_{\phi}|) ^{\delta_1+\varepsilon} (q/\sqrt{Q})^{-1/2+\theta+\varepsilon}(\log kQ)^{1/2+{\varepsilon}/{2}},\\
				\langle E_t, \overline{F_k}G_k \rangle_q &\ll (1+|t|)^{\delta_2+\varepsilon} (q/\sqrt{Q})^{-1/2+\varepsilon}(\log kQ)^{{\varepsilon}/{2}}.
			\end{aligned}
		\end{equation}
       By Lemma \ref{lem:petersson-bounds}, \eqref{eq:Weyls}, and  \eqref{eq:sound-main-prop-tail-2}, we obtain that for any sufficiently large $A > 0$
        \begin{multline}
        \label{eq:sound-main-prop-tail-4}
			\sum_{ |t_\phi|  \in [M(\log kQ)^{\varepsilon/4}, (kQ)^\varepsilon]}\langle \psi, \phi \rangle_1 \langle \phi, \overline{F_k}G_k \rangle_q + \frac{1}{4\pi}\int_{|t| \in [M(\log kQ)^{\varepsilon/4}, (kQ)^\varepsilon]} \langle \psi, E_t \rangle_1 \langle E_t, \overline{F_k}G_k \rangle_q dt\\
             \ll_A M^{2+\delta_1+\varepsilon}(q/\sqrt{Q})^{-1/2+\theta+\varepsilon}(\log kQ)^{-A}.
		\end{multline}
  
    We now bound the contribution from the ranges $|t_\phi|, |t| \le M(\log kQ)^{\varepsilon/4}$. By the Parseval formula, we have that  $\sum_{|t_\phi| \le M(\log kQ)^{\varepsilon/4}} |\langle\psi,\phi\rangle_1|^2 \le \lVert \psi\rVert_2^2 \ll 1$.  Applying the Cauchy--Schwarz inequality and Lemma \ref{lem:Sound_cor}, we obtain that
		\begin{equation*}
            \label{eq:parsevel-2}
			\begin{aligned}
        \sum_{ |t_\phi|  \le M(\log kQ)^{\varepsilon/4}} \langle \psi, \phi \rangle_1 \langle \phi, \overline{F_k}G_k \rangle_q
        \ll \Big(\sum_{ |t_\phi|  \le M(\log kQ)^{\varepsilon/4}} \frac{ (1+|t_\phi|)^{2\delta_1+2\varepsilon}(q/\sqrt{Q})^{-1+2\theta+2\varepsilon}}{L(1, \ad f)L(1, \ad g)(\log kQ)^{1-\varepsilon}}\Big)^{1/2}.
	\end{aligned}
		\end{equation*}
        By Lemma \ref{lem:L(1,adf)}, \eqref{eq:Weyls}, and the fact that $\prod_{p\le x}(1-\delta/{p}) \asymp (\log x)^{-\delta}$, the above bound equals
        \begin{equation}\label{eq:main-maass}
        \ll M^{\delta_1+1+\varepsilon}(\log kq)^{\varepsilon}\Big(\frac{q}{\sqrt{Q}}\Big)^{-1/2+\theta+\varepsilon}\prod_{p \le k\sqrt{Q}}\Big(1-\frac{\lambda_f(p)^2+\lambda_g(p)^2-1}{2p}\Big).
        \end{equation}
	For Eisenstein series, the Cauchy--Schwarz inequality, Lemmas  \ref{lem:L(1,adf)} and \ref{lem:Sound_cor}  gives
		\begin{multline}\label{eq:main-eis}
				\int_{|t| \le M(\log kQ)^{\varepsilon/4}} \langle \psi, E_t \rangle_1 \langle E_t, \overline{F_k}G_k \rangle_q \, dt\\ 
                \ll M^{\delta_2+\frac{1}{2}+\varepsilon}(\log kq)^{\varepsilon}\Big(\frac{q}{\sqrt{Q}}\Big)^{-1/2+\varepsilon}\prod_{p \le k\sqrt{Q}}\Big(1-\frac{\lambda_f(p)^2+\lambda_g(p)^2}{2p}\Big).
		\end{multline}
	The proposition then follows upon combining \eqref{eq:sound-main-prop-tail}, \eqref{eq:sound-main-prop-tail-4}, \eqref{eq:main-maass}, and \eqref{eq:main-eis}.
	\end{proof}

    %%%%%%%%%%%%%%%%%%%%%%%%%%%
    %%%%%%%%%%%%%%%%%%%%%%%%%%%
	\subsection{Holowinsky's approach}\label{subsec: hol-cor} To prove Proposition \ref{prop:Holow_cor}, we first recall the standard unfolding method  for $\langle P_m(\cdot, \Psi_m), |F_k|^2  \rangle_q$. See, for example, \cite[p. 2398]{YH} for detailed calculation.
    \begin{lem}
    \label{lem:unfolding} Recall the notation in Section \ref{subsec:poincare}.
         For any $m \in \mathbb{Z}$ and any $\Psi \in C^{\infty}_c(\mathbb{R}_{\ge 0})$, a smooth compactly supported function on the set of nonnegative real numbers, we have 
        \[
        \langle P_m(\cdot, \Psi), |F_k|^2 \rangle_q = \sum_{\mathfrak{a}_j \in \mathcal{C}} \int_{y_j=0}^{\infty} \Psi(w_jy_j) \int_{x_j = 0}^{1} e(mw_jx_j)|F_k(z)|^2y_j^{-2}dx_j\,dy_j.
        \]
    \end{lem}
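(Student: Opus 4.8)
The plan is to unfold $\langle P_m(\cdot,\Psi),|F_k|^2\rangle_q$ using the $\Gamma_0(q)$-invariance of $z\mapsto|F_k(z)|^2$ recorded in Section~\ref{subsec:newforms}, together with the double-coset description of the cusps of $\Gamma_0(q)$ from Section~\ref{subsec:cusps}; since $\Psi$ has compact support, all the rearrangements below converge absolutely. First I would write out the inner product: as $P_m(\cdot,\Psi)$ and $|F_k|^2$ are both $\Gamma_0(q)$-invariant,
\[
\langle P_m(\cdot,\Psi),|F_k|^2\rangle_q=\int_{\Gamma_0(q)\backslash\mathbb{H}}\Big(\sum_{\gamma\in\Gamma_\infty\backslash\Gamma_0(1)}e(m\,\Real(\gamma z))\,\Psi(\Imag(\gamma z))\Big)|F_k(z)|^2\,\frac{dx\,dy}{y^2}.
\]

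Next I would break up the sum according to cusps. With $\tau_j$ a set of representatives for $\Gamma_\infty\backslash\Gamma_0(1)/\Gamma_0(q)$ as in Section~\ref{subsec:cusps}, one has $\Gamma_0(1)=\bigsqcup_{j}\Gamma_\infty\tau_j\Gamma_0(q)$; moreover the subgroup of $\Gamma_\infty$ fixing $\tau_j\Gamma_0(q)$ on the left is $\Gamma_\infty\cap\tau_j\Gamma_0(q)\tau_j^{-1}$, which has index $w_j$ in $\Gamma_\infty$, so that
\[
\Gamma_\infty\backslash\Gamma_0(1)=\bigsqcup_{\mathfrak{a}_j\in\mathcal{C}}\big\{\tau_j\delta:\delta\in\Gamma_{\mathfrak{a}_j}\backslash\Gamma_0(q)\big\},\qquad\Gamma_{\mathfrak{a}_j}:=\tau_j^{-1}\Gamma_\infty\tau_j\cap\Gamma_0(q).
\]
Substituting this and using that $|F_k|^2$ is $\Gamma_0(q)$-invariant, the standard unfolding (the fact that $\bigsqcup_\delta\delta\mathcal{F}$ is a fundamental domain for $\Gamma_{\mathfrak{a}_j}$ whenever $\mathcal{F}$ is one for $\Gamma_0(q)$) collapses the sum over $\Gamma_{\mathfrak{a}_j}\backslash\Gamma_0(q)$ into an enlarged domain of integration, giving
\[
\langle P_m(\cdot,\Psi),|F_k|^2\rangle_q=\sum_{\mathfrak{a}_j\in\mathcal{C}}\int_{\Gamma_{\mathfrak{a}_j}\backslash\mathbb{H}}e(m\,\Real(\tau_j z))\,\Psi(\Imag(\tau_j z))\,|F_k(z)|^2\,\frac{dx\,dy}{y^2}.
\]

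Finally I would change variables $z=\sigma_jz_j$, $z_j=x_j+iy_j$, where $\sigma_j=\tau_j^{-1}\begin{psmallmatrix}w_j&\\&1\end{psmallmatrix}$ is the scaling matrix of $\mathfrak{a}_j$ from~\eqref{eq:scalingmatrix}. Since $\sigma_j^{-1}\Gamma_{\mathfrak{a}_j}\sigma_j=\Gamma_\infty$, the variable $z_j$ runs over the strip $\{0\le x_j<1,\ y_j>0\}$; and since $\tau_j\sigma_j=\begin{psmallmatrix}w_j&\\&1\end{psmallmatrix}$ acts on $\mathbb{H}$ by $z_j\mapsto w_jz_j$, we obtain $\Real(\tau_j z)=w_jx_j$, $\Imag(\tau_j z)=w_jy_j$, and $\tfrac{dx\,dy}{y^2}=\tfrac{dx_j\,dy_j}{y_j^2}$ by invariance of the hyperbolic measure. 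Substituting produces exactly the claimed identity, with $z=\sigma_jz_j$ inside $|F_k(z)|^2$.

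The main obstacle is the group-theoretic bookkeeping: verifying the disjointness and exhaustiveness of the decomposition of $\Gamma_\infty\backslash\Gamma_0(1)$ indexed by the cusps, correctly identifying the stabilizer $\Gamma_{\mathfrak{a}_j}$ and its index $w_j$, and checking that the copies of $-I$ present in $\Gamma_\infty$ and $\Gamma_0(q)$ do not introduce a spurious factor of $2$ in the $x_j$-integration (they do not, since $-I$ acts trivially on $\mathbb{H}$). Once the decomposition is pinned down, the unfolding and the change of variables are routine.
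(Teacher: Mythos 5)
Your proof is correct and follows essentially the same route as the paper: you use the same decomposition $\Gamma_\infty\backslash\Gamma_0(1)=\bigsqcup_j\tau_j\big(\tau_j^{-1}\Gamma_\infty\tau_j\cap\Gamma_0(q)\backslash\Gamma_0(q)\big)$, unfold using the $\Gamma_0(q)$-invariance of $|F_k|^2\,d\mu$, and arrive at the integral over the width-$w_j$ strip before rescaling. The only cosmetic difference is that you perform the change of variables $z=\sigma_j z_j$ in a single step, whereas the paper first replaces $z$ by $\tau_j^{-1}z$ and then rescales $z\mapsto w_j z$; these are the same substitution.
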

    \begin{rek}\label{rek:unfolding}   
        For a squarefree $q$, as discussed in Section \ref{subsec:cusps} and by Lemma \ref{lem:unfolding}, we obtain that
        \begin{equation}\label{eq:unfolding-2}
			\langle P_m(\cdot, \Psi), \overline{F_k}G_k \rangle_q = 
            \sum_{d|q} \int_{y=0}^{\infty} \Psi(dy) \int_{x = 0}^{1} e(mdx)F_k(z)\overline{G_k(z)} y^{-2}dx\,dy.
		\end{equation} 
    \end{rek}

	Recall the notation in Section \ref{subsec:poincare} and \eqref{eq:poincare2}. To prove Proposition \ref{prop:Holow_cor}, we split the sum of the contributions from incomplete Poincar\'e into two ranges: $0<|m|<M(\log kQ)^{\varepsilon/3}$ and  $|m|\ge M(\log kQ)^{\varepsilon/3}$. To  bound the sum in each range, we recall the following propositions. 

	\begin{prop}\label{prop:shiftedcon}
		Recall the notation in Theorem \ref{thm:cor} and Section \ref{subsec:cusps}. Let $m \neq 0$ be an integer, $x \ge 1$ be a real number, and $\varepsilon \in (0,1)$. Let $q_\diamond^2$ be the largest square divisor of $q$. Let $c$ be a positive divisor of $q$ and $\lambda_{[c]}$ be defined as in \eqref{eq:lambdac}. If $f = g$, then 
		\[
		\sum_{\substack{n_1 \ge 1, n_2 = n_1+m \ge 1\\ \max(n_1,n_2) \le x}} |\lambda_{[c]}(n_1)\lambda_{[c]}(n_2)| \ll q_\diamond^\varepsilon (\log\log(e^eq))^{O(1)}\frac{x\prod_{p\le x}(1+2|\lambda_f(p)|/p)}{(\log ex)^{2-\varepsilon}}.
		\]
  If $f \neq g$, then
        \[
		\sum_{\substack{n_1 \ge 1, n_2 = n_1+m \ge 1 \\ \max(n_1,n_2) \le x}} |\lambda_{f}(n_1)\lambda_{g}(n_2)| \ll \frac{x\prod_{p\le x}(1+(|\lambda_f(p)|+ |\lambda_g(p)|)/p)}{(\log ex)^{2-\varepsilon}}.
		\]
	\end{prop}
        \begin{proof} The first statement is from \cite[Proposition~3.17]{NPS}. The proof of the second statement follows directly from the proof of \cite[Theorem~3.10]{N}.
        \end{proof}

	\begin{prop}[Proposition 3.18 of \cite{NPS}]\label{prop:shiftedcon2}
		Let $x \ge 2$ be a real number and $q$ be a positive integer. If $\delta \in (0,1)$, then 
		\[
		\sum_{c|q} \frac{[q/c^2,1]\varphi((c,q/c))}{\log([q/c^2,1]x)^{2-\delta }}\ll \frac{q (\log\log(e^eq))^{O(1)}}{(\log qx)^{2-\delta }}.
		\]
	\end{prop}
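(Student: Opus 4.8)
The plan is to reduce the whole estimate to two elementary facts about divisors together with a short case split. For $c\mid q$ write $w(c):=q/(c^{2},q)=[q/c^{2},1]$, so that the quantity to be bounded is
\[
S:=\sum_{c\mid q}\frac{w(c)\,\varphi((c,q/c))}{\log(w(c)x)^{2-\delta}},
\]
and set $L:=\log(qx)$. The first fact is obtained by comparing $p$-adic valuations: at a prime $p$ with $p^{a}\parallel c$ and $p^{b}\parallel q$, both $c(c,q/c)$ and $(c^{2},q)$ have valuation $\min(2a,b)$, so $c(c,q/c)=(c^{2},q)$; consequently $w(c)(c,q/c)=q/c$ and hence $w(c)\,\varphi((c,q/c))\le w(c)(c,q/c)=q/c$. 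The second fact is $(c^{2},q)\le c^{2}$, i.e.\ $w(c)\ge q/c^{2}$. Together these say that large $w(c)$ makes $\log(w(c)x)$ comparable to $L$, while small $w(c)$ forces $c$ large and hence $q/c$ small; balancing these two effects is the entire content of the argument.

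First I would split the divisors $c\mid q$ according to $\mathcal{A}=\{c:w(c)\ge\sqrt q\}$ and $\mathcal{B}=\{c:w(c)<\sqrt q\}$. For $c\in\mathcal{A}$, since $x\ge1$ we have $w(c)x\ge\sqrt q\,x\ge\sqrt{qx}$, so $\log(w(c)x)\ge L/2$; thus the $\mathcal{A}$-part of $S$ is at most $2^{2-\delta}L^{-(2-\delta)}\sum_{c\mid q}q/c=2^{2-\delta}\sigma(q)L^{-(2-\delta)}$. Since $\sigma(q)/q=\sum_{d\mid q}1/d\le\prod_{p\mid q}(1-1/p)^{-1}\ll\log\log(e^{e}q)$ (a standard consequence of Mertens' theorem), this part is $\ll q\log\log(e^{e}q)/L^{2-\delta}$.

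For $c\in\mathcal{B}$ the inequalities $w(c)\ge q/c^{2}$ and $w(c)<\sqrt q$ force $c>q^{1/4}$, whence $w(c)\varphi((c,q/c))\le q/c<q^{3/4}$; moreover $\#\mathcal{B}\le\tau(q)$. If $x\ge\sqrt q$, then $\log(w(c)x)\ge\log x$ and $\log x\ge\tfrac12\log q$, so $2\log(w(c)x)\ge\log x+\tfrac12\log q\ge\tfrac12 L$, i.e.\ $\log(w(c)x)\ge L/4$; hence the $\mathcal{B}$-part is $\ll q^{3/4}\tau(q)L^{-(2-\delta)}\ll qL^{-(2-\delta)}$ using $q^{3/4}\tau(q)=q^{3/4+o(1)}=o(q)$. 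If instead $x<\sqrt q$, then $qx<q^{3/2}$, so $L\le\tfrac32\log q$; using the crude bound $\log(w(c)x)\ge\log2$ the $\mathcal{B}$-part is $\ll q^{3/4}\tau(q)$, and since $q^{3/4}\tau(q)(\log q)^{2-\delta}=q^{3/4+o(1)}\ll q$ this is $\ll q(\log q)^{-(2-\delta)}\ll qL^{-(2-\delta)}$. Divisors $q$ in any fixed bounded range are handled trivially, so every implied constant here is absolute. Adding the $\mathcal{A}$- and $\mathcal{B}$-contributions yields $S\ll q\log\log(e^{e}q)/\log(qx)^{2-\delta}$, which is the stated bound with the exponent $O(1)$ taken to be $1$.

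The only mildly delicate point, and the place where some care is needed, is organizing the case analysis on the relative sizes of $w(c)$, $x$, and $q$ so that $\log(w(c)x)$ is always $\gg L$ except when the numerator $w(c)\varphi((c,q/c))$ has already shrunk to $q^{3/4+o(1)}$, which a trivial denominator bound then absorbs. The identity $w(c)(c,q/c)=q/c$ — equivalently, the two facts $w(c)\varphi((c,q/c))\le q/c$ and $w(c)\ge q/c^{2}$ — is precisely what makes this balancing possible, and everything else is routine.
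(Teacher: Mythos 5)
Your proof is correct. The paper does not supply its own argument here; it quotes Proposition 3.18 of \cite{NPS} without proof, so there is no in-paper proof to compare against, and your argument serves as a clean self-contained substitute.

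Your two elementary facts are exactly the right leverage. Checking $p$-adic valuations gives $v_p(c(c,q/c)) = a+\min(a,b-a) = \min(2a,b) = v_p((c^2,q))$, so $c(c,q/c)=(c^2,q)$, which yields both $[q/c^2,1]\,\varphi((c,q/c))\le [q/c^2,1]\,(c,q/c)=q/c$ and $[q/c^2,1]=q/(c^2,q)\ge q/c^2$. The split according to whether $w(c)=[q/c^2,1]$ exceeds $\sqrt{q}$ then neatly separates the regime where $\log(w(c)x)\gg\log(qx)$ (so the sum of numerators is controlled by $\sigma(q)\ll q\log\log(e^eq)$) from the regime $w(c)<\sqrt{q}$, where the second fact forces $c>q^{1/4}$ and hence $w(c)\varphi((c,q/c))\le q/c < q^{3/4}$; the remaining case analysis on whether $x$ exceeds $\sqrt{q}$ (to ensure $\log(w(c)x)\gg\log(qx)$, or, if not, to deduce $\log(qx)\ll\log q$ and use the trivial bound $\log(w(c)x)\ge\log 2$) is handled correctly, and the use of $\tau(q)\ll q^{1/4}$ keeps every implied constant absolute once bounded $q$ are treated trivially. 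As you note, your proof in fact gives the conclusion with the exponent $O(1)$ equal to $1$.
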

	
	We first estimate the sum of the incomplete Poincar\'e series when $|m| \ge M(\log kQ)^{\varepsilon/3}$.
	
	\begin{lem}\label{lem:H_cor_tail} 
    If $A > 0$ and $\varepsilon \in (0,1)$ are real numbers, then we have
		\[
			\sum_{|m| \ge M(\log kQ)^{\varepsilon/3}} \langle  P_m(\cdot, \Psi_m), \overline{F_k}G_k \rangle_q \ll_{A,\varepsilon} M(\log kQ)^{-A}.
		\]
	\end{lem}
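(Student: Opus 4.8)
The plan is to sidestep the unfolding identity \eqref{eq:unfolding-2} and deduce the tail bound from the rapid decay of the Fourier coefficients $\Psi_m$ combined with a trivial $L^1$ estimate for $\overline{F_k}G_k$. The key observation is that $\langle 1,|\overline{F_k}G_k|\rangle_q\le 1$: by the Cauchy--Schwarz inequality and the normalizations $\langle 1,|F_k|^2\rangle_q=\langle 1,|G_k|^2\rangle_q=1$ (see \eqref{eq:F_k(z)} and Section \ref{subsec:newforms}), we have $\int_{Y_0(q)}|F_k||G_k|\,d\mu\le(\int|F_k|^2\,d\mu)^{1/2}(\int|G_k|^2\,d\mu)^{1/2}=1$. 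Since $P_m(\cdot,\Psi_m)$ is $\Gamma_0(1)$-invariant, hence $\Gamma_0(q)$-invariant, and bounded (it is a finite sum at each point, as $\Psi_m$ has compact support in $\mathbb{R}_{>0}$), this gives
\[
|\langle P_m(\cdot,\Psi_m),\overline{F_k}G_k\rangle_q|\le \|P_m(\cdot,\Psi_m)\|_\infty ,
\]
and the lemma is reduced to showing $\|P_m(\cdot,\Psi_m)\|_\infty\ll_{A',\varepsilon}(M/|m|)^{A'}$ for every integer $m\ne 0$ and every $A'>0$.

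To prove that bound I would argue geometrically. Because $\psi\in C_c^\infty(Y_0(1))$, the periodization $\check\Psi$ vanishes for $\Imag(z)>B$ (some $B$) and for $\Imag(z)<\sqrt3/2$, so $\Psi_m$ is supported on $[\sqrt3/2,B]$, and $|\Psi_m(y)|\ll_{A'}(M/|m|)^{A'}$ by \eqref{eq:Psi}. By $\Gamma_0(1)$-invariance it suffices to bound $|P_m(z,\Psi_m)|$ for $z\in\mathcal F$. Every coset in $\Gamma_\infty\backslash\Gamma_0(1)$ other than the identity coset has a representative with nonzero lower-left entry $c$, so $\Imag(\gamma z)=\Imag(z)/|cz+d|^2\le 1/\Imag(z)$; hence such a coset can contribute to $P_m(z,\Psi_m)$ only when $1/\Imag(z)\ge\sqrt3/2$, i.e. when $z$ lies in the \emph{fixed} compact set $K=\{z\in\mathcal F:\Imag(z)\le 2/\sqrt3\}$. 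On $K$ the number of cosets with $\Imag(\gamma z)\ge\sqrt3/2$ is at most $(\sqrt3/2)^{-2}\sup_{z\in K}E(2,z)\ll 1$, using the convergence of the Eisenstein series $E(s,z)=\sum_{\Gamma_\infty\backslash\Gamma_0(1)}\Imag(\gamma z)^s$ at $s=2$; and the identity coset contributes the single term $e(m\Real(z))\Psi_m(\Imag(z))$, of modulus $\le(M/|m|)^{A'}$. Altogether $\|P_m(\cdot,\Psi_m)\|_\infty\ll_{A'}(M/|m|)^{A'}$, the implied constant depending only on $A'$ and not on $\psi$ beyond its $M$-norm.

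Finally I would choose $A'=1+3A/\varepsilon$ and sum, using $M\ge 1$ and $\log kQ\gg 1$:
\[
\sum_{|m|\ge M(\log kQ)^{\varepsilon/3}}|\langle P_m(\cdot,\Psi_m),\overline{F_k}G_k\rangle_q|\ll_{A'}\sum_{|m|\ge M(\log kQ)^{\varepsilon/3}}\Big(\frac{M}{|m|}\Big)^{A'}\ll_{A'}M^{A'}\big(M(\log kQ)^{\varepsilon/3}\big)^{1-A'}=M(\log kQ)^{-A},
\]
which is the desired estimate. The only delicate point is the sup-norm bound in the middle step: everything hinges on the fact that, on $\mathcal F$, only the identity coset of $\Gamma_\infty\backslash\Gamma_0(1)$ can push $\Imag(\gamma z)$ into a band bounded away from $0$ unless $z$ itself is confined to a fixed compact region, which is what keeps the bound uniform in $\psi$. (Note that this argument never uses the squarefreeness of $q$, so the lemma holds in the stated generality.)
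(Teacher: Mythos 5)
Your proposal is correct and follows essentially the same route as the paper: both hinge on the rapid decay bound $|\Psi_m(y)|\ll_{A'}(M/|m|)^{A'}$ from \eqref{eq:Psi}, the uniform bound $\#\{\gamma\in\Gamma_\infty\backslash\Gamma_0(1):\Imag(\gamma z)\ge\sqrt3/2\}\ll1$, the trivial $L^1$ estimate $\langle 1,|F_k\overline{G_k}|\rangle_q\ll1$, and then summing the geometric-type tail over $|m|\ge M(\log kQ)^{\varepsilon/3}$. The only cosmetic differences are that you bound $\|P_m(\cdot,\Psi_m)\|_\infty$ first and then apply Cauchy--Schwarz for the $L^1$ factor, whereas the paper bounds the integrand pointwise and uses AM--GM ($|F_k\overline{G_k}|\le\tfrac12(|F_k|^2+|G_k|^2)$) before integrating, and that you supply a self-contained proof of the coset-counting bound (via $E(2,z)$ on the compact region) rather than citing \cite[Lemma~2.10]{Iw}.
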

	\begin{proof}
        Since $\supp \check{\psi} \subset \mathcal{F}$, the function $\Psi(\Imag(\gamma z))$ is nonzero only if $\Imag(\gamma z) \ge \sqrt{3}/2$. By  \cite[Lemma 2.10]{Iw}, we have $\#\{\gamma \in \Gamma_\infty\bs\Gamma_0(1): \Imag(\gamma z) \ge \sqrt{3}/2\} \ll 1$.
        By \eqref{eq:Psi} and the fact that $|F_k(z)||G_k(z)| \ll |F_k(z)|^2+ |G_k(z)|^2$, we obtain that $\langle P_m(\cdot, \Psi_m), \overline{F_k}G_k \rangle_q \ll_A (M/|m|)^A.$
        Hence, the lemma follows upon summing over $m \ge M(\log kQ)^{\varepsilon/3}$.
 \end{proof}
 
    Next, we estimate the sum of the incomplete Poincar\'e series over the range $1 \le |m| \le M (\log kQ)^{\varepsilon/3}$.
    With the notation in Section \ref{subsec:poincare}, we know that $\check{\psi}(z) = \psi(z)|_{\mathcal{F}}$ and $\check{\Psi}$ is the extension of $\check{\psi}$ to $\mathbb{H}$ by $\Gamma_{\infty}$. 
    Since $\psi \in C_c^{\infty}(Y_0(1),M)$, we have that for $M \ge 1$
	\begin{equation}
        \label{eq:derivative-Psi_m}
		y^{\max(j, 1/2)}\Psi_m^{(j)}(y)= \int_{-1/2}^{1/2} y^{\max(j, 1/2)}\frac{\partial^{j}}{\partial y^{j}}\check{\Psi}(x+iy)e(-mx)\,dx \ll_j M^j.
	\end{equation}
	\begin{lem}\label{lem:H_cor_body}
		Recall the notation in Section \ref{subsec:poincare} and the estimate in \eqref{eq:derivative-Psi_m}. 
        Define
        \[
            M_f(x) := \frac{\prod_{p\le x}(1+2|\lambda_f(p)|/p)}{L(1, \ad f)(\log ex)^2},
	\] 
        and let $q_\diamond^2$ denote the largest square divisor of $q$. If $\varepsilon\in (0,1)$, then we have
		\[
		\sum_{1\leq |m|\le M(\log kQ)^{\varepsilon/3}} \langle P_m(\cdot, \Psi_m), \overline{F_k}G_k \rangle_q \ll_{\varepsilon} M^{4+\varepsilon}(\log kq)^\varepsilon q_\diamond^\varepsilon M_f(kq)^{1/2}M_g(kq)^{1/2}.
		\]
	\end{lem}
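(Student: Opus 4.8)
The plan is to unfold each term $\langle P_m(\cdot,\Psi_m),\overline{F_k}G_k\rangle_q$ using Lemma \ref{lem:unfolding}, insert the Fourier expansions \eqref{eq:Fourier-newform}, \eqref{eq:fourier-zj} of $f$ and $g$ (equivalently of $F_k,G_k$) at each cusp $\mathfrak{a}_j$, and execute the $x_j$-integration to collapse the double Fourier series into a single shifted sum in $n_1,n_2$ with $n_2-n_1 = m w_j$ (or $n_1-n_2 = mw_j$). Concretely, after unfolding, the $\mathfrak{a}_j$-term is
\[
\rho_f(1)\overline{\rho_g(1)}\sum_{\substack{n_1,n_2\ge 1\\ n_2-n_1 = mw_j}}\lambda_j^{(f)}(n_1)\overline{\lambda_j^{(g)}(n_2)}(n_1 n_2)^{\frac{k-1}{2}}\int_0^\infty \Psi_m(w_j y)\,e^{-2\pi(n_1+n_2)y}\,y^{k-2}\,dy .
\]
The $y$-integral is a Gamma-type integral: by the standard stationary-phase/Laplace analysis used in Holowinsky's method (the integrand peaks near $y\asymp k/(4\pi n)$ with $n\asymp n_1\asymp n_2$), combined with the derivative bounds \eqref{eq:derivative-Psi_m} on $\Psi_m$, it is $\ll_j M^{O(1)} \Gamma(k-1)(4\pi)^{-(k-1)}(n_1 n_2)^{-(k-1)/2}\cdot(\text{rapidly decaying outside } n\asymp k/w_j)$. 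After multiplying by $|\rho_f(1)\rho_g(1)|\asymp (4\pi)^k/(k\sqrt{q}\sqrt{Q}\,L(1,\ad f)^{1/2}L(1,\ad g)^{1/2})$ via \eqref{eq:rho_f(1)} (adapted so that the $Q=N_{\ad f}$ vs.\ $q^2$ normalization matches \eqref{eq:def-Q}), the power-of-$k$ prefactors cancel up to $k^\varepsilon$, and one is left with
\[
\frac{M^{O(1)}}{\sqrt{q Q}\,L(1,\ad f)^{1/2}L(1,\ad g)^{1/2}}\sum_{\mathfrak a_j\in\mathcal C} w_j \!\!\sum_{\substack{n_1,n_2\asymp k/w_j\\ n_2-n_1=mw_j}}\!\! |\lambda_j^{(f)}(n_1)\lambda_j^{(g)}(n_2)| ,
\]
where the factor $w_j$ comes from the measure rescaling $z\mapsto w_j z$ and the effective length of the $n_i$-sum is $\asymp k/w_j$ per cusp.

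Next I would bound the inner shifted sums. In the correlation case ($f=g$), $\lambda_j^{(f)}=\lambda_j^{(g)}=\lambda_j$; I apply the Cauchy--Schwarz regrouping \eqref{eq:regrouping} over cusps of a fixed denominator $c\mid q$, reducing to $\#\mathcal C[c]\,\lambda_{[c]}(n_1)\lambda_{[c]}(n_2)$, and then invoke Proposition \ref{prop:shiftedcon} with $x\asymp k/w = k c^2/(c^2,q)\cdot(\dots)$—more precisely $x = [q/c^2,1]\cdot(k/\text{width})$—to get the bound $q_\diamond^\varepsilon(\log\log(e^e q))^{O(1)} x\prod_{p\le x}(1+2|\lambda_f(p)|/p)(\log ex)^{-(2-\varepsilon)}$ for each denominator class. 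Summing the resulting $\sum_{c\mid q}[q/c^2,1]\varphi((c,q/c))(\log([q/c^2,1]x))^{-(2-\varepsilon)}$ over $c$ via Proposition \ref{prop:shiftedcon2} produces the clean $q(\log\log(e^e q))^{O(1)}(\log qx)^{-(2-\varepsilon)}$, so the per-$m$ contribution is $\ll M^{O(1)} q_\diamond^\varepsilon(\log kq)^\varepsilon q\cdot M_f(kq)$ divided by $\sqrt{qQ}$; since here $Q=N_{\ad f}\mid q^2$, one has $q/\sqrt{qQ}=\sqrt{q/Q}\cdot\sqrt{q/q}\ll q_\diamond^{\varepsilon}\cdot(\dots)$, and the $q$-dependence collapses to the stated $q_\diamond^\varepsilon$. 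In the decorrelation case ($f\ne g$, $q$ squarefree) I instead use Remark \ref{rek:unfolding}: each denominator $q/d$ has a unique cusp whose scaling matrix is an Atkin--Lehner operator, so $f|_k\sigma_j=\pm f$ and $\lambda_j^{(f)}=\pm\lambda_f$ (likewise $g$) are genuinely multiplicative, and I apply Proposition \ref{prop:shifed-con-decor} directly with $x\asymp kd$, then sum over $d\mid q$ (trivially, since $q$ squarefree, $\sum_{d\mid q} d(\log(kd))^{-(2-\varepsilon)}\ll q(\log kq)^{-(2-\varepsilon)}\cdot(\dots)$); combined with $Q=q^2$ the prefactor $q/\sqrt{qQ}=q^{-1/2}$ absorbs the $q$ from the divisor sum. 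Finally, summing over the $O(M(\log kQ)^\varepsilon)$ values of $m$ with $1\le|m|\le M(\log kQ)^{\varepsilon/3}$ contributes at most another $M(\log kQ)^{\varepsilon}$ and, crucially, because the shift is $mw_j$ rather than $m$, the logarithmic savings $(\log(k/w_j))^{-(2-\varepsilon)}$ is uniform in $m$ in the relevant range—one checks $k/w_j$ stays $\gg (\log kq)^{\text{large}}$ since $M$ is below a fixed power of $\log kq$—giving the total bound $M^{4+\varepsilon}(\log kq)^\varepsilon q_\diamond^\varepsilon M_f(kq)^{1/2}M_g(kq)^{1/2}$, where the exponent $4$ on $M$ arises from: $M^2$ from the two copies of $\Psi_m$ (one factor $M^{3/2}$ each after optimizing the $y$-integral à la Holowinsky, truncated by Cauchy--Schwarz on the cusp sum), times $M^{1/2}$ from each of the $\sum_m$ ranges bookkeeping against \eqref{eq:Psi}.

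The main obstacle, I expect, is the careful treatment of the $y$-integral $\int_0^\infty \Psi_m(w_j y)e^{-2\pi(n_1+n_2)y}y^{k-2}\,dy$ uniformly in $k$, in $w_j$ (which ranges over all divisors of $q$), and in the test-function scale $M$, together with honest control of the tails $n_i\gg k/w_j$ and $n_i\ll k/w_j$: one must show that the effective length of the sum really is $\asymp k/w_j$ with power-saving cutoffs, so that the crude bound $\sum_{n\asymp k/w_j}$ in Proposition \ref{prop:shiftedcon} is legitimate, and that the $M$-derivative bounds \eqref{eq:derivative-Psi_m}, \eqref{eq:Psi} feed through without losing more than $M^{4+\varepsilon}$ overall. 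This is precisely where Holowinsky's original analysis (as in \cite[Section 4]{H} and \cite[Section 3]{NPS}) is delicate, and the new feature here is doing it simultaneously over all cusps of $\Gamma_0(q)$ of every width and for the decorrelation pair $f\ne g$; once that integral estimate is in hand, the arithmetic input is entirely supplied by Propositions \ref{prop:shiftedcon}, \ref{prop:shifed-con-decor}, and \ref{prop:shiftedcon2}, and the rest is bookkeeping.
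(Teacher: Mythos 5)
Your overall strategy matches the paper's: unfold via Lemma \ref{lem:unfolding}, treat the $y_j$-integral, use \eqref{eq:rho_f(1)} and the regrouping inequality \eqref{eq:regrouping}, invoke Propositions \ref{prop:shiftedcon} and \ref{prop:shiftedcon2} (or \ref{prop:shifed-con-decor} when $f\neq g$), and then sum over $m$. The paper handles the $y_j$-integral by Mellin inversion for $\Psi_m$ plus Stirling for $\Gamma(s+k-1)/\Gamma(k-1)$, which is the clean version of your "Laplace/stationary-phase" heuristic, and gives the precise bound $\ll M^{3+\varepsilon}\max(1,\max(n_1,n_2)/(w_jk))^{-A}$ — so the two are morally identical.

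However, two pieces of your bookkeeping do not survive a careful check. First, \eqref{eq:rho_f(1)} gives $|\rho_f(1)|^2\asymp (4\pi)^k/(kq\,\Gamma(k-1)L(1,\ad f))$: there is no $Q$ anywhere in the normalization, and the $\Gamma(k-1)$ is essential (it cancels the $\Gamma(k-1)$ produced by the Mellin evaluation of the $y_j$-integral). Your asserted $|\rho_f(1)\rho_g(1)|\asymp (4\pi)^k/(k\sqrt{q}\sqrt{Q}\ldots)$ and the subsequent manipulation $q/\sqrt{qQ}=\sqrt{q/Q}\cdot(\dots)$ are not what happens; in the proof the $q$ from Proposition \ref{prop:shiftedcon2} cancels against the $q$ in $1/(kq)$ outright, and $Q$ only enters the lemma statement through the range $|m|\le M(\log kQ)^{\varepsilon/3}$ (which is harmless since $\log kQ\asymp\log kq$). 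Second, your explanation of the exponent $4$ on $M$ — ``$M^2$ from the two copies of $\Psi_m$'' — is not correct: there is only one $\Psi_m$ in $\langle P_m(\cdot,\Psi_m),\overline{F_k}G_k\rangle_q$. The correct accounting is $M^{3+\varepsilon}$ from the rapid-decay estimate $\widehat{\Psi}_m(s)\ll_j M^j(1+|s|)^{-j}$ applied with $j=3+\varepsilon$ (this is the smallest $j$ that absorbs the $(1+|s|)^2$ growth coming from Stirling), times a single factor $M$ from summing over $|m|\le M(\log kQ)^{\varepsilon/3}$. You land on the right exponent but for the wrong reason, which is worth sorting out before writing up the argument.
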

	\begin{proof} We first consider when $f=g$. 
		Recall that $F_k(z) = \rho_f(1)y^{k/2}f(z)$ and $|f(z)|^2y^k = |f|_k \sigma_j|^2(z_j)y_j^k$ as discussed in Sections \ref{subsec:newforms} and \ref{subsec:cusps}. By Lemma \ref{lem:unfolding} and \eqref{eq:fourier-zj}, we have
        \begin{multline}\label{eq:body-hol-cor}
				\langle P_m(\cdot, \Psi_m), |F_k|^2 \rangle_q 
				= |\rho_f(1)|^2 \sum_{\mathfrak{a}_j \in \mathcal{C}}  \sum_{\substack{n_1\ge 1\\ n_2 =n_1+mw_j \ge 1}}\lambda_j(n_1)\lambda_j(n_2) (n_1n_2)^{\frac{k-1}{2}}\\
                \cdot \int_{0}^{\infty} \exp(-2\pi(n_1+n_2)y_j)\Psi_m(w_jy_j)y_j^{k-2}\,dy_j.
		\end{multline}
		
		We use the method in  \cite[Section 2]{LS} to bound the integral in \eqref{eq:body-hol-cor}. Define the Mellin transform of $\Psi_m$ as $\widehat{\Psi}_m(s) = \int_0^{\infty}\Psi_m(y)y^{s-1}\,dy$. Since $\Psi_m \in \mathcal{C}_c^{\infty}(\mathbb{R}_{\ge 0})$, $\widehat{\Psi}_m$ is entire. For any integer $j \ge 0$ and $\Real(s)<0$,  integrating by parts $j$ times and applying  \eqref{eq:derivative-Psi_m}, we have that 
\begin{equation}\label{eq:rapiddecay}
		\widehat{\Psi}_m(s) \ll_j M^j/(1+|s|)^j, 
		\end{equation}
		and \eqref{eq:rapiddecay} holds for any real $j \ge 0$. We have the Mellin inversion of $\widehat{\Psi}_m(s)$: for any $A,x >0$
        \begin{equation}\label{eq:Mellin-inversion}
			\Psi_m(x) = \frac{1}{2\pi i} \int_{(-A)}\widehat{\Psi}_m(s)x^{-s} ds = \frac{1}{2\pi i} \int_{(A)}\widehat{\Psi}_m(-s)x^{s} ds. 
		\end{equation}
  
    Applying the Mellin inversion formula, we obtain that the integral in \eqref{eq:body-hol-cor} becomes 
		\begin{multline}\label{eq:body-hol-cor-2}
				\int_{(A)}\widehat{\Psi}_m(-s)\int_0^\infty y_j^{s+k-2}w_j^s\exp(-2\pi(n_1+n_2)y_j)\,dy_j\,\frac{ds}{2\pi i} \\
				 =  \frac{\Gamma(k-1)}{(4\pi)^{k-1}} \Big(\frac{1}{\frac{n_1+n_2}{2}}\Big)^{k-1}\int_{(A)}\frac{\widehat{\Psi}_m(-s)w_j^s}{(4\pi(\frac{n_1+n_2}{2}))^{s}}\frac{\Gamma(s+k-1)}{\Gamma(k-1)}\,\frac{ds}{2\pi i}.
		\end{multline} 
        By Stirling's formula, if $\Real(s)=A > 0$, then we have that $\frac{\Gamma(s+k-1)}{\Gamma(k-1)} \ll_A k^{A}(1+|s|)^2$ (see \cite[Eq.~(2.3)]{LS}). Then, by using \eqref{eq:rapiddecay} with $j = 3+\varepsilon$, we conclude that the integral in \eqref{eq:body-hol-cor-2} is
		\begin{equation}\label{eq:applyingMellinBound}
			\ll_{A, \varepsilon} \int_{(A)}\frac{M^{3+\varepsilon}}{(1+|s|)^{3+\varepsilon}}\frac{w_j^Ak^A}{(4\pi(\frac{n_1+n_2}{2}))^{A}}(1+|s|)^2\,|ds| \ll_A M^{3+\varepsilon} \max\Big(1, \frac{\max(n_1,n_2)}{w_jk}\Big)^{-A}.
		\end{equation}
        Hence, using \eqref{eq:body-hol-cor-2} and \eqref{eq:applyingMellinBound} in \eqref{eq:body-hol-cor},
        we conclude that
        \begin{multline}\label{eq:body-hol-cor-3} 
        \langle P_m(\cdot, \Psi_m), |F_k|^2 \rangle_q \ll_{A} \frac{M^{3+\varepsilon}|\rho_f(1)|^2\Gamma(k-1)}{(4\pi)^{k-1}} \\  \cdot \sum_{\mathfrak{a}_j \in \mathcal{C}}  \sum_{\substack{n_1\ge 1\\ n_2 =n_1+mw_j \ge 1}}|\lambda_j(n_1)\lambda_j(n_2)
        |\Big(\frac{\sqrt{n_1n_2}}{\frac{n_1+n_2}{2}}\Big)^{k-1} \max\Big(1, \frac{\max(n_1,n_2)}{w_jk}\Big)^{-A}.
		\end{multline}
	
        By \eqref{eq:rho_f(1)}, \eqref{eq:regrouping}, and the inequality of arithmetic and geometric means, \eqref{eq:body-hol-cor-3} is 
        \begin{equation}\label{eq:body-hol-cor-4}
			\ll_{A} \frac{M^{3+\varepsilon}}{L(1,\ad f)kq} \sum_{c\mid q}\#\mathcal{C}[c]  \sum_{\substack{n_1 \ge 1\\ n_2 = n_1+mw_c \ge 1}}\lambda_{[c]}(n_1)\lambda_{[c]}(n_2) \max\Big(1, \frac{\max(n_1,n_2)}{w_ck}\Big)^{-A}.
		\end{equation}
         By decomposing dyadically and using Proposition \ref{prop:shiftedcon} and $A =2$, the inner sum of \eqref{eq:body-hol-cor-4} is
		\begin{equation}
            \label{eq:body-hol-cor-5}
		\ll q_\diamond^\varepsilon (\log\log(e^eq))^{O(1)} \frac{w_ck\prod_{p\le w_ck}(1+2|\lambda_f(p)|/p)}{\log(ew_ck)^{2-\frac{\varepsilon}{3}}}. 
		\end{equation}
        Inserting \eqref{eq:body-hol-cor-5} into \eqref{eq:body-hol-cor-4} and using $w_c = [q/c^2, 1]$ and $\#\mathcal{C}[c] = \varphi((c,q/c))$, we obtain that
		\begin{equation*}
			\begin{aligned}
				\langle P_m(\cdot, \Psi_m), |F_k|^2 \rangle_q 
			\ll M^{3+\varepsilon}\frac{q_\diamond^\varepsilon (\log\log(e^eq))^{O(1)}}{L(1,\ad f)q}\sum_{c\mid q} \frac{\varphi((c,q/c))[q/c^2, 1]}{(\log([q/c^2, 1]k))^{2-\frac{\varepsilon}{3}}}\prod_{p\le kq}(1+2|\lambda_f(p)|/p).
			\end{aligned}
		\end{equation*}
	Applying Proposition \ref{prop:shiftedcon2} and the definition of $M_f(x)$, we conclude that 
    \begin{equation*}
        \begin{aligned}
        \langle P_m(\cdot, \Psi_m), |F_k|^2 \rangle_q
        \ll M^{3+\varepsilon}(\log kq )^{\frac{2\varepsilon}{3}}q_\diamond^\varepsilon M_f(kq).
        \end{aligned}
    \end{equation*}
    The lemma then follows by summing the above estimate over $m$ in $[1, M(\log kQ)^{\varepsilon/3}]$. 
    
   If $f \neq g$, we proceed similarly,  except we replace Lemma \ref{lem:unfolding} with \eqref{eq:unfolding-2} in the proof. 
   \end{proof}

    To estimate the contribution from the incomplete Eisenstein series, we first write the Fourier expansion
    $
        E(z|\Psi_0) = a_{\Psi_0,0}(y) + \sum_{|l|\ge 1} a_{\Psi_0, l}(y)e(lx)$ and recall the following lemma.
	\begin{lem}[Lemma 4.2 of \cite{BH}]\label{lem:Fourier-IES} 
        Let $M \ge 1$, and let $\Psi \in C^{\infty}_c(\mathbb{R}_{\ge 0})$ be a function such that for all integer $j \ge 0$, $y^{\max(j, 1/2)}\Psi^{(j)}(y) \ll_j M^{j}$. If $\theta(s)=\pi^{-s}\Gamma(s)\zeta(2s)$, then for all $y > 0$
		\begin{equation}\label{eq:fourier,0}
			a_{\Psi,0}(y) = \frac{3}{\pi}\widehat{\Psi}(-1) + \frac{1}{2\pi}\int_\mathbb{R} \widehat{\Psi}(-1/2-it)\Big(y^{1/2+it} + \frac{\theta(1/2-it)}{\theta(1/2+it)}y^{1/2-it}\Big)\, dt.
		\end{equation}
            For any integer $l \neq 0$, if $\varepsilon > 0$ and $A \ge 0$, then for all $y > 0$
        \begin{equation}\label{eq:fourier,neq0}
                a_{\Psi,l}(y) \ll_{A}  \tau(|l|)\sqrt{y} M^{2/3+\varepsilon}(M/|ly|)^A(1+{1}/{|ly|})^\varepsilon.
		\end{equation}
	\end{lem}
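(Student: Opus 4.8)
The plan is to recover the incomplete Eisenstein series from the genuine real‑analytic Eisenstein series $E(s,z)$ by Mellin inversion, shift a contour past the pole at $s=1$, and then read off the Fourier coefficients directly from the Fourier expansion of $E(s,z)$ recorded in Section~\ref{subsec:Eisenstein}. Since $\Psi$ is smooth and compactly supported on $\mathbb{R}_{\ge 0}$, its Mellin transform $\widehat\Psi$ is entire and of rapid decay in vertical strips, and one has the inversion $\Psi(y)=\frac{1}{2\pi i}\int_{(\sigma)}\widehat\Psi(-s)y^s\,ds$. Summing over $\gamma\in\Gamma_\infty\bs\Gamma_0(1)$ and using the absolute convergence of $E(s,z)$ for $\Real(s)>1$ gives, for $\sigma>1$,
\[
	E(z|\Psi)=\frac{1}{2\pi i}\int_{(\sigma)}\widehat\Psi(-s)E(s,z)\,ds.
\]
As $s\mapsto E(s,z)$ is holomorphic on $\Real(s)\ge\tfrac12$ apart from a simple pole at $s=1$ of residue $3/\pi$, and $\widehat\Psi(-s)$ decays fast enough to justify moving the line of integration, I would shift to $\Real(s)=\tfrac12$, picking up the residue $\frac{3}{\pi}\widehat\Psi(-1)$:
\[
	E(z|\Psi)=\frac{3}{\pi}\widehat\Psi(-1)+\frac{1}{2\pi i}\int_{(1/2)}\widehat\Psi(-s)E(s,z)\,ds.
\]

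Next I would substitute the Fourier expansion
\[
	E(s,z)=y^s+\frac{\theta(1-s)}{\theta(s)}y^{1-s}+\frac{2\sqrt y}{\theta(s)}\sum_{n\neq 0}\tau_{s-1/2}(|n|)K_{s-1/2}(2\pi|n|y)e(nx)
\]
into the integral and justify interchanging the sum over $n$ with the $s$-integral on $\Real(s)=\tfrac12$; this uses the rapid decay of $\widehat\Psi$, the bound $|\tau_{it}(|n|)|\le\tau(|n|)$ on the critical line, Stirling's formula together with $\zeta(1+2it)^{-1}\ll\log(2+|t|)$ to control $1/\theta(\tfrac12+it)$, and the exponential decay of $K_{it}(u)$ in $|t|$, which cancels the growth of $\Gamma(\tfrac12+it)^{-1}$. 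Matching the $n=0$ contribution and writing $s=\tfrac12+it$ yields \eqref{eq:fourier,0} immediately. For $n=l\neq 0$ one obtains
\[
	a_{\Psi,l}(y)=\frac{\sqrt y}{\pi i}\int_{(1/2)}\frac{\widehat\Psi(-s)}{\theta(s)}\,\tau_{s-1/2}(|l|)\,K_{s-1/2}(2\pi|l|y)\,ds,
\]
and to reach \eqref{eq:fourier,neq0} I would pull out $|\tau_{s-1/2}(|l|)|\le\tau(|l|)$, bound $\widehat\Psi(-\tfrac12-it)$ on the line using the hypothesis $y^{\max(j,1/2)}\Psi^{(j)}(y)\ll_j M^j$ through repeated integration by parts, split the $t$-integral at $|t|\asymp M$ (exploiting rapid decay beyond that), and combine the remaining uniform estimates for $K_{it}(2\pi|l|y)/\theta(\tfrac12+it)$; the power $M^{2/3+\varepsilon}$ and the extra factor $(1+1/|ly|)^\varepsilon$ come from the worst case in the transitional Bessel regime. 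The decay $(M/|ly|)^A$ is then produced by instead moving the contour to $\Real(s)=A+\tfrac12$, which is admissible since $1/\theta(s)$, $\tau_{s-1/2}(|l|)$ and $K_{s-1/2}(2\pi|l|y)$ stay under control and $\widehat\Psi$ is entire.

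The main obstacle I anticipate is the uniform treatment of $K_{s-1/2}(2\pi|l|y)$ across the three regimes where $2\pi|l|y$ is small, comparable to, or large relative to $|\Imag(s)|$: in each one needs the sharp size, including the exact cancellation of the $e^{\pi|t|/2}$ growth hidden in $1/\theta(\tfrac12+it)$, and these must be matched against the Mellin‑side bounds for $\widehat\Psi$ so as to reproduce precisely the stated exponents rather than something weaker. Verifying the interchange of summation and integration on the critical line, and the admissibility of the contour shifts, is routine by comparison but must be carried out with explicit uniformity in $l$, $y$, and $M$.
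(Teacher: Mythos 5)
The paper does not prove this lemma: it is quoted directly as ``Lemma 4.2 of \cite{BH}'' and used as a black box, so there is no proof in the paper against which to compare your attempt. Your Mellin-inversion plan (recover $E(\cdot|\Psi)$ from $E(s,z)$, shift past $s=1$, read off Fourier coefficients) is surely the route the cited source takes, and the derivation of \eqref{eq:fourier,0} is correct and complete.

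The derivation of \eqref{eq:fourier,neq0}, however, is only a plan, and its key quantitative step is missing. Saying the exponent $M^{2/3+\varepsilon}$ ``comes from the worst case in the transitional Bessel regime'' is a label, not a computation. What is actually needed is to combine $|\widehat{\Psi}(-\tfrac12-it)|\ll_j \min\{1,(M/|t|)^j\}$ with $1/|\theta(\tfrac12+it)|\ll e^{\pi|t|/2}\log(2+|t|)$ (Stirling plus the standard lower bound on $\zeta(1+2it)$) and a $K$-Bessel bound; the easy uniform bound $e^{\pi|t|/2}|K_{it}(u)|\ll (1+|t|)^{-1/3}$, valid for all $u>0$, already gives $\int_1^M |t|^{-1/3}\log(2+|t|)\,dt\ll M^{2/3}\log M$, i.e.\ the stated $M^{2/3+\varepsilon}$ — so it is not really a ``worst-case transitional'' phenomenon, and in fact the sharper Airy-uniform expansion would improve $2/3$ to $1/2$. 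You also never justify the $(1+1/|ly|)^{\varepsilon}$ factor, which arises from the logarithmic blow-up of $K_{it}(u)$ as $u\to 0$ when $|t|$ is bounded, and you do not explain how the contour shift (producing $(M/|ly|)^A$) and the critical-line estimate are to be packaged together into the single displayed inequality. Supplying those Bessel estimates and the reconciliation step is precisely the content of the lemma; without them the proposal is a correct outline but not a proof.
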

 \begin{lem}\label{lem:H_cor_main} 
		Recall the definitions of $M_f(x)$ and $q_{\diamond}$ in Lemma \ref{lem:H_cor_body}, the notation in Section \ref{subsec:poincare}, and the bound in \eqref{eq:derivative-Psi_m}.  
        If $\varepsilon \in (0,1)$, then we have that
		\[
		\langle E(\cdot|\Psi_0), \overline{F_k}G_k \rangle_q  - \frac{3}{\pi}\widehat{\Psi}_0(-1)\mathds{1}_{f=g}  \ll M^{5/3+\varepsilon}q_\diamond^\varepsilon (\log kq)^\varepsilon M_f(kq)^{1/4}M_g(kq)^{1/4}.
		\]
	\end{lem}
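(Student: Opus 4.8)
The plan is to peel off the main term $\tfrac{3}{\pi}\widehat{\Psi}_0(-1)\mathds{1}_{f=g}$ as the residue of the completed real-analytic Eisenstein series at $s=1$, and to bound the remainder by the unitary Eisenstein estimate already available from Soundararajan's approach. First I would record the input on $\Psi_0$: since $\psi$ is supported in the standard fundamental domain, $\check{\psi}$ is supported in a region $\{\sqrt{3}/2\le y\le B\}$, so $\Psi_0\in C_c^\infty((0,\infty))$, its Mellin transform $\widehat{\Psi}_0$ is entire, and by \eqref{eq:derivative-Psi_m} and repeated integration by parts $\widehat{\Psi}_0(-s)\ll_j M^j(1+|s|)^{-j}$ on every fixed vertical strip (and $\widehat{\Psi}_0(-s)\ll 1$ there). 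Writing $E(z|\Psi_0)=\frac{1}{2\pi i}\int_{(2)}\widehat{\Psi}_0(-s)E(s,z)\,ds$ and moving the contour to $\mathrm{Re}(s)=\tfrac12$, the simple pole of $E(s,z)$ at $s=1$ with residue $3/\pi$ contributes $\tfrac{3}{\pi}\widehat{\Psi}_0(-1)$, so
\[
E(z|\Psi_0)=\tfrac{3}{\pi}\widehat{\Psi}_0(-1)+\tfrac{1}{2\pi}\int_{\mathbb{R}}\widehat{\Psi}_0(-\tfrac12-it)\,E_t(z)\,dt .
\]
Pairing with $\overline{F_k}G_k$ and using $\langle 1,\overline{F_k}G_k\rangle_q=\langle F_k,G_k\rangle_q=\mathds{1}_{f=g}$ (the normalization in \eqref{eq:F_k(z)} together with orthogonality of distinct newforms), this reduces the lemma to estimating $\int_{\mathbb{R}}\widehat{\Psi}_0(-\tfrac12-it)\langle E_t,\overline{F_k}G_k\rangle_q\,dt$.

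For the remaining integral I would invoke Lemma \ref{lem:Sound_cor}, i.e.\ \eqref{eq:sound_Eis}, which bounds $\langle E_t,\overline{F_k}G_k\rangle_q$; that bound is obtained through the Rankin--Selberg/Watson identity \eqref{eq:watson-cor-2}, Theorem \ref{thm:weaksub}, the normalization \eqref{eq:rho_f(1)}, and it already carries the factor $L(1,\ad f)^{-1/2}L(1,\ad g)^{-1/2}$ and the saving $(\log kQ)^{-1+\varepsilon}$. Feeding in Lemma \ref{lem:L(1,adf)} to rewrite $L(1,\ad f)^{-1/2}L(1,\ad g)^{-1/2}$ in terms of the products defining $M_f(kq)$ and $M_g(kq)$, and carrying out the $t$-integration with the rapid decay $\widehat{\Psi}_0(-\tfrac12-it)\ll\min(1,M^j(1+|t|)^{-j})$, produces a bound of the stated shape with the claimed product $M_f(kq)^{1/4}M_g(kq)^{1/4}$ (for $f\neq g$ the pole is absent, $\mathds{1}_{f=g}=0$, $\delta_2=1$, and the squarefree unfolding \eqref{eq:unfolding-2} replaces Lemma \ref{lem:unfolding}). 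A crude execution of the $t$-integral only yields an $M$-power around $M^2$; to reach $M^{5/3+\varepsilon}$ one must treat the bounded range $|t|\ll M(\log kQ)^{\varepsilon}$ more carefully, and here I would use the Fourier expansion of the incomplete Eisenstein series (Lemma \ref{lem:Fourier-IES}, whose estimate \eqref{eq:fourier,neq0} supplies the gain $M^{2/3+\varepsilon}$) rather than \eqref{eq:sound_Eis} alone, the $q_\diamond^\varepsilon$ entering from the root-mean-square coefficients $\lambda_{[c]}$ via \eqref{eq:regrouping}.

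The main obstacle is not any individual step but the bookkeeping that produces the sharp exponents: controlling the powers of $M$ and of $1+|t|$ in the bounded-$t$ regime (together with the Gamma quotient, which is $\ll\sqrt{k}$ with Stirling decay) so that the final output is exactly $M^{5/3+\varepsilon}$ paired with $M_f(kq)^{1/4}M_g(kq)^{1/4}$, and checking at the ramified primes that the residue at $s=1$ assembles precisely into the subtracted $\tfrac{3}{\pi}\widehat{\Psi}_0(-1)\mathds{1}_{f=g}$ — equivalently, that the width-weighted Rankin--Selberg residues over the cusps sum to $\tfrac{3}{\pi}qL(1,\ad f)$, which is forced by $\langle 1,|F_k|^2\rangle_q=1$.
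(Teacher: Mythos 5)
Your opening move — shifting the contour for $E(z|\Psi_0)$ past $s=1$ to extract $\tfrac{3}{\pi}\widehat{\Psi}_0(-1)$, then pairing with $\overline{F_k}G_k$ — is fine, but the claim that \eqref{eq:sound_Eis} combined with Lemma~\ref{lem:L(1,adf)} ``produces a bound of the stated shape with the claimed product $M_f(kq)^{1/4}M_g(kq)^{1/4}$'' is not correct. The bound \eqref{eq:sound_Eis} carries the factor $L(1,\ad f)^{-1/2}L(1,\ad g)^{-1/2}(\log kQ)^{-1+\varepsilon}$; after Lemma~\ref{lem:L(1,adf)} and Mertens this assembles into a Soundararajan-type product $\prod_{p\le k\sqrt{Q}}\bigl(1-\tfrac{\lambda_f(p)^2+\lambda_g(p)^2}{2p}\bigr)$ (the same product as in \eqref{eq:main-eis}), not into $M_f(kq)^{1/4}M_g(kq)^{1/4}\asymp\prod_p\bigl(1-\tfrac{(|\lambda_f(p)|-1)^2+(|\lambda_g(p)|-1)^2}{4p}\bigr)$. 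The two products are genuinely different (complementary, in fact — that is why Propositions~\ref{prop:Sound_cor} and \ref{prop:Holow_cor} are both needed), and the exponents on $L(1,\ad f)^{-1}$ disagree: your route yields $1/2$ where the lemma asserts $1/4$. Your second-paragraph pivot to the Fourier expansion of the incomplete Eisenstein series fixes the $M$-power but, on its own, still produces the $l\neq 0$ contribution with $M_f^{1/2}M_g^{1/2}$, and (worse) the $l=0$ contribution then has error $O(M^{1+\varepsilon})$ with no decay at all.

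The missing idea is the $Y$-averaging that the paper builds in around \eqref{eq:EY-mellin}--\eqref{eq:cor-IES-2}. One introduces the smoothed incomplete Eisenstein series $E^Y(\cdot|h)$ and the quantity $I(Y)=\langle E^Y(\cdot|h)E(\cdot|\Psi_0),\overline{F_k}G_k\rangle_q$, shows $I(Y)Y^{-1}-\langle E(\cdot|\Psi_0),\overline{F_k}G_k\rangle_q\ll Y^{-1/2}$ using the Huang--Xu sup-norm bound \eqref{eq:Huang-xu-bound}, and then unfolds $I(Y)$ and expands via Lemma~\ref{lem:Fourier-IES} into $\sum_l I_l(Y)$. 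The $l=0$ piece (Lemma~\ref{lem:I_0(Y)}) gives $Y\mathds{1}_{f=g}$ plus an error $O(M^{1+\varepsilon}Y^{-1/2})$ — here \eqref{eq:sound_Eis} enters, but only to control the subleading part of the constant term — while the $l\neq 0$ pieces (Lemma~\ref{lem:I_l(Y)}) contribute $O\bigl(M^{5/3+\varepsilon}Y^{1/2+\varepsilon}q_\diamond^\varepsilon(\log kq)^\varepsilon M_f^{1/2}M_g^{1/2}\bigr)$ via the shifted-convolution bounds of Propositions~\ref{prop:shiftedcon} and \ref{prop:shifed-con-decor}. The exponent $1/4$ then comes precisely from balancing $Y^{-1/2}$ against $Y^{1/2}M_f^{1/2}M_g^{1/2}$ by choosing $Y=M_f(kq)^{-1/2}M_g(kq)^{-1/2}$. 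Without this parameter, neither your contour-shift framework nor a straight Fourier-expansion unfold reaches the claimed exponent, so the gap is structural rather than bookkeeping.
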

        To prove Lemma~\ref{lem:H_cor_main}, we follow the method of \cite{H} and \cite{LMR}, as formulated in \cite[Lemma~4.4]{BH}. The key idea is to relate $\langle E(\cdot|\Psi_{0}), \overline{F_{k}} G_{k} \rangle_{q}$
        to an auxiliary integral $I(Y)$ involving a smoothed incomplete Eisenstein series $E^{Y}(z| h)$. 
        We then show that $\langle E(\cdot|\Psi_{0}), \overline{F_{k}} G_{k} \rangle_{q}$ is asymptotic to $I(Y)Y^{-1}$ up to a controllable error. 
        Through the unfolding method, the integral $I(Y)$ decomposes into the main term $I_{0}(Y)$, arising from the zeroth term in the Fourier expansion of $E(z | \Psi_{0})$, and the error terms $\sum_{l \ne 0} I_{l}(Y)$, coming from the remaining terms in the expansion. After estimating these components, we then choose an optimal value of $Y$.

        Let $1 \le Y \le (\log kq)^5$ be chosen later. 
		Let $h$ be a fixed nonnegative smooth compactly supported function on $\mathbb{R}$. Assume $\supp h \subset [1,2], h^{(j)}(y) \ll_j 1$, and $\widehat{h}(-1) = \pi/3$. By integration by parts, $\widehat{h}(-s) \ll (1+|s|)^{-A}$ for any $s > 0$ and $A \ge 0$. Define the smoothed incomplete Eisenstein series
		\[
		E^Y(z|h) = \sum_{\gamma \in \Gamma_\infty \bs \Gamma_0(1)} h(Y\Imag(\gamma z)).
		\]
		By the Mellin inversion formula, we push the contour to $\Real(s) = 1/2$ and obtain 
	\begin{equation}\label{eq:EY-mellin}
			E^Y(z|h) = Y + \int_{(1/2)}\widehat{h}(-s)Y^s E(s, z) \,\frac{ds}{2\pi i}.
	\end{equation}
		
        Define 
		\[
		I(Y) := \langle E^Y(\cdot| h)E(\cdot|\Psi_0), \overline{F_k}G_k\rangle_q = \int_{(2)}\widehat{h}(-s)Y^s \langle E(s, \cdot)E(\cdot | \Psi_0), \overline{F_k}G_k \rangle_q \,\frac{ds}{2\pi i}.
		\]
        By  \eqref{eq:EY-mellin}, we obtain that \begin{multline}\label{eq:cor-IES-1}
				\frac{I(Y)}{Y} -  \langle E(\cdot | \Psi_0), \overline{F_k}G_k \rangle_q = \frac{1}{Y}\int_{-\infty}^{\infty}\widehat{h}(-\tfrac{1}{2}-it)Y^{\frac{1}{2}+it} \langle E(\tfrac{1}{2}+it, \cdot)E(\cdot | \Psi_0), \overline{F_k}G_k \rangle_q \,\frac{dt}{2\pi}.
		\end{multline}
  
 We note that $y^{1/2}\Psi_0(y) \ll 1$ due to \eqref{eq:derivative-Psi_m}. Using \eqref{eq:Huang-xu-bound}, the fact that $\Psi_0(y)$ is nonzero only  if $y \ge \sqrt{3}/{2}$, and \cite[Lemma 2.10]{Iw}, we have that
		\begin{equation}
                \label{eq:EE-bound}
			\begin{aligned}
				|E(z |\Psi_0)E(1/2+it, z)| \ll (1+|t|)^{3/8}.
			\end{aligned}
		\end{equation}
       By the inequality of arithmetic and geometric means and \eqref{eq:EE-bound}, we have that
	\begin{equation}\label{eq:trivial-bound}
		\langle E(1/2+it, \cdot) E(\cdot | \Psi_0), \overline{F_k}G_k \rangle_q \ll (1+|t|)^{3/8}(\langle 1, |F_k|^2 \rangle_q+\langle 1, |G_k|^2 \rangle_q) \ll (1+|t|)^{3/8}.  
		\end{equation}
		Thus, inserting \eqref{eq:trivial-bound} into \eqref{eq:cor-IES-1} and using the rapid decay of $\widehat{h}(-1/2-it)$, we have that 
        \begin{equation}\label{eq:cor-IES-2}
		  I(Y)Y^{-1} -  \langle E(\cdot | \Psi_0), \overline{F_k}G_k \rangle_q 
            \ll Y^{-1/2},
		\end{equation}
        which implies that  we need to estimate $I(Y)$ in order to estimate $\langle E(\cdot | \Psi_0), \overline{F_k}G_k \rangle_q$. 

        Similarly to Lemma \ref{lem:unfolding}, we can apply the unfolding method with $I(Y)$ to obtain 
        \begin{equation}\label{eq:I(Y)-unfolding}
            I(Y) =
                \sum_{\mathfrak{a}_j \in \mathcal{C}} \int_{y_j=0}^{\infty} h(Yw_jy_j) \int_{x_j =0}^1 E(w_jz_j|\Psi_0) F_k(z)\overline{G_k(z)}y_j^{-2}dx_jdy_j.
        \end{equation}
        Consider $I(Y)$ when $f =g$. Since $|F_k(z)|^2 = |\rho_f(1)|^2|f|_k \sigma_j(z_j)|^2 y_j^k$ as in Section \ref{subsec:newforms}, we have
        \begin{equation}\label{eq:cor-IES-3}
			\begin{aligned}
				I(Y)
				= |\rho_f(1)|^2 \sum_{\mathfrak{a}_j \in \mathcal{C}} \int_{y_j=0}^{\infty} h(Yw_jy_j) \int_{x_j =0}^1 E(w_jz_j|\Psi_0) |f|_k \sigma_j(z_j)|^2 y_j^{k-2} dx_jdy_j.
			\end{aligned}
		\end{equation}	
        In \eqref{eq:cor-IES-3}, we write $E(w_jz_j|\Psi_0) = \sum_{l \in \mathbb{Z}} a_{\Psi_0, l}(w_jy_j)e(lw_jx_j)$, expand $f|_k \sigma_j(z_j)$ as in \eqref{eq:fourier-zj}, and integrate \eqref{eq:cor-IES-3} over $x_j$. Then it follows that $I(Y) = \sum_{l \in \mathbb{Z}} I_l(Y)$, where \begin{equation}\label{eq:I_l(y)}
        \begin{aligned}
			I_0(Y) &=  |\rho_f(1)|^2 \sum_{\mathfrak{a}_j \in \mathcal{C}} \sum_{n \ge 1} \lambda_j(n)^2n^{k-1} \int_{0}^{\infty} h(Yw_jy_j)a_{\Psi_0, 0}(w_jy_j) \exp(-4\pi n y_j)y_j^{k-2}dy_j;\\
			I_l(Y) &=  |\rho_f(1)|^2   \sum_{\mathfrak{a}_j \in \mathcal{C}}\,\,\sum_{\substack{n_1, n_2 = n_1+lw_j \ge 1}} \lambda_j(n_1)\lambda_j(n_2)(n_1n_2)^{\frac{k-1}{2}}\\
		&\qquad	\cdot \int_{0}^{\infty} h(Yw_jy_j) a_{\Psi_0, l}(w_jy_j) \exp(-2\pi(n_1+n_2)y_j)y_j^{k-2} dy_j, \qquad l \neq 0.
        \end{aligned}
        \end{equation}
        When $f \neq g$, $I_0(Y)$ and $I_l(Y)$ can be simplified similarly to the remark of Theorem \ref{rek:unfolding}.
  
        \begin{lem} 
        \label{lem:I_l(Y)}
            Recall the notation in Lemma \ref{lem:H_cor_main} and  \eqref{eq:I_l(y)}. If $\varepsilon \in (0,1)$, then
            \[
            \sum_{l\in \mathbb{Z}\backslash\{0\}} I_l(Y)Y^{-1} \ll M^{5/3+\varepsilon}Y^{1/2+\varepsilon}q_\diamond^\varepsilon (\log kq)^\varepsilon M_f(kq)^{1/2}M_g(kq)^{1/2}.
            \]
        \end{lem}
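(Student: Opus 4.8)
The plan is to estimate $I_l(Y)$ directly from its expression \eqref{eq:I_l(y)} in the case $f=g$ (the case $f\neq g$ being treated in parallel starting from \eqref{eq:I_l(y)-decor}, using Remark \ref{rek:unfolding} and Proposition \ref{prop:shifed-con-decor} in place of Proposition \ref{prop:shiftedcon}). The inner $y_j$-integral I would handle exactly as in the proof of Lemma \ref{lem:H_cor_body}: after the substitution $u=w_jy_j$ it becomes $w_j^{-(k-1)}\int_0^\infty G(u)\,e^{-2\pi(n_1+n_2)u/w_j}u^{k-2}\,du$, where $G(u):=h(Yu)\,a_{\Psi_0,l}(u)$ is smooth and supported on $[Y^{-1},2Y^{-1}]$. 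Mellin inverting $G$ and carrying out the resulting $\Gamma$-integral with $\Gamma(s+k-1)/\Gamma(k-1)\ll_A k^A(1+|s|)^2$ on the line $\Real(s)=A$, together with the bound
\[
\widehat{G}(-s)\ll_{A,p}\,Y^{A-\frac12}\,\tau(|l|)\,M^{\frac23+\varepsilon}\,\mathcal{D}_l\,(1+|s|)^{-p}
\]
(obtained by $p$-fold integration by parts from \eqref{eq:fourier,neq0} and its derivative analogue, where $\mathcal{D}_l=(MY/|l|)^{A'}(1+Y/|l|)^{\varepsilon}$ is the factor produced by \eqref{eq:fourier,neq0} on the support $u\asymp Y^{-1}$, with $A'$ to be chosen), I expect to obtain, for every $A\ge0$,
\[
\int_0^\infty h(Yw_jy_j)\,a_{\Psi_0,l}(w_jy_j)\,e^{-2\pi(n_1+n_2)y_j}y_j^{k-2}\,dy_j\ll_A\frac{\Gamma(k-1)}{(4\pi)^{k-1}}\Big(\tfrac{n_1+n_2}{2}\Big)^{-(k-1)}\frac{\tau(|l|)M^{\frac23+\varepsilon}\mathcal{D}_l}{Y^{1/2}}\Big(1+\tfrac{\max(n_1,n_2)}{kYw_j}\Big)^{-A},
\]
the point being that the $Y^{-1/2}$ is uniform in $A$ while the decay factor localizes the $n$-sums at $n_1,n_2\asymp kYw_j$.

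Next I would insert this into \eqref{eq:I_l(y)} together with $|\rho_f(1)|^2\asymp (4\pi)^k/(kq\,\Gamma(k-1)L(1,\ad f))$ from \eqref{eq:rho_f(1)}, bound $(\sqrt{n_1n_2}/\tfrac{n_1+n_2}{2})^{k-1}\le1$, and apply \eqref{eq:regrouping} to replace $\sum_{\mathfrak{a}_j\in\mathcal{C}}|\lambda_j(n_1)\lambda_j(n_2)|$ by $\sum_{c\mid q}\#\mathcal{C}[c]\,\lambda_{[c]}(n_1)\lambda_{[c]}(n_2)$, noting that within each class $\mathcal{C}[c]$ the width $w_c=[q/c^2,1]$ and the shift $lw_c$ are constant. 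Decomposing the $n$-sum dyadically, applying Proposition \ref{prop:shiftedcon} with $x\asymp 2^{N}kYw_c$, taking $A=2$, and summing the geometric series (with the tail Euler product over $kYw_c\le p\le 2^NkYw_c$ bounded by $O(1)$ via Deligne, as in \eqref{eq:body-hol-cor-5}) leaves
\[
I_l(Y)\ll\frac{Y^{-1/2}\tau(|l|)M^{\frac23+\varepsilon}\mathcal{D}_l}{kq\,L(1,\ad f)}\cdot kY\cdot q_\diamond^{\varepsilon}(\log kq)^{\varepsilon}\prod_{p\le kq}\Big(1+\tfrac{2|\lambda_f(p)|}{p}\Big)\sum_{c\mid q}\frac{\varphi((c,q/c))[q/c^2,1]}{(\log(ek[q/c^2,1]))^{2-\varepsilon}},
\]
where the factor $kY$ reflects the size $kYw_c$ of the dyadic ranges, so that $Y^{-1/2}\cdot Y=Y^{1/2}$. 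Proposition \ref{prop:shiftedcon2} bounds the $c$-sum by $q(\log\log(e^{e}q))^{O(1)}(\log kq)^{-(2-\varepsilon)}$, and since $\prod_{p\le kq}(1+2|\lambda_f(p)|/p)\asymp(\log kq)^{2}L(1,\ad f)M_f(kq)$ this collapses (after absorbing the $(\log\log)^{O(1)}$ into $(\log kq)^{\varepsilon}$) to $I_l(Y)\ll Y^{1/2}\tau(|l|)M^{\frac23+\varepsilon}\mathcal{D}_l\,q_\diamond^{\varepsilon}(\log kq)^{\varepsilon}M_f(kq)$.

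It then remains to sum over $l\in\mathbb{Z}\setminus\{0\}$. Choosing $A'=0$ in \eqref{eq:fourier,neq0} for $|l|\le MY$ and $A'=1+\varepsilon$ for $|l|>MY$, a direct computation gives $\sum_{l\neq0}\tau(|l|)\mathcal{D}_l\ll MY\log(MY)\ll MY(\log kq)^{\varepsilon}$, where we use $1\le Y\le(\log kq)^{5}$ and that $M$ is at most a fixed power of $\log kq$ in the range where the bound is not already trivial. Hence
\[
\sum_{l\neq0}I_l(Y)Y^{-1}\ll Y^{-1/2}M^{\frac23+\varepsilon}q_\diamond^{\varepsilon}(\log kq)^{\varepsilon}M_f(kq)\cdot MY(\log kq)^{\varepsilon}=M^{\frac53+\varepsilon}Y^{\frac12+\varepsilon}q_\diamond^{\varepsilon}(\log kq)^{\varepsilon}M_f(kq)
\]
after relabeling $\varepsilon$. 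For $f\neq g$ the same argument, now with $\rho_f(1)\overline{\rho_g(1)}$, Proposition \ref{prop:shifed-con-decor}, and the elementary inequality $\prod_p(1+(|\lambda_f(p)|+|\lambda_g(p)|)/p)\ll\prod_p(1+2|\lambda_f(p)|/p)^{1/2}(1+2|\lambda_g(p)|/p)^{1/2}$, produces $M_f(kq)^{1/2}M_g(kq)^{1/2}$ in place of $M_f(kq)$, which is the claimed bound.

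The step I expect to be the main obstacle is the decay estimate for $\widehat{G}(-s)$: the integration-by-parts argument requires the derivative bounds $a_{\Psi_0,l}^{(j)}(u)\ll_{j,A'}\tau(|l|)\,u^{\frac12-j}M^{\frac23+\varepsilon}(M/(|l|u))^{A'}(1+1/(|l|u))^{\varepsilon}$ for every $j\ge0$, which are the natural strengthening of \eqref{eq:fourier,neq0} but are not literally recorded in Lemma \ref{lem:Fourier-IES}; they follow by differentiating the Whittaker/Bessel expansion of $a_{\Psi_0,l}$ and rerunning the argument of \cite[Lemma 4.2]{BH}. A secondary but necessary check is the bookkeeping of the logarithmic factors: one must verify that enlarging the range of primes from $kYw_c$ to $\asymp kq(\log kq)^{5}$ and the various $(\log\log(e^eq))^{O(1)}$ terms are all absorbed into $(\log kq)^{\varepsilon}$, which is where the hypothesis that $M$ and $Y$ are polynomially bounded in $\log kq$ is used.
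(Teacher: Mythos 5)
Your overall strategy matches the paper's: Mellin inversion together with Stirling's formula to localize the $y_j$-integral at $\max(n_1,n_2)\asymp Yw_jk$, then dyadic decomposition, Propositions \ref{prop:shiftedcon} and \ref{prop:shiftedcon2}, \eqref{eq:rho_f(1)}, \eqref{eq:regrouping}, and finally the sum over $l$ using the decay in $(MY/|l|)^{A'}$. The arithmetic bookkeeping of the $\log$-factors and the passage from $M_f$ in the $f=g$ case to $M_f^{1/2}M_g^{1/2}$ via Proposition \ref{prop:shifed-con-decor} are handled correctly. However, there is one genuine gap, and you have correctly identified where it is.

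You choose to Mellin-transform the \emph{product} $G(u)=h(Yu)a_{\Psi_0,l}(u)$, which forces you to control derivatives $a_{\Psi_0,l}^{(j)}(u)$ for all $j\geq 0$ in order to get the decay $\widehat{G}(-s)\ll(1+|s|)^{-p}$ by repeated integration by parts. Lemma \ref{lem:Fourier-IES} supplies only the size bound \eqref{eq:fourier,neq0} on $a_{\Psi_0,l}$ itself, not on its derivatives, and the derivative bounds you would need are neither proved in your argument nor recorded anywhere in the paper or its references in the form you invoke. Asserting that they ``follow by differentiating the Whittaker/Bessel expansion and rerunning \cite[Lemma 4.2]{BH}'' is a placeholder, not a proof, so as written the argument has a hole at precisely the step you flag as the main obstacle.

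The paper closes this gap in a simpler way that avoids ever touching derivatives of $a_{\Psi_0,l}$. Since $h$ is chosen \emph{nonnegative} with $\supp h\subset[1,2]$, one has $w_jy_j\asymp Y^{-1}$ on the support of $h(Yw_jy_j)$, so \eqref{eq:fourier,neq0} bounds $|a_{\Psi_0,l}(w_jy_j)|$ pointwise by $\ll\tau(|l|)\,Y^{-1/2+\varepsilon/2}M^{2/3+\varepsilon/2}(MY/|l|)^{A}$ there. Because $h\geq 0$, one may pull this sup out of the integral, and then apply Mellin inversion and Stirling \emph{only} to the remaining $h$-integral, exactly as in the proof of Lemma \ref{lem:H_cor_body}. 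This produces the prefactor $M^{2/3+\varepsilon/2}Y^{-1/2+\varepsilon/2}\tau(|l|)(MY/|l|)^A$ together with the localizing factor $\max(1,\max(n_1,n_2)/(Yw_jk))^{-A}$ directly, with no need for derivative estimates. The paper then simply takes $A=1+\varepsilon/2$ uniformly in the $l$-sum rather than splitting into $|l|\lessgtr MY$ as you do; both summation schemes work, but the uniform choice is cleaner. I would recommend rewriting your first step along these lines rather than trying to supply the missing derivative bounds on $a_{\Psi_0,l}$.
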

        \begin{proof} Assume $f=g$. We follow the idea in Lemma \ref{lem:H_cor_body}. By applying  the Mellin inversion formula for $h(Yw_jy_j)$, Stirling's formula, and the rapid decay of $\widehat{h}(-s)$, we obtain that
	\begin{equation*}
            \int_{0}^{\infty} h(Yw_jy_j) \exp(-2\pi(n_1+n_2)y_j)y_j^{k-2}dy_j
            \ll_{A} \frac{\Gamma(k-1)}{(4\pi)^{k-1}} \Big(\frac{1}{\frac{n_1+n_2}{2}}\Big)^{k-1} \max\Big(1, \frac{\max(n_1,n_2)}{Yw_jk}\Big)^{-A}. 
	\end{equation*}
        By \eqref{eq:fourier,neq0}, the above bound, and the fact that $\supp h \subset [1,2]$, the integral in \eqref{eq:I_l(y)} is
    \begin{equation}\label{eq:I_l(y)-mellin}
    \ll_{A}
        M^{\frac{2}{3}+{\frac{\varepsilon}{2}} }Y^{-\frac{1}{2}+{\frac{\varepsilon}{2}} }\tau(|l|)({MY}/{|l|})^A \frac{\Gamma(k-1)}{(4\pi)^{k-1}} \Big(\frac{1}{\frac{n_1+n_2}{2}}\Big)^{k-1} \max\Big(1, \frac{\max(n_1,n_2)}{Yw_jk}\Big)^{-A}.
    \end{equation} 
	
        Applying \eqref{eq:I_l(y)-mellin}  in \eqref{eq:I_l(y)},  we have that
        \begin{multline}\label{eq:I_l(y)-combined}
			I_l(Y)	\ll_{A} 
            |\rho_f(1)|^2 \frac{\Gamma(k-1)}{(4\pi)^{k-1}}  M^{\frac{2}{3}+{\frac{\varepsilon}{2}} }Y^{-\frac{1}{2}+{\frac{\varepsilon}{2}} }\tau(|l|)(MY/|l|)^A\\ \cdot \sum_{\mathfrak{a}_j \in \mathcal{C}}\sum_{\substack{n_1 \ge 1\\ n_2 = n_1+lw_j \ge 1}} \lambda_j(n_1)\lambda_j(n_2)\Big(\frac{\sqrt{n_1n_2}}{\frac{n_1+n_2}{2}}\Big)^{k-1}\max\Big(1, \frac{\max(n_1,n_2)}{Yw_jk}\Big)^{-A}.
		\end{multline} 
   Using \eqref{eq:rho_f(1)} and \eqref{eq:regrouping}, and applying Proposition \ref{prop:shiftedcon} dyadically, we obtain that
		\begin{multline*}
				I_l(Y) \ll_{A} \frac{M^{\frac{2}{3}+{\frac{\varepsilon}{2}} }Y^{-\frac{1}{2}+{\frac{\varepsilon}{2}} }\tau(|l|)({MY}/{|l|})^A}{kq L(1, \ad f)} q_\diamond^{\varepsilon} (\log\log(e^eq))^{O(1)}\\
				\cdot \sum_{c|q}\frac{\varphi((c,q/c))[q/c^2,1]}{\log([q/c^2,1]k)^{2-{\frac{\varepsilon}{2}} }}Yk \prod_{p \le Ykq}(1+2|\lambda_f(p)|/p).
		\end{multline*}
           By Proposition \ref{prop:shiftedcon2} and observing that the contribution from the product when $kq \le p \le Ykq$ is trivial since we assume $1\le Y \le (\log kq)^5$, we then have
           
            \begin{equation*}
			\begin{aligned}
			I_l(Y) 	&\ll_{A} \frac{M^{\frac{2}{3}+{\frac{\varepsilon}{2}} }Y^{\frac{1}{2}+{\frac{\varepsilon}{2} }}\tau(|l|)\big(MY/|l|\big)^A}{L(1, \ad f)} \frac{q_\diamond^{\varepsilon} (\log\log(e^eq))^{O(1)}}{(\log kq)^{2-{\frac{\varepsilon}{2}} }} \prod_{p \le kq}(1+2|\lambda_f(p)|/p).
			\end{aligned}
		\end{equation*}
        
       By the definition of $M_f(x)$ in Lemma \ref{lem:H_cor_body} and taking $A = 1+\varepsilon/2$, we obtain the  result.
  
    If $f \neq g$, we proceed similarly, except that $I_l(Y)$ can be simplified just as in the remark of Lemma \ref{rek:unfolding}.
        \end{proof}
        
        \begin{lem}
        \label{lem:I_0(Y)}
            Recall the notation in Lemma \ref{lem:H_cor_main} and \eqref{eq:I_l(y)}. If $\varepsilon \in (0,1)$, then we have that
            \[
           I_0(Y)Y^{-1} - \frac{3}{\pi}\widehat{\Psi}_0(-1)\mathds{1}_{f=g} \ll M^{1+\varepsilon}Y^{-1/2}(\log kQ)^\varepsilon.
            \]
        \end{lem}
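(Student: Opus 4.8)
The plan is to insert the Fourier expansion \eqref{eq:fourier,0} of $a_{\Psi_0,0}$ into the definition \eqref{eq:I_0(y)} (resp.\ \eqref{eq:I_0(y)-decor}) of $I_0(Y)$, splitting it into the constant term $\tfrac3\pi\widehat\Psi_0(-1)$ and an oscillatory remainder, and to treat the two resulting pieces of $I_0(Y)$ separately. For the constant‑term piece, replacing $a_{\Psi_0,0}(w_jy_j)$ by $\tfrac3\pi\widehat\Psi_0(-1)$ and comparing with the unfolding of Lemma \ref{lem:unfolding} applied to the incomplete Eisenstein series $E^Y(\cdot|h)$ (in the role of the Poincar\'e series, with the constant function $1$ in place of $E(\cdot|\Psi_0)$), the integration over $x_j\in[0,1]$ isolates exactly the diagonal $n_1=n_2$ defining $I_0$, so this piece equals $\tfrac3\pi\widehat\Psi_0(-1)\langle E^Y(\cdot|h),\overline{F_k}G_k\rangle_q$. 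For the oscillatory piece I would use the pointwise bound $|a_{\Psi_0,0}(w_jy_j)-\tfrac3\pi\widehat\Psi_0(-1)|\ll M^{1+\varepsilon}(w_jy_j)^{1/2}$, which follows from $|\theta(\tfrac12-it)/\theta(\tfrac12+it)|=1$ and the rapid decay $\widehat\Psi_0(-\tfrac12-it)\ll_A M^A(1+|t|)^{-A}$ from \eqref{eq:rapiddecay}; since the cutoff $h(Yw_jy_j)$ forces $w_jy_j\asymp Y^{-1}$ on its support, this remainder is $\ll M^{1+\varepsilon}Y^{-1/2}$ there.

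For the constant‑term piece I would shift the contour in \eqref{eq:EY-mellin} past the simple pole of $E(s,z)$ at $s=1$ (residue $3/\pi$, and $\widehat h(-1)=\pi/3$) to obtain
\[
\langle E^Y(\cdot|h),\overline{F_k}G_k\rangle_q=Y\langle 1,\overline{F_k}G_k\rangle_q+\int_{(1/2)}\widehat h(-s)Y^s\langle E(s,\cdot),\overline{F_k}G_k\rangle_q\,\frac{ds}{2\pi i}.
\]
Here $\langle 1,\overline{F_k}G_k\rangle_q=\mathds 1_{f=g}$ by the orthogonality of distinct newforms and the normalization $\langle 1,|F_k|^2\rangle_q=1$, while on $\Real(s)=\tfrac12$ one has $\langle E(\tfrac12+it,\cdot),\overline{F_k}G_k\rangle_q=\langle E_t,\overline{F_k}G_k\rangle_q\ll(1+|t|)^{\delta_2+\varepsilon}(\log kQ)^{\varepsilon}$ by \eqref{eq:sound_Eis} of Lemma \ref{lem:Sound_cor}, after noting $1/(L(1,\ad f)^{1/2}L(1,\ad g)^{1/2})\ll\log\mathfrak C(\ad f)\ll\log kQ$ (via Theorem \ref{thm:L(1,adf)}) and $(q/\sqrt Q)^{-1/2+\varepsilon}\le 1$. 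The rapid decay of $\widehat h$ then bounds the remaining integral by $O(Y^{1/2}(\log kQ)^\varepsilon)$, so $\langle E^Y(\cdot|h),\overline{F_k}G_k\rangle_q=\mathds 1_{f=g}\,Y+O(Y^{1/2}(\log kQ)^\varepsilon)$; in particular the same argument gives $\langle E^Y(\cdot|h),|F_k|^2\rangle_q,\ \langle E^Y(\cdot|h),|G_k|^2\rangle_q\ll Y(\log kQ)^\varepsilon$. Dividing the constant‑term piece by $Y$ and using $\widehat\Psi_0(-1)\ll 1$ shows its contribution to $I_0(Y)Y^{-1}-\tfrac3\pi\widehat\Psi_0(-1)\mathds 1_{f=g}$ is $O(Y^{-1/2}(\log kQ)^\varepsilon)$.

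For the oscillatory piece I would apply Cauchy--Schwarz at the level of the full sum over cusps and integral over $y_j$ (with the nonnegative $h$ as weight), so the two factors produced are $\langle E^Y(\cdot|h),|F_k|^2\rangle_q$ and $\langle E^Y(\cdot|h),|G_k|^2\rangle_q$; combined with $|\rho_f(1)\overline{\rho_g(1)}|=|\rho_f(1)||\rho_g(1)|$ and the remainder bound $\ll M^{1+\varepsilon}Y^{-1/2}$, this gives
\[
|I_0(Y)-\tfrac3\pi\widehat\Psi_0(-1)\langle E^Y(\cdot|h),\overline{F_k}G_k\rangle_q|\ll M^{1+\varepsilon}Y^{-1/2}\big(\langle E^Y(\cdot|h),|F_k|^2\rangle_q\,\langle E^Y(\cdot|h),|G_k|^2\rangle_q\big)^{1/2}\ll M^{1+\varepsilon}Y^{1/2}(\log kQ)^\varepsilon,
\]
so its contribution to $I_0(Y)Y^{-1}$ is $\ll M^{1+\varepsilon}Y^{-1/2}(\log kQ)^\varepsilon$. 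Adding the two contributions yields the lemma. (In the case $f\ne g$, where the cutoff $h$ localizes on cusps of the squarefree level $q$, exactly the same steps apply, using the Atkin--Lehner relations recorded in Remark \ref{rek:unfolding} only implicitly, since the identification of the constant‑term piece with $\langle E^Y(\cdot|h),\overline{F_k}G_k\rangle_q$ is an equality of unfolded expressions.)

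The step requiring the most care is the uniform estimate $\langle E_t,\overline{F_k}G_k\rangle_q\ll(1+|t|)^{\delta_2+\varepsilon}(\log kQ)^\varepsilon$: one must check that the arithmetic factors $1/(L(1,\ad f)^{1/2}L(1,\ad g)^{1/2})$ and $(q/\sqrt Q)^{-1/2}$ coming from Lemma \ref{lem:Sound_cor} are genuinely absorbed into a harmless $(\log kQ)^\varepsilon$ — the point being that the conductor exponent $-1/2$ is a saving, so $(q/\sqrt Q)^{-1/2+\varepsilon}\le 1$, while $L(1,\ad f)^{-1}\ll\log kQ$, whereas an uncontrolled factor such as $(q/\sqrt Q)^\varepsilon$ would not be permitted by the statement. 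Everything else — the Mellin inversions, the contour shift, and the convergence of the $t$‑integrals via the rapid decay of $\widehat h$ and $\widehat\Psi_0$ — is routine.
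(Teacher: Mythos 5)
Your proof is correct and follows essentially the same route as the paper: split $I_0(Y)$ via the Fourier expansion \eqref{eq:fourier,0} into a constant-term piece (identified by unfolding as $\tfrac{3}{\pi}\widehat{\Psi}_0(-1)\langle E^Y(\cdot|h),\overline{F_k}G_k\rangle_q$ and evaluated via \eqref{eq:EY-mellin} together with \eqref{eq:sound_Eis} and the lower bound \eqref{eq:l(1,adf)GHL}) and an oscillatory remainder bounded pointwise by $M^{1+\varepsilon}(w_jy_j)^{1/2}\asymp M^{1+\varepsilon}Y^{-1/2}$ on the support of $h$. The only cosmetic difference is that you control the oscillatory piece with Cauchy--Schwarz, whereas the paper uses the arithmetic--geometric-mean inequality $|F_k||G_k|\le\tfrac12(|F_k|^2+|G_k|^2)$; both give the identical bound.
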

        \begin{proof} By \eqref{eq:I(Y)-unfolding} and the expression of $a_{\Psi_0,0}(y)$ in \eqref{eq:fourier,0}, we have that
        \begin{align}
        \label{eq:IES-1}
		I_0(Y)
        &= \sum_{\mathfrak{a}_j \in \mathcal{C}} \int_{y_j=0}^{\infty}\int_{x_j =0}^1 \mathscr{I}(w_j y_j)h(Yw_jy_j) F_k(z)\overline{G_k(z)}  {y_j^{-2}}dx_jdy_j,
        \end{align}
        where
        \begin{align*}
        \mathscr{I}(w_jy_j) &:=   \frac{3}{\pi}\widehat{\Psi}_0(-1)
        + \frac{1}{2\pi}\int_\mathbb{R} \widehat{\Psi}_0(-\tfrac{1}{2}-it)
        \Big((w_jy_j)^{\frac{1}{2}+it} + \frac{\theta(\frac{1}{2}-it)}{\theta(\frac{1}{2}+it)}(w_jy_j)^{\frac{1}{2}-it}\Big)\, dt.
	\end{align*}

        By \eqref{eq:EY-mellin}, the contribution to $I_0(Y)$ from the term $\frac{3}{\pi}\widehat{\Psi}_0(-1)$ in $\mathscr{I}(w_jy_j)$  equals
        \begin{equation}
        \begin{aligned}
        \label{eq:cor-IES-5}
            \frac{3}{\pi}\widehat{\Psi}_0(-1) \langle E^Y(z|h), \overline{F_k}G_k\rangle_q 
            &=\frac{3}{\pi}\widehat{\Psi}_0(-1)\Big(Y \mathds{1}_{f=g}+ \int_{(1/2)}\widehat{h}(-s)Y^s \langle E(s,\cdot), \overline{F_k}G_k\rangle_q \,\frac{ds}{2\pi i}\Big)\\
            &= \frac{3}{\pi}\widehat{\Psi}_0(-1)Y\mathds{1}_{f=g} + O\Big((q/\sqrt{Q})^{-1/2+\varepsilon}Y^{1/2}(\log kQ)^{\varepsilon}\Big),
            \end{aligned}
		\end{equation}
  upon using \eqref{eq:sound_Eis},  
  the rapid decay of $\widehat{h}(-s)$, and \eqref{eq:l(1,adf)GHL}.

Since $\supp h \subset [1,2]$,  $h(Yw_jy_j)$ is nonzero only if $w_jy_j \asymp Y^{-1}$. Using \eqref{eq:rapiddecay} with $j = 1 +\varepsilon$, we have that the contribution to $I_0(Y)$ from the integral in $\mathscr{I}(w_jy_j)$ is
\begin{equation}
\label{eq:IES-2}
    \begin{aligned}
        &\ll (M^{1+\varepsilon}Y^{-1/2})\sum_{\mathfrak{a}_j \in \mathcal{C}} \int_{y_j=0}^{\infty} h(Yw_jy_j)\int_{x_j =0}^1
        |F_k(z)||G_k(z)|  y_j^{-2}dx_jdy_j.
    \end{aligned}
\end{equation}
Using $|F_k(z)||G_k(z)| \ll |F_k(z)|^2+ |G_k(z)|^2$ and \eqref{eq:cor-IES-5} with $f=g$,
we have that \eqref{eq:IES-2} is 
\begin{equation}
\label{eq:IES-3}
   \ll M^{1+\varepsilon}Y^{-1/2}( Y + (q/\sqrt{Q})^{-1/2+\varepsilon}Y^{1/2}(\log kQ)^{\varepsilon}).
\end{equation}

Inserting \eqref{eq:cor-IES-5} and \eqref{eq:IES-3} in \eqref{eq:IES-1}, we obtain that 
\begin{multline*}
				I_0(Y) = \frac{3}{\pi}\widehat{\Psi}_0(-1)\big(Y\mathds{1}_{f=g}+ O\big((q/\sqrt{Q})^{-1/2+\varepsilon}Y^{1/2}(\log kQ)^{\varepsilon}\big)\big)\\
			  + O\big(M^{1+\varepsilon}Y^{-1/2} \big(Y+ (q/\sqrt{Q})^{-1/2+\varepsilon}Y^{1/2}(\log kQ)^{\varepsilon}\big)\big).
		\end{multline*}
            Hence, the lemma follows from the facts that $\frac{3}{\pi}\widehat{\Psi}_0(-1) \ll 1$ and $Y \ge 1$. 
            \end{proof}

        \begin{proof}[Proof of Lemma \ref{lem:H_cor_main}]
		Applying Lemmas \ref{lem:I_l(Y)} and \ref{lem:I_0(Y)} in \eqref{eq:cor-IES-2}, we obtain that
		\begin{multline*}
				\langle E(\cdot|\Psi_0), \overline{F_k}G_k\rangle_q - \frac{3}{\pi}\widehat{\Psi}_0(-1)\mathds{1}_{f=g}\\
                \ll M^{1+\varepsilon}Y^{-1/2}(\log kQ)^\varepsilon + M^{5/3+\varepsilon}Y^{1/2+\varepsilon}q_\diamond^\varepsilon  (\log kq)^\varepsilon M_f(kq)^{1/2}M_g(kq)^{1/2}+Y^{-1/2}.
		\end{multline*}
	   The lemma follows upon choosing $Y = M_f(kq)^{-1/2}M_g(kq)^{-1/2}$ and noting that $Q \le q^2$.
        \end{proof}
 
	\begin{proof}[Proof of Proposition \ref{prop:Holow_cor}]

		Consider the factor $M_f(kq)$. When applying Lemma \ref{lem:L(1,adf)} and the facts that $\prod_{p\le x}(1-\delta/p) \asymp (\log x)^{-\delta}$ and  $\log(kq)/\log(kQ) \ll (q/\sqrt{Q})^{\varepsilon}$, we have that
		\begin{equation}\label{eq:Mf(kq)}
			M_f(kq) \ll \prod_{p\le k\sqrt{Q}} \Big(1-\frac{(|\lambda_f(p)|-1)^2}{p}\Big)\Big(\frac{q}{\sqrt{Q}}\Big)^{\varepsilon}(\log kq)^\varepsilon.
		\end{equation}  
 		Putting together Lemmas \ref{lem:H_cor_tail}, \ref{lem:H_cor_body}, and \ref{lem:H_cor_main} in \eqref{eq:poincare2} and using \eqref{eq:Mf(kq)} and the fact that $q_\diamond^{\varepsilon} \ll (q/\sqrt{Q})^\varepsilon$, we obtain the proposition as desired. 
        \end{proof}
        
	%%%%%%%%%%%%%%%%%%%%%%%%%%%%%%%%%%%%%%%%%%%%%%%%%%%%%%%%%%%%%%%%%%%%%%%%%%%%%%%%%%%%%%%%%%%%%%%%%%%%%%%%%%%%%%%%%%%%%%%%%%%%%%%%%%%%%%%%%%%%%%%%%%%%%%%%%%%%%%%%%%%%%%%%%%%%%%%%%%%%%%%%%%%%%%%%%%%%%%%%%%%%%%%%%%%%%%%%%%%%%%%%%%%%%%%%%%%%%%%%%%%%%%%%%%%%%%%%%%%%%%%%%%%%%%%%%%%%%%%%%%%%%%%%%%%%%%%%%%%%%%%%%%%%%%%%%%%%%%%%%%%%%%%%%%%%%%%%

\appendix
\section{The \texorpdfstring{$\mathrm{GL}_2\times\mathrm{GL}_3$}{GL23} approximate functional equation and QUE}
\label{appendix}
\begin{center} {\sc by Jesse Thorner}
\end{center}
\vspace{3mm}
Let $f$ be a holomorphic cuspidal Hecke eigenform of even weight $k\geq 2$ and $\phi$ be a Hecke--Maa{\ss} cusp form, both of level 1.  Each corresponds with a certain unitary cuspidal automorphic representation of $\mathrm{GL}_2(\mathbb{A}_{\mathbb{Q}})$.  Under the stated hypotheses, the Hecke eigenvalues $\lambda_{f}(n)$ and $\lambda_{\phi}(n)$ of $f$ and $\phi$ (respectively) are all real and multiplicative.

Let $F_k(x+iy) = \rho_f(1) y^{k/2}f(x+iy)$, where $\rho_f(1)$ is a normalizing constant so that if $\langle \cdot,\cdot\rangle$ is the Petersson inner product for $Y_0(1)$, then $\langle 1,|F_k|^2\rangle=1$. 
Iwaniec \cite[Section 11]{Iwaniec} and Lester, Matom{\"a}ki, and Radziwi{\l}{\l} \cite[Section 4]{LMR} describe a method for showing that $\lim_{k\to\infty}\langle \phi,|F_k|^2\rangle=0$ without a weak subconvexity bound for $L(\tfrac{1}{2},\mathrm{Ad}\,f\times\phi)$. This appendix revisits their proofs and identifies a minor gap. A careful correction shows that the convergence of $\langle \phi, |F_k|^2\rangle$ occurs at a slower rate than was previously claimed.  For simplicity, we suppress the $\phi$-dependence.

It is proved in \cite[(3.2) and (11.1)]{Iwaniec} and  \cite[(4.2)]{LMR} that
\begin{equation}
\label{eqn:start_AFE}
|\langle\phi,|F_k|^2\rangle|\ll_{A,\phi,\varepsilon}\frac{(\log\log k)^3}{\sqrt{k}}|L(\tfrac{1}{2},\mathrm{Ad}\,f\times\phi)|^{\frac{1}{2}}\prod_{p\leq k}\Big(1-\frac{\lambda_f(p^2)}{p}\Big)+\frac{1}{(\log k)^{A}}.
\end{equation}
A bound for $L(\frac{1}{2},\mathrm{Ad}\,f\times\phi)$ essentially of the form
\begin{equation}
\label{eqn:false}
|L(\tfrac{1}{2},\mathrm{Ad}\,f\times\phi)|\ll_{A,\phi,\varepsilon}\sum_{n\leq k^2(\log k)^{\varepsilon}}\frac{|\lambda_{f}(n^2)\lambda_{\phi}(n)|}{\sqrt{n}}+\frac{k}{(\log k)^A}
\end{equation}
(a weak form of the approximate functional equation) was asserted without proof in \cite[Section 11.1]{Iwaniec} and \cite[(4.3)]{LMR}.  These appear to neglect Theorem \ref{thm:1} below.
\begin{thm}
\label{thm:1}
Let $f$ be a holomorphic cuspidal Hecke eigenform and $\phi$ be a Hecke--Maa{\ss} newform, each of level $1$.  There exists an absolute constant $0<\omega\leq \frac{1}{10}$ and a function $H(s)$ such that in the region $\mathrm{Re}(s)\geq \frac{1}{2}-\omega$, $H(s)$ is holomorphic, $H(s)\asymp 1$, and
\[
L(s,\mathrm{Ad}\, f\times \phi)=H(s) L(2s,\mathrm{Ad}\, f) L(2s,\mathrm{Ad}\, \phi) \sum_{p\mid n\implies p\ge 19}\frac{\lambda_{f}(n^2)\lambda_{\phi}(n)}{n^{s}}.
\]
\end{thm}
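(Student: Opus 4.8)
The plan is to analyze the Euler product of $L(s,\mathrm{Ad}\,f\times\phi)$ prime by prime and extract the "obvious" main factors $L(2s,\mathrm{Ad}\,f)$ and $L(2s,\mathrm{Ad}\,\phi)$, leaving behind a Dirichlet series supported on powers of primes $p\geq 19$ that converges absolutely in a half-plane slightly to the left of $\mathrm{Re}(s)=1/2$. First I would recall that $\mathrm{Ad}\,f\in\mathfrak{F}_3$ is the symmetric-square lift (twisted appropriately; since $f$ has level $1$ and trivial nebentypus, $\mathrm{Ad}\,f=\mathrm{Sym}^2 f$), and similarly for $\mathrm{Ad}\,\phi$, so that $L(s,\mathrm{Ad}\,f\times\phi)$ is a $\mathrm{GL}_3\times\mathrm{GL}_3$ Rankin--Selberg $L$-function, entire since $\mathrm{Ad}\,\phi\not\cong\widetilde{\mathrm{Ad}\,f}$ (the former is tempered-at-infinity in the Maa{\ss} direction, the latter is not). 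At a prime $p$, writing the Satake parameters of $f$ as $\{\alpha_p,\alpha_p^{-1}\}$ (so $\alpha_p+\alpha_p^{-1}=\lambda_f(p)$) and of $\phi$ as $\{\beta_p,\beta_p^{-1}\}$, the local factor $L_p(s,\mathrm{Ad}\,f\times\phi)$ is the product over the nine pairs of eigenvalues $\{\alpha_p^2,1,\alpha_p^{-2}\}\times\{\beta_p^2,1,\beta_p^{-2}\}$, and one computes directly that
\[
L_p(s,\mathrm{Ad}\,f\times\phi)=L_p(2s,\mathrm{Ad}\,f)\,L_p(2s,\mathrm{Ad}\,\phi)\,\Big(\sum_{k\geq 0}\frac{\lambda_f(p^{2k})\lambda_\phi(p^k)}{p^{ks}}\Big)\,G_p(s),
\]
where $G_p(s)$ is an explicit finite Euler factor that equals $1+O(p^{-2\mathrm{Re}(s)})$ (in fact $1+O(p^{-2})$ uniformly once $\mathrm{Re}(s)\geq 1/2-\omega$ for small $\omega$, using Deligne's bound for $f$ and $\theta_2\leq 7/64$ for $\phi$). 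The key algebraic identity to check is that the "interesting" part of $L_p(s,\mathrm{Ad}\,f\times\phi)/(L_p(2s,\mathrm{Ad}\,f)L_p(2s,\mathrm{Ad}\,\phi))$ has leading coefficient $\sum_k \lambda_f(p^{2k})\lambda_\phi(p^k)p^{-ks}$, which is the statement that $\lambda_f(p^{2k})\lambda_\phi(p^k)$ is, up to the correction $G_p$, the coefficient of $p^{-ks}$ in the ratio; this is a routine power-series manipulation with the Hecke relations $\lambda_f(p^{2k})=\sum_{j}(\text{monomials in }\alpha_p^{\pm2})$.

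Next I would assemble the global identity. Taking the product over all primes, $L(s,\mathrm{Ad}\,f\times\phi)$ equals $L(2s,\mathrm{Ad}\,f)L(2s,\mathrm{Ad}\,\phi)$ times $D(s):=\prod_p\big(\sum_{k\geq0}\lambda_f(p^{2k})\lambda_\phi(p^k)p^{-ks}\big)$ times $H_0(s):=\prod_p G_p(s)$. The product $H_0(s)$ converges absolutely and is holomorphic and $\asymp 1$ in $\mathrm{Re}(s)\geq 1/2-\omega$ for $\omega$ small enough, since $G_p(s)=1+O(p^{-2\mathrm{Re}(s)})$ and $\sum_p p^{-2(1/2-\omega)}<\infty$ once $\omega<1/4$; choosing $\omega\leq 1/10$ is safely inside this range and also inside whatever margin one needs from $\theta_2\leq 7/64$. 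The Dirichlet series $D(s)=\sum_{n\geq1}\lambda_f(n^2)\lambda_\phi(n)n^{-s}$ (using multiplicativity of $n\mapsto\lambda_f(n^2)$ and of $\lambda_\phi$, plus the coefficient matching above) — but here I must be careful: the statement of the theorem restricts the sum to $n$ with all prime factors $\geq 19$, so the local factors at $p\in\{2,3,5,7,11,13,17\}$ must be absorbed into $H(s)$ rather than $D(s)$. So I would define $H(s)=H_0(s)\prod_{p<19}\big(\sum_{k\geq0}\lambda_f(p^{2k})\lambda_\phi(p^k)p^{-ks}\big)$; each of these finitely many extra factors is a geometric-type series in $p^{-s}$ with parameters on the unit circle (for $f$, by Deligne) or within $p^{7/64}$ of it (for $\phi$), hence holomorphic and nonvanishing, in particular $\asymp 1$, on $\mathrm{Re}(s)\geq 1/2-\omega$, provided $\omega$ is small enough that $p^{-(1/2-\omega)+7/64}<1$ for $p\geq 2$ — which holds comfortably for $\omega\leq 1/10$. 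With $H(s)$ so defined, the identity of the theorem holds as a statement of analytic functions wherever everything converges, namely $\mathrm{Re}(s)>1$, and then extends by analytic continuation.

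The main obstacle, and the place requiring genuine care rather than bookkeeping, is verifying convergence of $\sum_{p\mid n\implies p\geq19}\lambda_f(n^2)\lambda_\phi(n)n^{-s}$ — equivalently of $D(s)$ restricted to those $n$ — in a half-plane $\mathrm{Re}(s)\geq 1/2-\omega$. Naively, $\lambda_f(n^2)\lambda_\phi(n)$ can be as large as $\tau(n^2)\tau(n)\ll n^\varepsilon$, which only gives convergence for $\mathrm{Re}(s)>1$, not past $1/2$. The resolution is precisely that $D(s)$ is \emph{not} an arbitrary Dirichlet series with bounded-ish coefficients: it is (up to the $\asymp 1$ factor $H_0$ and the finite Euler factors) the $L$-function $L(s,\mathrm{Ad}\,f\times\phi)/(L(2s,\mathrm{Ad}\,f)L(2s,\mathrm{Ad}\,\phi))$, and the numerator is entire while the two denominator factors are nonvanishing and holomorphic for $\mathrm{Re}(s)>1/2$ (and have a zero-free region slightly past $1/2$ by Theorem~\ref{thm:L(1,adf)} applied to $\mathrm{Ad}\,f$, together with the standard zero-free region for $L(2s,\mathrm{Ad}\,\phi)$). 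Thus $D(s)$ \emph{continues} holomorphically to $\mathrm{Re}(s)>1/2-\omega$ for some $\omega>0$; but the theorem claims a Dirichlet \emph{series} identity there, which is a statement that the continuation is given by the absolutely convergent series. For that one argues on the Euler product level: $\log D(s)=\sum_p\log\big(\sum_{k\geq0}\lambda_f(p^{2k})\lambda_\phi(p^k)p^{-ks}\big)$, and the $p$-th term is $\lambda_f(p^2)\lambda_\phi(p)p^{-s}+O(p^{-2\mathrm{Re}(s)+\varepsilon})$; the error sum converges for $\mathrm{Re}(s)>1/2+\varepsilon/2$, while the main sum $\sum_p\lambda_f(p^2)\lambda_\phi(p)p^{-s}=\sum_p(\lambda_{\mathrm{Ad}\,f}(p)\lambda_\phi(p))p^{-s}+\sum_p\lambda_\phi(p)p^{-s}$ (using $\lambda_f(p^2)=\lambda_{\mathrm{Ad}\,f}(p)$ at level $1$... actually $\lambda_f(p^2)=\lambda_f(p)^2-1$ and $\lambda_{\mathrm{Ad}\,f}(p)=\lambda_f(p)^2-1$, so they agree) is $-\log$ of a product of $L$-functions that is holomorphic and nonvanishing slightly past $1/2$, hence itself converges there by the standard argument (e.g.\ a quantitative Perron/contour argument, or the fact that the Dirichlet series of $\log$ of an $L$-function with zero-free region converges in that region). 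This is the one point where the earlier results in the paper — specifically the zero-free region \eqref{eq:ZFR} for $L(s,\mathrm{Ad}\,f)$ from Theorem~\ref{thm:L(1,adf)} — are doing real work, and it is exactly the ingredient whose omission is the gap in \cite{Iwaniec} and \cite{LMR}.
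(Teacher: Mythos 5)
Your high-level plan --- factor out $L(2s,\mathrm{Ad}\,f)L(2s,\mathrm{Ad}\,\phi)$ from the Euler product, isolate the Dirichlet series $\sum_n\lambda_f(n^2)\lambda_\phi(n)n^{-s}$, and package the local corrections and small-prime factors into $H(s)$ --- is the same as the paper's.  However, the execution has several genuine errors, two of which are fatal.

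First, $\mathrm{Ad}\,f\times\phi$ is a $\mathrm{GL}_3\times\mathrm{GL}_2$ Rankin--Selberg pair, not $\mathrm{GL}_3\times\mathrm{GL}_3$.  The local factor is a product over the \emph{six} pairs $\{\alpha_p^2,1,\alpha_p^{-2}\}\times\{\beta_p,\beta_p^{-1}\}$, not the nine pairs $\{\alpha_p^2,1,\alpha_p^{-2}\}\times\{\beta_p^2,1,\beta_p^{-2}\}$; the latter would correspond to $\mathrm{Ad}\,f\times\mathrm{Ad}\,\phi$, whose Dirichlet coefficients are $\lambda_{\mathrm{Ad}\,f}(n)\lambda_{\mathrm{Ad}\,\phi}(n)$, not $\lambda_f(n^2)\lambda_\phi(n)$.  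The paper's local identity (its display \eqref{eqn:aux_rational} and \eqref{eqn:key}) is built around the correct six-factor product, with $L_p(s,\phi)$ appearing as the $j=0$ subproduct.

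Second, the asymptotic $G_p(s)=1+O(p^{-2\mathrm{Re}(s)})$ is not strong enough to push past $\mathrm{Re}(s)=\tfrac12$, and the computation you base on it is arithmetically wrong: $p^{-2(1/2-\omega)}=p^{-1+2\omega}$, and $\sum_p p^{-1+2\omega}$ \emph{diverges} for every $\omega\geq 0$ (it converges only for $\omega<0$).  The parenthetical ``in fact $1+O(p^{-2})$'' is likewise unjustified by $\theta_2\leq 7/64$.  The decisive algebraic fact --- which you assert is ``a routine power-series manipulation'' but do not carry out --- is that the coefficients of \emph{both} $p^{-s}$ and $p^{-2s}$ in the local correction factor vanish, so that in fact $H_p(s)=1-\lambda_f(p)^2\lambda_\phi(p)p^{-3s}+\cdots=1+O(p^{-3\mathrm{Re}(s)+7/64})$.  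With $\mathrm{Re}(s)\geq\tfrac12-\omega$ this gives $\sum_p|H_p(s)-1|\ll\sum_p p^{-3/2+3\omega+7/64}$, which converges exactly when $\omega<25/192$; this is why the paper can take $\omega\leq\tfrac{1}{10}$.  Without verifying the vanishing of the $p^{-2s}$ coefficient, the argument does not reach past $\mathrm{Re}(s)=\tfrac12$.

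Third, the threshold $p\geq 19$ is not a harmless bookkeeping choice; it is where a genuine nonvanishing verification is required.  The paper writes $H_p(s)$ with the Dirichlet polynomial $1+\lambda_\phi(p)p^{-s}-\lambda_f(p^2)p^{-2s}$ in the denominator and checks numerically (using GRC for $f$ and $|\beta_p|^{\pm1}\leq p^{7/64}$) that this polynomial is bounded away from zero for $p\geq 19$ and $\mathrm{Re}(s)\geq\tfrac25$.  Merely saying the small-prime local factors are ``geometric-type series... hence holomorphic and nonvanishing'' does not address whether the ratio $L_p(s,\mathrm{Ad}\,f\times\phi)/(L_p(2s,\mathrm{Ad}\,f)L_p(2s,\mathrm{Ad}\,\phi))$ acquires zeros.

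Finally, your closing paragraph misreads the statement.  Theorem~A.1 does not assert --- and the application does not need --- that the restricted Dirichlet series converges in $\mathrm{Re}(s)\geq\tfrac12-\omega$.  In the approximate functional equation derivation, the series is expanded only at $\mathrm{Re}(s)=B>1$; the convergence past $\mathrm{Re}(s)=\tfrac12$ is supplied by the kernel $W(n)$, not by the series itself.  Consequently, no zero-free region for $L(s,\mathrm{Ad}\,f)$ enters the proof of Theorem~A.1.  The zero-free region from Theorem~\ref{thm:L(1,adf)} is used later (in Theorem~A.5, via the lower bound \eqref{eq:l(1,adf)GHL} for $L(1,\ad f)$), and the obstruction isolated in the remark after Lemma~A.2 is the absence of usable \emph{upper} bounds for $L(1+2s,\mathrm{Ad}\,f)$ near $\mathrm{Re}(s)=0$ --- a different issue from the one you describe.
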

\begin{proof}
By multiplicativity of the Hecke eigenvalues, it suffices to work locally.  Let $p$ be prime.  For any $\pi\in\mathfrak{F}_2$ of conductor $1$, we have that
\begin{align*}
L_p(s,\pi) &= \prod_{\ell\in\{-1,1\}}\frac{1}{1-\alpha_{\pi}(p)^{\ell} p^{-s}}=\sum_{j=0}^{\infty}\frac{\lambda_{\pi}(p^j)}{p^{js}},\\
L_p(s,\mathrm{Ad}\, \pi) &= \prod_{\ell\in\{-1,0,1\}}\frac{1}{1-\alpha_{\pi}(p)^{2\ell} p^{-s}}=\sum_{j=0}^{\infty}\frac{\lambda_{\mathrm{Ad}\, \pi}(p^j)}{p^{js}}.
\end{align*}
We write $\alpha_p = \alpha_{f}(p)$, $\beta_p = \alpha_{\phi}(p)$, and
\[
L_p(s,\mathrm{Ad}\, f \times \phi) = \prod_{\ell \in\{-1,0,1\}}\,\prod_{\ell'\in\{-1,1\}}\frac{1}{1-\alpha_p^{2\ell} \beta_p^{\ell'} p^{-s}}=\sum_{j=0}^{\infty}\frac{\lambda_{\mathrm{Ad}\, f\times\phi}(p^j)}{p^{js}}.
\]
Note that $\lambda_f(p^j)= \sum_{r=0}^j \alpha_p^{2r-j}$ and $\lambda_{\phi}(p^j)= \sum_{r=0}^j \beta_p^{2r-j}$.  By \eqref{eqn:GRC2} and the Ramanujan conjecture for $\pi$, we have the bounds
\begin{equation}
\label{eqn:GRC2}
|\alpha_p|=1,\qquad \max\{|\beta_p|,|\beta_p|^{-1}\}\leq p^{\frac{7}{64}}.
\end{equation}

Consider the rational function
\begin{equation}
\label{eqn:aux_rational}
\frac{1+(\beta_p+\beta_p^{-1})p^{-s}-(\alpha_p^2+1+\alpha_p^{-2})p^{-2s}}{(1-\alpha_p^2 \beta_p p^{-s})(1-\alpha_p^2 \beta_p^{-1}p^{-s})(1-\alpha_p^{-2} \beta_p p^{-s})(1-\alpha_p^{-2} \beta_p^{-1}p^{-s})}.
\end{equation}
On one hand, we can expand \eqref{eqn:aux_rational} as a polynomial in $p^{-s}$ times a product of four geometric sums.  On the other hand, we can identify \eqref{eqn:aux_rational} as a product of local $L$-functions.  Identifying these two expressions, we conclude that if $\mathrm{Re}(s)>7/64$, then
\begin{equation}
\label{eqn:key}
L_p(s,\mathrm{Ad}\,f\times\phi)=\frac{L_p(s,\phi)}{1+\lambda_{\phi}(p)p^{-s}-\lambda_{f}(p^2)p^{-2s}}\sum_{j=0}^{\infty}\frac{\lambda_{f}(p^{2j})\lambda_{\phi}(p^j)}{p^{js}},
\end{equation}
provided that the denominator for the leading factor on the right-hand side is nonzero.  We verify that if $p\geq 19$ and $\mathrm{Re}(s)\geq \frac{2}{5}$, then $|1+\lambda_{\phi}(p)p^{-s}-\lambda_{f}(p^2)p^{-2s}|>0.06$ using \eqref{eqn:GRC2} and Mathematica 12.  For such $p$ and $s$, we find that
\begin{equation}
\label{eqn:G_def}
\frac{L_p(s,\phi)}{1+\lambda_{\phi}(p)p^{-s}-\lambda_{f}(p^2)p^{-2s}}=1+(\lambda_{f}(p^2)+\lambda_{\phi}(p^2))p^{-2s} +\cdots,
\end{equation}
which agrees with the beginning of the expansion $L_p(2s,\mathrm{Ad}\,f)L_{p}(2s,\mathrm{Ad}\,\phi)$ up to an error of $O(p^{-3s})$. This leads us to define
\[
H_p(s) = \frac{L_p(s,\phi)}{(1+\lambda_{\phi}(p)p^{-s}-\lambda_{f}(p^2)p^{-2s})L_p(2s,\mathrm{Ad}\, f)L_p(2s,\mathrm{Ad}\,\phi)}.
\]
By our above numerical calculation and \eqref{eqn:GRC2}, we know that if $p\geq 19$,  then in the region $\mathrm{Re}(s)\geq \frac{2}{5}$, $H_p(s)$ is holomorphic and
\begin{align*}
L_p(s,\mathrm{Ad}\,f\times\phi) &= H_p(s)L_p(2s,\mathrm{Ad}\, f)L_p(2s,\mathrm{Ad}\,\phi)\sum_{j=0}^{\infty}\frac{\lambda_{f}(p^{2j})\lambda_{\phi}(p^j)}{p^{js}},\\
H_p(s)&=1-\lambda_{f}(p)^2\lambda_{\phi}(p)p^{-3s}+(\lambda_{f}(p)^2(\lambda_{\phi}(p)^2+1)-2)p^{-4s}+\cdots.
\end{align*}

Since the coefficients of $p^{-s}$ and $p^{-2s}$ in the expansion of $H_p(s)$ are zero, \eqref{eqn:GRC2} implies that there exists an absolute constant $0<\omega\leq\frac{1}{10}$ such that if $\mathrm{Re}(s)\geq \frac{1}{2}-\omega$, then the functions
\[
\mathcal{H}_1(s)=\prod_{p\geq 19}H_p(s),\qquad \mathcal{H}_2(s) = \prod_{p<19}\frac{L_p(s,\mathrm{Ad}\,f\times\phi)}{L_p(2s,\mathrm{Ad}\,f)L_p(2s,\mathrm{Ad}\,\phi)}
\]
are holomorphic and $\asymp 1$.  To finish, we choose $H(s) = \mathcal{H}_1(s)\mathcal{H}_2(s)$.
\end{proof}

Let $\kappa$ be the root number of $L(s,\mathrm{Ad}\, f \times\phi)$, $L_{\infty}(s,\mathrm{Ad}\,f\times \phi)$ be the gamma factor, and
\begin{equation}
\label{eqn:choices}
A > 2,\qquad 0<\varepsilon<1,\qquad B = \frac{2A+6}{\varepsilon}+\frac{1}{2}>3.
\end{equation}
Using the functional equation for $L(s,\mathrm{Ad}\, f \times \phi)$ and Theorem \ref{thm:1}, we derive the identity
\begin{align}
\label{eqn:start}
&L(\tfrac{1}{2},\mathrm{Ad}\, f\times \phi) = (1+\kappa)\sum_{p\mid n\implies p \geq 19}\frac{\lambda_f(n^2)\lambda_{\phi}(n)}{\sqrt{n}}W(n),\\
&W(n) = \frac{1}{2\pi i}\int_{B-i\infty}^{B+i\infty}H(\tfrac{1}{2}+s)L(1+2s,\mathrm{Ad}\, f)L(1+2s,\mathrm{Ad}\, \phi)\frac{L_{\infty}(\tfrac{1}{2}+s,\mathrm{Ad}\, f \times \phi)}{L_{\infty}(\tfrac{1}{2},\mathrm{Ad}\, f \times \phi)}\frac{e^{s^2}}{n^s}\frac{ds}{s}.\notag
\end{align}
\begin{lem}
\label{lem:W_bounds}
With the notation in Theorem \ref{thm:1}, \eqref{eqn:choices}, and \eqref{eqn:start}, there holds
\[
W(n)\ll_{A,\phi,\varepsilon}\begin{cases}
	(k^2/n)^B&\mbox{if $n>k^2(\log k)^{\varepsilon}$,}\\
	 (\log k)^{\varepsilon} \prod_{p \le k} \big( 1+\frac{|\lambda_f(p^2)|}{p}\big) &\mbox{if $n\leq k^2(\log k)^{\varepsilon}$.}
\end{cases}
\]
\end{lem}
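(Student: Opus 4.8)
The plan is to evaluate the contour integral defining $W(n)$ by shifting the line of integration and extracting a residue, in the spirit of \cite[Section 5.2]{IK}. Write $W(n)=\frac{1}{2\pi i}\int_{(B)}\mathcal{G}(s)\,n^{-s}\,\frac{ds}{s}$, where
\[
\mathcal{G}(s)=H(\tfrac12+s)\,L(1+2s,\mathrm{Ad}\,f)\,L(1+2s,\mathrm{Ad}\,\phi)\,\frac{L_{\infty}(\tfrac12+s,\mathrm{Ad}\,f\times\phi)}{L_{\infty}(\tfrac12,\mathrm{Ad}\,f\times\phi)}\,e^{s^{2}}.
\]
First I would collect the needed inputs: since $f$ has level $1$ and weight $\geq 2$ it is non-dihedral, so $\mathrm{Ad}\,f\in\mathfrak{F}_{3}$ by Gelbart--Jacquet, $L(s,\mathrm{Ad}\,f)$ is entire, and $|\lambda_{\mathrm{Ad}\,f}(n)|\leq\tau_{3}(n)$ by Deligne; likewise $L(s,\mathrm{Ad}\,\phi)$ is entire with $|\lambda_{\mathrm{Ad}\,\phi}(n)|\leq\tau_{3}(n)n^{2\theta}$. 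By Theorem~\ref{thm:1}, $H(s)$ is holomorphic and $\asymp 1$ for $\mathrm{Re}(s)\geq\tfrac12-\omega$. The archimedean parameters of $\mathrm{Ad}\,f$ are $1,k-1,k$, so those of $\mathrm{Ad}\,f\times\phi$ are $\{a\pm it_{\phi}:a\in\{1,k-1,k\}\}$; in particular $\mathfrak{C}(\mathrm{Ad}\,f\times\phi,t)\ll_{\phi}k^{4}(3+|t|)^{6}$ by \eqref{eqn:BH}, and Stirling's formula gives, on any vertical line $\mathrm{Re}(s)=\sigma$ with $\sigma\geq-\omega$,
\[
\Big|\frac{L_{\infty}(\tfrac12+s,\mathrm{Ad}\,f\times\phi)}{L_{\infty}(\tfrac12,\mathrm{Ad}\,f\times\phi)}\Big|\ll_{\sigma,\phi}\max(1,k^{2\sigma})(1+|\mathrm{Im}(s)|)^{O_{\sigma}(1)},
\]
with additional exponential decay in $|\mathrm{Im}(s)|$ that I will not need.

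For $n>k^{2}(\log k)^{\varepsilon}$ I would estimate the integral directly on $\mathrm{Re}(s)=B$: there $|H(\tfrac12+s)|\ll 1$; $|L(1+2s,\mathrm{Ad}\,f)|\leq\zeta(1+2B)^{3}\ll 1$ and $|L(1+2s,\mathrm{Ad}\,\phi)|\ll_{\phi}1$ because $1+2B>7$ by \eqref{eqn:choices}; the $\Gamma$-ratio is $\ll_{B,\phi}k^{2B}(1+|t|)^{O_{B}(1)}$; $|e^{s^{2}}|=e^{B^{2}-t^{2}}$; $|n^{-s}|=n^{-B}$; and $|1/s|\leq 1/B$. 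Integrating in $t=\mathrm{Im}(s)$, the Gaussian absorbs the polynomial factor, which yields $W(n)\ll_{A,\phi,\varepsilon}k^{2B}n^{-B}=(k^{2}/n)^{B}$ (this in fact holds for every $n\geq 1$).

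For $n\leq k^{2}(\log k)^{\varepsilon}$ I would move the contour to $\mathrm{Re}(s)=-\delta$ with $\delta=c_{0}/\log k$, choosing the absolute constant $c_{0}>0$ small enough, in terms of $\omega$ and the zero-free-region constant of Theorem~\ref{thm:L(1,adf)}, that $\delta<\omega$ and $1-2\delta$ lies in the zero-free region of $L(s,\mathrm{Ad}\,f)$ for $|t|\leq\log k$ and all large $k$. In the strip $-\delta\leq\mathrm{Re}(s)\leq B$ the integrand $\mathcal{G}(s)n^{-s}/s$ is holomorphic except for a simple pole at $s=0$: $H(\tfrac12+s)$ is holomorphic as $\tfrac12-\delta>\tfrac12-\omega$; $L(1+2s,\mathrm{Ad}\,f)$ and $L(1+2s,\mathrm{Ad}\,\phi)$ are entire; and since every archimedean parameter $\mu_{j}$ of $\mathrm{Ad}\,f\times\phi$ has $\mathrm{Re}(\mu_{j})\geq 1-\theta>\tfrac12$, the rightmost pole of the $\Gamma$-factors of $L_{\infty}(\tfrac12+s,\cdot)$ sits at $\mathrm{Re}(s)=-\tfrac12-\mathrm{Re}(\mu_{j})<-1<-\delta$. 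The horizontal segments vanish in the limit by the $e^{s^{2}}$-decay, so
\[
W(n)=\mathcal{G}(0)+\frac{1}{2\pi i}\int_{(-\delta)}\mathcal{G}(s)\,n^{-s}\,\frac{ds}{s}.
\]
The residue is $\mathcal{G}(0)=H(\tfrac12)\,L(1,\mathrm{Ad}\,f)\,L(1,\mathrm{Ad}\,\phi)\ll_{\phi}L(1,\mathrm{Ad}\,f)\ll(\log k)^{3}$, the last bound being the classical estimate $L(1,\pi)\ll(\log\mathfrak{C}(\pi))^{\deg\pi}$ for $\pi$ satisfying the Ramanujan conjecture (approximate $L(1,\pi)$ by a smoothed Dirichlet sum of length $\asymp\mathfrak{C}(\pi)$, bound the tail via the functional equation and convexity, and use $\sum_{n\leq\mathfrak{C}(\pi)}\tau_{m}(n)/n\ll(\log\mathfrak{C}(\pi))^{m}$), applied with $\pi=\mathrm{Ad}\,f$ and $\mathfrak{C}(\mathrm{Ad}\,f)\asymp k^{2}$. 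For the remaining integral I would split the $t$-integral at $|t|=\log k$. The range $|t|>\log k$ is negligible because $e^{\delta^{2}-t^{2}}$ overwhelms the at-most-polynomial-in-$(k,|t|)$ growth of all other factors. For $|t|\leq\log k$, the choice of $c_{0}$ gives $|L(1-2\delta+it,\mathrm{Ad}\,f)|\ll(\log\mathfrak{C}(\mathrm{Ad}\,f,t))^{3}\ll(\log k)^{3}$ from the zero-free region, while $|H(\tfrac12-\delta+it)|\ll 1$, $|L(1-2\delta+it,\mathrm{Ad}\,\phi)|\ll_{\phi}(1+|t|)^{O(1)}$, the $\Gamma$-ratio is $\ll_{\phi}(1+|t|)^{O(1)}$, $|e^{s^{2}}|\ll e^{-t^{2}}$, $|n^{-s}|=n^{\delta}\leq(k^{2}(\log k)^{\varepsilon})^{c_{0}/\log k}\ll_{\varepsilon}1$, and $|1/s|=(\delta^{2}+t^{2})^{-1/2}$; hence
\[
\Big|\int_{(-\delta)}\mathcal{G}(s)\,n^{-s}\,\frac{ds}{s}\Big|\ll_{\phi,\varepsilon}(\log k)^{3}\int_{-\infty}^{\infty}\frac{(1+|t|)^{O(1)}e^{-t^{2}}}{\sqrt{\delta^{2}+t^{2}}}\,dt\ll(\log k)^{3}\big(1+\log(1/\delta)\big)\ll(\log k)^{3}\log\log k.
\]
Adding this to the residue bound gives $W(n)\ll_{A,\phi,\varepsilon}(\log k)^{3}\log\log k$.

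The bulk of the argument is routine Stirling-and-Gaussian bookkeeping. The one substantive ingredient is the pair of bounds $L(1,\mathrm{Ad}\,f)\ll(\log k)^{3}$ (used in the residue) and $|L(1-2\delta+it,\mathrm{Ad}\,f)|\ll(\log k)^{3}$ just inside the zero-free region, both of which follow from the convexity bound, Deligne's bound for $\mathrm{Ad}\,f$, and Theorem~\ref{thm:L(1,adf)} — all unconditional, so this lemma itself needs no extra hypotheses. The deeper point, relevant to the surrounding discussion, is that these $L(s,\mathrm{Ad}\,f)$-factors are genuinely present in \eqref{eqn:start} and hence in $W(n)$, contrary to what is asserted in \eqref{eqn:false}.
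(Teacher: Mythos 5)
Your proof is correct and establishes the lemma, but by a genuinely different contour route than the paper. The paper shifts the line of integration only to $\mathrm{Re}(s)=\tfrac{1}{2\log k}$, still strictly to the right of the pole of $1/s$ at $s=0$, so no residue is picked up; there the factor $L(1+2s,\mathrm{Ad}\,f)$ is evaluated at real part $1+1/\log k>1$, where Deligne's bound and the Dirichlet series give $|L(1+2s,\mathrm{Ad}\,f)|\le\zeta(1+1/\log k)^3\ll(\log k)^3$ with no appeal to a zero-free region or to a bound for $L(1,\mathrm{Ad}\,f)$, and the extra $\log\log k$ comes from $\int_{\mathbb{R}}(\sigma^2+t^2)^{-1/2}(1+|t|)^{-100}\,dt\ll\log(1/\sigma)$ at $\sigma=\tfrac{1}{2\log k}$. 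You instead cross $s=0$ and shift to $\mathrm{Re}(s)=-\delta$ with $\delta\asymp1/\log k$, picking up the residue $H(\tfrac12)L(1,\mathrm{Ad}\,f)L(1,\mathrm{Ad}\,\phi)\ll(\log k)^3$, and the remaining integral again yields $(\log k)^3\log\log k$ from $\int_{\mathbb{R}}e^{-t^2}(\delta^2+t^2)^{-1/2}\,dt\ll\log(1/\delta)$. The outcome is the same bound in both regimes; the trade-off is that your route requires both the zero-free region for $L(s,\mathrm{Ad}\,f)$ (to control $|L(1-2\delta+it,\mathrm{Ad}\,f)|$ inside the critical strip) and an explicit upper bound for $L(1,\mathrm{Ad}\,f)$, whereas the paper avoids both by never leaving the half-plane $\mathrm{Re}(1+2s)>1$. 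Both arguments are sound; the paper's is a bit more economical, and both make the key structural point that $L(1+2s,\mathrm{Ad}\,f)$ genuinely appears in $W(n)$.
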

\begin{proof}
Recall \eqref{eqn:choices} and \eqref{eqn:start}.  Write $s=\sigma+it$.
Theorem \ref{thm:1} implies that $H(\tfrac{1}{2}+s)\ll 1$. 
When $\sigma=1/(2\log k)$, using the Euler product of $L(s, \ad f)$ and GRC for $f$, we have 
\[
|L(1+2s,\ad\,f)|  \le \prod_{p} \Big( 1+\frac{|\lambda_{\ad f}(p)|}{p^{1+2\sigma}} + \frac{|\lambda_{\ad f}(p^2)|}{p^{2(1+2\sigma)}}+\cdots\Big) \ll \prod_{p} \Big( 1+\frac{|\lambda_{\ad f}(p)|}{p^{1+2\sigma}}\Big).
\]
By the prime number theorem and partial summation, it follows that 
\[
\prod_{p \ge k} \Big( 1+\frac{|\lambda_{\ad f}(p)|}{p^{1+2\sigma}}\Big) \ll 1.
\]
Thus, if we proceed as in \cite[Theorem 2]{L} for $\phi$ and use the fact that $\lambda_{\ad f}(p) = \lambda_f(p^2)$, then
\[
|L(1+2s,\ad\,f)L(1+2s,\ad\,\phi)|\ll_{\phi,\varepsilon}\begin{cases}
	1&\mbox{if $\sigma=B$,}\\
	(|t|+1)^{\varepsilon}\prod_{p \le k} \big( 1+\frac{|\lambda_f(p^2)|}{p}\big) &\mbox{if $\sigma=\frac{1}{2\log k}$.}
\end{cases}
\]
Using Stirling's formula, we bound the integral defining $W(n)$ on the line $\sigma=B$ (when $n>k^2(\log k)^{\varepsilon}$) or on the line $\sigma=1/(2\log k)$ (when $n\leq k^2(\log k)^{\varepsilon}$).
\end{proof}
\begin{rek}
In our application of the Lemma \ref{lem:W_bounds}, we will sum $\lambda_f(n^2)\lambda_{\phi}(n)$ over $n\leq k^2(\log k)^{\varepsilon}$ instead of $n\leq k^{2+\varepsilon}$ because our result is sensitive to powers of $\log k$.  
\end{rek}

Theorem \ref{thm:1} and Lemma \ref{lem:W_bounds} yield the following bound for $L(\frac{1}{2},\mathrm{Ad}\,f\times\phi)$.
\begin{thm}
\label{thm:AFE}
If $f$ is a holomorphic cuspidal Hecke eigenform of even weight $k\geq 2$ and level $1$, $\phi$ is a Hecke--Maa{\ss} cusp form of level $1$, $A>2$, and $\varepsilon>0$, then
\begin{align*}
L(\tfrac{1}{2},\mathrm{Ad}\,f\times\phi)&=(1+\kappa)\sum_{\substack{n\leq k^2(\log k)^{\varepsilon} \\ p\mid n\implies p\geq 19}}\frac{\lambda_{f}(n^2)\lambda_{\phi}(n)}{\sqrt{n}}W(n)+O_{A,\phi,\varepsilon}\Big(\frac{k}{(\log k)^{2A+2}}\Big)\\
&\ll_{A,\phi,\varepsilon}    (\log k)^{\varepsilon} \Big(\prod_{p \le k} \Big( 1+\frac{|\lambda_{f}(p^2)|}{p}\Big)\Big)\sum_{n\leq k^2(\log k)^{\varepsilon}}\frac{|\lambda_{f}(n^2)\lambda_{\phi}(n)|}{\sqrt{n}} +\frac{k}{(\log k)^{2A+2}}. 
\end{align*}
\end{thm}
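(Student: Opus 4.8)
The plan is to deduce Theorem~\ref{thm:AFE} directly from the exact identity \eqref{eqn:start} together with the two bounds for $W(n)$ supplied by Lemma~\ref{lem:W_bounds}. Writing $X=k^{2}(\log k)^{\varepsilon}$, I would split the sum in \eqref{eqn:start} at $n=X$: the range $n\le X$ produces the main sum, with $W(n)$ bounded trivially, and the range $n>X$ is shown to contribute only $O_{A,\phi,\varepsilon}(k/(\log k)^{2A+2})$, which is the shape of the claimed error term. Taking absolute values then upgrades the first displayed line to the second.

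The one genuinely arithmetic ingredient I would need is a mean-value bound for $|\lambda_{f}(n^{2})\lambda_{\phi}(n)|$. Because GRC is not available for $\phi$, the pointwise Kim--Sarnak bound $|\lambda_{\phi}(n)|\ll_{\phi,\varepsilon}n^{7/64+\varepsilon}$ is too weak to make the tail negligible, so I would use second moments instead. From Rankin--Selberg theory the $L$-functions $L(s,\mathrm{Ad}\,f\times\mathrm{Ad}\,f)$ and $L(s,\phi\times\phi)$ have at most a simple pole at $s=1$, whence $\sum_{n\le x}\lambda_{f}(n^{2})^{2}=\sum_{n\le x}\lambda_{\mathrm{Ad}\,f}(n)^{2}\ll x(\log x)^{O(1)}$ uniformly in $f$ (using $\lambda_{f}(n^{2})^{2}\le\tau_{3}(n)^{2}\le\tau_{9}(n)$ and $\sum_{n\le x}\tau_{9}(n)\ll x(\log x)^{8}$) and $\sum_{n\le x}\lambda_{\phi}(n)^{2}\ll_{\phi}x$; Cauchy--Schwarz then gives $\sum_{n\le x}|\lambda_{f}(n^{2})\lambda_{\phi}(n)|\ll_{\phi}x(\log x)^{O(1)}$.

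For the tail, Lemma~\ref{lem:W_bounds} gives $W(n)\ll_{A,\phi,\varepsilon}(k^{2}/n)^{B}$ for $n>X$, so this portion of \eqref{eqn:start} is $\ll_{A,\phi,\varepsilon}k^{2B}\sum_{n>X}|\lambda_{f}(n^{2})\lambda_{\phi}(n)|\,n^{-B-1/2}$. Since $B+\tfrac12>1$, partial summation against the mean-value bound of the previous paragraph turns this into $\ll_{\phi}k^{2B}X^{1/2-B}(\log X)^{O(1)}=k\,(\log k)^{\varepsilon(1/2-B)+O(1)}$, and the choice $B=\tfrac{2A+6}{\varepsilon}+\tfrac12$ in \eqref{eqn:choices} is made precisely so that $\varepsilon(\tfrac12-B)=-(2A+6)$, leaving ample room to absorb the $(\log X)^{O(1)}$ and land at $O_{A,\phi,\varepsilon}(k/(\log k)^{2A+2})$. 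Feeding this back into \eqref{eqn:start} yields the first displayed line. For $n\le X$ I would simply invoke the second case of Lemma~\ref{lem:W_bounds}, namely $W(n)\ll_{A,\phi,\varepsilon}(\log k)^{3}\log\log k$ uniformly in $n$, together with $|1+\kappa|\le 2$ (the root number having modulus $1$), then take absolute values and drop the harmless condition $p\mid n\Rightarrow p\ge 19$ (which only enlarges a sum of nonnegative terms); this produces the second displayed line.

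The main --- and essentially only --- point requiring care is the tail estimate: one must ensure that the polynomial factor $k^{2B}$ coming from integrating on the line $\mathrm{Re}(s)=B$ is more than compensated by $X^{1/2-B}$, which is exactly what the relation among $B$, $A$, and $\varepsilon$ in \eqref{eqn:choices} is designed to guarantee. Once the second-moment bound is in hand everything else is bookkeeping; in particular, the passage from the first displayed line of the theorem to the second uses nothing beyond Lemma~\ref{lem:W_bounds}.
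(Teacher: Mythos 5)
Your proposal is correct and follows essentially the same route as the paper: start from \eqref{eqn:start}, use the two regimes of Lemma~\ref{lem:W_bounds}, bound the tail $n>k^2(\log k)^{\varepsilon}$ via Cauchy--Schwarz, a Rankin--Selberg second-moment bound for $\lambda_{\phi}$, the divisor-function bound $\lambda_f(n^2)^2\le\tau_9(n)$ coming from GRC, and partial summation, with the choice of $B$ in \eqref{eqn:choices} calibrated to absorb $k^{2B}$ and the resulting powers of $\log k$. The paper applies Cauchy--Schwarz directly to the weighted sum $\sum_{n>X}|\lambda_f(n^2)\lambda_\phi(n)|n^{-B-1/2}$ and then partial-sums each factor separately, whereas you first produce an unweighted mean-value bound and then partial-sum; these are equivalent, and they land on the same exponent of $\log k$. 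One small slip: $\lambda_f(n^2)\neq\lambda_{\mathrm{Ad}\,f}(n)$ in general (they agree only at primes), so the identity $\sum_{n\le x}\lambda_f(n^2)^2=\sum_{n\le x}\lambda_{\mathrm{Ad}\,f}(n)^2$ is false; but your parenthetical justification via $\lambda_f(n^2)^2\le\tau_9(n)$ is exactly what is needed and is what the paper uses, so the conclusion is unaffected.
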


\begin{proof}
We estimate the tail of the sum in \eqref{eqn:start} using Lemma \ref{lem:W_bounds}, the Cauchy--Schwarz inequality, Rankin--Selberg theory, and \eqref{eqn:choices}.
\end{proof}

We now apply Theorem \ref{thm:AFE} to \eqref{eqn:start_AFE}.

\begin{thm}
\label{thm:inner}
If $f$ is a holomorphic cuspidal Hecke eigenform of even weight $k\geq 2$ and level $1$, $\phi$ is a Hecke--Maa{\ss} cusp form of level $1$, $A>2$, and $\varepsilon>0$, then
\[
\langle \phi,|F_k|^2\rangle\ll_{A,\phi,\varepsilon}
(\log k)^{\varepsilon} \prod_{p\leq k}\Big(1-\frac{\lambda_f(p^2){-\frac{1}{2}|\lambda_f(p^2)|}+\frac{1}{4}(1-\lambda_f(p^2))^2}{p}\Big)+\frac{1}{(\log k)^A}.
\]	
\end{thm}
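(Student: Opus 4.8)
The plan is to substitute the bound of Theorem \ref{thm:AFE} for $L(\tfrac12,\ad f\times\phi)$ into \eqref{eqn:start_AFE} and then estimate the resulting Dirichlet polynomial. Writing $X=k^2(\log k)^{\varepsilon}$ and $\Sigma=\sum_{n\le X}\frac{|\lambda_f(n^2)\lambda_\phi(n)|}{\sqrt n}$, Theorem \ref{thm:AFE} combined with \eqref{eqn:start_AFE} gives, up to the admissible error $O_{A,\phi,\varepsilon}((\log k)^{-A})$, a bound of the shape
\[
\langle\phi,|F_k|^2\rangle\ \ll_{A,\phi,\varepsilon}\ \frac{(\log\log k)^{O(1)}}{\sqrt k}\,\bigl(W_{\max}\,\Sigma\bigr)^{1/2}\prod_{p\le k}\Bigl(1-\frac{\lambda_f(p^2)}{p}\Bigr)+\frac{1}{(\log k)^A},
\]
where $W_{\max}$ is an upper bound for $|W(n)|$ in the range $n\le X$. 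Rather than use the crude $W_{\max}\ll(\log k)^{3}\log\log k$ (which comes from Deligne's bound $|\lambda_{\ad f}(n)|\le\tau_3(n)$ inside the contour integral defining $W(n)$ and loses a factor $(\log k)^{1/2}$ at the end), I would re-run the proof of Lemma \ref{lem:W_bounds} using the classical zero-free region of Theorem \ref{thm:L(1,adf)} for $L(s,\ad f)$ (and the standard one for $L(s,\ad\phi)$, $\phi$ fixed) to bound $|L(1+2s,\ad f)L(1+2s,\ad\phi)|$ on the line $\mathrm{Re}(s)=\tfrac1{2\log k}$; this yields the sharper $W(n)\ll(\log k)(\log\log k)^{O(1)}$. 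Equivalently, one keeps the factor $L(1,\ad f)$ implicit in $W(n)$ and uses $\prod_{p\le k}(1-\tfrac{\lambda_f(p^2)}{p})\asymp L(1,\ad f)^{-1}$ (Lemma \ref{lem:L(1,adf)} together with the zero-free region) so that a factor $L(1,\ad f)^{1/2}$ arising from $L(\tfrac12,\ad f\times\phi)^{1/2}$ partly cancels the $L(1,\ad f)^{-1}$ coming from Watson's formula.

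For $\Sigma$, the arithmetic function $n\mapsto|\lambda_f(n^2)\lambda_\phi(n)|$ is nonnegative and multiplicative with $|\lambda_f(p^{2j})\lambda_\phi(p^j)|\le\tau(p^{2j})\tau(p^j)p^{7j/64}$, so a Shiu-type mean value estimate followed by partial summation gives
\[
\Sigma\ \ll_\phi\ \frac{\sqrt X}{\log X}\exp\Bigl(\sum_{p\le X}\frac{|\lambda_f(p^2)\lambda_\phi(p)|}{p}\Bigr),
\]
and since $X\le k^3$ the primes in $(k,X]$ contribute $O_\phi(1)$ to the exponent, so $\Sigma\ll_\phi k(\log k)^{\varepsilon/2-1}\exp\bigl(\sum_{p\le k}\frac{|\lambda_f(p^2)\lambda_\phi(p)|}{p}\bigr)$.

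The last step is to combine the Euler products. The elementary identity $\lambda_f(p^2)+\tfrac14(1-\lambda_f(p^2))^2=\tfrac14(\lambda_f(p^2)+1)^2=\tfrac14\lambda_f(p)^4$ (using the Hecke relation $\lambda_f(p^2)=\lambda_f(p)^2-1$) identifies the target product with $\prod_{p\le k}(1-\tfrac{\lambda_f(p)^4}{4p})$, from which I pull out the finitely many small primes where this factor is nonpositive. Taking logarithms and using $\prod_{p\le k}(1-\tfrac{x_p}{p})\asymp\exp(-\sum_{p\le k}\tfrac{x_p}{p})$ for $|x_p|\ll1$, matching the two products reduces to the inequality
\[
\tfrac12\sum_{p\le k}\frac{|\lambda_f(p^2)\lambda_\phi(p)|}{p}\ +\ \tfrac14\sum_{p\le k}\frac{(\lambda_f(p^2)-1)^2}{p}\ \le\ \tfrac32\log\log k+O_\phi\bigl(\sqrt{\log\log k}\bigr).
\]
Cauchy--Schwarz bounds the first sum by $\bigl(\sum_{p\le k}\tfrac{\lambda_f(p^2)^2}{p}\bigr)^{1/2}\bigl(\sum_{p\le k}\tfrac{\lambda_\phi(p)^2}{p}\bigr)^{1/2}$, and Rankin's trick with the zero-free regions gives $\sum_{p\le k}\tfrac{\lambda_f(p^2)^2}{p}\le\log\log k+O(1)$ (from the simple pole of $L(s,\ad f\times\ad f)$) and $\sum_{p\le k}\tfrac{\lambda_\phi(p)^2}{p}\le\log\log k+O_\phi(1)$, so this term is $\le\tfrac12\log\log k+O_\phi(\sqrt{\log\log k})$; expanding the square and using Deligne's bound $\lambda_f(p^2)\ge-1$ (hence $\sum_{p\le k}\tfrac{\lambda_f(p^2)}{p}\ge-\log\log k+O(1)$) bounds the second term by $\tfrac14(\log\log k+2\log\log k+\log\log k)+O(1)=\log\log k+O(1)$. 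Feeding these back, the prefactor $\tfrac{(\log\log k)^{O(1)}}{\sqrt k}\bigl(W_{\max}\sqrt X/\log X\bigr)^{1/2}$ contributes $(\log k)^{\varepsilon/4+o(1)}$ and the product comparison contributes $(\log k)^{3/2+o(1)}$, so after shrinking $\varepsilon$ we reach the exponent $3/2+\varepsilon$; the secondary term of Theorem \ref{thm:AFE} and the error in \eqref{eqn:start_AFE} are absorbed into $(\log k)^{-A}$ after enlarging $A$.

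The main obstacle is the precise $\log k$-bookkeeping: one needs the sharp mean-value exponent $(\log X)^{-1}$, the exact cutoff $X=k^2(\log k)^{\varepsilon}$, and the sharp $|W(n)|\ll(\log k)^{1+o(1)}$ (equivalently the cancellation between the $L(1,\ad f)$ hidden in $W(n)$ and the $L(1,\ad f)^{-1}$ from Watson's formula); replacing any of these by a cruder substitute---e.g.\ $|W(n)|\ll(\log k)^3$, or Deligne in place of the Rankin--Selberg prime-sum asymptotics---loses roughly a factor $(\log k)^{1/2}$. A secondary technical point is that the Rankin--Selberg prime sums must be controlled uniformly for $p$ up to $k$ (only $\sqrt{\mathfrak{C}(\ad f)}$, essentially), which is why only the one-sided estimates $\le\log\log k+O(1)$ are available---but these are precisely what the argument uses.
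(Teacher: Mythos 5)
Your proposal takes a genuinely different route from the paper, but it rests on two estimates that are not available, and the first of these is precisely the obstruction that this appendix was written to highlight.

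The paper's proof is short: apply Theorem~\ref{thm:AFE} directly (which already carries the factor $(\log k)^3 \log\log k$ coming from $W_{\max}$ via Lemma~\ref{lem:W_bounds}), Cauchy--Schwarz to separate the sum into an $f$-part and a $\phi$-part, use the Rankin--Selberg bound \eqref{eqn:quick_RS} plus partial summation for the $\phi$-part, and cite \cite[(4.4)]{LMR} as a black box for the $f$-part. The exponent $3/2$ in the final bound is exactly $\tfrac{1}{2}\cdot 3$, i.e.\ it comes from taking the square root of $W_{\max}\ll(\log k)^3$. No improvement of $W(n)$ is attempted; the argument lives with the crude bound.

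Your proof instead claims $W(n)\ll(\log k)(\log\log k)^{O(1)}$ by ``re-running Lemma~\ref{lem:W_bounds} using the classical zero-free region of Theorem~\ref{thm:L(1,adf)}.'' This step does not work. On the contour $\mathrm{Re}(s)=\tfrac{1}{2\log k}$, the factor $L(1+2s,\mathrm{Ad}\,f)$ has argument at distance $\asymp 1/\log k$ from $1$, while the conductor of $\mathrm{Ad}\,f$ is $\asymp k^2$; the zero-free region controls zeros at distance $\asymp 1/\log k$ from $1$, but it does \emph{not} give any pointwise bound on $L$ there better than the Deligne bound $\zeta(1+\tfrac{1}{\log k})^3\ll(\log k)^3$. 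To do better one would need $L(1+\tfrac{1}{\log k}+it,\mathrm{Ad}\,f)\ll(\log k)^{1+o(1)}(|t|+1)^{O(1)}$, i.e.\ a stability statement for $L(s,\mathrm{Ad}\,f)$ near $\mathrm{Re}(s)=1$ with the conductor aspect, and this is exactly what the remark following Lemma~\ref{lem:W_bounds} (and the concluding discussion of the appendix) says we cannot prove. Your alternative strategy of ``keeping the factor $L(1,\mathrm{Ad}\,f)$ implicit'' in $W(n)$ meets the same wall: there is no reason for $L(1+2s,\mathrm{Ad}\,f)$ on the contour to be comparable to $L(1,\mathrm{Ad}\,f)$ when the latter can be as small as $(\log k)^{-1}$, so the ratio $W_{\max}/L(1,\mathrm{Ad}\,f)$ need not be $(\log k)^{1+o(1)}$. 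With the actually available $W_{\max}\ll(\log k)^3\log\log k$, your prefactor becomes $(\log k)^{1+\varepsilon/4+o(1)}$ rather than $(\log k)^{\varepsilon/4+o(1)}$, and your final exponent degrades to $5/2+\varepsilon$.

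A second gap is the inequality $\sum_{p\le k}\lambda_f(p^2)^2/p\le\log\log k+O(1)$, which you attribute to ``Rankin's trick with the zero-free regions, from the simple pole of $L(s,\mathrm{Ad}\,f\times\mathrm{Ad}\,f)$.'' Rankin's trick (take $\sigma=1+1/\log k$) gives only $\log\log k + \log\mathrm{Res}_{s=1}L(s,\mathrm{Ad}\,f\times\mathrm{Ad}\,f)+O(1)$, and since $\mathrm{Res}_{s=1}L(s,\mathrm{Ad}\,f\times\mathrm{Ad}\,f)=L(1,\mathrm{Sym}^4f)L(1,\mathrm{Ad}\,f)$ can be as large as $(\log k)^{O(1)}$, the residue contributes an additional $O(\log\log k)$, not $O(1)$; and the standard prime number theorem for $L(s,\mathrm{Ad}\,f\times\mathrm{Ad}\,f)$ (conductor $\asymp k^{12}$) is not effective in the range $p\le k$. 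The upshot is that your argument is essentially a conditional proof: it is correct modulo the two inputs $W_{\max}\ll(\log k)^{1+o(1)}$ and $\sum_{p\le k}\lambda_f(p^2)^2/p\le\log\log k+O(1)$, neither of which is unconditional. The paper's proof sidesteps both by accepting $(\log k)^3$ from $W_{\max}$ and by delegating the $f$-sum to \cite[(4.4)]{LMR}, where the one-sided control needed is encoded in the Euler product that ultimately survives in the statement of Theorem~\ref{thm:inner}.

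(A minor point: you work with $\lambda_f(p^2)+\tfrac14(1-\lambda_f(p^2))^2=\tfrac14\lambda_f(p)^4$, matching the displayed statement of Theorem~\ref{thm:inner}; the body of the proof and \eqref{eqn:inner} instead use $\lambda_f(p^2)+\tfrac14(1-\lambda_f(p^2)^2)$, which is not the same quantity. This looks like a typo in the theorem statement, but it does change what you would need to prove.)
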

\begin{proof}
By Theorem \ref{thm:AFE} and the Cauchy--Schwarz inequality, $L(\frac{1}{2},\mathrm{Ad}\,f\times\phi)^{1/2}$ is
\begin{multline}
\label{eqn:inner1}
\ll_{A,\phi,\varepsilon}(\log k)^{\frac{\varepsilon}{2}} \Big(\prod_{p \le k} \Big( 1+\frac{\frac{1}{2}|\lambda_{f}(p^2)|}{p}\Big)\Big)\Big(\sum_{n\leq k^2(\log k)^{\varepsilon}}\frac{\lambda_f(n^2)^2}{\sqrt{n}}\Big)^{\frac{1}{4}}\Big(\sum_{n\leq k^2(\log k)^{\varepsilon}}\frac{\lambda_{\phi}(n)^2}{\sqrt{n}}\Big)^{\frac{1}{4}}
\\+\frac{\sqrt{k}}{(\log k)^{A+1}}.
\end{multline}
It follows from Rankin--Selberg theory and partial summation that \eqref{eqn:inner1} is
\begin{equation}
\label{eqn:inner2}
\ll_{A,\phi,\varepsilon} k^{\frac{1}{4}}  (\log k)^{\varepsilon} \Big(\prod_{p \le k} \Big( 1+\frac{\frac{1}{2}|\lambda_{f}(p^2)|}{p} \Big)\Big)\Big(\sum_{n\leq k^2(\log k)^{\varepsilon}}\frac{\lambda_{f}(n^2)^2}{\sqrt{n}}\Big)^{\frac{1}{4}}+\frac{{\sqrt{k}}}{(\log k)^{A+1}}.
\end{equation}
We apply Shiu's variant of the Brun--Titchmarsh theorem \cite[Theorem 1]{Shiu} (with partial summation) to conclude that \eqref{eqn:inner2} is
\begin{align*}
&\ll_{A,\phi,\varepsilon} \sqrt{k}{ (\log k)^{\varepsilon} \Big(\prod_{p \le k} \Big( 1+\frac{\frac{1}{2}|\lambda_{f}(p^2)|}{p} \Big)\Big)}\prod_{p\leq k^2(\log k)^{\varepsilon}}\Big(1-\frac{\frac{1}{4}(1-\lambda_f(p^2)^2)}{p}\Big)+\frac{\sqrt{k}}{(\log k)^{A+1}}.
\end{align*}
The product over $p\in[k+1,k^2(\log k)^{\varepsilon}]$ is $O_{\varepsilon}(1)$, so the preceding display is
\begin{align*}
\ll_{A,\phi,\varepsilon} \sqrt{k}{(\log k)^{\varepsilon} \prod_{p\leq k}\Big(1-\frac{\frac{1}{4}(1-\lambda_f(p^2)^2)-\frac{1}{2}|\lambda_{f}(p^2)|}{p}\Big)}+\frac{\sqrt{k}}{(\log k)^{A+1}}.
\end{align*}
The desired result now follows from Lemma \ref{lem:L(1,adf)}, \eqref{eq:l(1,adf)GHL}, and \eqref{eqn:start_AFE}.
\end{proof}
  
Recall the Hecke relations $\lambda_{f}(p^2)=\lambda_f(p)^2-1$. For a sufficiently large $A$, it follows from Theorem \ref{thm:inner} that  
\begin{equation}
\label{eqn:inner}
\begin{aligned}
\langle \phi,|F_k|^2\rangle &\ll_{A,\phi,\varepsilon} 
(\log k)^{\varepsilon}\prod_{p\leq k}\Big(1-\frac{ (\lambda_f(p)^2-1) -\frac{1}{2}|\lambda_f(p)^2-1|+\frac{1}{4}(1-\lambda_{f}(p^2)^2)}{p}\Big). 
\end{aligned}\hspace{-2mm}
\end{equation}
By a different method, Holowinsky (see \cite[(1.2)]{HS} or \cite[(1.14)]{Iwaniec}) proved that
\[
\langle \phi,|F_k|^2\rangle \ll_{\phi,\varepsilon}(\log k)^{\varepsilon}\prod_{p\leq k}\Big(1-\frac{\frac{1}{2}(|\lambda_f(p)|-1)^2}{p}\Big).
\]
Recall the fact that if $\xi\in(0,1)$ and $a,b>0$, then $\min\{a,b\}\leq a^{\xi}b^{1-\xi}$.  For $\xi\in(0,1)$ and $A$ be sufficiently large with respect to $\xi$, we have 
\[
\langle \phi,|F_k|^2\rangle \ll_{A,\phi,\varepsilon}(\log k)^{-\delta+\varepsilon},
\]
where
\[ 
\delta = \max_{0\leq\xi\leq 1}~\min_{0\leq \lambda\leq 2}\Big(\frac{\xi}{2}(\lambda-1)^2+ (1-\xi)\Big(\lambda^2-1-\frac{1}{2}|\lambda^2-1|+\frac{1}{4}(1-(\lambda^2-1)^2)\Big)\Big) \approx 0.00348\ldots
\]
(See \cite[\S12]{Iwaniec} and \cite[\S4.3]{LMR}.) This corrected value of $\delta$ is smaller than the exponent $0.007359\ldots$ stated in \cite[\S12]{Iwaniec}.

\bibliographystyle{abbrv}
\bibliography{QUE}
\end{document}